\setlist[enumerate,1]{label=(\roman*)}
\setlist[enumerate,2]{label=(\alph*)}
\setlist[enumerate,3]{label=(\Roman*)}
\setlist[enumerate,4]{label=(\Alph*)}
\theoremstyle{definition}
\newtheorem{defn}{Definition}[section]
\newtheorem{rmk}[defn]{Remark}
\theoremstyle{plain}
\newtheorem{thm}[defn]{Theorem}
\newtheorem{lem}[defn]{Lemma}
\newtheorem{prop}[defn]{Proposition}
\newtheorem{cor}[defn]{Corollary}
\def\C{\ensuremath{\mathbb{C}}}
\def\H{\ensuremath{\mathbb{H}}}
\def\P{\ensuremath{\mathbb{P}}}
\def\FF{\ensuremath{\mathcal F}}
\def\GG{\ensuremath{\mathcal G}}
\def\HH{\ensuremath{\mathcal H}}
\def\II{\ensuremath{\mathcal I}}
\def\MM{\ensuremath{\mathcal M}}
\def\NN{\ensuremath{\mathcal N}}
\def\OO{\ensuremath{\mathcal O}}
\def\WW{\ensuremath{\mathcal W}}
\def\ch{\mathop{\mathrm{ch}}\nolimits}
\def\dim{\mathop{\mathrm{dim}}\nolimits}
\def\Hom{\mathop{\mathrm{Hom}}\nolimits}
\def\Hom{\mathop{\mathrm{Hom}}\nolimits}
\def\Imm{\mathop{\mathrm{Im}}\nolimits}
\def\NS{\mathop{\mathrm{NS}}\nolimits}
\def\rk{\mathop{\mathrm{rk}}}
\def\Real{\mathop{\mathrm{Re}}\nolimits}
\def\Stab{\mathop{\mathrm{Stab}}\nolimits}
\def\into{\ensuremath{\hookrightarrow}}
\def\onto{\ensuremath{\twoheadrightarrow}}
\def\sub{\ensuremath{\subset}}
\theoremstyle{definition}
\newcommand{\nocontentsline}[3]{}
\newcommand{\tocless}[2]{\bgroup\let\addcontentsline=\nocontentsline#1{#2}\egroup}
\begin{document}

\title[An interesting wall-crossing]{\texorpdfstring{ An interesting wall-crossing: Failure of the wall-crossing/MMP correspondence}{ An interesting wall-crossing:Failure of the wall-crossing/MMP correspondence}}

\author{Fatemeh Rezaee}
\address{Mathematical Sciences, Loughborough University, Schofield Building, Epinal Way, Loughborough, LE11 3TU,   United Kingdom}
\email{f.rezaee@lboro.ac.uk}
\address{School of Mathematics,
University of Edinburgh,
James Clerk Maxwell Building,
Peter Guthrie Tait Road, Edinburgh, EH9 3FD,
United Kingdom}

\email{f.rezaee@sms.ed.ac.uk}

%\keywords{Stability Conditions, Derived Categories}

%\subjclass[2010]{14H50 (Primary); 14F05, 14J30, 18E30 (Secondary)}

\begin{abstract}
We show that the wall-crossing in Bridgeland stability fails to be detected by the birational geometry of stable sheaves, and vice versa. There is a wall in the stability space of canonical genus $4$ curves which does not induce a step in the Minimal Model Program.
  More precisely, we give an example of a wall-crossing in $\mathrm{D}^{b}(\mathbb{P}^{3})$ such that: the wall induces a small contraction of the moduli space of stable objects associated to one of the adjacent chambers, but  a divisorial contraction to the other. This significantly complicates the overall picture in this correspondence to applications of stability conditions to algebraic geometry.
\end{abstract}

\maketitle

\setcounter{tocdepth}{1}
\setcounter{secnumdepth}{3}

     \tableofcontents

\section{Introduction} \label{Introduction}

There  are many examples of moduli spaces of sheaves on surfaces whose entire MMP can explain and be explained by wall-crossing: each wall-crossing induces a birational map (as a MMP step in the movable cone), and every birational model associated to a movable divisor %$D$
on the moduli space appears as a moduli space $\mathcal{M}_{\sigma}(v)$ of $\sigma$-stable objects for some fixed vector $v$. If we replace surfaces by $\mathbb{P}^{3}$, %it turns out that
this picture breaks down; indeed, our main result gives an example of a wall-crossing from a smooth moduli space to a space whose main component is not even $\mathbb{Q}$-factorial. This wall-crossing behaves in a manner that is surprising from the birational geometry:

\begin{thm} [See Theorem \ref{finally!}] \label{main Theorem} 
 Fix $v=(1,0,-6,15)$. There is a wall-crossing with respect to Bridgeland stability conditions $\mathcal{M}_{\sigma_{-}}(v) \rightarrow \mathcal{M}_{\sigma_{+}}(v)$ with the following properties:
 \begin{itemize}
     \item $\mathcal{M}_{\sigma_{-}}(v)$ is a smooth and irreducible variety,
     \item $\mathcal{M}_{\sigma_{+}}(v)= \widetilde {\mathcal{M}_{\sigma_{-}}(v)} \cup \mathcal{M}'$, where $ \widetilde {\mathcal{M}_{\sigma_{-}}(v)}$ is birational to $\mathcal{M}_{\sigma_{-}}(v)$ and $\mathcal{M}'$ is a new irreducible component,
     \item There is a diagram 
     \begin{center} 
    
\begin{tikzcd}[column sep=
scriptsize
]
 \mathcal{M}_{\sigma_{-}}(v) \arrow[dr, "\text{small contraction ($\phi$)}"{align=left,left=2mm,font=\scriptsize}] {}
&  &\widetilde {\mathcal{M}_{\sigma_{-}}(v)} \arrow[dl, "\text{divisorial contraction ($\psi$)}"{align=right,right=2mm,font=\scriptsize}] {} \\

& \mathcal{M}_{\sigma_{0}}(v)
\end{tikzcd}
\end{center} 
where both $\phi$ and $\psi$ have relative Picard rank 1. Furthermore,  $ \widetilde {\mathcal{M}_{\sigma_{-}}(v)}$ is not $\mathbb{Q}$-factorial.
 \end{itemize}

\end{thm}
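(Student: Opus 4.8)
The plan is to make the wall-crossing for $v=(1,0,-6,15)$ completely explicit and then to read off the birational geometry from the Jordan--H\"older data at the wall, working throughout with the geometric (double-tilt) Bridgeland stability conditions on $\Db(\P^3)$. Since $v$ has rank $1$, $c_1=0$, $\mathrm{ch}_2=-6$ and $\mathrm{ch}_3=15$, the large-volume moduli space is the moduli of Gieseker-stable sheaves with these invariants, i.e.\ ideal sheaves $I_C$ of degree $6$, arithmetic genus $4$ curves; the generic such $C$ is a canonical curve, the complete intersection of a quadric $Q$ and a cubic $S$. First I would show that the component $M$ of the Hilbert scheme carrying these complete intersections is smooth and irreducible of dimension $24$ (from $H^1(N_{C/\P^3})=0$ for ACM curves together with the parameter count $\dim|\OO_{\P^3}(2)|+\dim|\OO_{\P^3}(3)|_{\mathrm{mod}\,Q}=9+15$), and identify $\mathcal M_{\sigma_-}(v)$ with $M$ for $\sigma_-$ in the chamber adjacent to the relevant wall, smoothness of the Bridgeland moduli following from the vanishing of the obstruction spaces $\Ext^2$.

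Next I would locate the wall $W$ and describe the strictly semistable objects. The essential point --- and the reason the contraction $\phi$ is \emph{small} --- is that $W$ must be chosen so that the destabilizing subobject injects into $I_C$ only along a locus $Z_-$ of codimension $\ge 2$, the generic canonical curve remaining stable across $W$; geometrically I expect $Z_-$ to be the curves that are singular at the vertex of a quadric cone (a codimension-$2$ condition, since ``$Q$ a cone'' and ``$C$ through the vertex'' are each one condition). On $Z_-$ the object $I_C$ acquires a two-step Jordan--H\"older filtration with factors $A$ and $B$ (a line bundle twist and a sheaf supported on the degenerate quadric/at the vertex), polystable representative $A\oplus B$; I would pin down $A,B$ and verify, by enumerating the finitely many candidate destabilizing classes of bounded discriminant, that $W$ is an actual wall and $\codim Z_-\ge 2$, so that $\phi\colon\mathcal M_{\sigma_-}(v)\to\mathcal M_{\sigma_0}(v)$ contracts $Z_-$ and is small of relative Picard rank $1$.

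The heart of the argument, and its main obstacle, is the analysis of the $\sigma_+$ side and the proof that $\psi$ is \emph{divisorial} rather than small. The asymmetry is made possible by the geometry of $\P^3$: unlike the K3 case, where Serre duality forces $\ext^1(A,B)=\ext^1(B,A)$ and hence a symmetric (flopping) wall, on $\P^3$ one has $\Ext^1(A,B)\cong\Ext^2(B,A(-4))^\vee$, so the two extension spaces governing the stable objects on either side of $W$ have \emph{different} dimensions. I would compute $\ext^1(A,B)$ and $\ext^1(B,A)$ explicitly; the larger one produces, over the deepest stratum of $\mathcal M_{\sigma_0}(v)$, a family of opposite extensions of dimension one greater than on the $\sigma_-$ side. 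Consequently $\mathcal M_{\sigma_+}(v)$ is reducible: its main component $\widetilde{\mathcal M_{\sigma_-}(v)}$ is birational to $\mathcal M_{\sigma_-}(v)$ but now carries a \emph{contracted divisor}, while the remaining opposite extensions sweep out a genuinely new irreducible component $\mathcal M'$. The delicate steps here are proving that these new objects are $\sigma_+$-stable, that $\mathcal M'$ is not contained in the closure of the transform, and that $\psi\colon\widetilde{\mathcal M_{\sigma_-}(v)}\to\mathcal M_{\sigma_0}(v)$ is divisorial of relative Picard rank $1$.

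Finally, non-$\mathbb Q$-factoriality of $\widetilde{\mathcal M_{\sigma_-}(v)}$ follows formally from the diagram. If $\widetilde{\mathcal M_{\sigma_-}(v)}$ were $\mathbb Q$-factorial, then the divisorial contraction $\psi$ of relative Picard rank $1$ would force $\mathcal M_{\sigma_0}(v)$ to be $\mathbb Q$-factorial. But $\phi$ is a small contraction of relative Picard rank $1$ from the smooth, hence $\mathbb Q$-factorial, space $\mathcal M_{\sigma_-}(v)$: choosing a $\phi$-ample class $H$ and a curve $C_0$ contracted by $\phi$, if $\phi_*H$ were $\mathbb Q$-Cartier then, $\phi$ being small, $\phi^*(\phi_*H)=H$ and the projection formula would give $H\cdot C_0=\phi^*(\phi_*H)\cdot C_0=(\phi_*H)\cdot\phi_*C_0=0$, contradicting $H\cdot C_0>0$. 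Hence $\mathcal M_{\sigma_0}(v)$ is not $\mathbb Q$-factorial, a contradiction, and therefore $\widetilde{\mathcal M_{\sigma_-}(v)}$ is not $\mathbb Q$-factorial.
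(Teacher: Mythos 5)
Your overall architecture --- locate a wall where one side contracts a small locus and the other a divisor, exploit the failure of the K3-type Serre-duality symmetry $\ext^1(A,B)\neq\ext^1(B,A)$ to produce a new component from the opposite extensions, and then deduce non-$\mathbb{Q}$-factoriality formally from the two contractions --- matches the paper's in outline, and your closing $\mathbb{Q}$-factoriality argument is essentially identical to the one in Theorem \ref{finally!} (you even supply the projection-formula justification for which the paper cites the proof of {\cite[Corollary 3.18]{KM}}). But there are two genuine gaps. First, the setup misidentifies both the chamber and the wall. The Gieseker-chamber moduli space for $v=(1,0,-6,15)$ is the full Hilbert scheme $\mathrm{Hilb}^{6t-2}(\mathbb{P}^{3})$ (every ideal sheaf is stable at the large volume limit), which is reducible --- it contains, e.g., the component of plane sextics plus seven points --- so it cannot be the smooth irreducible $\mathcal{M}_{\sigma_{-}}(v)$ of the statement, and restricting attention to the complete-intersection component is not an option, since a Bridgeland moduli space for a chamber is what it is. In the paper, $\mathcal{M}_{\sigma_{-}}(v)$ sits only two walls away from the collapsing region: it is the blow-up of the $\mathbb{P}^{15}$-bundle $\mathcal{M}_{1}$ over $\mathbb{P}^{9}$ (Propositions \ref{N1} and \ref{N2}), and at least one further actual wall separates $\sigma_{+}$ from the Gieseker chamber (objects of $\mathcal{M}'$ have nonzero $\HH^{1}=\mathcal{O}_{Z_{2}}$, so they must be destabilized again before the large volume limit). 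Relatedly, your guessed wall --- ideal sheaves of curves singular at the vertex of a quadric cone, destabilized in codimension $2$ --- matches no actual wall: Proposition \ref{Lemma 9} pins down the numerically possible destabilizing classes, the relevant wall is $\mathcal{W}=\langle \mathcal{I}_{L}(-1),\iota_{P_{*}}(\mathcal{I}_{Z_{2}})^{\vee}(-5)\rangle$, and the locus it destabilizes on the $\sigma_{-}$ side consists of non-torsion-free sheaves (extensions of $\mathcal{I}_{L}(-1)$ by $\iota_{P_{*}}(\mathcal{I}_{Z_{2}})^{\vee}(-5)$), never ideal sheaves of complete intersections; it is $10$-dimensional, and $\phi$ contracts only an $8$-dimensional $\mathbb{P}^{1}$-bundle inside it (Lemma \ref{U-+}) --- codimension $16$, not $2$.

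Second, and more seriously, the heart of the theorem --- that $\psi$ is divisorial of relative Picard rank $1$ --- is asserted rather than proved. The Ext-asymmetry ($\ext^{1}(B,A)=18$ versus $\ext^{1}(A,B)\le 2$, Lemma \ref{ext1purple1}) does give the new $28$-dimensional component $\mathcal{M}'$ (Corollary \ref{Cor: M+}), but the divisor contracted by $\psi$ is the intersection $\widetilde{\mathcal{M}_{\sigma_{-}}(v)}\cap\mathcal{M}'$, and nothing in your dimension count shows this intersection has codimension one in $\widetilde{\mathcal{M}_{\sigma_{-}}(v)}$; a priori it could be much smaller, in which case $\psi$ would be small as well and the theorem would fail. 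Establishing this is precisely what occupies Section \ref{Exceptional locus} of the paper: surjectivity of $\delta\colon \mathrm{Ext}^{1}(B,A)\to \mathrm{Hom}(\mathrm{Ext}^{1}(A,B),\mathrm{Ext}^{2}(B,B))$ (Lemmas \ref{surj1} and \ref{surj2}), the identification of the singular locus $\mathcal{R}$ of $\mathcal{M}_{\sigma_{+}}(v)$ along $\mathcal{M}'$ (Proposition \ref{P}), and --- crucially --- explicit degenerations showing $\mathcal{R}$ actually lies in the intersection: generic objects of $\mathcal{R}$ are realized as limits of ideal sheaves of canonical curves via projection to a two-nodal plane quintic (Corollary \ref{secondcor}, Lemma \ref{Uinint}), and the deeper stratum via a further degeneration to a plane quartic union a thickened line (Lemma \ref{closure}). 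Your proposal names these as ``delicate steps'' but supplies no mechanism for any of them. Likewise, relative Picard rank $1$ of $\psi$ is not addressed at all: the paper must compute $N_{1}$ of the degenerate $14$-dimensional cone fibers with $\mathbb{P}^{9}$ vertex, extending {\cite[Example 2.8]{FL}}, before the contraction can be said to have relative Picard rank $1$.
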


Here, $v=(1,0,-6,15)$ is the Chern character of the ideal sheaf of a (2,3)-complete intersection curve in $\mathbb{P}^{3}$, $\mathcal{M}_{\sigma}(v)$ denotes the moduli space of $\sigma$-semistable objects with Chern character $v$, and $\sigma_{0}$ is a stability condition on the wall separating $\sigma_{-}$ and $\sigma_{+}$.

%This completely breaks down the overall view in this fundamentally important correspondence to applications of stability conditions to algebraic geometry.

A smooth non-hyperelliptic genus 4 curve $C$ embeds into $\mathbb{P}^{3}$ as a (2,3)-complete intersection curve. 
The question is how to compactify the space of such curves. Considering Bridgeland stability conditions on $\mathrm{D}^{b}(\mathbb{P}^{3})$ gives a good answer: depending on a choice of a stability condition $\sigma \in \Stab(\mathbb{P}^{3})$, we obtain $\mathcal{M}_\sigma(1, 0, -6, 15)$, the moduli space of $\sigma$-stable complexes $E$ with $\mathrm{ch}(E)=\mathrm{ch}(\mathcal{I}_{C})$.
Following a path along the space of stability conditions, we want to understand how $\mathcal{M}_\sigma(1, 0, -6, 15)$ changes. Moving towards the large volume limit, at the beginning of the path, we get a very efficient compactification, given by a $\mathbb{P}^{15}$-bundle over $\P^9$, parametrising some non-torsion free sheaves in addition to the ideal sheaves. The second wall-crossing is what we describe in this paper. The full geometry of canonical genus four curves is described in the sequel \cite{R2}.

Let $P$ be a plane in $\mathbb{P}^{3}$,   $Z_{2} \subset P$ be a zero dimensional subscheme of length $2$,  $L$ a line in $\mathbb{P}^{3}$, and $\iota_ {P}$ the inclusion map from $P$ to  $\mathbb{P}^{3}$. Let $\mathcal{W}=\langle \mathcal{I}_{L}(-1),\iota_{P_{*}}(\mathcal{I}_{Z_{2}})^{\vee}(-5)\rangle$ be the wall defined by strictly semistable objects with Jordan-H\"older factors $\mathcal{I}_{L}(-1)$ and $\iota_{P_{*}}(\mathcal{I}_{Z_{2}})^{\vee}(-5)$.

Let $\sigma_{0} \in \mathcal{W}$, and $\sigma_{-}$ and $\sigma_{+}$ be stability conditions on either sides of the wall such that $\mathcal{I}_{L}(-1)$ has bigger phase than $\iota_{P_{*}}(\mathcal{I}_{Z_{2}})^{\vee}(-5)$ with respect to $\sigma_{-}$. We will see that $\mathcal{M}_{\sigma_{-}}(v)$ is a blow-up of a $\mathbb{P}^{15}$-bundle over $|\mathcal{O}(2)|=\mathbb{P}^{9}$ at a smooth center. On the other hand, we will show that $\mathcal{M}_{\sigma_{+}}(v)$ is the union  $\mathcal{M}_{\sigma_{+}}(v)= \widetilde {\mathcal{M}_{\sigma_{-}}(v)} \cup \mathcal{M}'$,  where  $\widetilde {\mathcal{M}_{\sigma_{-}}(v)}$ is as in Theorem \ref{main Theorem}, and $\mathcal{M}'$ is  a $\mathbb{P}^{17}$-bundle over $\mathbb{G}r(2,4) \times$ (7-dimensional locus) which appears as a new component after crossing the wall (or becomes unstable at the wall).

\subsection{Bridgeland Wall-crossing  versus Mori wall-crossing}

Consider the following principles: 

(1) Every wall-crossing is birational (in the sense that it is a step in the Minimal Model Program), or inducing a Mori fibration.

(2) These birational transformations/Mori fibrations are induced by a continuous map from $\Stab(S)$ to the movable cone of $\mathcal{M}_{\sigma_{-}}(v)$. Its image includes every chamber of the movable cone, and thus every birational model isomorphic in codim 2 appears as a moduli space.

These principles hold for a number of  cases:  Moduli of sheaves on K3 surfaces  (\cite{BM}); we will briefly explain this case in Section  \ref{BMsection}. For the Hilbert scheme of points on $\mathbb{P}^{2}$, some results can be found in \cite{ABCH, CH14b, BMW14, LZ}. Arguments for  Enriques surface can be found in \cite{NY} and \cite{Beckmann}, for Hirzebruch and del Pezzo surfaces in \cite{BC13}, for smooth projective surfaces in \cite{BHLRSWZ16}, and for abelian surfaces in \cite{Y, YY14, MM13}. The relation between Bridgeland stability and MMP for general smooth projective surfaces is discussed in \cite{Toda14} and  \cite{Toda13}. A relation between the geometry of a variety and Bridgeland stability conditions on derived category of coherent sheaves of the variety for surfaces with rational curves of
negative self-intersection is investigated in \cite{TX}.

 In more detail in the ideal situation, these results state the following:
For a relevant surface $S$, there is a \textit{linearization map} from any chamber in $\Stab(S)$ to the  nef cone  of the moduli space for the chamber. Gluing these maps for all chambers, one can define a global linearization map from  $\Stab(S)$ to the movable cone which relates birational transformations of the moduli space to wall-crossing in the stability space (\cite{BM}).

In our situation, we can still give a partial interpretation like above as follows. Roughly speaking, we will see that there are varieties $\mathcal{N}'$ and $\mathcal{N}''$ birational to  $\mathcal{M}_{\sigma_{-}}(v)$, and a map from the stability space to the movable cone of those varieties, such that the image of the map is completely contained in the walls of the movable cone.

The first examples of wall-crossing for curves in $\mathbb{P}^{3}$ can be found in \cite{Sch, Xia} for twisted cubics, and in \cite{GHS} for elliptic quartics.

The main goal of the paper is to investigate the birational behaviour of hitting the wall $\mathcal{W}$. In particular, we study the exceptional loci in  $\mathcal{M}_{\sigma_{-}}(v)$ and $\mathcal{M}_{\sigma_{+}}(v)$, and also the intersection of the two components in $\mathcal{M}_{\sigma_{+}}(v)$. Regarding this, we will prove the following Theorem:
\begin{thm} [See Theorem \ref{intersec}]  \label{intN3}
The intersection $\widetilde {\mathcal{M}_{\sigma_{-}}(v)} \cap \mathcal{M}'$ is the exceptional divisor of the contraction map $\psi$. This exceptional locus contains an open subset  such that the restriction of $\psi$ to it is a $\mathbb{P}^{13}$-bundle over a 10-dimensional base. Over a 7-dimensional subset of the base, the fibers degenerate to a 14-dimensional cone with $\mathbb{P}^9$ as vertex over the 5-dimensional variety of rank one $2 \times 4$ matrices.
\end{thm}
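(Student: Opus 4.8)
The plan is to realise the exceptional divisor of $\psi$ as the wall-crossing locus sitting over the strictly semistable stratum of $\mathcal{M}_{\sigma_0}(v)$, and then to read off its fibration structure from the extension groups between the two Jordan--H\"older factors. Throughout write $A := \iota_{P_*}(\mathcal{I}_{Z_2})^\vee(-5)$ and $B := \mathcal{I}_L(-1)$, so that on the $\sigma_+$ side $A$ has the larger phase.

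First, I would set up the local geometry of the wall. Both components of $\mathcal{M}_{\sigma_+}(v)$ consist of $\sigma_+$-stable objects, and $\psi$ is the restriction to $\widetilde{\mathcal{M}_{\sigma_-}(v)}$ of the natural morphism $\mathcal{M}_{\sigma_+}(v) \to \mathcal{M}_{\sigma_0}(v)$ that sends a $\sigma_+$-stable object to the polystable representative of its $S$-equivalence class. This morphism is an isomorphism away from the strictly semistable stratum $\Sigma \subset \mathcal{M}_{\sigma_0}(v)$ of objects $S$-equivalent to $A \oplus B$. On the one hand, a point of $\widetilde{\mathcal{M}_{\sigma_-}(v)}$ lying over $\Sigma$ is strictly $\sigma_0$-semistable and therefore fits into a non-split triangle $0 \to B \to E \to A \to 0$; on the other hand, every object parametrised by the new component $\mathcal{M}'$ is of exactly this shape, so its image in $\mathcal{M}_{\sigma_0}(v)$ lands in $\Sigma$. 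Hence $\widetilde{\mathcal{M}_{\sigma_-}(v)} \cap \mathcal{M}' = \psi^{-1}(\Sigma)$ as sets, and that this is a divisor---rather than of smaller dimension---is forced by $\psi$ being a divisorial contraction of relative Picard rank $1$, as established in the proof of Theorem~\ref{main Theorem}.

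Next I would identify the fibers. Over a point of $\Sigma$ the fiber of $\mathcal{M}_{\sigma_+}(v) \to \mathcal{M}_{\sigma_0}(v)$ is $\mathbb{P}(\Ext^1(A, B))$---the $\mathbb{P}^{17}$-fiber of $\mathcal{M}'$---once one checks, via the resolutions of $A$ and $B$, that $\Hom(A,B) = \Hom(B,A) = 0$ and that every non-split extension is stable. The fiber of $\psi$ over the same point is the sublocus of this $\mathbb{P}^{17}$ cut out by the requirement that $E$ lie in the closure of the ideal-sheaf locus, i.e. deform to $\mathcal{I}_C$ for a $(2,3)$-complete intersection. The computational engine feeding this step is an explicit evaluation of $\Ext^\bullet(A,B)$---via a Koszul-type resolution of $\mathcal{I}_L(-1)$ and the structure sequence of the plane $P$ for $\iota_{P_*}(\mathcal{I}_{Z_2})^\vee(-5)$, assembled through the hypercohomology spectral sequence---which I expect to produce a canonical splitting $\Ext^1(A,B) \cong \mathbb{C}^{10} \oplus \mathrm{Mat}_{2 \times 4}$ together with a $2 \times 4$ matrix $M$, attached to the configuration $(L,P,Z_2)$, recording the relative position of the two factors. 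For generic $(L,P,Z_2)$ the matrix $M$ has rank $2$, the smoothing condition becomes four independent linear equations, and the fiber is a linear $\mathbb{P}^{13}$; letting the configuration vary over the $10$-dimensional center assembles these into the asserted $\mathbb{P}^{13}$-bundle.

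The crux---and the step I expect to be the main obstacle---is the degeneration over the $7$-dimensional sublocus where $M$ drops to rank $1$. Since the rank-$\le 1$ locus of $2 \times 4$ matrices has codimension $(2-1)(4-1)=3$, this sublocus is $7$-dimensional inside the $10$-dimensional center, matching the statement. There the four equations defining the generic fiber become dependent, and the smoothing condition degenerates so that the fiber enlarges: projectivising the decomposition $\Ext^1(A,B) \cong \mathbb{C}^{10} \oplus \mathrm{Mat}_{2\times 4}$, it becomes the join of the fixed $\mathbb{P}^9 = \mathbb{P}(\mathbb{C}^{10})$ with the Segre variety $\mathbb{P}^1 \times \mathbb{P}^3 \subset \mathbb{P}^7$ of rank-one $2 \times 4$ matrices, a cone of projective dimension $9 + 4 + 1 = 14$. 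The two delicate points are (i) to pin down $M$ intrinsically and show that its rank-one locus is scheme-theoretically the degeneration locus, so that a single matrix governs both the stratification of the base and the geometry of the fibers, and (ii) to prove that every point of the limiting cone remains $\sigma_+$-stable and lies in $\widetilde{\mathcal{M}_{\sigma_-}(v)}$, so that the degenerate fiber is the whole cone and not a proper subvariety.
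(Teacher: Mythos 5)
Your reduction of the intersection to the strictly semistable locus $\psi^{-1}(\Sigma)$, and your picture of the final answer (generic fibers $\mathbb{P}^{13}$, degenerate fibers the join of a $\mathbb{P}^9$ with the Segre variety of rank-one $2\times 4$ matrices), are consistent with the paper; but the argument has two genuine gaps. The first is a circularity: you justify that the intersection is a divisor by appealing to $\psi$ being a divisorial contraction of relative Picard rank $1$ ``as established in the proof of Theorem \ref{main Theorem}''. In the paper the logical order is the reverse: the fact that $\psi$ is divisorial (Corollary \ref{Cor: smalldivisorial}) is \emph{deduced from} Theorem \ref{intersec}, precisely because that theorem exhibits the intersection as a $23$-dimensional locus inside the $24$-dimensional component $\widetilde{\mathcal{M}_{\sigma_-}(v)}$, and Theorem \ref{finally!} in turn cites that corollary. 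So the divisor property must fall out of the explicit fiber description; it cannot be used as an input.

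The second, more serious gap is in the fiber description itself: you assert that the ``smoothing condition'' cutting the fiber of $\psi$ out of $\mathbb{P}^{17}$ is four independent linear equations (respectively a determinantal condition), but this conflates two inclusions that need separate, and very different, proofs. The deformation-theoretic half, which your conjectural splitting $\C^{18}\cong\C^{10}\oplus\mathrm{Mat}_{2\times 4}$ gestures at, only bounds the intersection from above: since $\mathcal{M}'$ is smooth of dimension $28$, the intersection lies in the singular locus $\mathcal{R}$, which Lemma \ref{ext1E} together with the surjectivity of $\delta\colon\Ext^1(B,A)\to\Hom(\Ext^1(A,B),\Ext^2(B,B))$ (Lemmas \ref{surj1} and \ref{surj2}, themselves nontrivial and equivalent to your ``independence'' claim) identifies with the locus where $\delta(\vartheta)$ drops rank. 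The converse inclusion --- that every point of $\mathcal{R}$ really is a limit of ideal sheaves of $(2,3)$-complete intersections, so the fibers are the \emph{full} $\mathbb{P}^{13}$'s and full cones rather than proper subvarieties --- is exactly the step you flag as ``the main obstacle'' and never address, and it occupies most of Section \ref{Exceptional locus}: one shows a generic $E\in\mathcal{R}$ has $\HH^0(E)=\mathcal{I}_{C_5\cup L}$ with $C_5$ a plane quintic nodal along $Z_2$ (Lemma \ref{{H}^{i}(E)}), smooths such objects by realizing $C_5$ as the projection of a canonical genus-$4$ curve and building an explicit family over $\mathbb{A}^1$ (Corollary \ref{secondcor}, Lemma \ref{Uinint}), and then reaches the cone fibers by a further explicit ideal-theoretic degeneration to a plane quartic union a thickened line (Lemma \ref{closure}). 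Finally, your matrix $M$ inverts the paper's bookkeeping: the base is stratified by configuration type, equivalently $\dim\Ext^1(A,B)\in\{0,1,2\}$ in the paper's labelling $A=\mathcal{I}_L(-1)$, $B=\iota_{P_*}(\mathcal{I}_{Z_2})^{\vee}(-5)$ (Lemma \ref{ext1purple1}); the rank-one $2\times 4$ matrices live in the \emph{fiber} direction as images $\delta(\vartheta)$, and the $7$-dimensional stratum is the configuration locus $Z_2\subset L\subset P$, whose codimension $3$ is a fact about configurations, not a consequence of the determinantal stratification you posit.
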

\noindent\textbf{Strategy of the proof.} To prove Theorem \ref{main Theorem} and make a birational description of our wall, we need Theorem \ref{intN3} which basically describes the exceptional locus of the birational map associated with the wall. We observe that the morphism $ \widetilde{\psi}\colon \mathcal{M}_{\sigma_+}(v) \rightarrow \mathcal{M}_{\sigma_0}(v)$ identifying S-equivalent objects with respect to stability conditions on the wall, contracts the new component $\mathcal{M}'$ as a $\mathbb{P}^{17}$-bundle. Therefore, in order to understand $\psi=\widetilde{\psi}|_{\widetilde{\mathcal{M}_{\sigma_-}(v)}}$, we first need to understand the intersection of each $\mathbb{P}^{17}$ with $\widetilde{\mathcal{M}_{\sigma_-}(v)}$. There are a couple of steps to reach this goal. First, after some heavy Ext computations, we determine the singular locus of $ \mathcal{M}_{\sigma_+}(v)$ along $\mathcal{M}'$ (which will eventually be the intersection of the two components).
Secondly, we will explicitly construct enough degenerations of objects in $\mathcal{M}' \cap \widetilde{\mathcal{M}_{\sigma_-}(v)}$ to objects in $\mathcal{M}'$  to recover the 14-dimensional cone. %In order to do so, we project the (2,3) complete intersection curves in the intersection to plane quintics with two nodes.We note that the normalization of a quintic with two nodes is a genus four curve.
The moduli space $\MM'$ contains stable pairs whose underlying curve is the union of a plane quintic with a line intersecting this quintic, along with two marked points on the quintic. We show that such stable pairs arise as the degeneration of the ideal sheaf of (2,3)-complete intersection curves if and only if the quintic has two nodes that are colinear with the intersection point with the line, and if the two marked points are the nodes. In this case, the plane quintic arises as the projection of a (2,3)-complete intersection curve in $\P^3$ from the intersection point with the line. The next step is to degenerate the union to a plane quartic union a thick line passing through those nodes, via Lemma \ref{closure}.

%and we show that it is non-hyperelliptic as well. The crucial condition is that the nodes must be colinear with the intersection point of the line with the plane to be able to directly construct the deformations of the quintic with a line as a generic element in the singularity locus. The plane quintic union line arises as degeneration of the canonical genus four curves. 

After all, we are able to analyse our wall-crossing in Section \ref{Birational morphism corresponding to the wall-crossing} and  prove our main Theorem (see Theorem \ref{finally!}). We explain the birational situation and the relation between wall-crossing in the stability manifold and in the movable cone of some intermediate spaces birational to our component.
%\textbf{Plan of the paper.} In Section \ref{background}, we  give a brief background on Bridgeland stability conditions and the linearization Bayer-Macr\`{i} map which plays an important role in this paper. In Section \ref{Walls and chambers}, we  describe all the walls and chambers we need here. In Section \ref{Exceptional locus}, the exceptional locus of our main wall-crossing is studied, and finally Section \ref{Birational morphism corresponding to the wall-crossing} is devoted to more precisely describe what happens birationally when we cross the wall in the stability space. 
$$$$
\textbf{Acknowledgements.} First and foremost, I would like to thank Arend Bayer for suggesting the problem, continued support, enormous generosity of time and invaluable comments on preliminary versions. Special thanks to Benjamin Schmidt for the great suggestion to work on the stable pairs side, and helpful discussions. This work benefited from useful discussions with Antony Maciocia and Diletta Martinelli. I am grateful for comments by Aaron Bertram, Ivan Cheltsov, Daniel Huybrechts, Dominic Joyce, Emanuele Macr\`{i},  Balázs Szendröi, Richard Thomas, Yukinobu Toda, Bingyu Xia and Ziquan Zhuang. The author was supported by PCDS scholarship, the school of mathematics of the university of Edinburgh scholarship, ERC Starting grant WallXBirGeom, no. 337039, and ERC Consolidator grant WallCrossAG, no. 819864. Part of the work was written when the author was visiting the mathematical institute of the university of Bonn, and she would like to thank them for their hospitality. This material is partially based upon work supported by the NSF under Grant No. DMS-1440140 while the author was in residence at the Mathematical Sciences Research Institute in Berkeley, California, during the spring 2019 semester. The author was also partially supported by EPSRC grant EP/T015896/1,  during final edits. The pictures were generated with GeoGebra, and some computations were checked using Macaulay2 (\cite{GS}).
%The pictures were made by using GeoGebra, and some computations were checked by using Macaulay2. 
$$$$
\textbf{Notation.} 
\begin{center}
   \begin{tabular}{ r l }
     $\mathcal{M}_{\sigma_{+}}(v)$: & moduli space of $\sigma_{+}$-stable objects with respect to $v$\\
      $\mathcal{M}_{\sigma_{-}}(v)$: & moduli space of $\sigma_{-}$-stable objects with respect to $v$\\
       $\mathcal{M}'$: & the new component in $\mathcal{M}_{\sigma_{+}}(v)$ after crossing the wall $\mathcal{W}$\\
       $\widetilde {\mathcal{M}_{\sigma_{-}}(v)}$: & the component birational to $\mathcal{M}_{\sigma_{-}}(v)$ in $\mathcal{M}_{\sigma_{+}}(v)$\\
       $\langle A,B \rangle$: & the wall  describes strictly semistable objects with Jordan-H\"older factors $A$\\ & and  $B$\\
      $\Stab(\mathbb{P}^{3})$: & the space of Bridgeland stability conditions in $\mathbb{P}^{3}$  with  respect to\\& $v=(1,0,-6,15)$\\
      $\Stab^{tilt}(\mathbb{P}^{3})$: & the space of tilt-stability conditions in $\mathbb{P}^{3}$  with respect to $v=(1,0,-6,15)$\\
       $\lambda_{\alpha, \beta, s}$: & Bridgeland stability conditions\\
      $\nu_{\alpha, \beta}$: & tilt-stability conditions\\
      $\H$: & (the left branch of) the hyperbola defined by 
      $\Imm(Z_{\alpha, \beta, s}(1,0,-6,15))=0$\\
      $N_k(X)$: & numerical group of k-cycles on a scheme $X$\\
      $\HH^{i}$: & the $i-th$ cohomology object in the corresponding heart\\
     $\mathrm{H}^{i}$: & the $i-th$ sheaf cohomology  group\\
      $\otimes$: & derived tensor (unless otherwise is explicitly stated)
   \end{tabular}
\end{center}
$$$$
\textbf{Conventions.} When there is no confusion, the subobject and the quotient of the defining short exact sequence of any wall will be denoted by $A$ and $B$, respectively. Notice that we cross the walls towards the large volume limit. 

Also, notice that as we will see, $\MM_1$ will be a projective bundle, and hence $\mathcal{M}_{\sigma_{-}}(v)$ as its blow up at a locus, will be a smooth reduced moduli space. We consider $\mathcal{M}_{\sigma_{+}}(v)$ as an algebraic space given by the reduced part of the moduli space defined by the union $ \widetilde {\mathcal{M}_{\sigma_{-}}(v)} \cup \mathcal{M}'$.

\section{Bridgeland stability  on $\mathbb{P}^{3}$ and Bayer-Macr\`{i} linearization map} \label{background}
In this section, we briefly review Bridgeland stability conditions on $\mathbb{P}^{3}$, and the Bayer-Macr\`{i} map for K3 surfaces in order to compare it with our case for $\mathbb{P}^{3}$.

\subsection{Bridgeland stability conditions on $\mathbb{P}^{3}$}

In this subsection, we briefly define stability conditions on the bounded derived category $\mathrm{D}^b(\mathbb{P}^{3})$, of coherent sheaves on $\mathbb{P}^{3}$, following the construction in ~\cite{BMT14:stability_threefolds}. Let $\mathrm{Coh}(\mathbb{P}^{3})$ be the abelian category of coherent sheaves (as an initial heart of a bounded t-structure with usual notion of torsion pair) on  $\mathbb{P}^{3}$. Let $\alpha>0, \beta \in \mathbb{R}$ be two real numbers. For $E \in \mathrm{D}^{b}(\mathbb{P}^{3})$, we define the \textit{twisted slope function} by $\mu_{\beta}(E):=  \frac{c_{1}(E)-\beta c_{0}(E)}{c_{0}(E)}$  if $c_{0}(E) \neq 0$, and $\mu_{\beta}(E)=+ \infty$ otherwise. We define the twisted Chern characters $\mathrm{ch}^{\beta}(E)= e^{-\beta H}.\mathrm{ch}(E)$, where $\mathrm{H}$ denotes the hyperplane class.

\begin{defn}
        By \textit{tilting}, one can define a new heart of a bounded t-structure as follows: the torsion pair is defined by 
       \begin{equation*}
\begin{aligned}\label{(2.1)}
\mathcal{T}_{\beta}:= \text{{\{$E \in \mathrm{Coh}(\mathbb{P}^{3})\colon \mu_{\beta}(G)>0$  for all $E  \twoheadrightarrow G $}\}},   \\
\mathcal{F}_{\beta}:= \text{{\{$E \in \mathrm{Coh}(\mathbb{P}^{3})\colon \mu_{\beta}(F) \leq 0$ for all $F  \hookrightarrow E$}\} }.
\end{aligned}
\end{equation*}
The new \textit{heart of a bounded t-structure} can be defined as $\mathrm{Coh}^{\beta}(\mathbb{P}^{3}):= \langle \mathcal{F}_{\beta}[1], \mathcal{T}_{\beta} \rangle$.

 The \textit{central charge} and the corresponding \textit{slope function} for the new heart can be defined as

\begin{equation*}
Z^{tilt}_{\alpha, \beta}:=-(\mathrm{ch}_{2}-\beta \mathrm{ch}_{1}+(\beta^{2}/2)\mathrm{ch}_{0})+(\alpha^{2}/2)\mathrm{ch}_{0}+i(\mathrm{ch}_{1}-\beta \mathrm{ch}_{0})=-(\mathrm{ch}^{\beta}_{2})+(\alpha^{2}/2)\mathrm{ch}^{\beta}_{0}+i(\mathrm{ch}^{\beta}_{1}), 
\end{equation*}
\hfill \break
and (using the twisted notation)
\begin{equation*}
\nu_{\alpha, \beta}:=-\frac{\mathrm{Re}(Z^{tilt}_{\alpha, \beta})}{\mathrm{Im}(Z^{tilt}_{\alpha, \beta})}=\frac{\mathrm{ch}^{\beta}_{2}-(\alpha^{2}/2)\mathrm{ch}^{\beta}_{0}}{\mathrm{ch}^{\beta}_{1}},
\end{equation*}

with $\nu_{\alpha, \beta}(E)=+\infty$ if $\mathrm{ch}^{\beta}_{1}(E)=0$. The pair $(\mathrm{Coh}^{\beta}(\mathbb{P}^{3}),Z^{tilt}_{\alpha, \beta})$ is called \textit{tilt-stability}.
\end{defn}

We denote by $\Stab^{tilt}(\mathbb{P}^{3})$, the space of all tilt-stability conditions. It was conjectured in ~\cite{BMT14:stability_threefolds} for arbitrary three-folds, and proved in by Macr\`{i} in \cite{M} for $\mathbb{P}^{3}$ that tilting again gives a Bridgeland stability condition. We will define this briefly.

\begin{defn}
We define a \textit{torsion pair} similarly as for the tilting case:

 \begin{equation*}
\begin{aligned}\label{(2.2)}
\mathcal{T}_{\alpha,\beta}:=\text{{\{$E \in \mathrm{Coh}^{\beta}(\mathbb{P}^{3}): \nu_{\alpha, \beta}(G)>0$ for all $E  \twoheadrightarrow G$ }\}},   \\
\mathcal{F}_{\alpha,\beta}:=\text{{\{$E \in \mathrm{Coh}^{\beta}(\mathbb{P}^{3}): \nu_{\alpha, \beta}(F) \leq 0$ for all $F  \hookrightarrow E$}\} }.
\end{aligned}
\end{equation*}

Now define a new \textit{heart, central charge, and slope} respectively as follows:

\begin{equation*}\mathrm{Coh}^{ \alpha, \beta}(\mathbb{P}^{3}) :=\langle \mathcal{F}_{\alpha,\beta}[1], \mathcal{T}_{\alpha,\beta}\rangle,
\end{equation*}

\begin{equation*}
Z_{\alpha,\beta,s}:=-\mathrm{ch}^{\beta}_{3}+(s+1/6)\alpha^{2}\mathrm{ch}^{\beta}_{1}+i(\mathrm{ch}^{\beta}_{2}-\alpha^{2}/2\mathrm{ch}^{\beta}_{0}),
\end{equation*}
and (for $s>0$)

\begin{equation*}
\lambda_{\alpha, \beta, s}:= -\frac{\mathrm{R}e(Z_{\alpha,\beta,s})}{\mathrm{I}m(Z_{\alpha,\beta,s})}.
\end{equation*}
with $\lambda_{\alpha, \beta, s}(E)=+\infty$ if $\mathrm{Im}(Z_{\alpha,\beta,s})(E)=0$.
\end{defn}

The pair $\sigma_{\alpha, \beta, s}=(\mathrm{Coh}^{\alpha, \beta}(\mathbb{P}^{3}), Z_{\alpha,\beta,s})$ (when exists) is called \textit{Bridgeland stability}.
   
   Before going further, we have a formal definition of a wall and chamber:
   
   \begin{defn} [{\cite {MS2}, \cite{MS3}}]
   
  A \textit{numerical wall} in Bridgeland stability with respect to
a class $v \in \Lambda$ is a non trivial proper subset of the stability space which is defined as

$$\mathcal{W}_{v,v'}=\{\sigma \in Stab (\mathbb{P}^3): \lambda_{\alpha, \beta, s}(v)=\lambda_{\alpha, \beta, s}(v'),  \text{  for any $v' \in \Lambda$}\}.$$
(We omit $v,v'$ in the notation, when it's clear from the text.)

 An \textit{actual wall} is a subset $\mathcal{W}'$ of a numerical wall if the set of semistable objects with
class $v$ changes at $\mathcal{W}'$ (we can give a similar definition for tilt-stability).

A \textit{chamber} is defined as a connected component of the complement of the set of actual walls.

   \end{defn}
   
The main step to show that $(\mathrm{Coh}^{\alpha, \beta}(\mathbb{P}^{3}), Z_{\alpha,\beta,s})$ defines a Bridgeland stability condition (for all $s>0$) is a Bogomolov-type inequality, which we will refer to it as \textit{BMT inequality} (Theorem \ref{BMT14:stability_threefolds}). Before stating that, we have the classical Bogomolov-Gieseker inequality:
\begin{thm}  [{\cite[Corollary 7.3.2]{BMT14:stability_threefolds}}] \label{BG}
 Any $\nu_{\alpha, \beta}$-semistable object $E \in \mathrm{Coh}^{\beta}(\mathbb{P}^{3})$ satisfies
 $$2\mathrm{ch}_{0}(E)\mathrm{ch}_{2}(E) \leq \mathrm{ch}^{2}_{1}(E).$$
\end{thm}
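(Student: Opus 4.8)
The plan is to reduce this statement about tilt-semistable complexes to the classical Bogomolov inequality for slope-semistable torsion-free sheaves, which I will take as the base input. Write $\Delta(E) := \mathrm{ch}_1(E)^2 - 2\,\mathrm{ch}_0(E)\,\mathrm{ch}_2(E)$ for the discriminant; the goal is $\Delta(E) \geq 0$. The first observation is that $\Delta$ depends only on $\mathrm{ch}_{\leq 2}$ and is invariant under the twist $E \mapsto \mathrm{ch}^{\beta}(E)$ (tensoring by a line bundle), so that $\Delta(E) = (\mathrm{ch}_1^{\beta})^2 - 2\,\mathrm{ch}_0^{\beta}\,\mathrm{ch}_2^{\beta}$ and in particular the inequality is unchanged if we shift $\beta$. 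Since $\Delta$ involves only $\mathrm{ch}_0,\mathrm{ch}_1,\mathrm{ch}_2$, it may be computed on a general hyperplane section $P \cong \mathbb{P}^2$, where the classical Bogomolov inequality for $\mu$-semistable sheaves is available; the Mehta--Ramanathan restriction theorem guarantees that a $\mu_{\beta}$-semistable sheaf restricts to a $\mu$-semistable sheaf on a general such $P$.

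Next I would set up the reduction from tilt-semistability to slope-semistability. An object $E \in \mathrm{Coh}^{\beta}(\mathbb{P}^3)$ sits in a short exact sequence $0 \to \mathcal{H}^{-1}(E)[1] \to E \to \mathcal{H}^0(E) \to 0$ in the tilted heart, where the cohomology is taken with respect to the standard heart $\mathrm{Coh}(\mathbb{P}^3)$, with $A := \mathcal{H}^{-1}(E) \in \mathcal{F}_{\beta}$ and $B := \mathcal{H}^0(E) \in \mathcal{T}_{\beta}$, and both $A$ and $B$ admit Harder--Narasimhan filtrations into $\mu_{\beta}$-semistable sheaves. Since $\mathrm{ch}(E) = \mathrm{ch}(B) - \mathrm{ch}(A)$, a short linear-algebra computation gives $\Delta(E) = \Delta(A) + \Delta(B) - 2\,\mathrm{ch}_1(A)\mathrm{ch}_1(B) + 2\,\mathrm{ch}_0(B)\mathrm{ch}_2(A) + 2\,\mathrm{ch}_0(A)\mathrm{ch}_2(B)$; as $\Delta(A),\Delta(B) \geq 0$ already by the classical inequality applied to the semistable factors, the whole problem reduces to controlling the sign of the cross-term built from $\mathrm{ch}_{\leq 2}(A)$ and $\mathrm{ch}_{\leq 2}(B)$ using $\nu_{\alpha,\beta}$-semistability.

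The core of the argument is then a continuity/wall-crossing argument in the $(\alpha,\beta)$-upper half plane. I would argue by contradiction: suppose $\Delta(E) < 0$ for some $\nu_{\alpha,\beta}$-semistable $E$. Comparing $\Delta(E)$ with the discriminant of any tilt-subobject via the quadratic form underlying $Z^{tilt}_{\alpha,\beta}$ (a Hodge-index-type inequality), the set of $(\alpha,\beta)$ for which an object of fixed class with $\Delta < 0$ can remain semistable is a bounded region cut out by finitely many numerical walls, which are semicircles and vertical lines. Pushing $(\alpha,\beta)$ to the boundary of the chamber containing the given point -- either to an actual wall, or toward the large-volume limit $\alpha \to \infty$, where $\nu_{\alpha,\beta}$-semistability of a sheaf forces $\mu_{\beta}$-semistability -- produces a genuine destabilizing sub- or quotient-sheaf that is $\mu_{\beta}$-semistable. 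Applying the classical Bogomolov inequality to this factor and to its complement, and feeding the result back into the decomposition of $\Delta(E)$ above, contradicts $\Delta(E) < 0$.

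I expect the main obstacle to be this last step: making precise that an object with negative discriminant can only be tilt-semistable inside a bounded region, and that at the boundary a slope-semistable destabilizing sheaf genuinely appears. This requires the structure theory of numerical walls together with the support-type Hodge-index inequality for the central charge $Z^{tilt}_{\alpha,\beta}$, and careful bookkeeping of the cross-term so that the classical inequality on the factors propagates to $E$. By contrast, the twist-invariance and hyperplane-restriction reductions in the first paragraph are routine; the genuine content lies in the limiting argument that reduces tilt-stability to Mumford--Takemoto stability.
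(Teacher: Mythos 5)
First, a remark on the comparison itself: the paper does not prove Theorem \ref{BG} at all --- it imports the statement verbatim as \cite[Corollary 7.3.2]{BMT14:stability_threefolds}, so there is no internal argument to measure yours against; you are in effect reconstructing the proof from that reference. At the level of strategy (classical Bogomolov for $\mu_\beta$-semistable sheaves as base input, the large-volume limit $\alpha \to \infty$, and a wall-crossing argument in the $(\alpha,\beta)$-plane) your outline does shadow the real one, but two steps are genuinely broken. The first is the claim that $\Delta(A), \Delta(B) \geq 0$ ``already by the classical inequality applied to the semistable factors''. This is false: the discriminant is not superadditive over Harder--Narasimhan filtrations with distinct slopes. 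For classes of ranks $r_1, r_2 > 0$ and slopes $\mu_1 \neq \mu_2$ one has $\Delta(v_1 + v_2) = \bigl(1 + \tfrac{r_2}{r_1}\bigr)\Delta(v_1) + \bigl(1 + \tfrac{r_1}{r_2}\bigr)\Delta(v_2) - r_1 r_2 (\mu_1 - \mu_2)^2$, so for instance $\mathcal{O} \oplus \mathcal{O}(1)$ has both factors semistable with $\Delta = 0$, yet $\Delta = 1 - 2\cdot 2\cdot \tfrac{1}{2} = -1 < 0$. Consequently your ``reduction to the cross-term'' starts from a false premise: the individual terms $\Delta(\mathcal{H}^{-1}(E))$ and $\Delta(\mathcal{H}^{0}(E))$ can themselves be negative, and the decomposition of $E$ into its cohomology sheaves (whose tilt slopes differ in general) is the wrong one to run a positivity argument through.

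The second gap is in the limiting argument. When the path in the $(\alpha,\beta)$-plane hits an actual wall, the destabilizing sub- and quotient objects are tilt-semistable objects of the \emph{same tilt slope} inside the tilted heart $\mathrm{Coh}^{\beta}(\mathbb{P}^3)$; they are not $\mu_\beta$-semistable sheaves, so the classical Bogomolov inequality cannot be applied to them directly --- genuine sheaves only appear in the large-volume limit. The cited proof repairs this with an induction on a discrete invariant (e.g.\ $H^2\mathrm{ch}_1^{\beta}$ at rational $\beta$, which strictly drops for wall factors), combined with the one lemma that makes the bookkeeping close: the quadratic form $\Delta$ is negative semidefinite on $\ker Z^{tilt}_{\alpha,\beta}$, hence for classes $v, w$ with aligned central charges \emph{and} $\Delta(v), \Delta(w) \geq 0$ (the inductive hypothesis) the cross-term is nonnegative, giving $\Delta(E) \geq \sum_i \Delta(E_i) \geq 0$ over the Jordan--H\"older factors at the wall. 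You name exactly this ingredient (your ``Hodge-index-type inequality'') as the main obstacle and leave it unproved; but it is not a technical footnote --- together with the induction scheme it \emph{is} the proof, and without it neither your second-paragraph decomposition nor your final contradiction closes.
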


\begin{thm}
 [{\cite[Lemma 8.8]{BMS}, \cite[Theorem 1.1]{M}}] \label{BMT14:stability_threefolds}
 
 Any $\nu_{\alpha, \beta}$-semistable object $E \in \mathrm{Coh}^{\beta}(\mathbb{P}^{3})$ satisfies
$$\alpha^{2} [(\mathrm{ch}^{\beta}_{1} (E))^{2}  − 2(\mathrm{ch}^{\beta}_{0} (E) (\mathrm{ch}^{\beta}_{2} (E) )] + 4(\mathrm{ch}^{\beta}_{2} (E))^{2} − 6(\mathrm{ch}^{\beta}_{1} (E) ) \mathrm{ch}^{\beta}_{3} (E) ≥ 0,$$
 and therefore  $(\mathrm{Coh}^{\alpha, \beta}(X), Z_{\alpha,\beta,s})$ is a Bridgeland stability condition for all $s \geq 0$. The support
property is also satisfied.
\end{thm}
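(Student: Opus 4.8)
The statement has two logically independent halves: the generalized Bogomolov--Gieseker (GBG) inequality itself, and the formal implication that its validity produces a Bridgeland stability condition enjoying the support property. I would separate them and dispose of the formal half first. Granting the GBG inequality for every $\nu_{\alpha,\beta}$-semistable object, one tilts $\mathrm{Coh}^\beta(\mathbb{P}^3)$ a second time along $(\mathcal{T}_{\alpha,\beta},\mathcal{F}_{\alpha,\beta})$ to obtain $\mathrm{Coh}^{\alpha,\beta}(\mathbb{P}^3)$, and must verify three things: that $Z_{\alpha,\beta,s}$ sends nonzero objects of this heart into the upper half-plane together with the negative real axis, that Harder--Narasimhan filtrations exist, and that the support property holds. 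The first and third points are where the inequality enters: one repackages the GBG expression as a quadratic form $Q$ on the numerical lattice and checks, by a Hodge-index-type manipulation, that $Q$ is negative definite on $\ker Z_{\alpha,\beta,s}$ while $Q(\mathrm{ch}^\beta E)\ge 0$ on the semistable objects; this is exactly the content of \cite{BMS}. HN existence is standard from Noetherianity of the heart and local discreteness of the image of $Z_{\alpha,\beta,s}$.

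The substance is the GBG inequality, which I would attack following Macr\`i. Write $\overline\Delta(E)=(\mathrm{ch}_1^\beta)^2-2\mathrm{ch}_0^\beta\mathrm{ch}_2^\beta$, a $\beta$-independent quantity that is nonnegative on tilt-semistable objects by the classical inequality (Theorem \ref{BG}). The first moves are the symmetries preserving both tilt-stability and the inequality: tensoring by $\mathcal{O}(k)$, which translates $\beta\mapsto\beta+k$, and the derived dual $\mathbb{R}\mathcal{H}om(-,\mathcal{O}_{\mathbb{P}^3})$, which reflects $\beta\mapsto-\beta$; together these confine the problem to a bounded $\beta$-range. Next, on the locus $\nu_{\alpha,\beta}(E)=0$, where $\mathrm{ch}_2^\beta=\tfrac{\alpha^2}{2}\mathrm{ch}_0^\beta$, the left-hand side of the GBG inequality collapses to $\alpha^2(\mathrm{ch}_1^\beta)^2-6\,\mathrm{ch}_1^\beta\,\mathrm{ch}_3^\beta$, so the inequality there is equivalent to the single bound $\mathrm{ch}_3^\beta(E)\le\tfrac{\alpha^2}{6}\mathrm{ch}_1^\beta(E)$. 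A deformation in the $(\alpha,\beta)$ half-plane---sliding along the relevant numerical wall while $E$ stays tilt-stable until one meets either $\nu_{\alpha,\beta}=0$ or an actual wall---reduces the general case to this boundary bound, the wall-crossings being absorbed by induction on $\overline\Delta$, since Jordan--H\"older factors have strictly smaller discriminant.

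The crux, and the only genuinely three-dimensional step, is proving $\mathrm{ch}_3^\beta(E)\le\tfrac{\alpha^2}{6}\mathrm{ch}_1^\beta(E)$ for a tilt-stable $E$ on $\nu_{\alpha,\beta}=0$. Here I would invoke the structure of $\mathrm{D}^b(\mathbb{P}^3)$ through the strong exceptional collection $(\mathcal{O},\mathcal{O}(1),\mathcal{O}(2),\mathcal{O}(3))$ and the associated Beilinson monad: in an appropriate algebraic chamber the double-tilted heart is equivalent to a category of quiver representations assembled from the $\mathcal{O}(i)$, and tilt-stable objects with $\nu_{\alpha,\beta}=0$ are identified with such representations, for which $\mathrm{ch}_3$ is explicitly computable and the extremal bound can be checked on the finitely many boundary cases. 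The base of the induction is the small-discriminant regime, where the building blocks are twists of line bundles for which the bound is immediate, and the inductive step returns to the wall-crossing of the previous paragraph. I expect the main obstacle to be exactly this control of $\mathrm{ch}_3$, the Chern number invisible to the surface-level inequality of Theorem \ref{BG}: it is what forces genuinely three-dimensional input and is the reason the analogous statement remains conjectural for general threefolds. The most delicate bookkeeping will be keeping the $(\alpha,\beta)$-deformation inside the region where the quiver description and the tilt-stability of $E$ hold simultaneously.
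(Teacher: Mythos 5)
First, a point of reference: the paper does not prove this statement at all --- it is quoted from the literature, the proof being Macr\`i's \cite[Theorem 1.1]{M} as packaged in \cite[Lemma 8.8]{BMS}. So your proposal has to be measured against that proof. Your skeleton matches its architecture closely: splitting off the formal half (the quadratic-form/support-property machinery that turns the inequality into a stability condition, with HN filtrations from noetherianity and discreteness), the symmetries given by $-\otimes\mathcal{O}(k)$ and derived duality, and above all the reduction of the inequality to the single boundary bound $\mathrm{ch}^{\beta}_{3}(E)\le\frac{\alpha^{2}}{6}\mathrm{ch}^{\beta}_{1}(E)$ for tilt-stable $E$ on the locus $\nu_{\alpha,\beta}(E)=0$ (your collapse computation is correct), with wall-crossings absorbed by induction on the discriminant. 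This is precisely the structure of \cite{BMT14:stability_threefolds} and \cite{BMS}.

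The gap is in the crux --- the proof of that boundary bound --- and it is exactly where you and Macr\`i part ways. Macr\`i's argument uses no exceptional collections: it rests on the tilt-stability of all line bundles $\mathcal{O}(k)$ and their shifts on $\mathbb{P}^{3}$, slope-comparison Hom-vanishings between these and $E$, Serre duality, and Riemann--Roch, which converts the resulting sign constraints on Euler characteristics $\chi(\mathcal{O}(k),E)$ into the bound on $\mathrm{ch}^{\beta}_{3}$. Your proposed substitute --- identify $E$ with a Beilinson-quiver representation in an ``appropriate algebraic chamber'' and check ``finitely many boundary cases'' --- does not work as stated, for three reasons. First, the double-tilted heart $\mathrm{Coh}^{\alpha,\beta}(\mathbb{P}^{3})$ agrees with (a finite tilt of) the quiver heart attached to $(\mathcal{O},\ldots,\mathcal{O}(3))$ only in bounded regions of the $(\alpha,\beta)$-plane, whereas the hyperbola $\nu_{\alpha,\beta}(E)=0$, along which $E$ must remain tilt-stable, need not meet any such region; this is not ``delicate bookkeeping'' but the failure point of the approach, since the bound must be established at every such point. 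Second, every object of $\mathrm{D}^{b}(\mathbb{P}^{3})$ admits a Beilinson monad, so a monad presentation by itself carries no stability information and cannot bound $\mathrm{ch}_{3}$; one still needs a cohomological vanishing forced by stability, which is exactly the Hom-vanishing/Serre-duality/Riemann--Roch input above. Third, tilt-stable objects with $\nu_{\alpha,\beta}=0$ range over infinitely many numerical classes, and you offer no finiteness statement that would reduce the verification to finitely many cases.
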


The support property implies the manifold of all (Bridgeland) stability conditions $\Stab(\mathbb{P}^{3})$ admits a chamber decomposition, depending on $v$, such that
(i) for a chamber $C$, the moduli space $\mathcal{M}_{\sigma}(v) = \mathcal{M}_{C}(v) $ is independent of the choice of
$\sigma \in C$, and
(ii) walls consist of stability conditions with strictly semistable objects of class $v$ (\cite{BM}).

It turns out that there is a well-behaved wall-chamber structure in $\Stab^{tilt}(\mathbb{P}^{3})$ (as well as $Stab(\mathbb{P}^{3})$). The last part of the following Theorem was proved for surfaces in \cite{MacA}:
\begin{thm}[{\cite{BMS}}] \label{WC}

The function $\mathbb{R}_{> 0} \times \mathbb{R} \rightarrow \Stab^{tilt}(\mathbb{P}^{3})$ defined
by $(\alpha, \beta) \rightarrow (\mathrm{Coh}^{\beta}(X), Z_{\alpha,\beta})$ is continuous. Moreover, walls with respect to a class $v$ in the image of this map are locally finite. In addition, the walls in the tilt-stability space are either nested semicircles or vertical lines.
\end{thm}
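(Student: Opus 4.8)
The plan is to prove the three assertions --- continuity of the family, the semicircle/line shape of the numerical walls, and local finiteness --- separately, since they draw on rather different inputs. I would begin with continuity. By Theorem \ref{BMT14:stability_threefolds} every pair $(\alpha,\beta)$ with $\alpha>0$ genuinely defines a point of $\Stab^{tilt}(\mathbb{P}^{3})$, and this space carries Bridgeland's topology, under which the forgetful map $\sigma\mapsto Z_{\sigma}$ to $\Hom(\Lambda,\C)$ is a local homeomorphism onto its image. It therefore suffices to observe that $(\alpha,\beta)\mapsto Z^{tilt}_{\alpha,\beta}$ is continuous (indeed real-analytic) as a map into $\Hom(\Lambda,\C)$, which is immediate from the explicit formula, where $\mathrm{Re}$ and $\mathrm{Im}$ are polynomials in $\beta$ and $\alpha^{2}$ with the Chern numbers as coefficients. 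Combined with the local validity of the support property (also part of Theorem \ref{BMT14:stability_threefolds}), the family is continuous.

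Next, the shape of the walls. For two classes $v,w$ set
\[
f_{v,w}(\alpha,\beta)\;=\;\mathrm{Re}\,Z^{tilt}_{\alpha,\beta}(v)\,\mathrm{Im}\,Z^{tilt}_{\alpha,\beta}(w)\;-\;\mathrm{Re}\,Z^{tilt}_{\alpha,\beta}(w)\,\mathrm{Im}\,Z^{tilt}_{\alpha,\beta}(v),
\]
an antisymmetric bilinear form in $(v,w)$, so that the numerical wall $\mathcal{W}_{v,w}$ is exactly $\{f_{v,w}=0\}$. Substituting $\mathrm{Im}\,Z^{tilt}=\mathrm{ch}^{\beta}_{1}$ and $\mathrm{Re}\,Z^{tilt}=-\mathrm{ch}^{\beta}_{2}+\tfrac{\alpha^{2}}{2}\mathrm{ch}_{0}$ and expanding in powers of $\beta$ and $\alpha^{2}$, one checks the two crucial cancellations: the $\beta^{3}$ term cancels, and the coefficient of $\alpha^{2}$ equals the coefficient of $\beta^{2}$ --- both are $\tfrac{1}{2}\big(\mathrm{ch}_{0}(v)\,\mathrm{ch}_{1}(w)-\mathrm{ch}_{0}(w)\,\mathrm{ch}_{1}(v)\big)$. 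Hence
\[
f_{v,w}\;=\;A\,(\alpha^{2}+\beta^{2})+B\,\beta+C
\]
for constants $A,B,C$ depending only on the truncated Chern characters. When $A\neq 0$ this cuts out a circle centered on the $\beta$-axis, meeting $\{\alpha>0\}$ in a semicircle; when $A=0$ it is the vertical line $\beta=-C/B$ (or degenerates to the empty set or the whole plane). This is the computational heart, but it is routine.

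Finally --- and this is where the real work lies --- the nested structure and local finiteness. For nestedness I would exploit the cofactor identity $f_{v,w}\,Z(w')+f_{w,w'}\,Z(v)+f_{w',v}\,Z(w)=0$, valid for any three classes since $Z_{\alpha,\beta}(v),Z_{\alpha,\beta}(w),Z_{\alpha,\beta}(w')$ are three vectors in $\C=\R^{2}$: if two numerical walls $\mathcal{W}_{v,w}$ and $\mathcal{W}_{v,w'}$ met at an interior point $p$ with $Z_{p}(v)\neq 0$, then $f_{w,w'}(p)=0$ too, so all three classes are phase-aligned at $p$; since $\mathcal{W}_{v,w}=\mathcal{W}_{w,\,v-w}$ and a numerical wall depends only on the plane spanned by the relevant classes, this concurrence forces the two semicircles to coincide, giving the pairwise non-crossing (nested) picture. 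For local finiteness, fix $v$ and a compact $K\subset\{\alpha>0\}$: along an actual wall a destabilizing factor $w$ and its complement $v-w$ are both $\nu_{\alpha,\beta}$-semistable, hence both satisfy the Bogomolov--Gieseker inequality of Theorem \ref{BG}, while $0\le \mathrm{ch}^{\beta}_{1}(w)\le \mathrm{ch}^{\beta}_{1}(v)$ on $K$. Together with the support property these constraints bound the discrete invariants $(\mathrm{ch}_{0}(w),\mathrm{ch}_{1}(w),\mathrm{ch}_{2}(w))$ of any class able to produce a wall through $K$, leaving only finitely many. The main obstacle is exactly this boundedness step: converting the inequalities into an honest finite bound on destabilizers --- so that walls can accumulate only toward the boundary $\alpha\to 0$ or $|\beta|\to\infty$ --- is what makes the support property do genuine work, and it is the part requiring care rather than bookkeeping.
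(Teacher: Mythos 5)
The paper offers no proof of Theorem \ref{WC}: it is imported wholesale from \cite{BMS}, with the nested-wall statement going back to \cite{MacA} and restated in \cite{Sch}. So your proposal can only be measured against the standard arguments in those references, and in outline it reconstructs them faithfully. The computational core checks out: expanding $f_{v,w}$ with $v=(r,c,d)$, $w=(r',c',d')$, the $\beta^{3}$-terms do cancel and the coefficients of $\alpha^{2}$ and $\beta^{2}$ both equal $\tfrac12(rc'-r'c)$, so every numerical wall is $A(\alpha^{2}+\beta^{2})+B\beta+C=0$ with $A=\tfrac12(rc'-r'c)$, $B=dr'-d'r$, $C=d'c-dc'$, hence a semicircle centered on the $\beta$-axis or a vertical line. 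Your local-finiteness sketch (Bogomolov--Gieseker for both the destabilizing class and its complement, together with $0\le \mathrm{ch}^{\beta}_{1}(w)\le \mathrm{ch}^{\beta}_{1}(v)$ on a compact set, bounding the discrete invariants of possible destabilizers) is also the standard argument.

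Two steps, however, have genuine holes. First, nestedness: pairwise non-crossing of walls is strictly weaker than nesting, since two circles centered on the $\beta$-axis can be disjoint side by side. The missing ingredient is the fact, which the paper itself uses at the end of Proposition \ref{Lemma 9}, that every semicircular wall for $v$ has its apex on the hyperbola $\mathrm{Re}\,Z^{tilt}_{\alpha,\beta}(v)=0$; in your notation this is the one-line identity $rC+cB+2Ad=0$. Since the relevant branch of that hyperbola is a $1$-Lipschitz graph $\beta(\alpha)$, two walls with apexes $(\beta_{1},R_{1})$, $(\beta_{2},R_{2})$ satisfy $|\beta_{1}-\beta_{2}|\le |R_{1}-R_{2}|$, i.e.\ the distance between centers is at most the difference of radii, which forces one semicircle to lie inside the other; without this input your conclusion does not follow. (In the concurrence step you should also justify that alignment of the three charges at a single point $p$ forces $\mathrm{span}(v,w)=\mathrm{span}(v,w')$ --- e.g.\ because both planes must contain the one-dimensional kernel of $Z_{p}$ on $\Lambda_{\mathbb{R}}$ --- before invoking that the wall depends only on the plane.) Second, continuity: you cannot invoke Bridgeland's deformation theorem as stated, because tilt stability is only a \emph{weak} stability condition --- $Z^{tilt}_{\alpha,\beta}$ annihilates the class of a skyscraper sheaf --- so $\Stab^{tilt}(\mathbb{P}^{3})$ is not a Bridgeland stability manifold and the forgetful map to $\Hom(\Lambda,\C)$ is not a local homeomorphism in the usual sense. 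What \cite{BMS} actually do is extend the deformation/support formalism to weak stability conditions relative to the rank-three lattice spanned by $(\mathrm{ch}_{0},\mathrm{ch}_{1},\mathrm{ch}_{2})$, where the relevant quadratic form is exactly the Bogomolov form of Theorem \ref{BG}: it is non-negative on tilt-semistable objects and negative definite on $\ker Z^{tilt}_{\alpha,\beta}$ (the kernel is spanned by $(1,\beta,\tfrac{\alpha^{2}+\beta^{2}}{2})$, on which the form equals $-\alpha^{2}$). Note also that the existence of tilt stability rests on Theorem \ref{BG}, not on Theorem \ref{BMT14:stability_threefolds}, which concerns the second tilt.
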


\begin{rmk} \label{remJH}
Note that the Jordan-H\"older factors of an object on a wall are stable along the entire wall.
\end{rmk}

For more details on Bridgeland stability conditions on $\mathbb{P}^{3}$ we refer to \cite{Sch}.

\subsection{Bayer-Macr\`{i} linearization map} \label{BMsection} Let $S$ be a K3 surface, and $v$ a primitive algebraic class in the Mukai
lattice with self-intersection with respect to the Mukai pairing. Let $\Stab(S)$ be the space of stability conditions on $S$. In \cite{Bri8}, Bridgeland described a connected component $\Stab^{\dag}(S)$ of $\Stab(S)$ which admits a chamber decomposition.

In \cite{BM}, Bayer and  Macr\`{i} proved the following Theorem:

\begin{thm} [{\cite[Theorem 1.1]{BM}}] \label{BM}
 Let $\sigma, \eta $ be generic stability conditions with respect to $v$. Then the two moduli spaces $\mathcal{M}_{\sigma}(v)$ and $\mathcal{M}_{\eta}(v)$ of Bridgeland-stable objects are birational to
each other.

\end{thm}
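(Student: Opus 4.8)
The plan is to exploit the wall-and-chamber structure of $\Stab^{\dag}(S)$ together with the fact that, for primitive $v$ with $v^{2}\geq -2$ and generic $\sigma$, the moduli space $\mathcal{M}_{\sigma}(v)$ is a nonempty, smooth, projective, irreducible holomorphic symplectic variety of dimension $v^{2}+2$ (of Hilbert-scheme deformation type); these inputs come from the nonemptiness and deformation-type results for Bridgeland moduli on K3 surfaces. Since birationality is transitive and the walls are locally finite, I would first connect $\sigma$ to $\eta$ by a generic path meeting only finitely many walls, each transversally at a single point, and thereby reduce to the case where $\sigma$ and $\eta$ lie in two chambers adjacent along a single wall $\mathcal{W}$, with a fixed $\sigma_{0}\in\mathcal{W}$ between them.

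For adjacent chambers, I would compare the two moduli spaces through the set of objects stable on both sides. Let $U\subset\mathcal{M}_{\sigma}(v)$ be the locus of $\sigma$-stable objects that are also $\eta$-stable; the very same objects form an open subset $U'\subset\mathcal{M}_{\eta}(v)$, and the tautological identification gives an isomorphism $U\cong U'$. Any object that is $\sigma$-stable but not $\eta$-stable must be strictly $\sigma_{0}$-semistable, hence carries a nontrivial Jordan--H\"older filtration into $\sigma_{0}$-stable factors of equal phase. Stratifying this wall-crossing locus by the numerical types of its factors and bounding the dimension of each stratum via $\Ext$-computations and the Mukai pairing, one shows its codimension is at least $1$ (indeed $\geq 2$ in the flopping case). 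Hence $U$ is dense, and the isomorphism $U\cong U'$ extends to a birational map $\mathcal{M}_{\sigma}(v)\dashrightarrow\mathcal{M}_{\eta}(v)$.

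The main obstacle is the \emph{totally semistable} wall, where every $\sigma$-stable object of class $v$ is destabilized by $\sigma_{0}$, so that $U=\varnothing$ and the argument above is vacuous. Such walls are governed by spherical classes $w$ with $w^{2}=-2$ (together with the relevant isotropic classes) lying on $\mathcal{W}$. I would resolve this case using the autoequivalences of $\Db(S)$: applying the appropriate spherical twist $\ST_{w}$ (or a finite composition of such), one obtains a derived equivalence $\Phi$ sending $\sigma_{0}$-semistable objects of class $v$ to semistable objects of a transformed class $\Phi_{*}v$ for which the wall is no longer totally semistable. This identifies $\mathcal{M}_{\eta}(v)$ with a moduli space to which the previous step applies, reducing the totally semistable case to the generic one. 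Determining exactly which spherical and isotropic classes occur on $\mathcal{W}$, and verifying that the twist induces the correct birational identification, is the technical heart of the argument.

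Finally, projectivity and the organization of the birational map are supplied by the Positivity Lemma: the central charge of $\sigma_{0}$ determines a nef class $\ell_{\sigma_{0}}$ on each side, realized as the pullback of an ample class from the coarse space $\mathcal{M}_{\sigma_{0}}(v)$ parametrizing $\sigma_{0}$-semistable objects up to S-equivalence. Both $\mathcal{M}_{\sigma}(v)$ and $\mathcal{M}_{\eta}(v)$ map to this common target, which realizes the birational map as a flop or a divisorial contraction and simultaneously guarantees that every space involved is projective.
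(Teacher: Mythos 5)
This statement is not proved in the paper at all: it is quoted as background (Theorem \ref{BM}), with the proof deferred entirely to the cited reference \cite{BM}, and it concerns moduli of objects on K3 surfaces rather than the $\mathbb{P}^3$ situation the paper actually studies. So there is no in-paper argument to compare yours against; the only fair comparison is with the strategy of Bayer--Macr\`{i} themselves.

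Measured against that, your proposal reconstructs the correct architecture: reduction to adjacent chambers via local finiteness of walls, the common open locus of objects stable on both sides together with codimension bounds on its complement, the separate treatment of totally semistable walls via spherical twists (and the isotropic classes), and the Positivity Lemma producing the nef classes and the contraction to the common coarse space $\mathcal{M}_{\sigma_0}(v)$, which organizes the birational map as a flop or divisorial contraction. These are exactly the ingredients of the cited proof. Be aware, however, that your sketch compresses the two steps that constitute the technical heart of \cite{BM}: the stratum-by-stratum dimension estimates showing the destabilized locus has positive codimension (which genuinely requires the hyperk\"ahler structure and the Mukai pairing, and fails without the non-totally-semistable hypothesis), and the verification that a finite composition of spherical twists turns a totally semistable wall into one where the generic argument applies while inducing the claimed identification of moduli spaces. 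You correctly flag both as the hard parts, so as a blind proposal this identifies the right route; but it is an outline of the cited proof, not a self-contained argument, and nothing in the present paper supplies those missing details.
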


As a consequence, we can canonically identify the Néron-Severi groups of  $\mathcal{M}_{\sigma}(v)$ and $\mathcal{M}_{\eta}(v)$.
Now consider the chamber decomposition of $\Stab^{\dag}(S)$ with respect to $v$ as above, and let $C$ be a
chamber. The main result of \cite{bm14} gives a natural map

$$ l_{C}\colon C \rightarrow \NS (\mathcal{M}_{C}(v)),$$
to the Néron-Severi group of the moduli space, whose image is contained in the ample cone of
$\mathcal{M}_{C}(v)$. Their main result describing the global behavior of this map is the following:

\begin{thm}[{\cite[Theorem 1.2]{BM}}] \label{BM2}
 Fix a base point $\sigma \in \Stab^{\dag}(S)$.

(a) Under the identification of the Néron-Severi groups, the maps $l_{C}$ glue to a piece-wise analytic continuous map
$$ l\colon \Stab^{\dag}(S) \rightarrow \NS (\mathcal{M}_{\sigma}(v)).$$

(b) The map $l$ is compatible, in the sense that for any generic $\sigma' \in \Stab(S)^{\dag}$, the moduli
space $\mathcal{M}_{\sigma'}(v)$ is the birational model corresponding to $l(\sigma')$. In particular, every smooth
K-trivial birational model of  $\mathcal{M}_{\sigma}(v)$ appears as a moduli space  $\mathcal{M}_{C}(v)$ of Bridgeland
stable objects for some chamber $C \subset \Stab^{\dag}(S)$.

\end{thm}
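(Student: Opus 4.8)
The plan is to realize $l$ as a gluing of \emph{Bayer--Macrì maps} $l_C$ attached to each chamber, where each $l_C$ is built from the central charge by a positivity argument, and then to match the wall-and-chamber structure of $\Stab^{\dag}(S)$ with that of the movable cone of $\mathcal{M}_{\sigma}(v)$. I would first construct $l_C$ via the \emph{positivity lemma}. Fix a chamber $C$ and a (quasi-)universal family $\mathcal{E}$ on $S \times \mathcal{M}_{C}(v)$, with associated Fourier--Mukai functor $\Phi_{\mathcal{E}}$. For $\sigma = (Z,\mathcal{A}) \in C$ define a class $l_{\sigma} \in \NS(\mathcal{M}_{C}(v))_{\R}$ by the numerical rule
$$ l_{\sigma} \cdot [D] \;=\; \Imm\!\left(-\frac{Z\big(\Phi_{\mathcal{E}}(\mathcal{O}_{D})\big)}{Z(v)}\right) $$
for every integral projective curve $D \subset \mathcal{M}_{C}(v)$. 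The heart of the matter is to prove that $l_{\sigma}$ is nef and, more precisely, that $l_{\sigma}\cdot[D]=0$ if and only if the objects parametrized by the generic point of $D$ are S-equivalent with respect to $\sigma$. This is a slope/Harder--Narasimhan estimate: because $Z$ sends every $\sigma$-stable object of class $v$ into the strict upper half plane, the imaginary part above is a sum of non-negative contributions, vanishing exactly when a strict destabilization occurs generically along $D$. As a byproduct this produces an ample class and hence the projectivity of $\mathcal{M}_{C}(v)$.

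For part (a), within a single chamber $C$ the moduli space $\mathcal{M}_{C}(v)$ is constant by the support property and the wall-and-chamber structure recalled above, so $l|_{C}$ is literally the function $\sigma \mapsto l_{\sigma}$ given by the displayed formula; it is real-analytic because $Z$ depends analytically on $\sigma$ while $v$ and $\Phi_{\mathcal{E}}$ are fixed. Across a wall I would invoke Theorem \ref{BM}: the adjacent moduli spaces are birational, hence their Néron--Severi groups are canonically identified, and I would check that the two one-sided limits of $l_{\sigma}$ agree along the wall, the formula extending continuously because $Z$ does. Local finiteness of walls then yields the desired piecewise-analytic continuous map $l$.

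For part (b) I would first upgrade nefness to ampleness for generic $\sigma \in C$: since such a $\sigma$ admits no strictly semistable objects of class $v$, no curve $D$ has S-equivalent generic member, so $l_{\sigma}\cdot[D]>0$ for all $D$ and $l_{\sigma}$ is ample. Next, using the Mukai morphism $\theta\colon v^{\perp} \to \NS(\mathcal{M}_{C}(v))$, an isometry after Yoshioka and O'Grady, I would show that as $\sigma$ ranges over $C$ the classes $l_{\sigma}$ sweep out the full ample cone, so that $l$ restricts to a homeomorphism from $C$ onto the ample cone of $\mathcal{M}_{C}(v)$. Crossing a wall $W$ of $\Stab^{\dag}(S)$, the positivity lemma identifies the contracted curves as exactly those $D$ with $l_{\sigma_{0}}\cdot[D]=0$, i.e. the loci of S-equivalent objects; this exhibits the induced birational transformation and places $l_{\sigma_{0}}$ on the corresponding wall of the movable cone.

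The main obstacle is the surjectivity of $l$ onto the movable cone, which underlies the final ``in particular'' assertion that every smooth K-trivial birational model of $\mathcal{M}_{\sigma}(v)$ is itself some $\mathcal{M}_{C}(v)$. The moduli spaces are smooth projective holomorphic-symplectic varieties of $\mathrm{K3}^{[n]}$-type, so by Markman's Hodge-theoretic Torelli theorem and the resulting lattice-theoretic description of the movable cone, the chambers of the movable cone are cut out by explicit classes in $v^{\perp}$ arising from spherical and isotropic Mukai vectors. I would need to show that each such wall is the image under $\theta$ of an actual wall of $\Stab^{\dag}(S)$, and conversely that every chamber of the movable cone is the image of a stability chamber. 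This matching requires a case analysis of the totally semistable walls, controlling the birational modifications via spherical twists and other derived autoequivalences, and comparing the numerically predicted walls with the actual walls furnished by the wall-and-chamber structure. Establishing this bijection is the crux of the argument and the step I expect to be hardest.
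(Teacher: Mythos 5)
This statement is not proved in the paper at all: it is background, quoted verbatim (with citation) as Theorem 1.2 of Bayer--Macr\`{i}'s work \cite{BM}, and the paper's Section 2 only records it for later comparison with the $\mathbb{P}^3$ situation. So the comparison to make is with the original Bayer--Macr\`{i} proof, and there your outline is faithful to the real argument: the chamber-wise divisor class $l_{\sigma}$ defined by $l_{\sigma}\cdot [D] = \Imm\bigl(-Z(\Phi_{\mathcal{E}}(\mathcal{O}_{D}))/Z(v)\bigr)$, the positivity lemma characterizing $l_{\sigma}\cdot[D]=0$ via generic S-equivalence, the gluing across walls using the birational identification of N\'eron--Severi groups, the Mukai morphism $\theta\colon v^{\perp}\to \NS(\mathcal{M}_{C}(v))$, and the lattice-theoretic analysis of walls against Markman's description of the movable cone are exactly the ingredients Bayer and Macr\`{i} use.

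However, as a proof your proposal has a genuine gap, and you name it yourself: the entire content of part (b) --- that the image of $l$ fills out the movable (big) cone and that every chamber of the movable cone is hit by a stability chamber --- is deferred as ``the step I expect to be hardest.'' That step is not a routine verification; it occupies the bulk of \cite{BM} and requires the full classification of walls (divisorial, flopping, fake/totally semistable) via rank-two hyperbolic sublattices containing $v$, together with control of the birational maps induced by spherical twists at totally semistable walls. Without it, nothing rules out the possibility that some chamber of $\Mov(\mathcal{M}_{\sigma}(v))$ is missed by $l$, which is precisely the failure mode this paper exhibits for $\mathbb{P}^{3}$. Separately, one intermediate claim is false as stated: $l$ does \emph{not} in general restrict to a homeomorphism from a single stability chamber $C$ onto the ample cone of $\mathcal{M}_{C}(v)$. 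Several distinct chambers can map to the same birational model (e.g.\ on either side of a totally semistable wall inducing no birational modification), and the image of one chamber is then only an open subcone of the ample cone; the correct statement is about the union of such chambers, and conflating the two would break the wall-to-wall matching you need in the final step.
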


Claim (b) is the precise version of their claim that MMP can be run via wall-crossing: any minimal model can be reached after wall-crossing as a moduli space of stable objects. Extremal contractions arising as canonical models are given as coarse moduli spaces for
stability conditions on a wall.

%\textcolor{red}{We will see that comparing to the case of surfaces and our example in $\mathbb{P}^3$, Theorem \ref{BM} is still satisfied, but Theorem \ref{BM2} is not true anymore.}
\section{Walls and chambers} \label{Walls and chambers}

According to Theorem \ref{WC} there is  a wall-chamber structure in the stability manifold. In this section, we numerically describe the walls in $\Stab^{tilt}(\mathbb{P}^{3})$ and $\Stab(\mathbb{P}^{3})$  with respect to $\mathrm{ch}(\mathcal{I}_C)$,where $C$ is a canonical genus four curve, and give a geometric description of the first three walls. Furthermore, we describe the moduli spaces associated to the adjacent chambers.

First of all, we compute the Chern character of $\mathcal{I}_{C}$:

\begin{prop}\label{prop:prop_1}For a canonical genus 4 curve $C$ in $\mathbb{P}^{3}$, we have $\mathrm{ch}(\mathcal{I}_{C})=(1,0,-6,15)$.

\end{prop}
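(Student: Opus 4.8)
The plan is to exploit the complete intersection structure of $C$ directly, rather than going through Grothendieck--Riemann--Roch on the curve. Since $C$ is a canonical genus $4$ curve, its canonical embedding realizes it as a $(2,3)$-complete intersection $C = Q \cap S$ in $\mathbb{P}^{3}$, where $Q$ is the unique quadric through the canonical model and $S$ is a cubic. (This is classical: a non-hyperelliptic genus $4$ curve lies on a unique quadric and a cubic in its canonical $\mathbb{P}^{3}$; equivalently $\deg C = 6$, $g = 4$, and $\omega_C = \mathcal{O}_C(1)$.) Writing $f_2, f_3$ for the defining forms of degrees $2$ and $3$, the regular sequence $(f_2, f_3)$ gives the Koszul resolution
$$0 \to \mathcal{O}_{\mathbb{P}^{3}}(-5) \to \mathcal{O}_{\mathbb{P}^{3}}(-2) \oplus \mathcal{O}_{\mathbb{P}^{3}}(-3) \to \mathcal{I}_C \to 0,$$
which is the only geometric input needed.

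First I would invoke additivity of $\mathrm{ch}$ on short exact sequences, reducing the computation to the three line bundles, and then apply $\mathrm{ch}(\mathcal{O}_{\mathbb{P}^{3}}(n)) = e^{nH} = 1 + nH + \tfrac{n^{2}}{2}H^{2} + \tfrac{n^{3}}{6}H^{3}$, identifying the four components of $\mathrm{ch}$ with their coefficients against $1, H, H^{2}, H^{3}$ (normalized so that $\int_{\mathbb{P}^{3}} H^{3} = 1$). Computing $\mathrm{ch}(\mathcal{I}_C) = \mathrm{ch}(\mathcal{O}(-2)) + \mathrm{ch}(\mathcal{O}(-3)) - \mathrm{ch}(\mathcal{O}(-5))$ componentwise, the rank is $1 + 1 - 1 = 1$, the $H$-coefficient is $-2 - 3 + 5 = 0$, the $H^{2}$-coefficient is $2 + \tfrac{9}{2} - \tfrac{25}{2} = -6$, and the $H^{3}$-coefficient is $-\tfrac{4}{3} - \tfrac{9}{2} + \tfrac{125}{6} = 15$. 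This yields $\mathrm{ch}(\mathcal{I}_C) = (1,0,-6,15)$, as claimed.

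As an independent check I would verify the answer against Hirzebruch--Riemann--Roch: with $\mathrm{td}(\mathbb{P}^{3}) = 1 + 2H + \tfrac{11}{6}H^{2} + H^{3}$, pairing against $(1,0,-6,15)$ gives $\chi(\mathcal{I}_C) = 1 - 12 + 15 = 4$, which matches $\chi(\mathcal{I}_C) = 1 - \chi(\mathcal{O}_C) = 1 - (1 - g) = g = 4$ obtained from the structure sequence $0 \to \mathcal{I}_C \to \mathcal{O}_{\mathbb{P}^{3}} \to \mathcal{O}_C \to 0$ and the genus. I expect no genuine obstacle here: the argument is essentially bookkeeping, and the only real content is the classical identification of a canonical genus $4$ curve with a $(2,3)$-complete intersection. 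The one point requiring care is the Chern-character convention (the normalization of the degree map and the signs in $e^{nH}$), which the HRR cross-check is designed to guard against.
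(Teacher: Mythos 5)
Your proof is correct and follows the same route as the paper: both use the Koszul resolution $0 \to \mathcal{O}(-5) \to \mathcal{O}(-2)\oplus\mathcal{O}(-3) \to \mathcal{I}_C \to 0$ of the $(2,3)$-complete intersection together with additivity of $\mathrm{ch}$; you simply carry out explicitly the componentwise arithmetic (and the HRR sanity check) that the paper leaves to the reader.
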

\begin{proof}

Since $C$ is a (2,3)-complete intersection, we have the short exact sequence $\mathcal{O}(-5) \hookrightarrow \mathcal{O}(-2) \oplus \mathcal{O}(-3) \twoheadrightarrow \mathcal{I}_{C}$, from which the claim follows.
\end{proof}

Consider the hyperbola $\H$ in the $(\alpha, \beta)$-plane defined by $\mathrm{Im}(Z_{\alpha, \beta, s}(v))=0$. For such $\alpha, \beta$, and $s$, semistable objects of Chern character $v$ have phase $0$. Moreover, these semistable objects have positive and negative phases with respect to the stability conditions on the left and right side of the hyperbola, respectively. Thus on the left side of the hyperbola,  we work with  $\mathrm{Coh}^{\beta}(\mathbb{P}^3)$ and $\mathrm{Coh}^{\alpha,\beta}(\mathbb{P}^3)$, for tilt and Bridgeland stability, respectively, and  on the right side, we work in $\mathrm{Coh}^{\beta}(\mathbb{P}^3)[-1]$ and $\mathrm{Coh}^{\alpha,\beta}(\mathbb{P}^3)[-1]$. Theorem \ref{WC} gives an order for the walls or semicircles in $\Stab^{tilt} (\mathbb{P}^{3})$. We refer to the semicircle with the smallest radius as the first wall, and so on. First, we have a Lemma:

\begin{lem} \label{lem:lem_7}   

Let $\beta$ be an integer, and $E$ a tilt semistable object in $\mathrm{Coh}^{\beta}(\mathbb{P}^{3})$.

1. If $\mathrm{ch}^{\beta}(E)=(1,1,d,e)$, then $d-1/2 \in \mathbb{Z}_{\leq 0}$. If $d=1/2$, then $E \cong \mathcal{I}_{Z}(\beta +1)$ for a zero dimensional subscheme $Z$ in $\mathbb{P}^{3}$ of length $1/6-e$. If $d=1/2-D$ where $D=1,2$, then we have $E \cong \mathcal{I}_{C_D}(\beta +1)$ where $C_D$ is a rational degree $D$ curve, plus $D-e-5/6$ (floating/embedded) points in $\mathbb{P}^{3}$.

2. If $\mathrm{ch}^{\beta}(E)=(0,1,d,e)$, then $d+1/2 \in \mathbb{Z}$ and $E \cong \mathcal{I}_{Z/P}(\beta +d +1/2)$ in which $Z$ is a zero dimensional subscheme supported in a plane in $\mathbb{P}^{3}$ and of length $1/24+d^{2}/2-e$.

\end{lem}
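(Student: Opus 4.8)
The plan is to reduce each part to two ingredients: a \emph{structural} statement identifying the underlying object as a twisted ideal sheaf, and a \emph{bookkeeping} computation of twisted Chern characters. The key observation throughout is that, since $\beta$ is an integer and Chern classes are integral, $\mathrm{ch}^{\beta}_{1}(E)=1$ is the \emph{minimal} positive value of $\mathrm{Im}(Z^{tilt}_{\alpha,\beta})=\mathrm{ch}^{\beta}_{1}$ available in the lattice. Consequently, any strict destabilization of $E$ in the tilt-heart must split off factors with $\mathrm{ch}^{\beta}_{1}=0$, which are tightly constrained; this is what forces $E$ to be essentially a rank-one object with no genuine complex structure. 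Once the object is identified, the numerical claims follow from expanding $\mathrm{ch}^{\beta}=e^{-\beta H}\cdot\mathrm{ch}$ and matching components.

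For Part 1, I would first apply the Bogomolov--Gieseker inequality (Theorem \ref{BG}), which is invariant under twisting, to $\mathrm{ch}^{\beta}(E)=(1,1,d,e)$; this reads $2d\le 1$, giving $d\le 1/2$. Next I would show $E$ is an honest sheaf: writing the cohomology objects $\HH^{-1}(E)\in\mathcal{F}_{\beta}$ and $\HH^{0}(E)\in\mathcal{T}_{\beta}$, the minimality of $\mathrm{ch}^{\beta}_{1}=1$ together with a tilt-slope comparison rules out a nonzero $\HH^{-1}(E)$, so $E=\HH^{0}(E)$; since $\mathrm{ch}_{0}(E)=1$ and $c_{1}(E)=\beta+1$, it is a rank-one torsion-free sheaf $E\cong\mathcal{I}_{W}(\beta+1)$ for a subscheme $W\subset\mathbb{P}^{3}$ of codimension at least two. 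A direct expansion of $\mathrm{ch}^{\beta}(\mathcal{I}_{W}(\beta+1))$ then gives $d=1/2-D$, where $D\in\mathbb{Z}_{\ge 0}$ is the degree of the one-dimensional part of $W$; this proves $d-1/2\in\mathbb{Z}_{\le 0}$ and splits the analysis into $D=0$ (so $W$ is zero-dimensional, of length $1/6-e$ read off from $\mathrm{ch}^{\beta}_{3}$) and $D=1,2$ (so $W=C_{D}\cup(\text{points})$). In the latter cases the same $\mathrm{ch}^{\beta}_{3}$ computation yields the point count $D-e-5/6$ \emph{exactly} when $C_{D}$ has arithmetic genus zero; I would deduce this rationality, and the nonnegativity of the point count, from tilt-semistability together with the BMT inequality (Theorem \ref{BMT14:stability_threefolds}), since a positive-genus or otherwise degenerate curve part would produce a destabilizing subobject.

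For Part 2, the vanishing $\mathrm{ch}^{\beta}_{0}(E)=0$ shows $E$ is torsion, and $\mathrm{ch}^{\beta}_{1}(E)=\mathrm{ch}_{1}(E)=1$ forces its support to be a single reduced plane $P\subset\mathbb{P}^{3}$. Tilt-semistability then gives purity, so $E\cong\iota_{P_{*}}F$ for a rank-one torsion-free sheaf $F$ on $P\cong\mathbb{P}^{2}$; since $\Pic(\mathbb{P}^{2})=\mathbb{Z}$, we have $F\cong\mathcal{I}_{Z/P}(a)$ for a zero-dimensional $Z\subset P$ and some $a\in\mathbb{Z}$. Expanding $\mathrm{ch}^{\beta}(\iota_{P_{*}}\mathcal{I}_{Z/P}(a))$ and matching the degree-two part gives $a=\beta+d+1/2$, which forces $d+1/2\in\mathbb{Z}$; matching the degree-three part gives $\mathrm{length}(Z)=1/24+d^{2}/2-e$, completing the identification.

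The main obstacle in both parts is the structural step, not the Chern-character arithmetic: one must rule out that the tilt-semistable object is a genuine two-term complex, pin down its rank and torsion type, and---in Part 1 for $D=1,2$---force $C_{D}$ to be rational with a nonnegative number of residual points. I expect the rationality and nonnegativity claims to require the most care, as these are precisely where a naive numerical count does not suffice and one must exhibit or exclude concrete destabilizing subobjects using the heart structure of $\mathrm{Coh}^{\beta}(\mathbb{P}^{3})$ and the BMT inequality.
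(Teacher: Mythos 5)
Your structural steps are sound, and as a route they are genuinely more self-contained than the paper's proof, which simply quotes \cite[Lemma 5.4]{Sch} for Part 2 and for the $d=1/2$ case of Part 1, and settles the $C_D$ cases with a two-line remark ($\mathrm{ch}_2$ of an ideal sheaf of a curve is $-\deg$, and $\mathrm{ch}_3$ is computed by Riemann--Roch). Your reductions --- $\mathcal{H}^{-1}_{\beta}(E)=0$ because a nonzero one would either give a subobject of tilt-slope $+\infty$ or force negative rank on $\mathcal{H}^{0}_{\beta}(E)$; exclusion of torsion; hence $E\cong\mathcal{I}_{W}(\beta+1)$, and in Part 2 $E\cong\iota_{P_{*}}\mathcal{I}_{Z/P}(a)$ with $a=\beta+d+1/2$ --- together with the bookkeeping $d=1/2-D$, length $1/6-e$ for $D=0$, and length $1/24+d^{2}/2-e$ in Part 2, are correct and essentially reproduce Schmidt's argument.

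The genuine gap is the final step of Part 1 for $D=1,2$: rationality of $C_{D}$ and the point count $D-e-5/6$ \emph{cannot} be deduced from tilt-semistability plus the BMT inequality, because the dichotomy you want to force is not implied by stability at all. Take $E=\mathcal{I}_{L_{1}\sqcup L_{2}}(\beta+1)$ for two disjoint lines: it is rank-one torsion-free, hence slope-stable, hence $\nu_{\alpha,\beta}$-stable for $\alpha\gg 0$, so it satisfies every hypothesis of the lemma; one computes $\mathrm{ch}^{\beta}(E)=(1,1,-3/2,1/6)$, exactly the invariants the lemma assigns to ``conic plus one point'', and the BMT inequality holds (it reads $4\alpha^{2}+8>0$ here) --- yet the curve part has arithmetic genus $-1$ and there are no floating or embedded points. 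In general, for $E\cong\mathcal{I}_{W}(\beta+1)$ with pure one-dimensional part of degree $D$ and arithmetic genus $g$ plus $m$ points, one finds $D-e-5/6=m-g$; ideal sheaves with different $(m,g)$ but equal $m-g$ are all tilt-stable for $\alpha\gg 0$ with the same $\mathrm{ch}^{\beta}$, so no destabilizing subobject can exist to separate them. (Your worry about positive genus is vacuous: a curve of degree $\le 2$ has $g\le 0$; the problematic configurations are the negative-genus ones --- disjoint lines and non-planar double lines --- and these are \emph{not} destabilized.) The paper never attempts your deduction: its Riemann--Roch computation puts $g=0$ in by hand, and in the actual application (Theorem \ref{thm3.10}) the degenerate cases are excluded by the nonnegativity constraints coming from \emph{both} Jordan-H\"older factors, which force $m-g=0$ and hence $m=0$, $g=0$, rather than by stability of one factor alone. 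The honest conclusion available from your method is: $E\cong\mathcal{I}_{W}(\beta+1)$ with $W$ one-dimensional of degree $D$ and $\chi(\mathcal{O}_{W})=D-e+1/6$; the refinement to ``rational curve plus $D-e-5/6$ points'' needs either an additional hypothesis or the numerical input of the context in which the lemma is applied.
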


\begin{proof}
The first two cases of $1$ and $2$ were proven in \cite[Lemma 5.4]{Sch}. For $C_{D}$, we notice that the $\mathrm{ch}_{2}$ of an ideal sheaf of a curve is equal to  $-\mathrm{deg}$ of the curve. Also,  $\mathrm{ch}_{3}$ can be computed using Riemann-Roch. 

\end{proof}

\begin{prop}
 
 \label{Lemma 9}

Fix the class $v=(1,0,-6,15)$. The walls in $\Stab^{tilt}(1,0,-6,15)$ with respect to $v$ and for $\beta < 0$ are given by the following equations of semicircles in the $(\beta, \alpha)$ plane, with $\mathrm{ch}^{-4}_{\leq 2}$ of either the sub-object or the quotient, $F$, given as follows:
\\
1)  $(\beta +4)^{2}+\alpha^{2}=4$, $\mathrm{ch}^{-4}_{\leq 2}(F)= (1,2,2)$,
\\
2)  $(\beta + 4.5)^{2}+\alpha^{2}=8.25$, $\mathrm{ch}^{-4}_{\leq 2}(F)= (1,3,5/2)$,
\\
3) $(\beta + 5.5)^{2}+\alpha^{2}=18.25$,  $\mathrm{ch}^{-4}_{\leq 2}(F)= (1,3,7/2)$,
\\
4)  $(\beta + 6.5)^{2}+\alpha^{2}=30.25$,  $\mathrm{ch}^{-4}_{\leq 2}(F)= (1,3,9/2)$.

Furthermore, the hyperbola which is defined by $\Real(Z_{\alpha,\beta}(\nu)=0)$ where $v=\mathrm{ch}(\mathcal{I}_{C})$ intersects all these semicircles at their top.
\end{prop}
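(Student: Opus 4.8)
The plan is to carry out a direct numerical computation of the walls, exploiting the structure established in Theorem \ref{WC} (walls are nested semicircles) and the classification of tilt-semistable objects in Lemma \ref{lem:lem_7}. The key point is that on each wall, the class $v=(1,0,-6,15)$ is destabilized by a short exact sequence $0 \to F \to E \to E/F \to 0$ in $\mathrm{Coh}^{\beta}(\mathbb{P}^3)$ where $F$ and $E/F$ have equal tilt-slope along the wall, i.e. $\nu_{\alpha,\beta}(F)=\nu_{\alpha,\beta}(E)$. First I would record that the center and radius of a numerical wall are determined purely by the numerical condition $\nu_{\alpha,\beta}(F)=\nu_{\alpha,\beta}(v)$ for a fixed candidate destabilizing class $\mathrm{ch}(F)$; solving this equation gives an equation of a semicircle in the $(\beta,\alpha)$-plane with center on the $\beta$-axis. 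Setting $\beta=-4$ (the integer value at which the twisted Chern characters are conveniently normalized for this $v$) is the computational device: by Lemma \ref{lem:lem_7}, integrality constraints on $\mathrm{ch}^{-4}$ force the destabilizing subobject or quotient to have one of finitely many possible Chern characters.

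The main steps proceed as follows. I would first compute $\mathrm{ch}^{-4}(v)$ explicitly via $\mathrm{ch}^{\beta}=e^{-\beta H}\mathrm{ch}$ to fix the reference class. Next, I would enumerate the candidate classes $\mathrm{ch}^{-4}_{\leq 2}(F)$ for a potential destabilizer: the admissible list is constrained by (i) the Bogomolov--Gieseker inequality of Theorem \ref{BG} and the BMT inequality of Theorem \ref{BMT14:stability_threefolds}, which both $F$ and $E/F$ must satisfy along the wall; (ii) the requirement $0 \leq \mathrm{ch}^{-4}_1(F) \leq \mathrm{ch}^{-4}_1(v)$ so that the slopes can actually coincide and the sequence lies in the heart; and (iii) the integrality/geometric constraints from Lemma \ref{lem:lem_7}, which pin down which sheaves can appear as stable factors. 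For each surviving candidate I would solve $\nu_{\alpha,\beta}(F)=\nu_{\alpha,\beta}(v)$, which is a linear relation in the variables $(\beta^2+\alpha^2)$ and $\beta$ after clearing the common denominator $\mathrm{ch}^{\beta}_1$, yielding exactly a semicircle $(\beta - c)^2 + \alpha^2 = r^2$; reading off $c$ and $r$ gives the four equations in the statement. I would verify that these are precisely the four semicircles with $\beta<0$ by ordering them by radius (nested, per Theorem \ref{WC}) and checking no smaller or intermediate semicircle arises from another admissible class.

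For the final assertion about the hyperbola, I would write down the real-part equation $\Real(Z_{\alpha,\beta,s}(v))=0$ from the definition of $Z_{\alpha,\beta,s}$ and, using $\mathrm{ch}(v)=(1,0,-6,15)$, produce its explicit equation in $(\beta,\alpha)$. The top of each semicircle is the point $(\beta,\alpha)=(c,r)$ where the tangent is horizontal; I would substitute this point into the hyperbola's equation and confirm it is satisfied for all four walls. The conceptual reason this works is that the top of a tilt-wall is exactly where the vertical-wall condition $\mathrm{ch}^{\beta}_1=0$ meets the relevant locus, and the hyperbola passing through these apexes reflects the compatibility between the tilt central charge and the Bridgeland central charge $Z_{\alpha,\beta,s}$ for this class.

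I expect the main obstacle to be the enumeration step: ruling out all \emph{other} candidate destabilizing classes, so that the four listed semicircles are genuinely the complete list of (numerical) walls for $\beta<0$. This requires a careful finiteness argument showing that the Bogomolov--Gieseker and BMT inequalities, together with the slope-matching and heart-membership constraints, leave only finitely many admissible $\mathrm{ch}^{-4}_{\leq 2}(F)$, and that among those only the four tabulated ones give actual (rather than spurious or empty) walls with $\beta<0$. The individual semicircle computations and the hyperbola verification are routine algebra once the destabilizing classes are fixed; the real content is the exhaustiveness of the classification, which leans on Lemma \ref{lem:lem_7} and the local finiteness guaranteed by Theorem \ref{WC}.
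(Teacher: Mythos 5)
Your proposal follows essentially the same route as the paper's proof: fix $\beta=-4$, constrain $\mathrm{ch}^{-4}_{1}(F)$ by heart-membership, combine the positivity of $\alpha^{2}$ on the wall (slope-matching) with the Bogomolov--Gieseker inequality to reduce to the finite list of classes $(1,2,2)$ and $(1,3,d)$ with $d\in\{5/2,7/2,9/2\}$, and then solve $\nu_{\alpha,-4}(F)=\nu_{\alpha,-4}(v)$ for the semicircles. The only cosmetic differences are that the paper handles the apex claim by citing Bertram's Nested Wall Theorem rather than by direct substitution into $\Real(Z^{tilt}_{\alpha,\beta}(v))=0$, and it needs neither the BMT inequality nor Lemma \ref{lem:lem_7} for the enumeration itself.
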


\begin{proof}

Given a short exact sequence in  $\mathrm{Coh}^{-4}(\mathbb{P}^{3})$ that defines a wall for $M^{tilt}_{\alpha, \beta}(v)$, either the subobject or the quotient of $E \in M^{tilt}_{\alpha, \beta}(v)$ will have positive rank. Let $F$ be this object, $G$ the other one, and write $\mathrm{ch}^{-4}_{\leq 2}(F)= (r,c,d)$ with $r \geq 1$. As $\mathrm{ch}^{-4}_{\leq 2}(E)= (1,4,2)$ and $F, G \in \mathrm{Coh}^{-4}(\mathbb{P}^{3})$, we have $c\geq 0$ and $4-c \geq 0$. Now, if either $c=0$ or $c=4$, then either $F$ or $G$ would have slope $+\infty$, a contradiction; Therefore $1 \leq c \leq 3$. We want to find all the possibilities for $\mathrm{ch}^{-4}_{\leq 2}(F)$ and $\mathrm{ch}^{-4}_{\leq 2}(G)$. The equation $\nu_{\alpha, -4}(E)=\nu_{\alpha, -4}(F)$ implies  $\alpha^{2}=(8d-4c)/(4r-c)$ which has to be positive. As by our assumption $r \geq 1$, and also $c \leq 3$, this implies  $d>c/2$, i.e., $d \geq c/2+1$. Combined with the Bogomolov-Gieseker inequality (Theorem \ref{BG}), we get

  \begin{equation*}
    \begin{cases}
     c^{2}\geq 2rd \geq r(c+2), \\
        r \geq 1.

    \end{cases}
  \end{equation*}
This has no solution for $c=1$. For $c=2$, the only solution is $d=2$, $r=1$. For $c=3$, we have $r=1$ and $d \in \{5/2, 7/2, 9/2\}$. Plugging these into the equation $\nu_{\alpha, -4}(E)=\nu_{\alpha, -4}(F)$ gives the corresponding semicircles. The last part comes from Bertram’s Nested Wall Theorem (Theorem \ref{WC}) which is restated in {\cite[Theorem 3.3]{Sch}} as well.
\end{proof}

At this point, we need the following Lemma:

\begin{lem} \label{(0, >)}
  Let $E \in Coh^{\beta}(\mathbb{P}^{3})$ be a $\nu_{\alpha, \beta}$-semistable object with $\mathrm{ch}(E)=(0,2,-8,e)$. Then $e \leq 49/3$. Moreover, if the equality holds, then $E=  \mathcal{O}_{ Q}(-3)$ for a (possibly singular) quadric surface $Q$ in $\mathbb{P}^{3}$.
\end{lem}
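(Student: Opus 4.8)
The plan is to prove the bound via the Bogomolov--Gieseker inequality and then extract the structure of the extremal object. First I would observe that $E$ has rank $0$ with $\mathrm{ch}_1(E) = 2H$, so $E$ is supported on a surface of degree $2$, i.e.\ (scheme-theoretically) on a quadric. The natural guess is that the extremal object is a twist of the structure sheaf of a quadric. To pin down the twist, I would compute the Chern character of $\OO_Q(m)$ for a quadric $Q$ and match it against $(0,2,-8,e)$: using the standard exact sequence $\OO_{\P^3}(m-2) \hookrightarrow \OO_{\P^3}(m) \twoheadrightarrow \OO_Q(m)$, one finds $\mathrm{ch}(\OO_Q(m)) = \mathrm{ch}(\OO(m)) - \mathrm{ch}(\OO(m-2))$, and reading off $\mathrm{ch}_1 = 2H$ is automatic while matching $\mathrm{ch}_2 = -8$ forces $m = -3$. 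This both explains the value $e = 49/3$ (it equals $\mathrm{ch}_3(\OO_Q(-3))$) and identifies the candidate extremal object.

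The key inequality step is to apply the classical Bogomolov--Gieseker inequality (Theorem \ref{BG}) — but that bounds $\mathrm{ch}_2$, not $\mathrm{ch}_3$, so it is not directly the right tool. The correct instrument for bounding $\mathrm{ch}_3 = e$ in terms of the lower Chern characters is the BMT inequality (Theorem \ref{BMT14:stability_threefolds}), which gives precisely a lower bound involving $\mathrm{ch}^\beta_3$ and hence, for a rank-zero object, translates into an upper bound for $e$. Concretely, I would plug $\mathrm{ch}(E) = (0,2,-8,e)$ into the BMT inequality $\alpha^2[(\mathrm{ch}^\beta_1)^2 - 2\mathrm{ch}^\beta_0 \mathrm{ch}^\beta_2] + 4(\mathrm{ch}^\beta_2)^2 - 6\mathrm{ch}^\beta_1 \mathrm{ch}^\beta_3 \ge 0$, compute the $\beta$-twisted characters (with $\mathrm{ch}^\beta_0 = 0$ the twist only shifts $\mathrm{ch}^\beta_1 = 2$ trivially and affects $\mathrm{ch}^\beta_2, \mathrm{ch}^\beta_3$), and optimise over the allowed range of $(\alpha,\beta)$ so that the semistability of $E$ holds. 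Since $\mathrm{ch}^\beta_0 = 0$, the $\alpha^2$ term simplifies dramatically, and the inequality should reduce to a clean linear bound forcing $e \le 49/3$.

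For the equality case, I would argue that saturating the BMT inequality is extremely restrictive. The plan is: first show that $E$ must be a sheaf (not a genuine complex) supported in dimension $2$, by a cohomology-vanishing/purity argument using that any rank-zero tilt-semistable object concentrated on the hyperbola has controlled cohomology sheaves; then show $E$ is the pushforward of a rank-one torsion-free sheaf on a degree-two surface $Q$. Matching $\mathrm{ch}_1 = 2H$ exactly (rather than, say, $H + H'$ splitting into two planes with lower-dimensional defect) uses that any proper subobject would create a destabilising wall, contradicting semistability at the extremal point; this is where I would invoke the stability of Jordan--H\"older factors along the wall (Remark \ref{remJH}). Equality in BMT then forces the torsion-free sheaf on $Q$ to be a line bundle with the computed twist, yielding $E \cong \OO_Q(-3)$, with $Q$ allowed to be singular since the argument never used smoothness.

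The main obstacle I anticipate is the equality analysis, specifically ruling out the possibility that $E$ is supported on a non-reduced or reducible degree-two scheme, or that it is a rank-one sheaf on $Q$ that fails to be locally free (e.g.\ an ideal-sheaf twist $\iota_{Q*}\mathcal{I}_{Z/Q}(-3)$ with $Z \ne \emptyset$). Such a sheaf would have strictly smaller $\mathrm{ch}_3$, so in principle the equality condition excludes it — but making this rigorous requires carefully tracking how the length of $Z$ contributes to $\mathrm{ch}_3$ and confirming that the BMT equality is strict unless $Z = \emptyset$. The computation that $\mathrm{ch}_3(\iota_{Q*}\mathcal{I}_{Z/Q}(-3)) = 49/3 - \mathrm{length}(Z)$ is the crux: once that is in hand, equality immediately forces $Z = \emptyset$ and hence $E \cong \OO_Q(-3)$ for a possibly singular quadric $Q$.
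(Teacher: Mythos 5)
Your computation of the extremal invariants is correct ($\mathrm{ch}(\OO_Q(-3))=(0,2,-8,49/3)$, with $m=-3$ forced by $\mathrm{ch}_2$), and the BMT inequality is the right instrument to reach for, but the central step of your bound does not close. Plugging $\mathrm{ch}(E)=(0,2,-8,e)$ into Theorem \ref{BMT14:stability_threefolds}, with $\mathrm{ch}^{\beta}_{0}=0$, $\mathrm{ch}^{\beta}_{1}=2$, $\mathrm{ch}^{\beta}_{2}=-8-2\beta$, $\mathrm{ch}^{\beta}_{3}=e+8\beta+\beta^{2}$, gives
\begin{equation*}
e \;\le\; \frac{\alpha^{2}+(\beta+4)^{2}+48}{3},
\end{equation*}
a bound that depends on the point $(\alpha,\beta)$ at which $E$ is known to be semistable; it is not a ``clean linear bound forcing $e\le 49/3$''. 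It yields $49/3$ only if $E$ is semistable somewhere on or inside the circle $(\beta+4)^{2}+\alpha^{2}=1$, and nothing in the hypothesis provides such a point: in the paper's own application (Theorem \ref{thm3.10}) the object is only known to be semistable on the wall $(\beta+4)^{2}+\alpha^{2}=4$, where this inequality gives merely $e\le 52/3$. Nor may you optimise toward the center $(-4,0)$: there the bound would read $e\le 16<49/3$, which is false for $\OO_Q(-3)$ --- precisely because $\OO_Q(-3)$ is destabilized by $\OO(-3)$ inside the radius-one circle. So ``the allowed range of $(\alpha,\beta)$'' is itself the unknown of the problem, and determining it is the entire content of the proof. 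What is missing is a wall-crossing induction: for the class $(0,2,-8)$ every tilt-wall is a semicircle centered at $(-4,0)$, of radius given by $16+2(\mathrm{ch}_2(F)+4\,\mathrm{ch}_1(F))/\mathrm{ch}_0(F)$ for a destabilizer $F$ of nonzero rank; one must show that either $E$ stays semistable down to arbitrarily small radius (whence $e\le 16$), or $E$ is destabilized at some concentric wall, and then bound $\mathrm{ch}_3(E)$ by summing inductive bounds on $\mathrm{ch}_3$ of the Jordan--H\"older factors over every numerically possible wall. That induction --- with equality traced exactly to the radius-one wall, destabilizer $\OO(-3)$ and quotient $\OO(-5)[1]$, hence $E=\OO_Q(-3)$ --- is what the cited result {\cite[Theorem 2.20]{Sch18}} provides; the paper's proof is essentially that citation.

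Your equality analysis has a second, related gap: you plan first to prove $E$ is a pure sheaf pushed forward from a degree-two surface and then count lengths, but semistability at a point does not make $E$ a sheaf. The object $\OO(-3)\oplus\OO(-5)[1]$ is tilt-semistable on the radius-one wall (both summands lie in $\mathrm{Coh}^{\beta}(\P^3)$ for $-5<\beta<-3$, are tilt-stable there, and have equal slope on the wall) and has Chern character exactly $(0,2,-8,49/3)$, yet it is neither a sheaf nor isomorphic to $\OO_Q(-3)$. So any purity/pushforward argument must use strictly more than the stated hypothesis (stability, or semistability off the wall), and in practice the structure of $E$ falls out of the destabilizing sequence $\OO(-3)\hookrightarrow E \twoheadrightarrow \OO(-5)[1]$ produced by the wall analysis, not from a classification of rank-one sheaves on quadrics. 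Your length count $\mathrm{ch}_3(\iota_{Q*}\II_{Z/Q}(-3))=49/3-\mathrm{length}(Z)$ is correct, but it is the easy half; producing the pushforward (equivalently, extension) structure is exactly what your proposal leaves unproved.
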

\begin{proof}
The first part is a special case of {\cite[Theorem 2.20]{Sch18}}. For the second part, the proof of {\cite[Theorem 2.20]{Sch18}} shows that in the case of equality, E becomes unstable at the wall with radius one with same center as the first wall, i.e., the wall $(\beta +4)^{2}+\alpha^{2}=1$, and also the destabilizing subobject of $E$ must have rank one. Therefore, the destabilizing short exact sequence is of the form $\mathcal{O}(-3) \hookrightarrow E \twoheadrightarrow \mathcal{O}(-5)[1]$, and so we will have $E=  \mathcal{O}_{ Q}(-3)$ for a quadric surface $Q$ in $\mathbb{P}^{3}$.
\end{proof} 

Before describing the walls in the space of Bridgeland stability conditions, we need the following result and Remarks (where $H_{\beta}^{i}(E)$'s are the cohomology objects in $ \mathrm{Coh}^{  \beta}(\mathbb{P}^{3})$):

\begin{lem}[{\cite[Lemma 8.9]{BMS}}] \label{BMS8.9}
 Let $E \in \mathrm{Coh}^{ \alpha, \beta}(\mathbb{P}^{3})$ be a $\lambda_{\alpha, \beta, s}$-semistable object, for all $s\gg1$ sufficiently big. Then it satisfies one of the following conditions:
 \\(a) $H_{\beta}^{-1}(E)=0$ and  $H_{\beta}^{0}(E)$ is $\nu_{\alpha, \beta}$-semistable.
 \\(b)  $H_{\beta}^{-1}(E)$ is $\nu_{\alpha, \beta}$-semistable and  $H_{\beta}^{0}(E)$ is either $0$ or supported in dimension $0$. 
 Moreover, if $H_{\beta}^{-1}(E)$ is $\nu_{\alpha, \beta}$-stable, $H_{\beta}^{0}(E)$ is either $0$ or zero dimensional torsion sheaf, and $\mathrm{\mathrm{Hom}}(\mathcal{O}_{p}, E)=0$ for all points $p \in \mathbb{P}^{3}$, then $E $ is  $\lambda_{\alpha, \beta, s}$-stable, for all $s\gg1$ sufficiently big. 
\end{lem}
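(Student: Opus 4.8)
The plan is to use the large-volume asymptotics of the slope $\lambda_{\alpha,\beta,s}$ to reduce Bridgeland (semi)stability to the tilt-stability $\nu_{\alpha,\beta}$. The crucial observation is that $\mathrm{Im}(Z_{\alpha,\beta,s})=\mathrm{ch}^{\beta}_{2}-(\alpha^{2}/2)\mathrm{ch}^{\beta}_{0}$ is independent of $s$, so that for any object $F$ with $\mathrm{Im}(Z_{\alpha,\beta,s}(F))>0$ one has
$$\lambda_{\alpha,\beta,s}(F)=-\frac{(s+1/6)\alpha^{2}}{\nu_{\alpha,\beta}(F)}+\frac{\mathrm{ch}^{\beta}_{3}(F)}{\mathrm{Im}(Z_{\alpha,\beta,s}(F))}.$$
Thus to leading order in $s$ the Bridgeland slope is a decreasing function of the tilt slope, and for $s\gg 1$ the ordering of phases among objects of positive imaginary part is governed by $\nu_{\alpha,\beta}$ (larger $\nu$ gives larger phase). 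In particular objects in $\mathcal{F}_{\alpha,\beta}[1]$ acquire phase tending to $1$, objects with $\mathrm{ch}^{\beta}_{1}>0$ in the torsion part have phase tending to $0$, and the skyscraper sheaves $\mathcal{O}_{p}$ (with $\mathrm{ch}=(0,0,0,1)$) sit at the maximal phase $1$ for every $s$.

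For the structural statements (a) and (b), I would work with the canonical sequence $0\to H^{-1}_{\beta}(E)[1]\to E\to H^{0}_{\beta}(E)\to 0$ in $\mathrm{Coh}^{\alpha,\beta}(\mathbb{P}^{3})$ and argue by contradiction using this phase ordering. If the torsion piece $H^{0}_{\beta}(E)$ were not $\nu_{\alpha,\beta}$-semistable, its minimal-slope tilt quotient would be a quotient of $E$ whose $\lambda_{\alpha,\beta,s}$ drops below that of $E$ for $s\gg 1$, contradicting semistability; dually, a tilt-destabilizing subobject of $H^{-1}_{\beta}(E)$ produces a $\lambda$-destabilizing subobject. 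Hence both cohomology objects are $\nu_{\alpha,\beta}$-semistable. The dichotomy between (a) and (b) then follows from phase separation: if $H^{-1}_{\beta}(E)\neq 0$ had positive-dimensional support, the subobject $H^{-1}_{\beta}(E)[1]$ would have phase tending to $1$ while any positive-dimensional part of the quotient $H^{0}_{\beta}(E)$ would have phase tending to $0$; these cannot coexist in a $\lambda_{\alpha,\beta,s}$-semistable object for $s$ large, so $H^{-1}_{\beta}(E)\neq 0$ forces $H^{0}_{\beta}(E)$ to be supported in dimension $0$.

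For the converse, assuming $H^{-1}_{\beta}(E)$ is $\nu_{\alpha,\beta}$-stable, $H^{0}_{\beta}(E)$ is $0$ or zero-dimensional, and $\mathrm{Hom}(\mathcal{O}_{p},E)=0$ for all $p$, I would verify $\lambda_{\alpha,\beta,s}$-stability directly. Taking the long exact cohomology sequence of a hypothetical destabilizing subobject $A\hookrightarrow E$, the $\nu_{\alpha,\beta}$-stability of $H^{-1}_{\beta}(E)$ pins down the possible $H^{-1}_{\beta}(A)$ and forces any candidate destabilizer, for $s\gg 1$, to be concentrated at the maximal phase $1$; such a subobject must be a direct sum of point sheaves $\mathcal{O}_{p}$, which is excluded precisely by the vanishing $\mathrm{Hom}(\mathcal{O}_{p},E)=0$. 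This upgrades semistability to stability for all large $s$.

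The main obstacle I anticipate is uniformity: the asymptotic slope comparison holds for each fixed numerical class, but to produce a single threshold $s_{0}$ beyond which the conclusion holds for $E$ one must bound the numerical classes of the potential destabilizing subobjects and quotients, so that only finitely many comparisons are relevant. This is where the support property and the local finiteness of walls (Theorem \ref{WC}) enter, and it is the technically delicate point. A secondary nuisance is the treatment of the borderline objects with $\mathrm{ch}^{\beta}_{1}=0$ (sheaves supported in dimension $\le 1$, together with the skyscrapers), whose $\lambda_{\alpha,\beta,s}$ stays bounded and must be tracked through the $O(1)$ term in the formula above rather than through the dominant $-s\alpha^{2}/\nu_{\alpha,\beta}$ term.
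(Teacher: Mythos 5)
This lemma is nowhere proved in the paper: it is quoted verbatim from \cite[Lemma 8.9]{BMS}, so the only meaningful benchmark is the argument given there, and your sketch is essentially that argument. Your key formula
$$\lambda_{\alpha,\beta,s}(F)=-\frac{(s+1/6)\alpha^{2}}{\nu_{\alpha,\beta}(F)}+\frac{\mathrm{ch}^{\beta}_{3}(F)}{\mathrm{Im}(Z_{\alpha,\beta,s}(F))}$$
is correct (the imaginary part is indeed $s$-independent), and the asymptotic picture you draw --- objects of $\mathcal{F}_{\alpha,\beta}[1]$ drift to phase $1$, torsion-part objects with $\mathrm{ch}^{\beta}_{1}>0$ drift to phase $0$, skyscrapers $\mathcal{O}_{p}$ sit at phase $1$ for every $s$ --- together with the canonical sequence $H_{\beta}^{-1}(E)[1]\to E\to H_{\beta}^{0}(E)$ and the use of $\mathrm{Hom}(\mathcal{O}_{p},E)=0$ in the converse, is exactly the mechanism of the cited proof. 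Three points need tightening. (i) The blanket claim that ``larger $\nu$ gives larger phase'' among objects of positive imaginary part is false across signs: $\nu\mapsto -1/\nu$ is increasing on $(0,\infty)$ and on $(-\infty,0)$ separately, but every negative-$\nu$ (shifted) object lies above every positive-$\nu$ one; your subsequent sentences state the correct picture, so this is only a wording slip. (ii) In the dichotomy step, the quotients to be excluded when $H_{\beta}^{-1}(E)\neq 0$ are not only those with $\mathrm{ch}^{\beta}_{1}>0$ (phase tending to $0$) but also sheaves with one-dimensional support, whose phase is constant in $s$ and bounded away from $1$; treating these (which you defer to the ``$O(1)$ term'' remark) is precisely what yields ``supported in dimension $0$'' rather than ``dimension $\leq 1$'' in (b). (iii) Your uniformity worry is genuine only for the ``Moreover'' direction: in the forward direction one compares $E$ against the finitely many fixed tilt-HN pieces of its two cohomologies, so no uniform threshold in $s$ is needed there; and in the converse, the potential equal-asymptotic-phase subobjects are a priori not just direct sums of $\mathcal{O}_{p}$'s but also objects $G[1]$ with $\nu_{\alpha,\beta}(G)=0$ mapping into $H_{\beta}^{-1}(E)[1]$ --- these must be excluded using the assumed $\nu_{\alpha,\beta}$-stability of $H_{\beta}^{-1}(E)$, as you indicate, and this is where the actual case analysis lives. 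With these points carried out, your proposal reproduces the proof that the paper outsources to \cite{BMS}.
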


\begin{rmk} \label{explanation}
Note that (a) and (b)  correspond to  semistable objects with respect to the stability conditions on the left and right of the hyperbola $\H$, respectively. 
\end{rmk}

Lemma \ref{BMS8.9} and Remark \ref{explanation} allow us to transfer everything from $\Stab^{tilt}(\mathbb{P}^{3})$ to $\Stab(\mathbb{P}^{3})$.

\begin{thm} \label{thm3.10}
For the  walls    $(\beta +4)^{2}+\alpha^{2}=4$ and $(\beta + 4.5)^{2}+\alpha^{2}=8.25$  in $\Stab^{tilt}$ with respect to $v$, all (tilt and Bridgeland) strictly semistable objects of Chern character $v$ can be obtained from the extensions of the  pairs of tilt (and Bridgeland) semistable objects   $\langle \mathcal{O}(-2), \mathcal{O}_{ Q}(-3)\rangle$ and $\langle \mathcal{I}_{C_{2}}(-1), \mathcal{O}_{P}(-4)\rangle$, respectively, where   $P$ is a plane in $\mathbb{P}^{3}$, $C_{2}$  a conic, and $Q$ a (possibly singular) quadric surface in $\mathbb{P}^{3}$.
\end{thm}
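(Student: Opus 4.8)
The plan is to turn the identification of the strictly semistable objects into a numerical pinching argument: Proposition \ref{Lemma 9} already records the Chern characters of the destabilizing factors up to the single invariant $\mathrm{ch}_3$, and I would determine that invariant by playing two opposite nonnegativity constraints against each other, then read off the factors from Lemma \ref{lem:lem_7} and Lemma \ref{(0, >)}.

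First I would fix the setup common to both walls. Let $E$ be strictly semistable of class $v$ on one of the two walls. By Proposition \ref{Lemma 9} there is a destabilizing short exact sequence whose positive-rank term $F$ has $\mathrm{ch}^{-4}_{\leq 2}(F)=(1,2,2)$ on the first wall and $(1,3,5/2)$ on the second; since $\mathrm{ch}_0(E)=1$, the complementary term $G$ has rank $0$. By Remark \ref{remJH} both $F$ and $G$ are tilt-semistable all along the wall, so the structural Lemmas \ref{lem:lem_7} and \ref{(0, >)} apply. Untwisting leaves a single unknown $f:=\mathrm{ch}_3(F)$: on the first wall $\mathrm{ch}(F)=(1,-2,2,f)$ and $\mathrm{ch}(G)=(0,2,-8,15-f)$, while on the second $\mathrm{ch}(F)=(1,-1,-3/2,f)$ and $\mathrm{ch}(G)=(0,1,-9/2,15-f)$.

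For the first wall I would evaluate $F$ at $\beta=-3$, where $\mathrm{ch}^{-3}_{\leq 2}(F)=(1,1,1/2)$, so Lemma \ref{lem:lem_7}(1) gives $F\cong\mathcal{I}_Z(-2)$ with $Z$ of length $-f-\tfrac43$; nonnegativity of this length forces $f\le -\tfrac43$. Applying Lemma \ref{(0, >)} to $G$ gives $\mathrm{ch}_3(G)=15-f\le \tfrac{49}{3}$, i.e. $f\ge -\tfrac43$. Hence $f=-\tfrac43$, so $Z=\emptyset$ and $F=\mathcal{O}(-2)$, while the equality case of Lemma \ref{(0, >)} yields $G=\mathcal{O}_Q(-3)$. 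The second wall is entirely parallel: evaluating $F$ at $\beta=-2$ gives $\mathrm{ch}^{-2}_{\leq 2}(F)=(1,1,-3/2)$, so Lemma \ref{lem:lem_7}(1) with $D=2$ produces $F\cong\mathcal{I}_{C_2}(-1)$ together with $\tfrac{29}{6}-f$ points, forcing $f\le \tfrac{29}{6}$; Lemma \ref{lem:lem_7}(2) identifies $G\cong\mathcal{I}_{Z/P}(-4)$ with $Z$ of length $f-\tfrac{29}{6}$, forcing $f\ge \tfrac{29}{6}$. Thus $f=\tfrac{29}{6}$, $F=\mathcal{I}_{C_2}(-1)$ and $G=\mathcal{O}_P(-4)$. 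In each case $F$ and $G$ have the same phase on the wall, so $E$ is an extension of the two listed objects (in one order or the other), which is the assertion for tilt-stability; the Bridgeland statement then follows from Lemma \ref{BMS8.9}(a) and Remark \ref{explanation}, since on the relevant side of $\H$ a Bridgeland-semistable object of class $v$ is a single tilt-semistable sheaf $H_\beta^{0}(E)$, reducing it to the tilt computation just performed.

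The step I expect to require the most care is the identification of the rank-$0$ factor on the first wall. Unlike $F$, and unlike the second-wall factor $G$ (which has $\mathrm{ch}_1=1$ and is covered directly by Lemma \ref{lem:lem_7}(2)), the first-wall $G$ has $\mathrm{ch}_1=2$ and is supported on a quadric, so it falls outside Lemma \ref{lem:lem_7}; pinning it down as $\mathcal{O}_Q(-3)$ genuinely needs both the sharp bound $e\le \tfrac{49}{3}$ of Lemma \ref{(0, >)} and its equality analysis. The point that makes the whole argument close is that the two nonnegativity constraints meet exactly, at $f=-\tfrac43$ on the first wall and at $f=\tfrac{29}{6}$ on the second, leaving no room for extra points or an embedded subscheme; checking that these opposing bounds coincide rather than leaving a gap is where the bookkeeping must be done honestly.
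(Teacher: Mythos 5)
Your proposal is correct and takes essentially the same route as the paper's own proof: Proposition \ref{Lemma 9} to pin down the numerical factors, Lemma \ref{lem:lem_7} and Lemma \ref{(0, >)} (with its equality case) to identify them, and the same pair of opposing nonnegativity bounds pinching the last invariant exactly (your $f=-\tfrac43$, resp.\ $f=\tfrac{29}{6}$, correspond to the paper's twisted values $e=\tfrac43$, resp.\ $e=\tfrac32$, under the change of variable between $\mathrm{ch}_3$ and $\mathrm{ch}_3^{-4}$). The only cosmetic difference is that you bookkeep with untwisted Chern characters, and you spell out the tilt-to-Bridgeland transfer via Lemma \ref{BMS8.9} and Remark \ref{explanation}, which the paper invokes just before the theorem rather than inside its proof.
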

\begin{proof}
For the wall $(\beta + 4.5)^{2}+\alpha^{2}=8.25$, Proposition \ref{Lemma 9} implies $\mathrm{ch}^{-4}(F)=(1,3,5/2,e)$ and $\mathrm{ch}^{-4}(G)=(0,1,-1/2,5/3-e)$. The former implies $\mathrm{ch}^{-2}(F)=(1,1,-3/2,e-1/3)$. As the wall intersects the line $\beta=-2$, Remark \ref{remJH} and Lemma \ref{lem:lem_7} implies $F \cong \mathcal{I}_{C_{2}}(-1)$, where $C_{2}$ is a degree two curve plus $7/6 -e +1/3= 3/2 -e$ points. Lemma \ref{lem:lem_7} shows $G \cong \mathcal{I}_{Z/P}(-4+(-1/2)+1/2)=\mathcal{I}_{Z/P}(-4)$, in which $Z$ is a zero dimensional sub-scheme of length $e-3/2$, and $P$ is a plane in $\mathbb{P}^{3}$, containing $Z$. Non-negativity of lengths implies $e=3/2$, and so  $F =
\mathcal{I}_{C_{2}}(-1)$ and $G=\mathcal{O}_{P}(-4)$, in which $C_{2}$ is a connected conic Cohen-Macaulay curve. 

For the wall $(\beta +4)^{2}+\alpha^{2}=4$, we have $\mathrm{ch}^{-4}(F)=(1,2,2,e)$, and thus $\mathrm{ch}^{-3}(F)=(1,1,1/2,e-7/6)$. Using Lemma \ref{lem:lem_7}   implies $F \cong \mathcal{I}_{Z}(-2)$, in which $Z$ is a zero dimensional sub-scheme of length $1/6-e+7/6=4/3-e$. As the length is a non-negative integer, we have $e \leq 4/3$. On the other hand, $\mathrm{ch}(G)=(0,2,-8,53/3-e)$. Applying Lemma \ref{(0, >)}  implies $53/3-e \leq 49/3$ or $e \geq 4/3$. Therefore $e=4/3$ and $Z$ is a zero dimensional sub-scheme of length $0$,  i.e., $F \cong \mathcal{O}(-2)$. We have $\mathrm{ch}(G)=(0,2,-8,49/3)$, and hence Lemma \ref{(0, >)} implies the result.

\end{proof}

 \begin{lem} [{\cite[Corollary 11.4]{Huy}}]  \label{lemhuy} \label{exttrihuy}
  Let $j\colon Y\hookrightarrow X$ be a smooth hypersurface. Then for any $\mathcal{F} \in \mathrm{D}^{b}(Y)$ there exists a distinguished triangle
  $$\mathcal{F} \otimes \mathcal{O}_{Y}(-Y)[1] \rightarrow j^{*}j_{*}\mathcal{F} \rightarrow \mathcal{F}.$$
  \end{lem}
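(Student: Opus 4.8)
The plan is to first compute $j^{*}j_{*}\mathcal{O}_{Y}$ by hand and then promote the answer to an arbitrary $\mathcal{F}$ by a projection-formula argument. The starting point is the structure sequence of the smooth divisor $Y$, namely the short exact sequence of sheaves on $X$
$$0 \to \mathcal{O}_{X}(-Y) \xrightarrow{\,s\,} \mathcal{O}_{X} \to j_{*}\mathcal{O}_{Y} \to 0,$$
where $s$ is the canonical section cutting out $Y$ (using $\mathcal{I}_{Y}\cong\mathcal{O}_{X}(-Y)$). This exhibits $j_{*}\mathcal{O}_{Y}$ as the two-term complex of locally free sheaves $[\mathcal{O}_{X}(-Y)\to\mathcal{O}_{X}]$, so it is in particular a distinguished triangle in $\Db(X)$.

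The first step is to apply the (triangulated) derived functor $j^{*}=Lj^{*}$ to this triangle. Since $j^{*}\mathcal{O}_{X}(-Y)=\mathcal{O}_{Y}(-Y)$ and $j^{*}\mathcal{O}_{X}=\mathcal{O}_{Y}$, and the induced map $\mathcal{O}_{Y}(-Y)\to\mathcal{O}_{Y}$ is multiplication by the restricted section $s|_{Y}$, which vanishes identically on $Y$, this map is zero. The triangle therefore degenerates and I would read off
$$j^{*}j_{*}\mathcal{O}_{Y}\;\cong\;\mathcal{O}_{Y}\oplus\mathcal{O}_{Y}(-Y)[1],$$
fitting into the distinguished triangle $\mathcal{O}_{Y}(-Y)[1]\to j^{*}j_{*}\mathcal{O}_{Y}\to\mathcal{O}_{Y}\to\mathcal{O}_{Y}(-Y)[2]$; here $\mathcal{O}_{Y}(-Y)$ is the conormal bundle $N^{\vee}_{Y/X}$. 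This step is essentially formal once the vanishing of $s|_{Y}$ is observed.

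The second, and key, step is to pass from $\mathcal{O}_{Y}$ to an arbitrary $\mathcal{F}\in\Db(Y)$ by establishing the projection-type isomorphism
$$j^{*}j_{*}\mathcal{F}\;\cong\;\mathcal{F}\otimes_{\mathcal{O}_{Y}} j^{*}j_{*}\mathcal{O}_{Y},$$
with $\otimes$ the derived tensor product. I would deduce this from the projection formula $j_{*}\mathcal{A}\otimes\mathcal{B}\cong j_{*}(\mathcal{A}\otimes j^{*}\mathcal{B})$ for the closed immersion $j$: applying it with $(\mathcal{A},\mathcal{B})=(\mathcal{O}_{Y},\,j_{*}\mathcal{F})$ and with $(\mathcal{A},\mathcal{B})=(\mathcal{F},\,j_{*}\mathcal{O}_{Y})$ identifies both $j_{*}(j^{*}j_{*}\mathcal{F})$ and $j_{*}(\mathcal{F}\otimes j^{*}j_{*}\mathcal{O}_{Y})$ with the common object $j_{*}\mathcal{O}_{Y}\otimes j_{*}\mathcal{F}$. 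Since $j_{*}$ is exact, faithful and conservative, the resulting natural morphism on $Y$ is then an isomorphism. Tensoring the triangle of the previous step with $\mathcal{F}$ over $\mathcal{O}_{Y}$ finally yields the asserted distinguished triangle
$$\mathcal{F}\otimes\mathcal{O}_{Y}(-Y)[1]\to j^{*}j_{*}\mathcal{F}\to\mathcal{F}\to\mathcal{F}\otimes\mathcal{O}_{Y}(-Y)[2].$$

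The genuinely delicate point is this last step: because $\mathcal{F}$ is an arbitrary complex rather than a locally free sheaf, every tensor product must be taken in the derived sense, and the comparison of the two projection-formula isomorphisms must be promoted to an honest natural morphism on $Y$ before conservativity of $j_{*}$ can be invoked to descend the isomorphism from $X$. Once this bookkeeping is in place, the triangle follows immediately.
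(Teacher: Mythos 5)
First, a point of order: the paper does not prove this lemma at all --- it is imported verbatim from Huybrechts' book (the cited Corollary 11.4) --- so your proposal has to be judged on its own merits. Your Step 1 is correct: restricting the Koszul triangle $\mathcal{O}_X(-Y)\to\mathcal{O}_X\to j_*\mathcal{O}_Y$ and using $s|_Y=0$ does give $j^*j_*\mathcal{O}_Y\cong\mathcal{O}_Y\oplus\mathcal{O}_Y(-Y)[1]$. The fatal problem is Step 2: the ``projection-type isomorphism'' $j^*j_*\mathcal{F}\cong\mathcal{F}\otimes j^*j_*\mathcal{O}_Y$ is \emph{false} for general $\mathcal{F}\in\mathrm{D}^{b}(Y)$, and you correctly sense this is the delicate point, but the issue is not bookkeeping --- the statement itself fails. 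The red flag is that, combined with Step 1, it would give $j^*j_*\mathcal{F}\cong\mathcal{F}\oplus\bigl(\mathcal{F}\otimes\mathcal{O}_Y(-Y)[1]\bigr)$ for \emph{every} $\mathcal{F}$, i.e.\ the triangle of the lemma would always split; in fact it is non-split in general (its connecting morphism is governed by the Atiyah class of $\mathcal{F}$ composed with the extension class of the conormal sequence). A concrete counterexample lives in the very geometry of this paper: take $X=\mathbb{P}^3$, $Y=Q\cong\mathbb{P}^1\times\mathbb{P}^1$ a smooth quadric, and $\mathcal{F}=\mathcal{O}_Q(0,-1)$ a spinor line bundle. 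Then $j_*\mathcal{F}$ has the linear resolution $0\to\mathcal{O}_X(-2)^{\oplus 2}\to\mathcal{O}_X(-1)^{\oplus 2}\to j_*\mathcal{F}\to 0$ given by the matrix $\left(\begin{smallmatrix}x&y\\ z&w\end{smallmatrix}\right)$ whose determinant cuts out $Q$, so $j^*j_*\mathcal{F}$ is the two-term complex $[\mathcal{O}_Q(-2,-2)^{\oplus 2}\to\mathcal{O}_Q(-1,-1)^{\oplus 2}]$. On $Q$ this matrix factors through the line bundle $\mathcal{O}_Q(-2,-1)$, exhibiting the complex as the Yoneda splice of two twisted Euler sequences, one from each ruling; its truncation class in $\mathrm{Ext}^2_Q\bigl(\mathcal{F},\mathcal{F}\otimes\mathcal{O}_Q(-2,-2)\bigr)\cong \mathrm{H}^1(\mathbb{P}^1,\mathcal{O}(-2))\otimes\mathrm{H}^1(\mathbb{P}^1,\mathcal{O}(-2))\cong\mathbb{C}$ is the K\"unneth product of two nonzero Euler classes, hence nonzero. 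So $j^*j_*\mathcal{F}$ does not split, whereas $\mathcal{F}\otimes j^*j_*\mathcal{O}_Q$ does: the two objects are not isomorphic.

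This also shows why your proposed repair cannot succeed. Conservativity of $j_*$ says that a \emph{morphism} $f$ on $Y$ with $j_*f$ invertible is invertible; it does not say that $j_*A\cong j_*B$ forces $A\cong B$, and the example above is exactly an instance of failure: your two applications of the projection formula are correct, so $j_*(j^*j_*\mathcal{F})$ and $j_*(\mathcal{F}\otimes j^*j_*\mathcal{O}_Y)$ are both isomorphic to $j_*\mathcal{F}\otimes j_*\mathcal{O}_Y$, yet the objects upstairs differ --- equivalently, the comparison isomorphism on $X$ is not $j_*$ of any morphism on $Y$. The natural morphism one \emph{can} write down (unit $\mathcal{O}_X\to j_*\mathcal{O}_Y$ on one tensor factor, counit $j^*j_*\to\mathrm{id}$ on the other) is not an isomorphism: already for $\mathcal{F}=\mathcal{O}_Y$ it kills the summand $\mathcal{O}_Y(-Y)[1]$. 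The linearization $j^*j_*(-)\cong(-)\otimes j^*j_*\mathcal{O}_Y$ holds precisely when the embedding splits to first order (Arinkin--C\u{a}ld\u{a}raru); for instance it does hold when $\mathcal{F}=j^*\mathcal{E}$ is restricted from $X$, since then the projection formula can be applied on $X$ \emph{before} pulling back, but not for a general $\mathcal{F}$ such as the spinor bundle above. A correct proof must produce the (generally non-split) triangle directly: for a single sheaf, compute the two cohomology sheaves of $j^*j_*\mathcal{F}$ from the Koszul resolution and take the truncation triangle; for arbitrary complexes, run that truncation at the level of the Fourier--Mukai kernel of $j^*j_*$, namely the structure complex of the derived fibre product $Y\times_X Y$ inside $Y\times Y$, whose only cohomology sheaves are $\Delta_*\mathcal{O}_Y$ and $\Delta_*\mathcal{O}_Y(-Y)$; feeding its truncation triangle through the kernel formalism yields the lemma for all $\mathcal{F}$ at once. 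That, and not an object-by-object splitting argument, is the substance of the cited result.
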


\begin{lem} [{\cite[Corollary 3.34]{Huy}}]  \label{lemhuyf!} 

Let $f \colon X \to Y$  be a morphism of smooth schemes over a field $k$. For any $\mathcal{F} \in \mathrm{D}^{b}(X)$ and  $\mathcal{G} \in \mathrm{D}^{b}(Y)$ there exists a functorial isomorphism (which is called Grothendieck-Verdier duality)

$$f_*Hom(\FF, f^! \GG)=Hom(f_* \FF, \GG).$$

  \end{lem}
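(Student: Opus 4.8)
The plan is to deduce the stated isomorphism from the fundamental adjunction underlying Grothendieck duality, read at the level of complexes of sheaves. Since $X$ and $Y$ are smooth and $f$ is (here) proper, the twisted inverse image admits the explicit description $f^{!}\GG \cong f^{*}\GG \otimes \omega_{X/Y}[\dim X - \dim Y]$, where $\omega_{X/Y} = \omega_{X} \otimes f^{*}\omega_{Y}^{\vee}$, and $f^{!}$ is right adjoint to $\mathbf{R}f_{*}$. Granting this, the \emph{global} form of duality is immediate from the adjunction (applied to all shifts, so as to upgrade from $\Hom$-groups to $\RHom$-complexes): one gets a functorial isomorphism $\RHom_{X}(\FF, f^{!}\GG) \cong \RHom_{Y}(\mathbf{R}f_{*}\FF, \GG)$. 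Interpreting the plain $\mathrm{Hom}$'s in the statement as the derived local functors, the content of the lemma is the \emph{relative} upgrade $\mathbf{R}f_{*}\RlHom_{X}(\FF, f^{!}\GG) \cong \RlHom_{Y}(\mathbf{R}f_{*}\FF, \GG)$, an isomorphism of objects of $\Db(Y)$.

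To obtain this sheaf-level statement I would localize over the target. For an open $V \subseteq Y$ with preimage $U = f^{-1}(V)$ and restricted morphism $g = f|_{U}\colon U \to V$, flat base change along the open immersion $V \hookrightarrow Y$ gives $(\mathbf{R}f_{*}\FF)|_{V} \cong \mathbf{R}g_{*}(\FF|_{U})$, while compatibility of the twisted inverse image with open restriction gives $(f^{!}\GG)|_{U} \cong g^{!}(\GG|_{V})$. Computing sections of both sides of the desired isomorphism over $V$ then reduces, via these identifications, to the global duality isomorphism $\RHom_{U}(\FF|_{U}, g^{!}(\GG|_{V})) \cong \RHom_{V}(\mathbf{R}g_{*}(\FF|_{U}), \GG|_{V})$ for the morphism $g$. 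Because all of these comparison isomorphisms are natural in the open set, they are compatible with further restriction and therefore glue to the claimed isomorphism in $\Db(Y)$.

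The genuinely deep input is the existence of the right adjoint $f^{!}$ together with the adjunction $\mathbf{R}f_{*} \dashv f^{!}$ — this is Grothendieck duality itself, which I would take as established (for smooth projective schemes it is standard and is exactly what is invoked from \cite{Huy}). Granting that, the main obstacle is purely one of bookkeeping: one must check that the derived functors stay within $\Db$ (ensured by smoothness and properness, which keep everything bounded and coherent) and that flat base change for $\mathbf{R}f_{*}$, compatibility of $f^{!}$ with open immersions, and the projection-formula manipulations are all sufficiently functorial to sheafify. No single step is difficult once the adjunction is in hand; the care lies in verifying these compatibilities are natural enough for the local-to-global gluing to produce a \emph{canonical} isomorphism rather than merely an abstract one.
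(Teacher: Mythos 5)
The paper offers no proof of this lemma at all: it is quoted, with hypotheses, from Huybrechts \cite[Cor.\ 3.34]{Huy}, who in turn refers the proof to the classical duality literature. So your proposal must stand on its own, and as written it does not: the fatal step is exactly the one you dismiss as bookkeeping, namely the final gluing. Having, for every open $V \subseteq Y$ with $U = f^{-1}(V)$, an isomorphism between the section complexes of the two sides, compatible with restriction, does \emph{not} produce an isomorphism in $\Db(Y)$. Morphisms in a triangulated category do not form a sheaf: the presheaf $V \mapsto \Hom_{\Db(V)}(A|_V, B|_V)$ fails descent, with the obstruction controlled by the local-to-global Ext spectral sequence through the terms $H^{\geq 1}(Y, \mathcal{E}xt^{<0}(A,B))$, which need not vanish when $A = \mathbf{R}f_*\RlHom_X(\FF, f^!\GG)$ and $B = \RlHom_Y(\mathbf{R}f_*\FF,\GG)$ are genuine complexes. (The familiar example of $\OO \oplus \OO(2)$ and $\OO(1)^{\oplus 2}$ on $\P^1$ — locally isomorphic, compatibly, but not globally isomorphic — shows this is not a pedantic worry.) Your upgrade from Hom-groups to $\RHom$-complexes in the global statement survives only because complexes of $k$-vector spaces are formal; precisely that crutch is unavailable at the sheaf level, which is where the lemma actually lives.

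The standard repair inverts your order of operations: first build a single globally defined comparison map, then check it is an isomorphism locally — legitimate, since a morphism in $\Db(Y)$ is an isomorphism if and only if it is so on cohomology sheaves. The global map is the composite
\begin{equation*}
\mathbf{R}f_*\RlHom_X(\FF, f^!\GG) \longrightarrow \RlHom_Y\bigl(\mathbf{R}f_*\FF, \mathbf{R}f_*f^!\GG\bigr) \longrightarrow \RlHom_Y\bigl(\mathbf{R}f_*\FF, \GG\bigr),
\end{equation*}
where the first arrow is the natural push-forward map on sheaf Homs (lax monoidality of $\mathbf{R}f_*$ composed with evaluation) and the second is induced by the counit $\mathbf{R}f_*f^!\GG \to \GG$ of the adjunction $\mathbf{R}f_* \dashv f^!$. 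Your base-change computations — $(\mathbf{R}f_*\FF)|_V \cong \mathbf{R}g_*(\FF|_U)$, $(f^!\GG)|_U \cong g^!(\GG|_V)$ — then serve their correct purpose: verifying over (affine) opens $V$ that this fixed global map restricts to the adjunction isomorphism for $g = f|_U$, hence is an isomorphism. Two minor points in your favor: you correctly note that properness (projectivity in \cite{Huy}) must be added to the stated hypotheses for $f^!$ to be right adjoint to $\mathbf{R}f_*$ — the paper's phrasing omits it, though its only use is for the closed immersion $\iota_P$, where it holds — and the explicit formula $f^!\GG \cong f^*\GG \otimes \omega_{X/Y}[\dim X - \dim Y]$ is indeed valid in that setting, though it plays no essential role in the argument.
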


\begin{cor} \label{duallem}
Let $\iota_P \colon P \hookrightarrow \P^3$  be the inclusion map. For any $\mathcal{F} \in \mathrm{D}^{b}(P)$ and  $\GG \in \mathrm{D}^{b}(\P^3)$, we have $(\iota_{P_* }\FF)^{\vee}=\iota_{P_*} \FF^{\vee} (1)[-1]$ and $(\iota^{*}_P \GG)^{\vee}= \GG^{\vee}$.
\end{cor}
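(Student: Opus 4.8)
The two identities both describe how the derived dual $(-)^{\vee}=\RlHom(-,\OO)$ interacts with pushforward and pullback along the closed immersion $\iota_P$, so the plan is to deduce them from Grothendieck--Verdier duality (Lemma \ref{lemhuyf!}) together with an explicit computation of the twisted inverse image $\iota_P^!\OO_{\P^3}$. The essential geometric input is that $P$ is a smooth hyperplane, hence a smooth divisor of codimension $1$ with $\OO_{\P^3}(P)=\OO_{\P^3}(1)$ and normal bundle $N_{P/\P^3}=\OO_P(1)$.

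\textbf{Computing $\iota_P^!\OO_{\P^3}$.} First I would pin down $\iota_P^!\OO_{\P^3}$. Since $P$ is a smooth divisor, its relative dualizing complex is $\omega_{P/\P^3}[\dim P-\dim\P^3]=\OO_P(1)[-1]$, so that $\iota_P^!\GG=\iota_P^*\GG\otimes\OO_P(1)[-1]$ for all $\GG$; in particular $\iota_P^!\OO_{\P^3}=\OO_P(1)[-1]$. If one prefers an argument from scratch, this can be read off the Koszul resolution $\iota_{P_*}\OO_P\cong[\OO_{\P^3}(-1)\to\OO_{\P^3}]$: dualizing term by term gives $(\iota_{P_*}\OO_P)^{\vee}\cong[\OO_{\P^3}\to\OO_{\P^3}(1)]\cong\iota_{P_*}\OO_P(1)[-1]$, and comparing with Lemma \ref{lemhuyf!} for $\FF=\OO_P$, $\GG=\OO_{\P^3}$ identifies $\iota_P^!\OO_{\P^3}$.

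\textbf{First identity.} I would then apply Lemma \ref{lemhuyf!} with $f=\iota_P$ and $\GG=\OO_{\P^3}$:
\[
(\iota_{P_*}\FF)^{\vee}=\RlHom(\iota_{P_*}\FF,\OO_{\P^3})=\iota_{P_*}\RlHom(\FF,\iota_P^!\OO_{\P^3})=\iota_{P_*}\RlHom(\FF,\OO_P(1)[-1]).
\]
Pulling the line bundle and the shift out of the internal hom turns the right side into $\iota_{P_*}\big(\FF^{\vee}(1)[-1]\big)$, and the projection formula (using $\OO_P(1)=\iota_P^*\OO_{\P^3}(1)$) shows this equals $\iota_{P_*}\FF^{\vee}(1)[-1]$, as claimed. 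For the second identity I would use that derived pullback commutes with the derived dual of a perfect complex; since $\P^3$ and $P$ are smooth, every object of $\mathrm{D}^b$ is perfect, so
\[
\iota_P^*(\GG^{\vee})=\iota_P^*\RlHom(\GG,\OO_{\P^3})=\RlHom(\iota_P^*\GG,\iota_P^*\OO_{\P^3})=\RlHom(\iota_P^*\GG,\OO_P)=(\iota_P^*\GG)^{\vee},
\]
where the middle isomorphism is the compatibility of $\iota_P^*$ with $\RlHom$ for perfect arguments together with $\iota_P^*\OO_{\P^3}=\OO_P$; here the right-hand side $\GG^{\vee}$ of the statement is read as $\iota_P^*\GG^{\vee}$.

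\textbf{Main obstacle.} The only genuinely delicate point is the computation of $\iota_P^!\OO_{\P^3}$, i.e. getting the normal-bundle twist $\OO_P(1)$ and the cohomological shift $[-1]$ correct; everything else is formal manipulation with adjunction, the projection formula, and the monoidality of $\iota_P^*$. I would also record that the twist $(1)$ may equally be taken on $P$ or on $\P^3$ (they agree after $\iota_{P_*}$ by the projection formula), and flag the mild abuse in the second identity whereby $\GG^{\vee}$ really denotes its restriction $\iota_P^*\GG^{\vee}$.
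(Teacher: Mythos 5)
Your proposal is correct and takes essentially the same route as the paper: the paper's entire proof is ``Apply Lemma \ref{lemhuyf!} to the inclusion map,'' i.e.\ Grothendieck--Verdier duality for $\iota_P$, which is exactly your argument, with the key computation $\iota_P^!\OO_{\P^3}=\OO_P(1)[-1]$ and the twist/shift bookkeeping made explicit. Your handling of the second identity via compatibility of pullback with duals of perfect complexes (and your remark that the twist may sit on $P$ or on $\P^3$ by the projection formula) simply fills in details the paper's one-line proof leaves implicit.
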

\begin{proof}
Apply Lemma \ref{lemhuyf!} to the inclusion map.
\end{proof}

Before describing the other walls in the space of Bridgeland stability conditions, we have the following Lemmas for more general curves.

\begin{lem} \label{Gfactor}
 Fix a vector $w=(0,1,-i-1/2, e)$. The stable objects of class $w$ for stability conditions with $\mathrm{Im}(Z_{\alpha, \beta, s}(w))<0$ near the hyperbola  $\Imm(Z_{\alpha, \beta, s}(w))=0$ are of the form $\iota_{P_{*}}(\II^\vee_Z(- i))$, where $Z$ is a zero dimensional subscheme of length $l=1/6+(i/2)(i+1)-e$, and $P$ is a plane. 
\end{lem}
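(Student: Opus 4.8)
The plan is to determine the stable objects of class $w=(0,1,-i-1/2,e)$ by analyzing the structure of a semistable object of rank zero and then identifying it explicitly. First I would apply the tilt-stability classification machinery already developed in the excerpt. Since $\mathrm{ch}_0(E)=0$ and $\mathrm{ch}_1(E)=1$, the object $E$ is supported in codimension one, and the key observation is that near the hyperbola $\mathrm{Im}(Z_{\alpha,\beta,s}(w))=0$ with $\mathrm{Im}(Z)<0$, we are working on the appropriate side where (by Remark~\ref{explanation} and the dictionary provided by Lemma~\ref{BMS8.9}) the relevant heart is $\mathrm{Coh}^{\alpha,\beta}(\mathbb{P}^3)[-1]$. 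The strategy is to reduce to the already-classified case: Lemma~\ref{lem:lem_7}, part~2, classifies tilt-semistable objects with $\mathrm{ch}^\beta(E)=(0,1,d,e)$ as $E\cong\mathcal{I}_{Z/P}(\beta+d+1/2)$ for a plane $P$ and a length-$l$ zero-dimensional subscheme $Z\subset P$. So the bulk of the work is to pass from that coherent-sheaf description to the dualized form $\iota_{P_*}(\mathcal{I}^\vee_Z(-i))$ stated here.

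The main computational input is the duality Corollary~\ref{duallem}: for $\mathcal{F}\in\mathrm{D}^b(P)$ one has $(\iota_{P_*}\mathcal{F})^\vee=\iota_{P_*}\mathcal{F}^\vee(1)[-1]$. I would first normalize $\beta$ so that Lemma~\ref{lem:lem_7} applies (for instance taking $\beta$ an integer on the wall near the hyperbola), write the tilt-semistable object as $\iota_{P_*}(\mathcal{I}_{Z/P}(k))$ for the appropriate twist $k$ determined by $d$, and then apply the duality to rewrite this in the claimed form. Concretely, because crossing to the side $\mathrm{Im}(Z)<0$ corresponds to applying $\mathbb{R}\mathcal{H}om(-,\mathcal{O})$ (the shift $[-1]$ together with the dualizing twist from Corollary~\ref{duallem}), the ideal sheaf $\mathcal{I}_{Z/P}$ on the plane gets replaced by its derived dual $\mathcal{I}^\vee_{Z/P}$, and tracking the twists via $(1)[-1]$ yields the exponent $-i$. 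The length $l$ is then pinned down by matching Chern characters: computing $\mathrm{ch}(\iota_{P_*}(\mathcal{I}^\vee_Z(-i)))$ against $w=(0,1,-i-1/2,e)$ and solving for the third-degree term forces $l=1/6+(i/2)(i+1)-e$. This last identity is a direct (if slightly tedious) Grothendieck–Riemann–Roch computation on the plane, using $\mathrm{ch}(\iota_{P_*}\mathcal{F})=\iota_{P_*}(\mathrm{ch}(\mathcal{F})\cdot\mathrm{td}(N_{P/\mathbb{P}^3})^{-1})$.

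I expect the main obstacle to be establishing \emph{stability} (as opposed to mere semistability) and the precise correspondence between the two sides of the hyperbola. The numerical classification of Lemma~\ref{lem:lem_7} gives semistable objects, but one must argue that for stability conditions strictly off the wall on the side $\mathrm{Im}(Z_{\alpha,\beta,s}(w))<0$, the object $\iota_{P_*}(\mathcal{I}^\vee_Z(-i))$ is genuinely stable, which requires ruling out destabilizing subobjects using the Bogomolov-Gieseker-type bound (Theorem~\ref{BG}) and the finiteness of walls (Theorem~\ref{WC}). The subtle point is the dualization step: one must verify that the derived dual of a tilt-stable object, suitably shifted, lands in the correct heart and remains stable there, which is where the interplay between Remark~\ref{explanation}, Lemma~\ref{BMS8.9}, and Corollary~\ref{duallem} does the real work. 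Provided the duality functor is checked to be an anti-autoequivalence respecting the relevant wall-and-chamber structure near the hyperbola, the identification of stable objects follows; verifying this compatibility carefully is the crux of the argument.
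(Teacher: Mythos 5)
Your proposal correctly guesses the shape of the answer and the role of dualization, but it is not a proof: its load-bearing step --- the assertion that ``crossing to the side $\Imm(Z_{\alpha,\beta,s})<0$ corresponds to applying $\RlHom(-,\OO)$'' --- is exactly the statement that would have to be established, and you explicitly defer it (``verifying this compatibility carefully is the crux''). No such result is available in the paper: a duality statement of this kind is known for \emph{tilt} stability, where $\RlHom(-,\OO)$ exchanges $\beta\leftrightarrow-\beta$, but for the double-tilted Bridgeland stability used here it is not proven, and proving it would be comparable in length to the argument it is meant to replace. The paper is structured precisely to avoid needing any stability-preserving duality functor. It takes an arbitrary stable object $E$ of class $w$ with $\Imm Z<0$, uses the fact that semistability on the hyperbola is independent of $s$ to let $s\to+\infty$, and applies Lemma \ref{BMS8.9} to $E[1]$: case (b) gives that $\HH^{0}_{\beta}(E)$ is tilt-semistable while $\HH^{1}_{\beta}(E)$ is a zero-dimensional torsion sheaf $T$. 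Lemma \ref{lem:lem_7} is applied only to $\HH^{0}_{\beta}(E)$, giving $\II_{Z/P}(-i)$; then stability enters twice in an essential way: the vanishing $\Hom(\OO_Z[-1],E)=0$ forces $Z=\emptyset$, hence $\HH^{0}_{\beta}(E)=\OO_P(-i)$, and after dualizing the triangle $\OO_P(-i)\to E\to T[-1]$ (Corollary \ref{duallem} is used here only as bookkeeping on an explicit complex, not as a stability statement), the vanishing $\Hom(\OO_p[-1],E)=0$ for all $p\in P$ translates into surjectivity of $\OO_P(i+1)\to T'$, which is what identifies $T'$ as a structure sheaf $\OO_Z$ of a subscheme rather than an arbitrary zero-dimensional sheaf, whence $E^\vee[1]=\iota_{P_*}\II_{Z/P}(i+1)$ and $E=\iota_{P_*}(\II_Z^\vee(-i))$.

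This also exposes a structural error in your reduction. A stable object of class $w$ on the side $\Imm Z<0$ is \emph{not} a sheaf of the form $\iota_{P_*}(\mathcal{I}_{Z/P}(k))$ to which Lemma \ref{lem:lem_7} could be applied directly and which one could then dualize: it is a genuine two-term complex with nontrivial $\HH^{1}_{\beta}$ (indeed the final answer has $\HH^{1}=\OO_Z$, nonzero whenever $l>0$). The sheaves classified by Lemma \ref{lem:lem_7} occur, for the \emph{dual} class, in the unshifted heart --- case (a) of Lemma \ref{BMS8.9} --- so your plan only makes sense as ``classify on the other side, then transport by duality,'' at which point everything hinges on the unproven compatibility above; without it, nothing rules out stable objects on the $\Imm Z<0$ side that are not duals of sheaves at all, or extensions $\OO_P(-i)\to E\to T[-1]$ whose torsion part $T$ is not of the form required. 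The Chern-character bookkeeping that pins down $l$ is fine, but it is the easy part of the lemma.
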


\begin{proof}
 Let $E$ be such an object of Chern character $w$, i.e., $E$ is (semi)stable near the hyperbola, which means it is still (semi)stable on the hyperbola  when we reach the hyperbola from the right or left. For objects with $\Imm Z_{\alpha, \beta, s} = 0$, semistability doesn't change as $s$ varies; in particular, we can let $s \to +\infty$ and apply Lemma \ref{BMS8.9} to $E[1]$. Therefore, $\HH^0_\beta(E)$ is $\nu_{\alpha, \beta}$-semistable, and $\HH^1_\beta(E)$ is a torsion sheaf $T$ with zero-dimensional support of length $m$. Notice that we have $\ch(\HH^0_\beta(E))=(0,1,-i-1/2,e+m)$. Thus, from Lemma \ref{lem:lem_7}, and varying  $\alpha, \beta$  along the wall we have $\HH^0_\beta(E)=\mathcal{I}_{Z/P}(-i)$, where $Z$ is zero dimensional subscheme of $P$. Notice that $E$ is semistable for $\Imm Z_{\alpha, \beta, s}(w) < 0$; this implies  $\Hom(\OO_Z[-1], E) = 0$, as $\OO_Z[-1]$ is semistable of phase $0$. Therefore
$\Hom(\OO_Z[-1], \II_{Z/P}(-i)) = 0$, which is possible only if $Z = \emptyset$, and thus $\HH^0_\beta(E) = \OO_P(-i)$. Having $\HH^0_\beta$ and $\HH^1_\beta$, E fits into a short exact sequence $\OO_P(-i) \to E \to T[-1]$. Dualizing this sequence, noting that $(\OO_P(-i))^\vee = \OO_P(i+1)[-1]$ (by Corollary \ref{duallem}), and letting $T':=T^\vee[3]$, we get an exact triangle
$T'[-2] \to E^\vee \to \OO_P(i+1)[-1]$. Hence we have $E^\vee[1] = (\OO_P(i+1) \to T')$. Now, $\Hom(\OO_p[-1], E) = 0$, for all $p \in P$ implies $\Hom(E^\vee[1], \OO_p[-1]) = 0$, which is equivalent to the map $\OO_P(i+1) \rightarrow T'$ being surjective. This implies $T' = \OO_Z$ for a 0-dimensional subscheme $Z$ of  $P$, and hence $E^\vee[1] = 
\iota_{P_* }\II_{Z/P}(i+1)$, where $\iota_P\colon P \hookrightarrow \P^3$. Now, Corollary \ref{duallem} implies $E = \iota_{P_* }(\II^\vee_Z(-i))$. As for the length of $Z$, using Lemma \ref{lem:lem_7} implies $l=1/6+(i/2)(i+1)-e$.

\end{proof}

\begin{lem} \label{Ffactor}
 Fix a vector $w=(1,-1,-D+1/2, e)$ for $D=1,2$. The stable objects of Chern character $w$ for stability conditions with $\mathrm{Im}(Z_{\alpha, \beta, s}(w)) <0$ near the hyperbola  $\Imm(Z_{\alpha, \beta, s}(w))=0$ are complexes $\OO_{\P^3} \to \FF$ given by the section of pure one-dimensional sheaf $\FF$ supported on a curve of degree $D$ with cokernel of length $l=3D-e-7/6 \in \mathbb{Z}_{\geq 0}$. 
\end{lem}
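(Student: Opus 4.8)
The plan is to mirror the proof of Lemma~\ref{Gfactor}: first I determine the two heart--cohomology sheaves of such an $E$, and then I reassemble $E$ into a twisted stable pair by an octahedral argument, using stability to force the purity that is needed. Let $E$ be stable near the hyperbola with $\Imm(Z_{\alpha,\beta,s}(w))<0$. By Remark~\ref{explanation} it lies to the right, so $E[1]\in\mathrm{Coh}^{\alpha,\beta}(\P^3)$; since semistability is independent of $s$ on $\Imm Z=0$, I let $s\to+\infty$ and apply Lemma~\ref{BMS8.9}(b) to $E[1]$. This gives that $A:=\HH^0_\beta(E)$ is $\nu_{\alpha,\beta}$-semistable and $T:=\HH^1_\beta(E)$ is a zero-dimensional sheaf of some length $m$, sitting in a triangle $A\to E\to T[-1]$. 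From $\ch(A)=\ch(E)+\ch(T)=(1,-1,1/2-D,e+m)$ one gets $\ch^{-2}(A)=(1,1,1/2-D,\ast)$, so Lemma~\ref{lem:lem_7}(1) in the case $d=1/2-D$ (applied at the integer $\beta=-2$) identifies $A\cong\II_W(-1)$, where $W$ is a rational degree-$D$ curve together with finitely many embedded or floating points.

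Next I clear these auxiliary points out of $W$. Exactly as in Lemma~\ref{Gfactor}, stability gives $\Hom(\OO_p[-1],E)=\Hom(\OO_p,E[1])=0$ for every $p\in\P^3$. Applying $\Hom(\OO_p,-)$ to $A[1]\to E[1]\to T$ and using $\Hom(\OO_p,T[-1])=0$, I obtain $\Ext^1(\OO_p,A)=0$ for all $p$. Since $\OO_p$ has projective dimension $3$, we have $\Ext^{\le2}(\OO_p,\OO(-1))=0$, so the structure sequence $0\to A\to\OO(-1)\to\OO_W(-1)\to0$ yields $\Ext^1(\OO_p,A)\cong\Hom(\OO_p,\OO_W(-1))$; its vanishing for all $p$ forces $\OO_W$ to be pure, i.e.\ $W$ is a Cohen--Macaulay (rational) curve of degree $D$.

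Now I reassemble $E$ into a stable pair. Because $T$ is zero-dimensional, $\Ext^2(T,\OO(-1))=0$, so applying $\Hom(-,\OO(-1))$ to $A\to E\to T[-1]$ shows the inclusion $A=\II_W(-1)\hookrightarrow\OO(-1)$ extends to a morphism $\tilde s\colon E\to\OO(-1)$. Put $\FF:=\cone(\tilde s)$. The octahedron on $A\to E\xrightarrow{\tilde s}\OO(-1)$ (whose composite is the inclusion) produces a triangle $T[-1]\to\OO_W(-1)\to\FF$, from which $\FF$ is a \emph{sheaf}, fitting in $0\to\OO_W(-1)\to\FF\to T\to0$, and it exhibits $s\colon\OO(-1)\to\FF$ as the composite $\OO(-1)\twoheadrightarrow\OO_W(-1)\hookrightarrow\FF$. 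Hence $E\cong\bigl[\OO(-1)\xrightarrow{s}\FF\bigr]$ is the two-term complex given by the section $s$, with $\ker s=A=\II_W(-1)$, $\im s=\OO_W(-1)$ and $\mathrm{coker}\,s=T$. Finally, applying $\Hom(\OO_p,-)$ to $E\to\OO(-1)\to\FF$ and using $\Hom(\OO_p,\OO(-1))=0$ together with $\Hom(\OO_p,E[1])=0$ gives $\Hom(\OO_p,\FF)=0$ for all $p$, so $\FF$ is pure of dimension one, supported on the degree-$D$ curve $W$.

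It remains to compute the length of $\mathrm{coker}\,s=T$. From the triangle $E\to\OO(-1)\to\FF$ one has $\ch(\FF)=\ch(\OO(-1))-\ch(E)=(0,0,D,-1/6-e)$, while $\ch(\OO_W(-1))=(0,0,D,1-3D)$ because $W$ is rational ($\chi(\OO_W)=1$); subtracting gives $\ch(T)=(0,0,0,3D-e-7/6)$, so $l=3D-e-7/6$, which is automatically a non-negative integer as the length of a genuine cokernel. I expect the crux to be the third step: recognising the complex $E$ as an honest stable pair, that is, producing $\tilde s$ and verifying that $\FF=\cone(\tilde s)$ is a pure sheaf rather than a genuine complex. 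This is exactly where stability must be used twice — first to remove the auxiliary points from $W$, then to force purity of $\FF$ — and the recombination of the points of $W$ and the torsion $T$ into the single zero-dimensional cokernel of the section is the delicate bookkeeping.
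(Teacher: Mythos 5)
Your proposal is correct and follows essentially the same route as the paper's proof: apply Lemma \ref{BMS8.9} (via $s\to+\infty$) to identify $\HH^0_\beta(E)$ and $\HH^1_\beta(E)$, invoke Lemma \ref{lem:lem_7} to recognize $\HH^0_\beta(E)$ as $\II_{C_D}(-1)$, and use stability in the form $\Hom(\OO_p[-1],E)=0$ to rule out embedded/floating points and pin down the cokernel length. The only difference is one of detail: where the paper concludes with ``by definition of stable pairs,'' you carry out the reconstruction explicitly (lifting the inclusion $A\hookrightarrow\OO(-1)$ to $\tilde s\colon E\to\OO(-1)$, building $\FF=\cone(\tilde s)$ by the octahedral axiom, and verifying its purity), which is a welcome elaboration of the same argument rather than a different one.
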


\begin{proof}
Let $E$ be such an object of Chern character $w$. A similar argument as in the proof of Lemma \ref{Gfactor} implies $\HH^1_\beta(E)$ is a torsion sheaf $T$ with zero-dimensional support of length $l$, $\HH^0_\beta(E)$ is $\nu_{\alpha, \beta}$-semistable,  and  $\ch(\HH^0_\beta(E))=(1,-1,-D+1/2,e+l)$. Therefore, from Lemma \ref{lem:lem_7}, and varying  $\alpha, \beta$  along the wall we have $\HH^0_\beta(E)=\mathcal{I}_{C_D}(-1)$ where $C_D$ is a rational degree $D$ curve possibly with $3D-e-l-7/6$ embedded or floating points. With the same reasoning as in the proof of Lemma \ref{Gfactor}, we have  $\Hom(\OO_{p}[-1], E) = 0$, for all $p \in \P^3$, and so $\Hom(\OO_{p}[-1], \II_{C_D}(-1)) = 0$; if $3D-e-l-7/6\neq 0$, then we have $\Hom(\OO_{Z_{3D-e-l-7/6}}[-1], \II_{C_D}(-1)) \neq 0$, which is a contradiction, and hence we must have $l=3D-e-7/6$. Therefore, $E$ fits into 
$\mathcal{I}_{C_D}(-1) \rightarrow E \rightarrow T[-1]$, with $\dim(T)=l$,
and hence by definition of stable pairs,  we  get the result.

\end{proof}

\begin{prop}
 For the semicircle  $(\beta + 5.5)^{2}+\alpha^{2}=18.25$, the corresponding walls in the Bridgeland stability space on the right of the left branch of the hyperbola $\H$ are given by three sets of objects   $\langle \mathcal{I}_{L}(-1),\iota_{P_{*}}(\mathcal{I}_{Z_{2}})^{\vee}(-5)\rangle$, $\langle (\mathcal{O}(-1) \rightarrow \mathcal{O}_{L}),\iota_{P_{*}}(\mathcal{I^{\vee}}_{Z_{1}})(-5)\rangle$, and $\langle (\mathcal{O}(-1) \rightarrow \mathcal{O}_{L}(1)), \mathcal{O}_{P}(-5)\rangle$, where $L$ is a line, and $Z_{i}$ is length $i$ zero-dimensional subscheme contained in $P$.
\end{prop}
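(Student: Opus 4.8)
The plan is to exploit the tilt-wall data already pinned down in Proposition~\ref{Lemma 9} together with the two classification Lemmas~\ref{Ffactor} and~\ref{Gfactor}, reducing everything to a finite bookkeeping of the possible $\mathrm{ch}_3$-splittings. First I would fix the Chern characters of the two Jordan--H\"older factors. Proposition~\ref{Lemma 9} tells us that on the third tilt semicircle $(\beta+5.5)^2+\alpha^2=18.25$ the positive-rank factor $F$ satisfies $\mathrm{ch}^{-4}_{\leq 2}(F)=(1,3,7/2)$; since $\mathrm{ch}^{-4}_{\leq 2}(E)=(1,4,2)$, the complementary factor $G$ has $\mathrm{ch}^{-4}_{\leq 2}(G)=(0,1,-3/2)$. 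Untwisting (multiplying $\mathrm{ch}^{-4}$ by $e^{-4H}$) gives $\mathrm{ch}(F)=(1,-1,-1/2,f_3)$ and $\mathrm{ch}(G)=(0,1,-11/2,g_3)$, where additivity of the Chern character along the defining triangle forces $f_3+g_3=\mathrm{ch}_3(E)=15$.

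Next I would identify the two factors by matching classes against the classification lemmas. We are on the right of the left branch of $\mathcal{H}$, i.e. in the regime $\mathrm{Im}(Z_{\alpha,\beta,s})<0$ in which those lemmas are stated, and by Remark~\ref{remJH} the factors are stable along the entire wall, so their hypotheses are met. The class $\mathrm{ch}(F)=(1,-1,-1/2,f_3)$ is the case $D=1$ of Lemma~\ref{Ffactor}, exhibiting $F$ as a stable pair $\mathcal{O}\to\mathcal{F}$ on a line with cokernel of length $l_F=11/6-f_3$; and $\mathrm{ch}(G)=(0,1,-11/2,g_3)$ is the case $i=5$ of Lemma~\ref{Gfactor}, exhibiting $G=\iota_{P_*}(\mathcal{I}^\vee_Z(-5))$ for a zero-dimensional $Z\subset P$ of length $l_G=91/6-g_3$.

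Combining these with $f_3+g_3=15$ produces the single Diophantine constraint
\[
l_F+l_G = 17-(f_3+g_3) = 2, \qquad l_F,\,l_G\in\mathbb{Z}_{\geq 0},
\]
whose only solutions are $(l_F,l_G)\in\{(0,2),(1,1),(2,0)\}$. It then remains to read off the explicit object attached to each cokernel length: the length-$0$ cokernel collapses to the honest ideal sheaf $\mathcal{I}_L(-1)$ (one checks $\mathrm{ch}(\mathcal{I}_L(-1))=(1,-1,-1/2,11/6)$, so indeed $l_F=0$), while the length-$1$ and length-$2$ cokernels give the stable-pair complexes $(\mathcal{O}(-1)\to\mathcal{O}_L)$ and $(\mathcal{O}(-1)\to\mathcal{O}_L(1))$; correspondingly $Z$ has length $2,1,0$. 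This yields exactly the three pairs $\langle \mathcal{I}_L(-1),\iota_{P_*}(\mathcal{I}_{Z_2})^\vee(-5)\rangle$, $\langle(\mathcal{O}(-1)\to\mathcal{O}_L),\iota_{P_*}(\mathcal{I}^\vee_{Z_1})(-5)\rangle$, and $\langle(\mathcal{O}(-1)\to\mathcal{O}_L(1)),\mathcal{O}_P(-5)\rangle$ of the statement.

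The $\mathrm{ch}_3$ arithmetic is routine; the delicate points, where I expect the real work to sit, are twofold. First, one must genuinely verify the hypotheses of Lemmas~\ref{Ffactor} and~\ref{Gfactor} for $F$ and $G$ — that both lie on the correct side of $\mathcal{H}$ with $\mathrm{Im}(Z)<0$ and are stable there — rather than merely matching numerical classes. Second, one must pin down the precise two-term complex attached to each cokernel length, in particular that length $0$ collapses to the sheaf $\mathcal{I}_L(-1)$ while the positive lengths give the stated stable pairs, and confirm that each of the three numerical possibilities is realized by an \emph{actual} wall and not only a numerical one.
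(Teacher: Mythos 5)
Your proposal is correct and follows essentially the same route as the paper's own proof: it invokes Proposition \ref{Lemma 9} to pin down $\mathrm{ch}^{-4}_{\leq 2}(F)=(1,3,7/2)$, untwists to $\mathrm{ch}(F)=(1,-1,-1/2,f_3)$ and $\mathrm{ch}(G)=(0,1,-11/2,g_3)$, and then applies Lemma \ref{Ffactor} (with $D=1$) and Lemma \ref{Gfactor} (with $i=5$) so that the nonnegativity of the two cokernel/subscheme lengths forces exactly three solutions. Your bookkeeping via $l_F+l_G=2$ is just a reparametrization of the paper's condition $e\in\{-1/6,5/6,11/6\}$, and the pairs you read off agree with the statement.
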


\begin{proof}
From Proposition \ref{Lemma 9}, we have $\mathrm{ch}^{-4}_{\leq 2}(F)= (1,3,7/2)$ for a subobject or quotient of semistable objects with respect  stability conditions on the semicircle. Therefore we have  $\mathrm{ch}(F)= (1,-1,-1/2,e)$. Lemma \ref{Ffactor} implies  $F \cong (\mathcal{O}(-1) \rightarrow \mathcal{O}_{L}(l-1))$, where $L$ line, with $l=11/6-e \in \mathbb{Z}^{\geq 0}$  points on it. On the other hand, we can see that the other corresponding JH factor is given by $\mathrm{ch}(G)= (0,1,-11/2,15-e)$. Now using Lemma \ref{Gfactor}, we have $G \cong \iota_{P_{*}}(\mathcal{I^{\vee}}_{Z_{l'}})(-5)$, where $Z_{l'}$ is a zero dimensional subscheme of length $l'=1/6+e \in \mathbb{Z}^{\geq 0}$. The conditions on the lengths $l, l'$ gives three possibilities for $e$, i.e., $e \in \{-1/6,5/6,11/6\}$. This gives the three pairs stated above.

\end{proof}

Now, let us describe the chambers close to the hyperbola.

\begin{prop}
The moduli space for the first chamber below the first wall, which is given by all the extensions of two objects $\mathcal{O}(-2), \mathcal{O}_{ Q}(-3)$, is empty.
\end{prop}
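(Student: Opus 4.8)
The plan is to reduce the computation of stable objects in the inner chamber to an Ext computation involving only the two Jordan--Hölder factors on the first wall, exploiting a sharp asymmetry between them. By Theorem \ref{thm3.10}, every strictly semistable object of class $v$ on the wall $(\beta+4)^{2}+\alpha^{2}=4$ is an extension of the two stable factors $\mathcal{O}(-2)$ and $\mathcal{O}_{Q}(-3)$, with $Q$ ranging over the quadrics in $\mathbb{P}^{9}=|\mathcal{O}(2)|$. Since the chamber just below and the chamber just above the wall are adjacent (no wall lies between them), the only objects that can be stable in either chamber near the wall are extensions of these two factors. So the whole question reduces to deciding, on each side of the wall, which extension direction yields a stable object and whether any such non-split extension exists.

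First I would compute the two relevant Ext-groups. Since $\mathcal{O}(-2)$ is locally free, $\mathrm{Ext}^{i}(\mathcal{O}(-2),\mathcal{O}_{Q}(-3))=H^{i}(\mathbb{P}^{3},\mathcal{O}_{Q}(-1))$; twisting $0\to\mathcal{O}(-2)\to\mathcal{O}\to\mathcal{O}_{Q}\to 0$ by $\mathcal{O}(-1)$ and using $H^{\bullet}(\mathcal{O}(-1))=H^{\bullet}(\mathcal{O}(-3))=0$ forces $\mathrm{Ext}^{i}(\mathcal{O}(-2),\mathcal{O}_{Q}(-3))=0$ for all $i$. In the opposite direction, Serre duality gives $\mathrm{Ext}^{i}(\mathcal{O}_{Q}(-3),\mathcal{O}(-2))\cong H^{3-i}(\mathcal{O}_{Q}(-5))^{\vee}$, and the analogous two-term resolution shows $H^{\bullet}(\mathcal{O}_{Q}(-5))$ is concentrated in degree $2$ of dimension $20-4=16$ (the cokernel of multiplication by the quadric on $H^{0}(\mathcal{O}(1))\hookrightarrow H^{0}(\mathcal{O}(3))$ is dual to this). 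Hence $\mathrm{Ext}^{1}(\mathcal{O}_{Q}(-3),\mathcal{O}(-2))\cong\mathbb{C}^{16}$ and all other Ext-groups vanish. The conclusion is that the only non-split extensions of the two factors have the form $0\to\mathcal{O}(-2)\to E\to\mathcal{O}_{Q}(-3)\to 0$ (this is precisely the presentation $0\to\mathcal{O}(-2)\to\mathcal{I}_{C}\to\mathcal{O}_{Q}(-3)\to 0$ of the ideal sheaf of a $(2,3)$-curve $C\subset Q$), and there are none in the reverse direction.

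Next I would pin down which side is which. On the large-volume side these extensions are exactly the stable objects, forming the $\mathbb{P}^{15}$-bundle over $\mathbb{P}^{9}$ of the introduction, with $\mathbb{P}^{15}=\mathbb{P}(\mathrm{Ext}^{1}(\mathcal{O}_{Q}(-3),\mathcal{O}(-2)))$; for these $E$ to be stable there one needs $\phi(\mathcal{O}(-2))<\phi(E)<\phi(\mathcal{O}_{Q}(-3))$, so $\mathcal{O}_{Q}(-3)$ has the larger phase on that side. Because the phase ordering of the two factors must reverse across an actual wall, in the inner chamber one has $\phi(\mathcal{O}(-2))>\phi(\mathcal{O}_{Q}(-3))$ (a direct check with the tilt-slopes $\nu_{\alpha,\beta}(\mathcal{O}(-2))$ and $\nu_{\alpha,\beta}(\mathcal{O}_{Q}(-3))$, equal on the circle and ordered oppositely inside and outside, corroborates this). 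Consequently the subobject $\mathcal{O}(-2)\hookrightarrow E$ now destabilizes every such $E$, and a stable object below the wall would instead require $\mathcal{O}_{Q}(-3)$ as a subobject, i.e.\ a non-split class in $\mathrm{Ext}^{1}(\mathcal{O}(-2),\mathcal{O}_{Q}(-3))=0$. No such object exists, so the moduli space in the first chamber below the first wall is empty.

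I expect the main obstacle to be the phase-side bookkeeping rather than the cohomology: one must argue rigorously that the reversal of the phase ordering across the wall, together with Theorem \ref{thm3.10}, leaves the vanishing group $\mathrm{Ext}^{1}(\mathcal{O}(-2),\mathcal{O}_{Q}(-3))$ as the only possible source of stable objects in the inner chamber. The cohomological computation is what makes this work, since the asymmetric vanishing $\mathrm{Ext}^{\bullet}(\mathcal{O}(-2),\mathcal{O}_{Q}(-3))=0$ is exactly what rules out the extension needed for stability on the empty side, and this same asymmetry is what causes the entire large-volume $\mathbb{P}^{15}$-bundle to collapse onto $\mathbb{P}^{9}$ at the wall with nothing surviving beyond it.
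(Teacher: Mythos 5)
Your Ext computations are correct ($\mathrm{Ext}^{\bullet}(\mathcal{O}(-2),\mathcal{O}_{Q}(-3))=0$ and $\mathrm{Ext}^{1}(\mathcal{O}_{Q}(-3),\mathcal{O}(-2))=\mathbb{C}^{16}$, matching the $\mathbb{P}^{15}$-fibers of Proposition \ref{N1}), and your phase bookkeeping for objects that are destabilized at the wall is fine. The genuine gap is the reduction step: ``since the chambers are adjacent, the only objects that can be stable in either chamber near the wall are extensions of these two factors.'' This does not follow from Theorem \ref{thm3.10}, which classifies the \emph{strictly semistable} objects on the wall; it says nothing about objects of class $v$ that are \emph{stable on the wall itself}. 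Such an object would be stable in a neighbourhood of the wall, hence in both adjacent chambers, and it need not be an extension of $\mathcal{O}(-2)$ and $\mathcal{O}_{Q}(-3)$, so your Ext-vanishing argument never touches it. This is not a pedantic case: it is the typical behaviour at a wall. At the very next wall $\langle\mathcal{I}_{C_{2}}(-1),\mathcal{O}_{P}(-4)\rangle$ of this paper, the generic object of $\mathcal{M}_{1}$ (e.g.\ the ideal sheaf of a $(2,3)$-complete intersection) stays stable across the wall, and only a divisor's worth of objects is destabilized. So adjacency of chambers alone can never force every stable object to degenerate at the wall.

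The way you would naturally close this gap is also problematic. You could quote the description of the large-volume moduli space (every $E\in\mathcal{M}_{1}$ sits in $\mathcal{O}(-2)\hookrightarrow E\twoheadrightarrow\mathcal{O}_{Q}(-3)$, hence is strictly semistable on the wall, so nothing of class $v$ is stable \emph{on} the wall); but in the paper that statement, Proposition \ref{N1}, is itself a consequence of the emptiness of the inner chamber --- every object of $\mathcal{M}_{1}$ must be created at the first wall precisely because nothing is stable below it --- so invoking it here is circular. What actually rules out objects stable on the wall is an input your proposal never uses: the Bogomolov--Gieseker-type inequality of Theorem \ref{BMT14:stability_threefolds}. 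The paper's proof is exactly this: the first wall $(\beta+4)^{2}+\alpha^{2}=4$ (Proposition \ref{Lemma 9}) encloses the BMT semicircle $(\beta+3.75)^{2}+\alpha^{2}=2.06$, inside which no semistable object of class $v$ can exist; since there are no actual walls between the two semicircles, any object stable anywhere in the inner chamber would remain stable down into the BMT-forbidden region, a contradiction. So the emptiness is forced by a positivity bound on the whole chamber, not by an extension-direction asymmetry at its boundary; your Ext asymmetry is a correct and relevant observation, but by itself it only shows that no stable object of the inner chamber is \emph{born at} the wall, not that none exists.
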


\begin{proof}

These two objects define the semicircle  $(\beta +4)^{2}+\alpha^{2}=4$ (Proposition \ref{Lemma 9})  which is above the semicircle $(\beta +3.75)^{2}+\alpha^{2}=2.06$ given by the BMT inequality (Theorem \ref{BMT14:stability_threefolds}). We know that the moduli space is empty below the BMT semicircle. On the other hand, the moduli space can become non-empty only as we cross a wall as below the BMT semicircle there is no stable object. Combining these two, we conclude the claim.
\end{proof}

Let $\mathcal{M}_{1}$ be the first non-empty moduli space for the next chamber, which appears after crossing the smallest wall $\langle \mathcal{O}(-2), \mathcal{O}_{ Q}(-3)\rangle$. In the following Proposition we will see that this moduli space gives a compactification of objects corresponding to $(2,3)$-complete intersection curves in $\mathbb{P}^{3}$:

\begin{prop} \label{N1}
The moduli space $\mathcal{M}_{1}$ is a $\mathbb{P}^{15}$-bundle over  $\mathbb{P}^{9}$. More precisely, the complement of (2,3)-complete intersections in $\mathcal{M}_{1}$ are parametrized by the pairs $(Q, C)$ where $Q=P \cup P'$ is a union of two planes and $C$ is a conic in one of the two planes. The associated objects are non-torsion free sheaves $E$  given by %the short exact sequence
$\mathcal{O}_{P}(-4) \hookrightarrow E \twoheadrightarrow \mathcal{I}_{C_{2}}(-1),$ for a conic $C_{2}$.
\end{prop}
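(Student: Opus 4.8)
The plan is to present every stable object of $\mathcal{M}_1$ as the cokernel of a single Koszul-type map determined by a quadric and a cubic, and then to read off both the projective bundle structure and the non-torsion-free locus directly from that presentation. By Theorem \ref{thm3.10} the only strictly semistable objects on the first wall are extensions of $\mathcal{O}(-2)$ and $\mathcal{O}_Q(-3)$; since the chamber below the wall is empty (preceding Proposition), every $E \in \mathcal{M}_1$ is a non-split extension $0 \to \mathcal{O}(-2) \to E \to \mathcal{O}_Q(-3) \to 0$, with $\mathcal{O}(-2)$ the subobject (the phase ordering in the $\mathcal{M}_1$-chamber forces this direction, and the generic ideal sheaf $\mathcal{I}_C$ of a $(2,3)$-complete intersection sits in exactly such a sequence, as in Proposition \ref{prop:prop_1}). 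First I would record the governing $\Hom$ and $\Ext$ groups. Both $\Hom(\mathcal{O}(-2),\mathcal{O}_Q(-3)) = H^0(\mathcal{O}_Q(-1)) = 0$ and $\Hom(\mathcal{O}_Q(-3),\mathcal{O}(-2)) = 0$ vanish, so the extension is determined up to isomorphism by its class, while the structure sequence $0 \to \mathcal{O}(-2) \xrightarrow{q} \mathcal{O} \to \mathcal{O}_Q \to 0$ gives
\[
\Ext^1(\mathcal{O}_Q(-3),\mathcal{O}(-2)) \cong \cok\!\big(H^0(\mathcal{O}(1)) \xrightarrow{\,\cdot q\,} H^0(\mathcal{O}(3))\big) = H^0(\mathcal{O}(3))/q\cdot H^0(\mathcal{O}(1)),
\]
of dimension $20 - 4 = 16$. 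A class is thus a cubic $s$ taken modulo $q\cdot(\text{linear})$, and the corresponding object is $E = \cok\big(\mathcal{O}(-5) \xrightarrow{\binom{s}{-q}} \mathcal{O}(-2)\oplus\mathcal{O}(-3)\big)$, which recovers $\mathcal{I}_C$ for $C = \{q = s = 0\}$ when $q,s$ is a regular sequence.

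Next I would globalize over $\mathbb{P}^9 = |\mathcal{O}(2)|$. The universal multiplication map is fibrewise injective (multiplication by a nonzero quadric is injective on linear forms), so
\[
\mathcal{E} := \cok\!\big(H^0(\mathcal{O}(1))\otimes\mathcal{O}_{\mathbb{P}^9}(-1) \xrightarrow{\,\cdot q_{\mathrm{univ}}\,} H^0(\mathcal{O}(3))\otimes\mathcal{O}_{\mathbb{P}^9}\big)
\]
is a vector bundle of rank $16$. Because the split extension is strictly semistable and hence excluded, the stable objects over a fixed $Q$ are parametrized by $\mathbb{P}(\Ext^1) = \mathbb{P}^{15}$, and the vanishing of both $\Hom$'s guarantees distinct classes give non-isomorphic objects; moreover the inclusion $\mathcal{O}(-2) \hookrightarrow E$ is unique up to scalar, so $E \mapsto Q$ is a well-defined morphism $\mathcal{M}_1 \to \mathbb{P}^9$. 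I would then assemble the tautological cubic into a universal two-term complex on $\mathbb{P}(\mathcal{E}) \times \mathbb{P}^3$, identify it with a universal family for $\mathcal{M}_1$, and conclude $\mathcal{M}_1 \cong \mathbb{P}(\mathcal{E})$, a $\mathbb{P}^{15}$-bundle over $\mathbb{P}^9$; smoothness and reducedness are then automatic. Stability of each $E$ I would check through the criterion of Lemma \ref{BMS8.9} (verifying $H^{-1}_\beta(E) = 0$, that $E = H^0_\beta(E)$ is $\nu_{\alpha,\beta}$-semistable, and that $\Hom(\mathcal{O}_p[-1],E) = 0$ for all $p$).

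Finally, the torsion dichotomy follows from the presentation $E = \cok\big(\mathcal{O}(-5)\xrightarrow{\binom{s}{-q}}\mathcal{O}(-2)\oplus\mathcal{O}(-3)\big)$: the cokernel is torsion-free, so $E = \mathcal{I}_C$ for the $(2,3)$-complete intersection $C = \{q=s=0\}$, precisely when the two entries $q$ and $s$ have no common factor. For a nonzero class the common factor can only be a single linear form $\ell$ (a degree-$2$ common factor would mean $q \mid s$, i.e.\ the zero class), which forces $Q = P\cup P'$ to be reducible with $q = \ell\,\ell'$, $s = \ell\,q_2$ and $P = \{\ell = 0\}$. Then $\binom{s}{-q} = \ell\cdot\binom{q_2}{-\ell'}$ factors as $\mathcal{O}(-5)\xrightarrow{\ell}\mathcal{O}(-4)\xrightarrow{\iota}\mathcal{O}(-2)\oplus\mathcal{O}(-3)$ with $\iota = \binom{q_2}{-\ell'}$ injective and $\cok(\iota) = \mathcal{I}_{C_2}(-1)$, where $C_2 = \{q_2 = \ell' = 0\}$ is a conic in $P' = \{\ell' = 0\}$. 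Comparing $\cok(\ell\cdot\iota)$ with $\cok(\iota)$, and using $\im(\iota) \cong \mathcal{O}(-4)$ so that $\im(\iota)/\ell\,\im(\iota) \cong \mathcal{O}_P(-4)$, yields
\[
0 \to \mathcal{O}_P(-4) \to E \to \mathcal{I}_{C_2}(-1) \to 0,
\]
which is exactly the stated description of the complement of the $(2,3)$-complete intersections, parametrized by the pair $(Q = P\cup P',\, C_2)$ with $C_2$ a conic in one of the two planes.

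\textbf{The main obstacle} I anticipate is the global step rather than any pointwise computation: producing the universal complex on $\mathbb{P}(\mathcal{E})\times\mathbb{P}^3$ and proving that $\mathcal{M}_1 \cong \mathbb{P}(\mathcal{E})$ as schemes (not merely bijectively on closed points), together with verifying stability of the extensions over reducible quadrics, where $\mathcal{O}_Q(-3)$ itself degenerates to a strictly semistable sheaf and the relevant S-equivalence class must be kept under control. By contrast, the $\Ext$ computation and the torsion dichotomy governed by $\gcd(q,s)$ are essentially mechanical.
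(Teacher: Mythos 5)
Your proposal is correct, and its skeleton coincides with the paper's proof: both present every $E \in \mathcal{M}_{1}$ as a non-split extension $\mathcal{O}(-2) \hookrightarrow E \twoheadrightarrow \mathcal{O}_{Q}(-3)$, fibre the moduli space over $|\mathcal{O}(2)| = \mathbb{P}^{9}$, and identify each fibre with the projectivization of a $16$-dimensional extension space (the paper computes $\mathrm{Ext}^{1}(\mathcal{O}_{Q}(-3),\mathcal{O}(-2))$ as $\mathrm{H}^{0}(Q,\mathcal{O}(3)) \cong \mathbb{C}^{16}$, which is canonically your $\mathrm{H}^{0}(\mathcal{O}(3))/q\cdot \mathrm{H}^{0}(\mathcal{O}(1))$). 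The one genuinely different step is the analysis of the non-torsion-free locus. The paper argues sheaf-theoretically: for $Q = P \cup P'$ with the cubic vanishing on $P$, it splices the sequences $\mathcal{O}_{P}(-1) \hookrightarrow \mathcal{O}_{Q} \twoheadrightarrow \mathcal{O}_{P'}$ and $\mathcal{O}(-1) \hookrightarrow \mathcal{I}_{C_{2}} \twoheadrightarrow \mathcal{O}_{P'}(-2)$ into a $3\times 3$ diagram lifting $\mathcal{O}_{P}(-4) \hookrightarrow \mathcal{O}_{Q}(-3)$ to a subobject of $E$, and then invokes $\mathrm{Ext}^{1}(\mathcal{I}_{C_{2}}(-1),\mathcal{O}_{P}(-4)) = \mathbb{C}$. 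You instead read everything off the Koszul presentation $E = \cok\bigl(\mathcal{O}(-5) \to \mathcal{O}(-2)\oplus\mathcal{O}(-3)\bigr)$: torsion-freeness becomes $\gcd(q,s)=1$, and factoring the Koszul map through $\mathcal{O}(-4)$ yields $\mathcal{O}_{P}(-4) \hookrightarrow E \twoheadrightarrow \mathcal{I}_{C_{2}}(-1)$ by comparing cokernels. Your route is more elementary and makes the parametrization by pairs $(Q, C_{2})$ completely explicit in equations (it also covers the double-plane case $\ell = \ell'$ with no extra words), while the paper's Ext-group statement delivers directly the uniqueness of the extension for a given pair, which is what makes "parametrized by the pairs $(Q,C)$" precise; in your setup this uniqueness instead follows from the fibrewise $\mathbb{P}(\mathrm{Ext}^{1})$ description.

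Two caveats. First, Lemma \ref{BMS8.9} is not the right tool for the stability check: it concerns the regime $s \gg 1$, and, as quoted in the paper, its sufficiency clause applies only to objects with $\HH^{-1}_{\beta} \neq 0$, so for your sheaf-type extensions it gives a necessary condition, not a proof of stability in the chamber adjacent to the first wall. What is needed (and what the paper uses implicitly) is the standard wall-crossing argument: since $\Hom(\mathcal{O}(-2),\mathcal{O}_{Q}(-3)) = \Hom(\mathcal{O}_{Q}(-3),\mathcal{O}(-2)) = 0$ and both factors are stable along the wall, any destabilizing subobject of a non-split extension near the wall would have to be one of the two factors, and the non-splitness together with the slope ordering on the chosen side rules both out. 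You have already established all the required vanishings, so this is a small patch rather than a structural problem. Second, the scheme-theoretic identification $\mathcal{M}_{1} \cong \mathbb{P}(\mathcal{E})$ that you flag as the main obstacle is not carried out in the paper either, which simply asserts the bundle structure; your explicit relative construction is, if anything, the more careful version.
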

\begin{proof}
Any object $E$ in $\mathcal{M}_{1}$ is generated by a short exact sequences $\mathcal{O}(-2) \hookrightarrow E \twoheadrightarrow \mathcal{O}_{Q}(-3)$. The parameter space of $\mathcal{O}(-2)$ is just given by a point, whereas objects of the form $\mathcal{O}_{Q}(-3)$ are parametrized by  $\mathbb{P}(\mathrm{H}^{0}(\mathbb{P}^{3}, \mathcal{O}(2)))=\mathbb{P}(\mathbb{C}^{10})=\mathbb{P}^{9}$. Given $Q$, the extensions parametrized by $\mathrm{H}^{0}(Q,\mathcal{O}(3))=\mathbb{C}^{16}$ up to rescaling, i.e., $\mathbb{P}^{15}$. Therefore $\mathcal{M}_{1}$ will be a $\mathbb{P}^{15}$-bundle over $ {\{pt}\}  \times \mathbb{P}^{9} \cong \mathbb{P}^{9}$. %Description of generic element as an ideal sheaf is easily obtained by noticing that $\HH^1(\OO(-2))=0$, and then taking long exact sequence of the defining sequence and applying Lemma \ref{lift} to $D=P'$ for another plane $P'$. %a (2,3)-complete intersection curve $C_{2,3} \subset Q$ induces $\OO(-2)=\II_Q \hookrightarrow \II_C \twoheadrightarrow \II_{C_{2,3}/Q}=\OO_Q(-3)$; now, noticing that $\HH^1(\OO(-2))=0$,and a similar argument as for Proposition \ref{N1} gives the result.
Notice that a cubic surface, S, and a quadric, $Q$, define a complete intersection curve if and only if $S$ does not contain any irreducible component of $Q$. Thus being (2,3)-complete intersection is an open condition. Let us denote by CI the set of (2,3)-complete intersections in $\mathcal{M}_{1}$. We notice that $\mathcal{M}_{1}$ compactifies CI with the pairs of a quadric Q, plus a cubic equation that does not vanish entirely on $Q$; i.e., it can be zero on one of the two components of $Q$, when $Q$ is reducible. Therefore in the complement $\mathcal{M}_{1}\setminus CI$ we have $(S,Q)$ such that $Q=P \cup P'$ is a union of two planes. This induces the sequence $\mathcal{O}_{P}(-1) \hookrightarrow \mathcal{O}_{Q} \twoheadrightarrow \mathcal{O}_{P'}$. Now if $C_2$ is a conic in $P'$, we have $\OO(-1)=\II_{P'} \hookrightarrow \II_{C_2} \twoheadrightarrow \OO_{P'}(-2)$. Combining these two sequences with the defining sequence of $E$, we get the lift of $\mathcal{O}_{P}(-4) \hookrightarrow \mathcal{O}_{Q}(-3)$ to the subobject of the claimed extension as follows:

 %\begin{center}
    
%\begin{tikzcd}[column sep=
%scriptsize
%]
%&& \mathcal{I}_{C_{2}}(-1) \arrow[rrr, twoheadrightarrow] 
%& & &
%\mathcal{O}_{P'}(-3)\\
 %\mathcal{O}(-2) \arrow[urr, hookrightarrow] \arrow[rr, hookrightarrow, ""] & & E \arrow[u, dashrightarrow]  \arrow[rr, twoheadrightarrow, ""]{}
%& &  \mathcal{O}_{Q}(-3)  \arrow[ur, twoheadrightarrow, ""]  \\
%& & &

%\mathcal{O}_{P}(-4). \arrow[ur, hookrightarrow, ""]  \arrow[ul, hookrightarrow, dashrightarrow, "" ]

%\end{tikzcd}
%\end{center}

   \begin{center}
    
\begin{tikzcd}[column sep=
scriptsize
]
 0 \arrow[r, hook, ""]  \arrow[d, hook ] & \OO_P(-4) \arrow[r, dash, "\cong"] \arrow[d, hook, dashed ]&  \mathcal{O}_{P}(-4)    \arrow[d,  hook, "" ]\\
\mathcal{O}(-2)   \arrow[r,  ""]  \arrow[d, dash, "\cong" ]  &E\arrow[r, ""]  \arrow[d, two heads, dashed ] 
&  \mathcal{O}_{Q}(-3)   \arrow[d, two heads ] \\
\mathcal{O}(-2) \arrow[r, hook, ""]  & \II_{C_2}(-1) \arrow[r, two heads, ""] &
 \mathcal{O}_{P'}(-3).
\end{tikzcd}
\end{center}
 As $\mathrm{Ext}^{1}(\mathcal{I}_{C_{2}}(-1), \mathcal{O}_{P}(-4))= \mathbb{C}$, we will have the result.

\end{proof}

We need the following Lemmas:

\begin{lem}[{\cite[Lemma 4.4]{GHS}}] \label{3.13}
Let $F \hookrightarrow E \twoheadrightarrow G$ be an exact sequence at a wall in Bridgeland stability with $E$ semistable to one side of the wall and $F, G$ distinct stable objects of the same (Bridgeland) slop. Then we have:
$$\mathrm{ext}^{1}(E,E) \leq \mathrm{ext}^{1}(F,F)+\mathrm{ext}^{1}(G,G)+ \mathrm{ext}^{1}(F,G)+ \mathrm{ext}^{1}(G,F)-1.$$
\end{lem}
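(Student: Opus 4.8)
The plan is to run the standard long-exact-sequence calculus on the defining sequence $0 \to F \to E \to G \to 0$, exploiting that $F$ and $G$ are simple and mutually orthogonal in degree $0$. First I would record the consequences of stability: since $F$ and $G$ are stable of the same phase and non-isomorphic, Schur's lemma (applied in the abelian category of semistable objects of that phase) gives $\hom(F,F)=\hom(G,G)=1$ and $\hom(F,G)=\hom(G,F)=0$. Moreover, because $E$ is assumed semistable on one side of the wall, it cannot be the split extension $F\oplus G$ (which is unstable on both sides), so the extension class $\epsilon\in\Ext^1(G,F)$ is nonzero. These three inputs are the only place the hypotheses are used.

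Next I would compute the mixed $\Hom$ and $\Ext^1$ groups by feeding the sequence into $\Hom(-,F)$, $\Hom(-,G)$, and $\Hom(E,-)$. Applying $\Hom(-,F)$ and using $\hom(G,F)=0$ together with injectivity of the connecting map $\hom(F,F)\to\Ext^1(G,F)$, $\id_F\mapsto\epsilon\neq0$, yields $\hom(E,F)=0$ and the inequality $\ext^1(E,F)\le\ext^1(F,F)+\ext^1(G,F)-1$; this is exactly where the single $-1$ enters. Applying $\Hom(-,G)$ and using $\hom(F,G)=0$ gives $\hom(E,G)=1$ together with $\ext^1(E,G)\le\ext^1(G,G)+\ext^1(F,G)$, with no loss of constant. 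Finally, applying $\Hom(E,-)$ and using $\hom(E,F)=0$ and $\hom(E,G)=1$ shows $\hom(E,E)=1$ and, crucially, that the connecting map $\delta\colon\Hom(E,G)\to\Ext^1(E,F)$ vanishes: the composite $\Hom(E,E)\to\Hom(E,G)$ carries $\id_E$ to the nonzero projection $E\onto G$, hence is already surjective and forces $\delta=0$. This gives $\ext^1(E,E)\le\ext^1(E,F)+\ext^1(E,G)$, again with no loss.

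Combining the three inequalities yields precisely $\ext^1(E,E)\le\ext^1(F,F)+\ext^1(G,G)+\ext^1(F,G)+\ext^1(G,F)-1$. The delicate point — and the only place a naive count could go astray — is the bookkeeping of the constant: the $-1$ must appear once and exactly once. It appears because the extension is nonsplit (so $\epsilon\neq0$ cuts out a one-dimensional image in $\Ext^1(G,F)$), and it is not doubled because the reciprocal connecting map $\delta$ out of $\Hom(E,G)$ is forced to vanish by the simplicity of $E$. I therefore expect the main obstacle to be conceptual rather than computational: the whole argument hinges on establishing $\delta=0$ (equivalently $\hom(E,E)=1$) and $\epsilon\neq0$, both of which are consequences of the one-sided stability of $E$, and the inequality deliberately avoids any analysis of the higher groups $\Ext^{\ge2}$, which would otherwise complicate matters on a threefold.
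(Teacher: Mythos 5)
Your proof is correct. Note that the paper does not prove this lemma at all, but simply cites \cite[Lemma 4.4]{GHS}; your argument --- Schur's lemma plus non-splitness of the extension making the connecting map $\Hom(F,F)\to\Ext^{1}(G,F)$ injective (the source of the $-1$), followed by the three long exact sequences from $\Hom(-,F)$, $\Hom(-,G)$, $\Hom(E,-)$ with the vanishing of $\delta$ forced by $\id_{E}\mapsto\pi$ --- is essentially the standard proof given in that reference.
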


\begin{lem}[{\cite{Moi}, ~\cite[Theorem 4.7]{GHS}}] \label{3.15}
Any birational morphism $f\colon X \rightarrow Y$ between smooth proper algebraic spaces of finite type over complex numbers s.t. the contracted locus $E$ is irreducible, and $f(E)$ is smooth, is the blow up of $Y$ in $f(E)$. 
\end{lem}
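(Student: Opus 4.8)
The plan is to realize $f$ as a blow-up through the universal property of blowing up, after first controlling the geometry of the center. First I would invoke purity of the exceptional locus: since $f$ is proper birational and $Y$ is smooth, hence locally factorial, the contracted locus is of pure codimension one, so the irreducible $E$ is in fact an irreducible divisor and $f$ restricts to an isomorphism over $Y \setminus Z$, where $Z := f(E)$. As $E$ is contracted we have $\dim Z < \dim E = \dim Y - 1$, so $c := \codim_Y Z \geq 2$, and $Z$ is smooth by hypothesis. I then form $Y' := \Bl_Z Y$ with blow-down $\pi \colon Y' \to Y$ and exceptional divisor $E' = \P(N_{Z/Y})$; smoothness of $Z$ and $Y$ guarantees that $Y'$ is smooth with $\dim E' = \dim Y - 1$.

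The core of the argument is to factor $f$ through $\pi$. By the universal property of the blow-up, there is a unique morphism $g \colon X \to Y'$ with $\pi \circ g = f$ precisely when the inverse-image ideal $\mathcal{J} := f^{-1}\mathcal{I}_Z \cdot \mathcal{O}_X$ is invertible on $X$. I expect this to be the main obstacle. Locally near $E$ one may present $Y$ as $Z \times \mathbb{A}^c$ with $\mathcal{I}_Z = (x_1, \dots, x_c)$, and the task becomes showing that the pullbacks $f^{*}x_1, \dots, f^{*}x_c$ generate a principal ideal on the smooth scheme $X$ --- equivalently, that $\mathcal{J}$ cuts out an effective Cartier divisor supported on $E$ with no embedded components. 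This is the substance of Moishezon's theorem, and it is where the smoothness of both $X$ and the center $Z$ is used essentially; concretely it amounts to showing that the scheme-theoretic fibres of $f|_E \colon E \to Z$ are the projective spaces $\P^{c-1}$ realizing $E$ as the projectivized normal bundle $\P(N_{Z/Y})$.

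Granting the morphism $g$, the rest is formal. The map $g \colon X \to Y'$ is proper and birational between smooth proper algebraic spaces and is an isomorphism over $Y' \setminus E' = Y \setminus Z$, so its exceptional locus lies in $E$. If $g$ were not an isomorphism, purity would make $\mathrm{Exc}(g)$ a nonempty divisor, hence all of the irreducible $E$, so $g$ would contract $E$ and force $\dim g(E) < \dim E$. But $\pi(g(E)) = f(E) = Z$ gives $g(E) \subseteq E'$, while surjectivity of $g$ (together with $g$ being an isomorphism off $E$) forces $g(E) = E'$; since $\dim E' = \dim Y - 1 = \dim E$ this is a contradiction. Hence $\mathrm{Exc}(g) = \emptyset$, and a proper birational morphism with empty exceptional locus onto the normal space $Y'$ is an isomorphism by Zariski's Main Theorem. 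Therefore $f$ agrees, up to this isomorphism, with the blow-down $\pi$, i.e. $f$ is the blow-up of $Y$ along $f(E)$.
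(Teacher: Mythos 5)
The paper does not actually prove this lemma: it is imported verbatim from Moishezon and from \cite[Theorem 4.7]{GHS}, so there is no internal proof to compare against. Judged on its own terms, your proposal has a genuine gap precisely at the mathematical core. The outer structure is fine and standard: van der Waerden purity makes $E$ a prime divisor; the universal property of $\Bl_Z Y$ would give a factorization $g\colon X \to Y' = \Bl_Z Y$; and your endgame (purity applied to $g$, the dimension count $\dim g(E) = \dim E'$, and Zariski's Main Theorem) correctly shows that \emph{if} $g$ exists then it is an isomorphism. But the factorization exists only if $\mathcal{J} = f^{-1}\mathcal{I}_Z\cdot\mathcal{O}_X$ is invertible, and this is exactly the point you leave unproven, writing that it ``is the substance of Moishezon's theorem.'' That is circular: the invertibility of $\mathcal{J}$ \emph{is} the theorem, and everything you do prove around it is routine bookkeeping. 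Note that the gap is real and not cosmetic: knowing that $V(\mathcal{J})$ is supported on the divisor $E$ does not make $\mathcal{J}$ invertible (on $\mathbb{A}^2$ the ideal $(x^2,xy)$ has divisorial support but is not principal), so one genuinely needs the hard input --- e.g.\ identifying the scheme-theoretic fibres of $f$ over $Z$ with $\mathbb{P}^{c-1}$ and $E$ with $\mathbb{P}(N_{Z/Y})$, which is where Moishezon's analytic/characteristic-zero methods enter. Even granting the fibre description, deducing invertibility of $\mathcal{J}$ from it requires an argument you do not supply.

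Two smaller points. First, the statement is about algebraic spaces (the moduli spaces in this paper are a priori only algebraic spaces), and your local presentation of $Y$ as $Z \times \mathbb{A}^c$ with a global generating set for $\mathcal{I}_Z$ is at best étale-local; an honest proof must either work étale-locally and descend, or reduce to the analytic category as \cite{GHS} does. Second, nothing in your argument uses the hypothesis that the base field is $\mathbb{C}$, which should be a warning sign: the known proofs of Moishezon's theorem are not characteristic-free, so any argument that never invokes characteristic zero (or the analytic topology) is likely incomplete exactly where yours is.
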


%Now, we want to describe the next chamber, $\mathcal{M}_{\sigma_{-}}(v)$, which comes after the wall $\langle \mathcal{I}_{C_{2}}(-1), \mathcal{O}_{P}(-4)\rangle$. Since $\mathrm{Ext}^{1}(\mathcal{I}_{C_{2}}(-1),\mathcal{O}_{P}(-4))=\mathbb{C}$ for all $\mathcal{I}_{C_{2}} \in {\mathcal{H}ilb^{2t+1}(\mathbb{P}^{3})}$ and all hyperplanes $P \in (\mathbb{P}^{3})^{\vee}$, there is a unique object in $\mathcal{M}_{1}$ destabilized at the wall $\langle \mathcal{I}_{C_{2}}(-1), \mathcal{O}_{P}(-4)\rangle$, for each choice of $C_2$ and $P$, this identifies the destabilized locus with ${\mathcal{H}ilb^{2t+1}(\mathbb{P}^{3})} \times (\mathbb{P}^{3})^{\vee}$. 
%\begin{prop} \label{N2}
%The moduli space $\mathcal{M}_{\sigma_{-}}(v)$ for the next chamber is a blow up of $\mathcal{M}_{1}$ in the locus ${\mathcal{H}ilb^{2t+1}(\mathbb{P}^{3})} \times (\mathbb{P}^{3})^{\vee}$. 
%\end{prop}
Now, we want to describe the moduli space $\mathcal{M}_{\sigma_{-}}(v)$ for the next chamber,  which comes after the wall $\langle\mathcal{I}_{C_{2}}(-1), \mathcal{O}_{P}(-4)\rangle$. Since $\mathrm{Ext}^{1}(\mathcal{I}_{C_{2}}(-1),\mathcal{O}_{P}(-4))=\mathbb{C}$ for all $\mathcal{I}_{C_{2}} \in {\mathcal{H}ilb^{2t+1}(\mathbb{P}^{3})}$ and all hyperplanes $P \in (\mathbb{P}^{3})^{\vee}$, there is a unique object in $\mathcal{M}_{1}$ destabilized at the wall $\langle\mathcal{I}_{C_{2}}(-1), \mathcal{O}_{P}(-4)\rangle$, identifying the destabilized locus with ${\mathcal{H}ilb^{2t+1}(\mathbb{P}^{3})} \times (\mathbb{P}^{3})^{\vee}$.

\begin{prop} \label{N2}
The moduli space $\mathcal{M}_{\sigma_{-}}(v)$ for the next chamber is a blow up of $\mathcal{M}_{1}$ in the locus ${\mathcal{H}ilb^{2t+1}(\mathbb{P}^{3})} \times (\mathbb{P}^{3})^{\vee}$. %The generic element in the exceptional locus is given by an ideal sheaf $\II_{C_2 \cup C_4}$, for $C_4$ a plane quartic, and $C_2$ a conic not in the plane.
\end{prop}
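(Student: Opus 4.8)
The plan is to upgrade the set-theoretic description of the destabilized locus, already recorded in the paragraph preceding the statement, to a statement about the moduli space being a genuine blow-up. The crucial input is the computation $\mathrm{Ext}^{1}(\mathcal{I}_{C_{2}}(-1),\mathcal{O}_{P}(-4))=\mathbb{C}$, which holds uniformly over the whole parameter space ${\mathcal{H}ilb^{2t+1}(\mathbb{P}^{3})} \times (\mathbb{P}^{3})^{\vee}$. This uniformity is what makes the destabilized locus a copy of this product sitting inside $\mathcal{M}_{1}$: for each pair $(\mathcal{I}_{C_{2}},P)$ there is a \emph{unique} (up to scalar) nonsplit extension, hence a single point of $\mathcal{M}_{1}$ that becomes strictly semistable at the wall $\langle\mathcal{I}_{C_{2}}(-1),\mathcal{O}_{P}(-4)\rangle$.

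First I would identify precisely what happens to each destabilized object as one crosses from the chamber of $\mathcal{M}_{1}$ into the new chamber. On the $\mathcal{M}_{1}$ side the relevant object $E$ sits in $\mathcal{O}_{P}(-4)\hookrightarrow E\twoheadrightarrow\mathcal{I}_{C_{2}}(-1)$ (the order of the Jordan--H\"older factors being fixed by which has larger phase on that side), whereas on the $\mathcal{M}_{\sigma_{-}}(v)$ side the stable replacements are the objects in the opposite extension $\mathcal{I}_{C_{2}}(-1)\hookrightarrow E'\twoheadrightarrow\mathcal{O}_{P}(-4)$. The key point is to compute $\mathrm{Ext}^{1}(\mathcal{O}_{P}(-4),\mathcal{I}_{C_{2}}(-1))$, the dimension of the space of extensions in this reversed direction: the projectivization of this Ext group, fibered over ${\mathcal{H}ilb^{2t+1}(\mathbb{P}^{3})} \times (\mathbb{P}^{3})^{\vee}$, is exactly the exceptional locus that the wall-crossing glues in, replacing each destabilized point by a projective space of new stable objects. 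Verifying that this projective bundle is the exceptional divisor of a blow-up is the heart of the argument.

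Next I would assemble these local pictures into a global birational morphism. The stability condition $\sigma_{0}$ on the wall gives a coarse moduli space $\mathcal{M}_{\sigma_{0}}(v)$ together with a morphism $\mathcal{M}_{\sigma_{-}}(v)\to\mathcal{M}_{\sigma_{0}}(v)$ that identifies S-equivalent objects; composing with the (inverse of the) analogous map from $\mathcal{M}_{1}$, one obtains a birational morphism $f\colon\mathcal{M}_{\sigma_{-}}(v)\to\mathcal{M}_{1}$ that is an isomorphism away from the destabilized locus and contracts the exceptional projective bundle onto ${\mathcal{H}ilb^{2t+1}(\mathbb{P}^{3})} \times (\mathbb{P}^{3})^{\vee}$. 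To recognize this morphism as a blow-up I would invoke Lemma \ref{3.15}: both $\mathcal{M}_{1}$ and $\mathcal{M}_{\sigma_{-}}(v)$ are smooth (since $\mathcal{M}_1$ is a projective bundle, and the new chamber should likewise give a smooth moduli space, as remarked in the introduction), the contracted locus $f(E)$ is the smooth product ${\mathcal{H}ilb^{2t+1}(\mathbb{P}^{3})} \times (\mathbb{P}^{3})^{\vee}$, and one must check the exceptional locus is irreducible; then Lemma \ref{3.15} forces $f$ to be the blow-up in ${\mathcal{H}ilb^{2t+1}(\mathbb{P}^{3})} \times (\mathbb{P}^{3})^{\vee}$.

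The main obstacle I anticipate is twofold. First, one must establish smoothness of $\mathcal{M}_{\sigma_{-}}(v)$, or at least smoothness of $\mathcal{M}_{1}$ along the center together with enough control to apply Lemma \ref{3.15}; here the bound of Lemma \ref{3.13}, relating $\mathrm{ext}^{1}(E,E)$ to the Ext groups between the Jordan--H\"older factors $F=\mathcal{I}_{C_{2}}(-1)$ and $G=\mathcal{O}_{P}(-4)$, is the natural tool, and checking that it yields the expected tangent space dimension (so that no obstructions appear) is the delicate calculation. Second, and relatedly, one must confirm that the dimension of the exceptional fiber, governed by $\mathrm{ext}^{1}(\mathcal{O}_{P}(-4),\mathcal{I}_{C_{2}}(-1))$, matches the codimension of the center ${\mathcal{H}ilb^{2t+1}(\mathbb{P}^{3})} \times (\mathbb{P}^{3})^{\vee}$ inside $\mathcal{M}_{1}$, as required for a genuine blow-up at a smooth center; any discrepancy would signal a flip or a non-divisorial contraction rather than a blow-up. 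The Ext computations here are exactly the ``heavy Ext computations'' flagged in the strategy paragraph, so I expect these to be the technical crux.
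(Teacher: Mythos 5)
Your proposal is correct and takes essentially the same approach as the paper's proof: the paper likewise computes $\mathrm{Ext}^{1}(\mathcal{O}_{P}(-4),\mathcal{I}_{C_{2}}(-1))=\mathbb{C}^{13}$, applies Lemma \ref{3.13} to get $\mathrm{ext}^{1}(E,E)\leq 8+3+1+13-1=24$ and hence smoothness of $\mathcal{M}_{\sigma_{-}}(v)$, observes that the exceptional locus (a $\mathbb{P}^{12}$-bundle over the $11$-dimensional center ${\mathcal{H}ilb^{2t+1}(\mathbb{P}^{3})}\times(\mathbb{P}^{3})^{\vee}$) is a divisor, and concludes via Lemma \ref{3.15}. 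The only ingredient you leave as a flagged computation, the value $13$ of the reverse Ext group, is exactly what the paper supplies.
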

\begin{proof}

A simple computation shows  $\mathrm{Ext}^{1}(\mathcal{I}_{C_{2}}(-1),\mathcal{O}_{P}(-4))=\mathbb{C}$, and $\mathrm{Ext}^{1}(\mathcal{O}_{P}(-4),\mathcal{I}_{C_{2}}(-1))\\=\mathbb{C}^{13}$. We know that the parameter space for $\mathcal{I}_{C_{2}}(-1)$ is ${\mathcal{H}ilb^{2t+1}(\mathbb{P}^{3})}$, and the parameter space for $\mathcal{O}_{P}(-4)$ is $Gr(3,4) \cong (\mathbb{P}^{3})^{\vee}$, and we have $dim (\mathcal{H}ilb^{2t+1}(\mathbb{P}^{3}))=8$, and $dim ((\mathbb{P}^{3})^{\vee})=3$. Therefore the locus of extensions in $\mathrm{Ext}^{1}(\mathcal{I}_{C_{2}}(-1), \mathcal{O}_{P}(-4))$ is isomorphic to ${\mathcal{H}ilb^{2t+1}(\mathbb{P}^{3})} \times (\mathbb{P}^{3})^{\vee}$. Lemma \ref{3.13} implies $\mathrm{ext}^{1}(E,E) \leq 8+3+1+13-1=24$ for each $E$ given by a class in $\mathrm{Ext}^{1}(\mathcal{O}_{P}(-4),\mathcal{I}_{C_{2}}(-1))$. This implies that $\mathcal{M}_{\sigma_{-}}(v)$ is smooth. The dimension of the locus of extensions in $\mathrm{Ext}^{1}(\mathcal{O}_{P}(-4),\mathcal{I}_{C_{2}}(-1))$ (i.e., the exceptional locus)  is $12+(8+3)=23$, so it is a divisor in $\mathcal{M}_{\sigma_{-}}(v)$. Using Lemma  \ref{3.15} induces the result.

\end{proof}

\section{Exceptional locus} \label{Exceptional locus}

 In this section, we  describe the moduli space $\mathcal{M}_{\sigma_{+}}(v)$ for the next chamber as $\mathcal{M}_{\sigma_{+}}(v)=\widetilde{\mathcal{M}_{\sigma_{-}}(v)} \cup \mathcal{M}^{'}$, where $\widetilde{\mathcal{M}_{\sigma_{-}}(v)} $  is birational to $\mathcal{M}_{\sigma_{-}}(v)$, and $\mathcal{M}^{'}$ is a new irreducible component.
 
 Let $A=\mathcal{I}_{L}(-1)$ and $B=\iota_{P_{*}}(\mathcal{I}_{Z_{2}})^{\vee}(-5)$. Let  $\sigma_{0}$ be a stability condition on the wall $\WW$. As the morphism  $\mathcal{M}_{\sigma_{+}}(v) \dashrightarrow \mathcal{M}_{\sigma_{0}}(v)$ contracts $ \mathcal{M}^{'}$ (projecting the $\mathbb{P}^{17}$-bundle to its base), to understand its restriction $\widetilde{\mathcal{M}_{\sigma_{-}}(v)}   \rightarrow \mathcal{M}_{\sigma_{-}}(v) $,  we first need to understand the intersection $\widetilde{\mathcal{M}_{\sigma_{-}}(v)} \cap \mathcal{M}^{'}$; this is the main goal of this section. We can stratify the base of $\mathcal{M}'$ via
$\mathrm{Ext}^1(A, B)$ (Lemma \ref{ext1purple1}). We know that the intersection lies over the strata where 
$\mathrm{ext}^1(A,B) \ge 1$.

To reach our goal,  we first compute necessary Ext-groups.
Then we need to prove the surjectetivity of the map $\delta\colon \mathrm{Ext}^{1}(B,A) \rightarrow \mathrm{Hom}(\mathrm{Ext}^{1}(A,B),  \mathrm{Ext}^{2}(B,B))$ (Lemmas \ref{surj1} and \ref{surj2}). We show that $\ker(\delta)$ gives the precise description of the singularity locus  of $\mathcal{M}_{\sigma_{+}}(v)$.
To recover the 14-dimensional cone, we construct enough degenerations of objects in the intersection to objects in $\mathcal{M}'$. More precisely, we project the canonical genus four curves to the plane quintics with two nodes. Finally, we degenerate the quintic with two nodes union a line  to a plane quartic union a thickened line meeting those nodes (Lemma \ref{closure}). This implies that the singularity locus is the same as the intersection of the two components (which is the same as the exceptional locus of $\psi$). Having all this, Theorem  \ref{intN3} will be proved.

\subsection{Ext Groups}

To study this wall-crossing, we first compute all  necessary $\mathrm{Ext}$-groups associated to the wall  $\mathcal{W}=\langle \mathcal{I}_{L}(-1), \iota_{P_{*}}(\mathcal{I}_{Z_{2}})^{\vee}(-5)\rangle$.

First, we have the following Lemmas:

\begin{lem} \label{pullback}

 Let $P \subset \mathbb{P}^{3}$ be a plane,  $L \subset \mathbb{P}^{3}$ be an arbitrary line,  $L' \subset P$ be a line intersecting (but not identical to) $L$, and $p=P \cap L$. Then

\begin{equation*}
   \iota_{P}^{*}\mathcal{I}_{L} =
    \begin{cases}
     \mathcal{I}_{p/P}, &L \not\subset P\\
      \mathcal{O}_{P}(-1) \oplus \mathcal{O}_{L}(-1), & L \subset P.
      
      \\
  
    \end{cases}
  \end{equation*}

\end{lem}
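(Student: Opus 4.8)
The plan is to reduce the computation to a locally free (Koszul) resolution of $\II_L$ and then restrict it to $P$, reading off the cohomology of the resulting two-term complex in each case. Fix two linear forms $f_1,f_2$ cutting out the line $L=V(f_1,f_2)$, so that $\II_L$ admits the Koszul resolution
$$0 \to \OO_{\P^3}(-2) \xrightarrow{\binom{-f_2}{f_1}} \OO_{\P^3}(-1)^{\oplus 2} \to \II_L \to 0.$$
Since this is a locally free resolution of length one, the derived pullback $\iota_P^{*}\II_L$ is computed by the two-term complex $K^{\bullet}:=[\,\OO_P(-2)\xrightarrow{\binom{-f_2|_P}{f_1|_P}}\OO_P(-1)^{\oplus 2}\,]$ placed in degrees $-1$ and $0$, and it suffices to identify its cohomology sheaves. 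In particular, once one checks $\mathcal{H}^{-1}(K^{\bullet})=0$, the derived and ordinary pullbacks coincide and the answer is an honest sheaf concentrated in degree $0$.

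In the case $L\not\subset P$, I would observe that the restricted forms $f_1|_P,f_2|_P$ are two linearly independent linear forms on $P\cong\P^2$ whose common zero locus is the single point $p=L\cap P$; hence they form a regular sequence and $K^{\bullet}$ is the truncation of the Koszul complex $0\to\OO_P(-2)\to\OO_P(-1)^{\oplus 2}\to\OO_P\to\OO_p\to 0$. Thus the differential of $K^{\bullet}$ is injective, so $\mathcal{H}^{-1}(K^{\bullet})=0$ and $\mathcal{H}^0(K^{\bullet})=\mathrm{coker}\big(\binom{-f_2|_P}{f_1|_P}\big)=\ker(\OO_P\to\OO_p)=\mathcal{I}_{p/P}$, giving the first case. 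For $L\subset P$ I would instead choose the generators so that $f_1$ is the defining equation of $P$ and $f_2$ is a linear form cutting out $L$ inside $P$; then $f_1|_P=0$ while $f_2|_P\neq 0$ is the equation of the divisor $L\subset P$. The differential of $K^{\bullet}$ becomes the block map $\binom{-f_2|_P}{0}$, which is still injective (so again $\mathcal{H}^{-1}(K^{\bullet})=0$) but now splits off the second summand: its cokernel is $\big(\OO_P(-1)/f_2|_P\!\cdot\!\OO_P(-2)\big)\oplus\OO_P(-1)$, and since $f_2|_P$ is the equation of $L$ the first summand is $\OO_P(-1)|_L=\OO_L(-1)$, yielding $\iota_P^{*}\II_L=\OO_P(-1)\oplus\OO_L(-1)$.

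The only genuine subtlety, and the main obstacle, lies in the second case: because $L\subset P$ is an improper intersection one expects a nonzero $\Tor_1$ contribution, and indeed the derived restriction $\iota_P^{*}\OO_L$ acquires a term in degree $-1$. The point is that this term is exactly cancelled when one passes from $\OO_L$ to $\II_L$, and the a priori extension of $\OO_P(-1)$ by $\OO_L(-1)$ in fact splits. The block structure of the restricted Koszul differential, obtained from the careful choice $f_1=$ equation of $P$, makes both the vanishing of $\mathcal{H}^{-1}$ and the splitting completely manifest, which is why I prefer the explicit resolution to a triangle-chasing argument. As a consistency check one can recover the same description from the restriction triangle attached to $0\to\II_L\to\OO_{\P^3}\to\OO_L\to 0$ together with the triangle of Lemma \ref{exttrihuy}.
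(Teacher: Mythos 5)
Your proof is correct and takes essentially the same route as the paper: in the key case $L \subset P$ the paper likewise restricts the Koszul resolution obtained by taking $P$ itself as one of the two planes cutting out $L$ (the other being a plane $P'$ with $P \cap P' = L$), so that the restricted differential splits off a summand and yields $\mathcal{O}_{L}(-1) \oplus \mathcal{O}_{P}(-1)$ exactly as in your block-matrix argument. The only minor difference is the transverse case, where the paper restricts the sequence $\mathcal{I}_{L} \to \mathcal{O} \to \mathcal{O}_{L}$ rather than the Koszul complex; your regular-sequence computation is an equivalent, and slightly more explicit, way of seeing that the derived pullback is concentrated in degree $0$ and equals $\mathcal{I}_{p/P}$.
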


\begin{proof}
First assume $L \not \subset P$; the exact sequence $\mathcal{I}_{L} \rightarrow \mathcal{O} \rightarrow \mathcal{O}_{L}$ implies
$\iota_{P}^{*}\mathcal{I}_{L} =\mathcal{I}_{p/P}$. Now assume $L \subset P$. Assume that $P \cap P'=L$ where $P'$ is a plane containing $L$. From the resolution $\mathcal{O}(-2) \rightarrow \mathcal{O}(-1) \oplus \mathcal{O}(-1) \rightarrow \mathcal{I}_{L}$ induced by $P$ and $P'$, we have $\iota_{P}^{*}\mathcal{I}_{L}= \iota_{P}^{*}(\mathcal{O}(-2) \rightarrow \mathcal{O}(-1)) \oplus \iota_{P}^{*}(\mathcal{O}(-1)) =(\mathcal{O}_{P}(-2) \rightarrow \mathcal{O}_{P}(-1) )\oplus \mathcal{O}_{P}(-1)= \mathcal{O}_{L}(-1) \oplus \mathcal{O}_{P}(-1) $.

\end{proof}

In the statement of the following Lemma, all tensor products are in  $\mathrm{D}^{b}(P)$:
\begin{lem} \label{tensor}
Let $P$ be a plane, $p \subset P$ a single point,   $Z_2=q \cup q' \subset P$ a point of length 2, and $l \subset P $ a line. Then $\II_{Z_2} \otimes \II_p$ fits into the short exact sequence  $\OO_p \hookrightarrow \II_{Z_2} \otimes \II_p \twoheadrightarrow{}  \II_{2p \cup q'}$, for $p=q \subset Z_2$, and 

$$\II_{Z_2} \otimes \II_p=\II_{p \cup Z_2},$$ for $ p \not \subset Z_2$. Furthermore, we have

  %\begin{equation*}
  %\II_{Z_2} \otimes \II_p =
   % \begin{cases}
    % \OO_p\oplus \II_{2p \cup q'}, &p=q \subset Z_2\\
     %\II_{p \cup Z_2}, & p \not \subset Z_2
  
    %\end{cases}
  %\end{equation*}
\begin{equation*}
  \II_{Z_2} \otimes \OO_p =
    \begin{cases}
     \OO_p[1]\oplus \OO_p^{\oplus 2}, &p \subset Z_2\\
      \mathcal{O}_{p}, & p \not \subset Z_2
  
    \end{cases}
  \end{equation*}
  
  \begin{equation*}
  \II_{Z_2} \otimes \OO_l =
     \begin{cases}
      \OO_{Z_2} \oplus \OO_l(-2),&   Z \subset l\\
     \OO_{q} \oplus \OO_l(-1),& q \subset l,   \text{and $ q' \not\subset l$}\\
    
     \OO_l, &  q, q' \not \subset l

    \end{cases}
  \end{equation*}

\end{lem}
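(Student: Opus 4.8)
The plan is to reduce every assertion to a local computation at the points of $Z_2$ via the structure sequence $0\to\II_{Z_2}\to\OO_P\to\OO_{Z_2}\to0$. Since $P$ is a smooth surface, both $\OO_l$ and $\II_p$ have a locally free resolution of length one ($\OO_P(-1)\to\OO_P$ and $\OO_P(-2)\to\OO_P(-1)^{\oplus2}$ respectively), so $\Tor_i(-,N)=0$ for $i\ge2$ whenever $N\in\{\OO_l,\II_p\}$. For such $N$, tensoring the structure sequence with $N$ and using $\Tor_{>0}(\OO_P,N)=0$ gives $\Tor_1(\II_{Z_2},N)=0$ — so that $\II_{Z_2}\otimes N$ is a genuine sheaf — together with the four-term exact sequence
\[
0\longrightarrow\Tor_1(\OO_{Z_2},N)\longrightarrow\II_{Z_2}\otimes N\longrightarrow N\longrightarrow\OO_{Z_2}\otimes N\longrightarrow0 .
\]
This reduces the $\OO_l$ and $\II_p$ statements to computing $\OO_{Z_2}\otimes N$, which, as $\OO_{Z_2}$ is supported on $\{q,q'\}$, is a sum of purely local Koszul calculations. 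The sheaf $\OO_p$ has projective dimension two, so there I would instead resolve $\OO_p$ directly by its Koszul complex before tensoring with $\II_{Z_2}$; this is what produces the degree-shifted summand in the $p\subset Z_2$ case.

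\textbf{The point cases.} For $N=\OO_p$ I would pass to the stalk at $p$. If $p\not\subset Z_2$ then $\II_{Z_2}$ is free there and $\II_{Z_2}\otimes\OO_p=\OO_p$. If $p\subset Z_2$, the stalk of $\II_{Z_2}$ is (the length-two local ideal whose reduction is) $\mathfrak m_p$; tensoring its Koszul resolution with $\OO_p$ kills the differentials, since these lie in $\mathfrak m_p$, so the representing complex has zero differential and splits as its cohomology $\OO_p[1]\oplus\OO_p^{\oplus2}$, with $\mathcal H^0=\mathfrak m_p/\mathfrak m_p^2$. For $N=\II_p$ I feed the local answer $\OO_{Z_2}\otimes\II_p$ (equal to $\OO_{q'}$ near $q'$ and to $\OO_p^{\oplus2}\oplus\OO_p[1]$ near $p=q$) into the four-term sequence; the map $\II_p\to\OO_{Z_2}\otimes\II_p$ is evaluation, and its kernel is the ideal of functions vanishing to order two at $p$ and order one at $q'$, i.e. $\II_{2p\cup q'}$, yielding the asserted sequence $\OO_p\hookrightarrow\II_{Z_2}\otimes\II_p\twoheadrightarrow\II_{2p\cup q'}$. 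When $p\not\subset Z_2$ the non-free loci of the two factors are disjoint, all higher $\Tor$ vanish, and multiplication identifies $\II_{Z_2}\otimes\II_p$ with the product ideal $\II_{Z_2}\cdot\II_p=\II_{p\cup Z_2}$.

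\textbf{The line case.} For $N=\OO_l$ I would resolve $\OO_l$ by $\OO_P(-1)\xrightarrow{\,f\,}\OO_P$, with $f$ an equation of $l$, and tensor with $\OO_{Z_2}$; since $\OO_{Z_2}$ is Artinian, multiplication by $f$ has kernel and cokernel both equal to the part supported on $Z_2\cap l$, giving $\Tor_0(\OO_{Z_2},\OO_l)=\Tor_1(\OO_{Z_2},\OO_l)=\OO_{Z_2\cap l}$. The four-term sequence then reads
\[
0\longrightarrow\OO_{Z_2\cap l}\longrightarrow\II_{Z_2}\otimes\OO_l\longrightarrow\OO_l\xrightarrow{\ \mathrm{ev}\ }\OO_{Z_2\cap l}\longrightarrow0 .
\]
The kernel of evaluation is the ideal $\II_{(Z_2\cap l)/l}$ of the intersection scheme inside $l\cong\P^1$, namely $\OO_l(-d)$ with $d=\mathrm{length}(Z_2\cap l)\in\{0,1,2\}$, which gives the three listed cases. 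It remains to split the resulting extension $0\to\OO_{Z_2\cap l}\to\II_{Z_2}\otimes\OO_l\to\OO_l(-d)\to0$. The explicit stalk computation already exhibits a splitting near each point of $Z_2\cap l$, and $H^1(\lHom(\OO_l(-d),\OO_{Z_2\cap l}))=H^1(\OO_{Z_2\cap l})=0$, so by the local-to-global spectral sequence the global extension class is detected by its vanishing local classes; hence $\II_{Z_2}\otimes\OO_l$ is the claimed direct sum.

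\textbf{Main obstacle.} I expect the genuinely delicate points to be twofold: first, pinning down the global twists $\OO_l(-1),\OO_l(-2)$ by identifying the kernel of evaluation with the correct ideal sheaf on $l$ rather than merely matching stalks; and second, correctly distinguishing the statements that assert a direct sum (those for $\OO_p$ and $\OO_l$, where the torsion extension splits) from the $p=q$ case for $\II_p$, where one should claim only a short exact sequence. A further bookkeeping subtlety is the behaviour when $Z_2$ is non-reduced, or when a point of $Z_2$ lies on $l$ with a scheme direction transverse to it; I would treat these uniformly by rerunning the same local Koszul resolutions with $\mathfrak m_p$ replaced by the relevant length-two local ideal, checking that the $\Tor$ dimensions, and hence the stated formulas, are unchanged.
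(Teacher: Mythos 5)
Your proposal is correct, and it establishes every case of the lemma; the underlying toolkit (Koszul resolutions on the smooth surface $P$ plus Tor computations) is the same as the paper's, but the organization is genuinely different. You run all cases uniformly through the structure sequence $0\to\II_{Z_2}\to\OO_P\to\OO_{Z_2}\to0$: tensoring with $N$ yields the four-term sequence, the kernel of $N\to\OO_{Z_2}\otimes N$ is identified with the product ideal $\II_{Z_2}\cdot N$, and the remaining terms are purely local at the points of $Z_2$. The paper instead tensors an explicit global resolution $\OO_P(-3)\to\OO_P(-1)\oplus\OO_P(-2)$ of $\II_{Z_2}$ with $\OO_p$ or $\OO_l$, and for the $\II_{Z_2}\otimes\II_p$ case (with $p=q$) deduces the answer from $\II_p\otimes\II_p$ via the sequence $\II_{Z_2}\to\II_q\to\OO_{q'}$. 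The trade-off is this: the paper's route gets the direct-sum decompositions essentially for free, since when $Z_2\subset l$ the linear generator of $\II_{Z_2}$ cuts out $l$ itself and hence restricts to zero, so the restricted two-term complex visibly splits off the summand $\OO_l(-2)$; your route needs an extra splitting argument, which you correctly supply via local splittings together with the vanishing $H^1\bigl(\lHom(\OO_l(-d),\OO_{Z_2\cap l})\bigr)=0$ and the local-to-global $\Ext$ sequence. Conversely, your route buys uniformity and robustness: it does not depend on the particular shape of the resolution of $\II_{Z_2}$, it treats non-reduced $Z_2$ by the same mechanism, and it gives a cleaner derivation of the short exact sequence $\OO_p\hookrightarrow\II_{Z_2}\otimes\II_p\twoheadrightarrow\II_{2p\cup q'}$ — by identifying the kernel of evaluation with $\II_{Z_2}\cdot\II_p$ and computing $\Tor_1(\OO_{Z_2},\II_p)=\OO_p$ locally — precisely where the paper's argument is tersest.
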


\begin{proof}
 Tensoring the short exact sequence $\mathcal{I}_{p} \hookrightarrow \mathcal{O}_{P} \twoheadrightarrow \mathcal{O}_{p}$ by $\mathcal{I}_{p}$, we have 
   
   $$0 \to \mathcal{T}or^{1}(\mathcal{I}_{p},\mathcal{O}_{p}) \rightarrow \mathcal{I}_{p}\otimes \mathcal{I}_{p} \rightarrow \mathcal{I}_{p} \rightarrow \mathcal{O}_{p} \otimes^{u} \mathcal{I}_{p} \to 0,$$
   where $\otimes^{u}$, is the underived tensor.
   We know that $\mathcal{I}_{p} \cong[ \mathcal{O}_{P}(-2) \hookrightarrow \mathcal{O}_{P}(-1)^{\oplus2}]$; tensoring this by $\mathcal{O}_{p}$ gives  $\mathcal{I}_{p} \otimes \mathcal{O}_{p} \cong [\mathcal{O}_{p} \xrightarrow{0} \mathcal{O}_{p}^{\oplus 2}]$, and so we have $\II_p \otimes \OO_p=\OO_p[1] \oplus \OO_p^{\oplus 2}$.
   
Let us consider $\II_p \otimes \II_p$. The above sequence will be
   
   $$\mathcal{O}_{p}  \rightarrow \mathcal{I}_{p}\otimes \mathcal{I}_{p} \xrightarrow{f} \mathcal{I}_{p} \xrightarrow{g} \mathcal{O}_{p}^{\oplus2}=\mathcal{I}_{p}/\mathcal{I}_{p}^{2} \rightarrow 0.
  $$
 But $im(f)=ker(g)=\mathcal{I}_{p}^{2} $, so we have
 
   \begin{center}

   \begin{tikzpicture}[>=angle 90]
\matrix(a)[matrix of math nodes,
row sep=3em, column sep=2.5em,
text height=1.5ex, text depth=0.25ex]
{\mathcal{O}_{p} &\mathcal{I}_{p}\otimes \mathcal{I}_{p}&\mathcal{I}_{p} &\mathcal{I}_{p}/\mathcal{I}_{p}^{2}
\\
&\mathcal{I}_{p}^{2}\\};
\path[->](a-1-1) edge (a-1-2);
\path[->>](a-1-2) edge (a-2-2);

\path[->](a-1-2) edge (a-1-3);
\path[right hook->](a-2-2) edge (a-1-3);
\path[->](a-1-3) edge (a-1-4);
\end{tikzpicture}
 \end{center}
Therefore, we get the short exact sequence $\OO_p \hookrightarrow \II_p \otimes \II_p \twoheadrightarrow{} \II_p^{2}$. %as there is a natural map $b\colon \II_p^{2} \to \II_p \otimes \II_p$, such that $a \circ b =id$. 
%Thus we have $\mathcal{I}_{p}\otimes \mathcal{I}_{p} \cong \mathcal{O}_{p} \oplus \mathcal{I}_{p}^{2}$.

For $ \II_{Z_2} \otimes \II_p$, when $p=q \sub Z_2$, considering the above computation, and looking at $\mathcal{I}_{Z_2}\otimes \mathcal{I}_{q} \rightarrow \mathcal{I}_{q}\otimes \mathcal{I}_{q} \rightarrow \mathcal{I}_{q'}$, imply the result. %$\mathcal{I}_{Z_2}\otimes \mathcal{I}_{q} \cong \mathcal{O}_{q} \oplus \mathcal{I}_{2q \cup q'}$.
The result is immediate for  $p \not \sub Z_2$. 
%To compute $\II_{Z_2} \otimes \II_p$ when $p \not \sub Z$, we notice that $\II_{Z_2} \otimes \II_p=(\OO_P \to \OO_{Z_2})\otimes (\OO_P \to \OO_p)=\OO_P \to \OO_{Z_2} \oplus \OO_p$, which is quasi-isomorphic to $\II_{Z \cup p}$.

  For $ \II_{Z_2} \otimes \OO_p$, if $p \subset Z_2$, then tensoring  $\mathcal{I}_{Z_2} \cong [\mathcal{O}_{P}(-3) \hookrightarrow \mathcal{O}_{P}(-2) \oplus \OO_P(-1)]$  by $\mathcal{O}_{p}$, and the same argument as above give the result. %$\mathcal{I}_{Z_2} \otimes \mathcal{O}_{p} \cong [\mathcal{O}_{p} \xrightarrow{0} \mathcal{O}_{p}^{\oplus 2}]$, and so we have $\II_{Z_2} \otimes \OO_p=\OO_p[1] \oplus \OO_p^{\oplus 2}$.
  If $p \not \subset Z_2$, using the above, the result is immediate again. %then tensoring $\II_p \into \OO_P \onto \OO_p$ by $\II_{Z_2}$, gives 
  %\begin{equation} \label{IZ}
     % \II_{Z_2} \otimes \II_p \into \II_{Z_2} \onto \II_{Z_2} \otimes \OO_p.
  %\end{equation}
 % Therefore using the above gives the result for $ \II_{Z_2} \otimes \OO_p$.

Now, for $\II_{Z_2} \otimes \OO_l$, if $Z_2 \subset l$, tensoring $\mathcal{I}_{Z_2}$ as above by $\OO_l$ gives $\OO_l(-3) \to \OO_l(-1) \oplus \OO_l(-2)$, which is $\OO_{Z_2} \oplus\OO_l(-2)$. If $q \subset l$, but $q' \not \subset l$, the same argument gives the result. If $q, q' \not \subset l$, then tensoring $\II_l \into \OO_P \onto \OO_l$ by $\II_{Z_2}$, and noticing that $\II_{Z_2} \otimes \II_l=\II_{Z_2 \cup l}$ in this case (which can be similarly checked as above), imply the claim.

\end{proof}

Now we have the following Lemma on $\mathrm{Ext}$-groups:

\begin{lem} ~\label{ext1purple1}
For the wall $\langle \mathcal{I}_{L}(-1),\iota_{P_{*}}(\mathcal{I}_{Z_{2}})^{\vee}(-5)\rangle$, we have:

$$
\mathrm{Ext}^{1}(\mathcal{I}_{L}(-1),\mathcal{I}_{L}(-1))= \mathbb{C}^{4},\quad
\mathrm{Ext}^{1}(\iota_{P_{*}}(\mathcal{I}_{Z_{2}})^{\vee}(-5), \iota_{P_{*}}(\mathcal{I}_{Z_{2}})^{\vee}(-5))= \mathbb{C}^{7},$$$$
\mathrm{Ext}^{1}(\iota_{P_{*}}(\mathcal{I}_{Z_{2}})^{\vee}(-5),\mathcal{I}_{L}(-1))= \mathbb{C}^{18},
$$
\begin{equation*}
    \mathrm{Ext}^{1}(\mathcal{I}_{L}(-1),\iota_{P_{*}}(\mathcal{I}_{Z_{2}})^{\vee}(-5))=
    \begin{cases}
    0, & \langle Z_2 \rangle \cap L = \emptyset\\
       \mathbb{C}, & \text{ $\langle Z_2 \rangle \cap L \neq \emptyset$  but $\langle Z_2 \rangle \neq L$}\\
     
      \mathbb{C}^{2}, &  \langle Z_2 \rangle =  L

    \end{cases}
  \end{equation*}
where $\langle Z_2 \rangle$ is the line spanned by $Z_{2}$.
Furthermore, we have 
$$
\mathrm{Ext}^{2}(\mathcal{I}_{L}(-1),\mathcal{I}_{L}(-1))=0, \quad \mathrm{Ext}^{2}(\iota_{P_{*}}(\mathcal{I}_{Z_{2}})^{\vee}(-5),\mathcal{I}_{L}(-1))= 0,$$$$
\mathrm{Ext}^{2}(\iota_{P_{*}}(\mathcal{I}_{Z_{2}})^{\vee}(-5), \iota_{P_{*}}(\mathcal{I}_{Z_{2}})^{\vee}(-5))=\mathbb{C}^{4}.
$$

\end{lem}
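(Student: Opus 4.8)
The plan is to compute every $\Ext$-group by reducing the two complicated objects $A=\II_L(-1)$ and $B=\iota_{P_*}(\II_{Z_2})^\vee(-5)$ to pieces living on the plane $P$ or on $\P^3$, and then to apply adjunction for the closed immersion $\iota_P$ together with the Koszul/hypersurface triangle of Lemma \ref{lemhuy} and the duality of Corollary \ref{duallem}. First I would rewrite $B$ in a more usable form: by Corollary \ref{duallem} we have $(\iota_{P_*}\FF)^\vee=\iota_{P_*}\FF^\vee(1)[-1]$, so $B=\iota_{P_*}\big((\II_{Z_2})^\vee(1)\big)(-5)[-1]$, and $(\II_{Z_2})^\vee$ on $P$ can be resolved explicitly from $\II_{Z_2}\cong[\OO_P(-3)\to\OO_P(-2)\oplus\OO_P(-1)]$. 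With $A,B$ so presented, I would organize the six computations into three groups according to how many factors live purely on $P$.

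For $\Ext^\bullet(A,A)=\Ext^\bullet(\II_L(-1),\II_L(-1))$ the plan is the standard computation on $\P^3$: use $\II_L\into\OO\onto\OO_L$ to replace $A$, and compute via the known cohomology of line bundles on $\P^3$ restricted to the line, giving $\Ext^1=\C^4$ (the tangent space to the Hilbert scheme of lines, $\dim\G r(2,4)=4$) and $\Ext^2=0$ since $\II_L(-1)$ is a rigid-in-$\Ext^2$ sheaf on the smooth $\P^3$. For $\Ext^\bullet(B,B)$, I would push the whole computation onto $P$: by adjunction $\RHom_{\P^3}(\iota_{P_*}\FF,\iota_{P_*}\GG)=\RHom_P(\iota_P^*\iota_{P_*}\FF,\GG)$, and Lemma \ref{lemhuy} gives the triangle $\FF(-1)[1]\to\iota_P^*\iota_{P_*}\FF\to\FF$ (since $\OO_P(-P)=\OO_P(-1)$). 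This reduces $\Ext^\bullet(B,B)$ to $\Ext$-groups between twists of $\II_{Z_2}^\vee$ on the plane $P\cong\P^2$, which are then a finite linear-algebra computation from the resolution of $\II_{Z_2}$; the self-$\Ext^1$ should come out to $\C^7$ (matching $\dim P^\vee + \dim\Hilb^2(P)=3+4$) and $\Ext^2=\C^4$.

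The genuinely delicate computation is the mixed one, $\Ext^1(A,B)$, where the answer jumps with the geometric position of $Z_2$ relative to $L$; this is the step I expect to be the main obstacle. The plan is again to use adjunction to move to $P$, $\RHom(\II_L(-1),\iota_{P_*}(\cdots))=\RHom_P(\iota_P^*\II_L(-1),\cdots)$, and then feed in Lemma \ref{pullback}: the restriction $\iota_P^*\II_L$ is $\II_{p/P}$ when $L\not\subset P$ (with $p=P\cap L$), which is the relevant generic case here. The three-way case distinction then comes from how the point $p$ and the length-$2$ scheme $Z_2$ sit inside $P$ — whether $\langle Z_2\rangle\cap L$ is empty, a single point, or all of $L$ — and this is exactly where the tensor-product computations of Lemma \ref{tensor} (the various cases of $\II_{Z_2}\otimes\OO_p$ and $\II_{Z_2}\otimes\OO_l$) are needed to control the local $\Tor$-contributions; I would track these carefully through a spectral sequence or long exact sequence to extract $0$, $\C$, or $\C^2$. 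Finally, the two vanishing/computational statements $\Ext^2(A,A)=0$ and $\Ext^2(B,A)=0$, together with $\Ext^1(B,A)=\C^{18}$, I would obtain from the same adjunction-plus-resolution machinery, using Serre duality on $\P^3$ to convert $\Ext^2(B,A)$ into a $\Hom$ that one checks vanishes, and dimension-counting to confirm $\C^{18}$ is consistent with the expected dimension of the new component $\MM'$.
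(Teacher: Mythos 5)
Your plan is correct and follows essentially the same route as the paper's proof: adjunction along $\iota_P$ plus Lemma \ref{lemhuy}/Corollary \ref{duallem}, the restriction formula of Lemma \ref{pullback}, the Tor computations of Lemma \ref{tensor}, and Serre/Grothendieck--Verdier duality, with the same configuration-dependent case analysis for $\Ext^1(A,B)$ (just note that the $\C^2$ case $\langle Z_2\rangle = L$ forces $L \subset P$, so there you need the second branch of Lemma \ref{pullback}, $\iota_P^*\II_L = \OO_P(-1)\oplus\OO_L(-1)$, rather than the generic $\II_{p/P}$). The only other cosmetic difference is that the paper obtains $\Ext^1(A,A)=\C^4$ and $\Ext^1(B,B)=\C^7$ immediately as tangent spaces to $\mathbb{G}r(2,4)$ and to the flag space of pairs $Z_2\subset P$, where you propose a direct computation via resolutions pushed to $P$; both give the same numbers.
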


\begin{proof}
Since $\mathrm{Ext}^{1}(\mathcal{I}_{L}(-1), \mathcal{I}_{L}(-1))$ is the tangent space of $\mathbb{G}r(2,4)$, we can see $\mathrm{Ext}^{1}(\mathcal{I}_{L}(-1),\\ \mathcal{I}_{L}(-1))= \mathbb{C}^{4}$. We know that $\mathrm{Ext}^{1}(\iota_{P_{*}}(\mathcal{I}_{Z_{2}})^{\vee}(-5), \iota_{P_{*}}(\mathcal{I}_{Z_{2}})^{\vee}(-5))$ is the tangent space to the space parametrising two points in a plane in $\mathbb{P}^{3}$. This is a bundle over $(\mathbb{P}^{3})^{*}$ with fibers isomorphic to $(\mathbb{P}^{2})^{[2]}$, thus it is smooth of dimension $7$, and hence $\mathrm{Ext}^{1}(\iota_{P_{*}}(\mathcal{I}_{Z_{2}})^{\vee}(-5), \iota_{P_{*}}(\mathcal{I}_{Z_{2}})^{\vee}(-5))\\=\mathbb{C}^{7}.$

For $\mathrm{Ext}^1(A,B)$, using Serre duality we have:
$$\mathrm{Ext}^{1}(\mathcal{I}_{L}(-1),\iota_{P_{*}}(\mathcal{I}_{Z_{2}})^{\vee}(-5))=\mathrm{Ext}^{1}(\iota_{P}^{*}(\mathcal{I}_{L}(-1)),(\mathcal{I}_{Z_{2}})^{\vee}(-5)) =\mathrm{H}^{1}(\iota_{P}^{*}(\mathcal{I}_{L}) \otimes\mathcal{I}_{Z_{2}}(1))^{\vee}.$$
We use Lemma \ref{pullback} for $\iota_{P}^{*}(\mathcal{I}_{L}(-1))$; so there are two cases:
\begin{description}

\item [1) $L \subset P$] Let $Z_2=q \cup q'$, for $q$ and $q'$ single points. By Lemma \ref{tensor}, we have
$$\mathrm{Ext}^{1}(A,B)= \mathrm{H}^{1}( \mathcal{I}_{Z_{2}})^{\vee} \oplus \mathrm{H}^{1}(\mathcal{O}_{L} \otimes \mathcal{I}_{Z_{2}})^{\vee}$$
\begin{equation*}
    =
    \begin{cases}
      \mathrm{H}^{1}( \mathcal{I}_{Z_{2}})^{\vee}\oplus \mathrm{H}^{1}( \mathcal{O}_{L}(-2) \oplus \OO_{Z_2})^{\vee}  =\mathbb{C}\oplus (\mathbb{C} \oplus 0)=\mathbb{C}^{2}, &   \text{$L \subset P$ and $Z_{2} \subset L$}\\
      \mathrm{H}^{1}( \mathcal{I}_{Z_{2}})^{\vee} \oplus \mathrm{H}^{1}((\OO_L(-1) \oplus \OO_{q})\otimes \II_{q'})^{\vee} & L \subset P, q \subset L, q' \not\subset L \\ \quad \quad \quad =\C \oplus \mathrm{H}^{1}(\OO_L(-1) \oplus \OO_{q})^{\vee} =\mathbb{C}\oplus (0+0), & \\
    
      \mathrm{H}^{1}( \mathcal{I}_{Z_{2}})^{\vee} \oplus \mathrm{H}^{1}(\OO_L)^{\vee} =\mathbb{C}\oplus 0, & \text{$L \subset P$, and $q, q' \not \subset L$}

    \end{cases}
  \end{equation*}

\item [2) $L$ is not contained in $P$]

Recall that $p=P \cap L$. In this case, again using Lemma  \ref{pullback} ($\iota_{P}^{*}(\mathcal{I}_{L})= \mathcal{I}_{p}$) and Serre duality we have $\mathrm{Ext}^{1}(A,B)= \mathrm{H}^{1}(\mathcal{I}_{p} \otimes \mathcal{I}_{Z_{2}}(1))^{\vee}.$
Now, we consider the exact triangle
$$\mathcal{I}_{p} \otimes \mathcal{I}_{Z_{2}}(1) \rightarrow  \mathcal{I}_{Z_{2}}(1) \rightarrow \mathcal{O}_{p} \otimes \mathcal{I}_{Z_{2}}(1).$$
\begin{itemize}
    \item If $p \not \subset Z_{2}$, then using Lemma \ref{tensor},  we have $ \mathcal{O}_{p} \otimes \mathcal{I}_{Z_{2}}(1)= \mathcal{O}_{p}$ and so taking the long exact cohomology sequence of the above triangle, and noticing that the unique global section of $\mathcal{I}_{Z_{2}}(1)$ vanishes exactly along $\langle Z_2 \rangle$, implies that the map $\mathrm{H}^{0}(\mathcal{I}_{Z_{2}}(1))=\mathbb{C} \rightarrow \mathrm{H}^{0}(\mathcal{O}_{p})=\mathbb{C} $ is non-zero if and only if $p$ is not colinear with $Z_{2}$. We also notice that $\mathrm{H}^0(\mathcal{I}_{p} \otimes \mathcal{I}_{Z_{2}}(1))=\mathrm{H}^0(\mathcal{I}_{p \cup Z_2} (1))=0$, by Lemma \ref{tensor}. Furthermore, we have $\mathrm{H}^{1}(\mathcal{I}_{Z_{2}}(1)) =0$. Therefore in this case, $\mathrm{Ext}^1(A,B)=\mathrm{H}^{1}(\mathcal{I}_{p} \otimes \mathcal{I}_{Z_{2}}(1))^{\vee} =\mathbb{C}$ if $p$ is colinear with $Z_{2}$, and $\mathrm{Ext}^1(A,B)=\mathrm{H}^{1}(\iota_{P}^{*}(\mathcal{I}_{L}) \otimes\mathcal{I}_{Z_{2}}(1))^{\vee} =0$ otherwise.

\item If $p \subset Z_{2}$,  then using Lemma \ref{tensor}, we have %$ \mathcal{O}_{p} \otimes \mathcal{I}_{Z_{2}}(1)$ is a two term complex with
$\mathcal{H}^{0}( \mathcal{O}_{p} \otimes \mathcal{I}_{Z_{2}}(1))=\mathcal{I}_{Z_{2}}/m_{p}.\mathcal{I}_{Z_{2}} =(\mathcal{O}_{p})^{\oplus 2}$ and $\mathcal{H}^{-1}( \mathcal{O}_{p} \otimes \mathcal{I}_{Z_{2}}(1))=\mathcal{T}or^{1}( \mathcal{O}_{p} \otimes \mathcal{I}_{Z_{2}}(1))=\mathcal{O}_{p}$. The map $\mathrm{H}^{0}(\mathcal{I}_{Z_{2}}(1))=\mathbb{C} \rightarrow \mathrm{H}^{0}( \mathcal{O}_{p} \otimes \mathcal{I}_{Z_{2}}(1))=\mathrm{H}^{0}((\mathcal{O}_{p})^{\oplus 2})=\mathbb{C}^{2} $ is always non-zero, and as $\mathrm{H}^{1}(\mathcal{I}_{Z_{2}}(1)) =0$, we will have $\mathrm{Ext}^1(A,B)=\mathrm{H}^{1}(\mathcal{I}_{p} \otimes \mathcal{I}_{Z_{2}}(1))^{\vee} =\mathbb{C}$. 
\end{itemize}
\end{description}
Now we compute $\mathrm{Ext}^{1}(\iota_{P_{*}}(\mathcal{I}_{Z_{2}})^{\vee}(-5), \mathcal{I}_{L}(-1))$. In case  $L \subset P$
we use Lemma \ref{pullback} to obtain
$$
\mathrm{Ext}^{1}(\iota_{P_{*}}(\mathcal{I}_{Z_{2}})^{\vee}(-5), \mathcal{I}_{L}(-1))=\mathrm{Ext}^{1}((\mathcal{I}_{Z_{2}})^{\vee}(-5), \iota_{P}^{!}(\mathcal{I}_{L}(-1)))
=\mathrm{H}^{0}(\iota_{P}^{*}(\mathcal{I}_{L}) \otimes \mathcal{I}_{Z_{2}} \otimes \mathcal{O}(5))$$$$=\mathrm{H}^{0}\bigl((\mathcal{O}(-1) \oplus \mathcal{O}_{L}(-1)) \otimes \mathcal{I}_{Z_{2}}(5)\bigr)=\mathrm{H}^{0}(\mathcal{I}_{Z_{2}}(4)) \oplus \mathrm{H}^{0}(\mathcal{I}_{Z_{2}} \otimes \mathcal{O}_{L}(4))=\mathbb{C}^{13}\oplus \mathrm{H}^{0}(\mathcal{I}_{Z_{2}} \otimes \mathcal{O}_{L}(4)).
$$
Tensoring $\mathcal{O}_{\mathbb{P}^{2}}(-1) \rightarrow \mathcal{O}_{\mathbb{P}^{2}} \rightarrow \mathcal{O}_{L}$  by  $\mathcal{I}_{Z_{2}}(4) $ implies $ \mathrm{H}^{0}(\mathcal{I}_{Z_{2}} \otimes \mathcal{O}_{L}(4))=\mathbb{C}^{5}$, and therefore  $\mathrm{Ext}^{1}(B,A)=\mathbb{C}^{18}$. Now let $L \not \subset P$. As in this case we have  $\iota_{P}^{*}(\mathcal{I}_{L}) =\mathcal{I}_{p}$, a similar computation as before and using Lemma \ref{tensor} show that

\begin{equation*}
  \mathrm{Ext}^{1}(B,A)=\mathrm{H}^{0}(\mathcal{I}_{p} \otimes \mathcal{I}_{Z_{2}}(5)) =
    \begin{cases}
     \mathrm{H}^0(\OO_p)\oplus \mathrm{H}^{0}(\II_{2p \cup q'}(5))=\C \oplus \C^{17}, &p=q \subset Z\\
     \mathrm{H}^0(\II_{p \cup Z_2}(5))=\C^{18}, & p \not \subset Z_2
  
    \end{cases}
  \end{equation*}
  (notice that $2p \cup q$ has length 4 in the first case, and $p \cup Z_2$ has length 3 in the second case).

We can easily see that $\mathrm{Ext}^{2}(\mathcal{I}_{L}(-1), \mathcal{I}_{L}(-1))=0.$ 
As for $\mathrm{Ext}^{2}(B,A)$, using Lemma \ref{tensor}, in a similar way as for $\mathrm{Ext}^{1}(B,A)$, we can see that when $L \subset P$, we have $$
\mathrm{Ext}^{2}(B,A)=\mathrm{H}^{1}((\mathcal{O}(-1) \oplus \mathcal{O}_{L}(-1)) \otimes \mathcal{I}_{Z_{2}} \otimes \mathcal{O}(5))=\mathrm{H}^{1}(\mathcal{I}_{Z_{2}}(4)) \oplus \mathrm{H}^{1}(\mathcal{I}_{Z_{2}} \otimes \mathcal{O}_{L}(4))=0,
$$ and when  $L \not \subset P$, we have
$\mathrm{Ext}^{2}(B,A)=\mathrm{H}^{1}(\mathcal{I}_{p} \otimes \mathcal{I}_{Z_{2}}(5))=0.$

Finally, for $\mathrm{Ext}^2(B,B)$, applying Lemma \ref{exttrihuy}  to $j=\iota_{P}\colon P \rightarrow \mathbb{P}^{3}$ and $\mathcal{F}:=\mathcal{I}_{Z_{2}}(1)$, we get $\mathcal{I}_{Z_{2}}[1] \rightarrow \iota^{*}_{P}(\iota_{P_{*}}\mathcal{I}_{Z_{2}}(1))\rightarrow \mathcal{I}_{Z_{2}}(1),$
 and so applying $\mathrm{\mathrm{Hom}}(\mathcal{I}_{Z_{2}},-)$ to this exact triangle gives
$$\mathrm{Ext}^{2}(B,B)=\mathrm{Ext}^{2}(\iota_{P_{*}}(\mathcal{I}_{Z_{2}})^{\vee}(-5),\iota_{P_{*}}(\mathcal{I}_{Z_{2}})^{\vee}(-5))%=\mathrm{Ext}^{2}(\iota_{P_{*}}\mathcal{I}_{Z_{2}},\iota_{P_{*}}\mathcal{I}_{Z_{2}})=
=\mathrm{Ext}^{2}(\mathcal{I}_{Z_{2}},\iota^{!}_{P}(\iota_{P_{*}}\mathcal{I}_{Z_{2}}))$$$$
=\mathrm{Ext}^{1}(\mathcal{I}_{Z_{2}},\iota^{*}_{P}(\iota_{P_{*}}\mathcal{I}_{Z_{2}}(1))) = \mathrm{Ext}^{1}(\mathcal{I}_{Z_{2}},\mathcal{I}_{Z_{2}}(1))=\mathbb{C}^{4}.$$

\end{proof}

Let $\mathfrak{Fl}_2$ be the space  parametrising flags $Z_{2} \subset P \subset \mathbb{P}^{3}$ where $P$ is a plane and $Z_{2}$ a zero-dimensional subscheme of length 2. 

\begin{cor} \label{Cor: M+}
 The moduli space  $\mathcal{M}_{\sigma_{+}}(v)$ for the relevant chamber consists of two irreducible components: one is  $\widetilde {\mathcal{M}_{\sigma_{-}}(v)}$, birational to  $\mathcal{M}_{\sigma_{-}}(v)$, and a new component,  $\mathcal{M}'$, which is  a $\mathbb{P}^{17}$-bundle over $\mathbb{G}r(2,4) \times \mathfrak{Fl}_2 $.
\end{cor}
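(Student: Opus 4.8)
The plan is to combine the cohomological computations of Lemma \ref{ext1purple1} with the standard mechanism of wall-crossing for a single wall whose generic semistable object has the two Jordan--H\"older factors $A=\mathcal{I}_{L}(-1)$ and $B=\iota_{P_{*}}(\mathcal{I}_{Z_{2}})^{\vee}(-5)$. First I would record the elementary fact that, away from the locus of strictly $\sigma_{0}$-semistable objects, the notions of $\sigma_{-}$- and $\sigma_{+}$-stability coincide, so the two moduli spaces share a common dense open subset $U$; taking closures identifies $\widetilde{\mathcal{M}_{\sigma_{-}}(v)}$ (the closure of $U$ in $\mathcal{M}_{\sigma_{+}}(v)$) as a variety birational to $\mathcal{M}_{\sigma_{-}}(v)$, exactly in the spirit of the Bayer--Macr\`i picture (Theorem \ref{BM}). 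The content of the corollary is then to describe the complement of $U$ in $\mathcal{M}_{\sigma_{+}}(v)$. Since $\sigma_{-}$ and $\sigma_{+}$ are separated only by $\mathcal{W}$, and by Remark \ref{remJH} the only $\sigma_{0}$-stable constituents of $v$ are $A$ and $B$, this complement is built entirely from extensions of these two factors.

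Next I would pin down which extensions are newly stable. Because $A$ has larger phase than $B$ on the $\sigma_{-}$ side and smaller phase on the $\sigma_{+}$ side, an extension with $A$ as subobject, $0\to A\to E\to B\to 0$, is destabilized on $\sigma_{-}$ but becomes $\sigma_{+}$-stable, whereas the opposite extension with $B$ as subobject is $\sigma_{-}$-stable and destabilized on $\sigma_{+}$. The latter, classified by $\Ext^{1}(A,B)$, is the small center on the $\sigma_{-}$ side and, by Lemma \ref{ext1purple1}, has fibre dimension at most $2$; the new objects are therefore governed by $\Ext^{1}(B,A)$. Since $A$ and $B$ are $\sigma_{0}$-stable of equal phase and nonisomorphic, $\Hom(A,B)=\Hom(B,A)=0$, so scaling is the only ambiguity and nonzero extension classes up to scalar give distinct isomorphism classes; I would then confirm that a generic nonsplit $E$ is genuinely $\sigma_{+}$-stable by feeding the cohomological structure of $A$ and $B$ into the criterion of Lemma \ref{BMS8.9}. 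The crucial numerical input is the computation $\Ext^{1}(B,A)=\mathbb{C}^{18}$ of Lemma \ref{ext1purple1}, which holds with constant dimension $18$ in every configuration (both $L\subset P$ and $L\not\subset P$), together with the vanishing $\Ext^{2}(B,A)=0$.

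Finally I would globalize. Over the irreducible base $\mathbb{G}r(2,4)\times\mathfrak{Fl}_{2}$ --- the first factor parametrizing $A=\mathcal{I}_{L}(-1)$ and the second the flags $Z_{2}\subset P$ defining $B$ --- the universal families $\mathcal{A}$ and $\mathcal{B}$ produce a relative Ext sheaf $\mathcal{E}xt^{1}_{\pi}(\mathcal{B},\mathcal{A})$; by cohomology and base change, the constancy of $\ext^{1}(B,A)=18$, with the neighbouring Ext-groups vanishing, forces this sheaf to be locally free of rank $18$, so that $\mathcal{M}'=\mathbb{P}\bigl(\mathcal{E}xt^{1}_{\pi}(\mathcal{B},\mathcal{A})\bigr)$ is the asserted $\mathbb{P}^{17}$-bundle, irreducible because its base is. A dimension count then separates it from $\widetilde{\mathcal{M}_{\sigma_{-}}(v)}$: Lemma \ref{3.13} together with Lemma \ref{ext1purple1} gives $\ext^{1}(E,E)\le 4+7+0+18-1=28$ for a generic such $E$, and since $\dim\mathcal{M}'=4+7+17=28$ the bound is attained, so $\mathcal{M}'$ is smooth of dimension $28$ --- strictly larger than the $24$-dimensional component $\widetilde{\mathcal{M}_{\sigma_{-}}(v)}$, which is birational to the blow-up $\mathcal{M}_{\sigma_{-}}(v)$ of Proposition \ref{N2}. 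The main obstacle I anticipate is the stability verification: ruling out an unexpected destabilizing subobject of the generic extension in $\mathrm{Coh}^{\alpha,\beta}(\mathbb{P}^{3})$ requires genuinely using the torsion and cohomology structure of $A$ and $B$ via Lemma \ref{BMS8.9}, rather than the numerics alone; by contrast, the local-freeness of the relative Ext sheaf is comparatively routine once the fibrewise dimension is known to be constant.
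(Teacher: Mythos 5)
Your proposal is correct and takes essentially the same route as the paper's own proof: both use the constancy of $\Ext^{1}(B,A)=\mathbb{C}^{18}$ from Lemma \ref{ext1purple1} to exhibit $\mathcal{M}'$ as a $\mathbb{P}^{17}$-bundle over $\mathbb{G}r(2,4)\times\mathfrak{Fl}_{2}$, identify $\widetilde{\mathcal{M}_{\sigma_{-}}(v)}$ as the closure of the open locus of objects stable on both sides of $\mathcal{W}$ (with the destabilized locus a proper subset, cf.\ Lemma \ref{U-+}), and separate the two components by the dimension count $28>24$. The extra steps you supply (phase bookkeeping, $\Hom$-vanishing, the relative Ext sheaf with cohomology and base change) are elaborations of what the paper leaves implicit; the only questionable detail is your plan to verify $\sigma_{+}$-stability of nonsplit extensions via Lemma \ref{BMS8.9}, which concerns the large-$s$ regime, whereas the standard and cleaner argument is that a nonsplit extension of the $\sigma_{0}$-stable factors $B$ by $A$ has $A$ as its only proper subobject in the category of $\sigma_{0}$-semistable objects of that phase, and $A$ does not destabilize on the $\sigma_{+}$ side.
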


\begin{proof}Any object in $\mathcal{M}'$ fits into a short exact sequence $\mathcal{I}_{L}(-1)   \rightarrow E \rightarrow   \iota_{P_{*}}(\mathcal{I}_{Z_{2}})^{\vee}(-5)$.
Now we just notice that from Lemma ~\ref{ext1purple1}, we have $\mathrm{Ext}^{1}(\iota_{P_{*}}(\mathcal{I}_{Z_{2}})^{\vee}(-5),\mathcal{I}_{L}(-1))= \mathbb{C}^{18}$. This gives the description of $\mathcal{M}'$ as $\mathbb{P}^{17}$-bundle. As the set of objects in $\mathcal{M}_{\sigma_{-}}(v)$ that are also $\sigma_{+}$-stable is open, its closure $\widetilde{\mathcal{M}_{\sigma_{-}}(v)}$ is birational to $\mathcal{M}_{\sigma_{-}}(v)$ (we notice that by Proposition \ref{N2} and Lemma \ref{U-+} below, the whole $\mathcal{M}_{\sigma_{-}}(v)$ is not destabilized by crossing the wall). As $\mathrm{dim} \mathcal{M}'=28 > 24=\mathrm{dim}  \widetilde{\mathcal{M}_{\sigma_{-}}(v)}$, the locus $\mathcal{M}'$ is its own irreducible component. 
\end{proof}

Before the next subsection, we have the following Lemma:

\begin{lem} \label{Lem: wall-moduli}
For a stability condition $\sigma_{0}$ on the wall $\mathcal{W}$, the moduli space $\mathcal{M}_{\sigma_{0}}(v)$ exists.
\end{lem}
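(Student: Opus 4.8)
The plan is to realise $\mathcal{M}_{\sigma_{0}}(v)$ as the good moduli space (in the sense of Alper) of the stack of $\sigma_{0}$-semistable objects of class $v$, whose closed points parametrise S-equivalence classes. Since $\sigma_{0}$ lies on a single wall, every $\sigma_{0}$-semistable object is either $\sigma_{0}$-stable---and such objects remain stable in a neighbourhood of $\sigma_{0}$ by openness of stability, so they form an open locus common to $\mathcal{M}_{\sigma_{-}}(v)$ and $\mathcal{M}_{\sigma_{+}}(v)$---or strictly semistable, in which case by Remark \ref{remJH} its Jordan--H\"older factors are the stable objects $A=\mathcal{I}_{L}(-1)$ and $B=\iota_{P_{*}}(\mathcal{I}_{Z_{2}})^{\vee}(-5)$. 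Thus the content of the lemma is concentrated entirely in handling the strictly semistable locus.

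First I would establish boundedness: the support property from Theorem \ref{BMT14:stability_threefolds} guarantees that the family of $\sigma_{0}$-semistable objects of class $v$ is bounded, and combined with openness of $\sigma_{0}$-semistability in families (and Lieblich's theory of the stack of objects in $\mathrm{D}^{b}(\mathbb{P}^{3})$) this shows that the moduli stack $\mathfrak{M}_{\sigma_{0}}(v)$ is an algebraic stack of finite type over $\mathbb{C}$. Next I would invoke the general existence theorem for moduli of Bridgeland semistable objects on threefolds for which the BMT inequality holds---verified for $\mathbb{P}^{3}$ in Theorem \ref{BMT14:stability_threefolds}---to conclude that $\mathfrak{M}_{\sigma_{0}}(v)$ admits a good moduli space $\mathcal{M}_{\sigma_{0}}(v)$, a proper algebraic space parametrising S-equivalence classes; the hypothesis required by these theorems (due to Piyaratne and Toda) is precisely the support property available here. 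Concretely, each S-equivalence class has a polystable representative $A^{\oplus a}\oplus B^{\oplus b}\oplus S$ with $S$ a sum of $\sigma_{0}$-stable objects, and the strictly semistable part is governed by the finite-dimensional Ext-data of Lemma \ref{ext1purple1}; the universal maps identifying S-equivalent objects then furnish the contractions $\mathcal{M}_{\sigma_{\pm}}(v)\rightarrow\mathcal{M}_{\sigma_{0}}(v)$ used later.

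The hard part will be the construction of the good moduli space at the strictly semistable points: one must check that distinct S-equivalence classes are genuinely separated by the coarse space and that the resulting space is universally closed via the valuative criterion. This is where the threefold situation is more delicate than the surface case treated in \cite{BM}, and where boundedness coming from the support property is indispensable---once boundedness and the completeness of the family of semistable objects are in hand, properness of $\mathcal{M}_{\sigma_{0}}(v)$ follows from the valuative criterion verified in that general framework. Granting the general machinery available for $\mathbb{P}^{3}$, this yields the existence claimed.
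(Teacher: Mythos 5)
Your overall strategy coincides with the paper's: the paper proves this lemma by a single citation to \cite{AHH}, i.e.\ by realising $\mathcal{M}_{\sigma_{0}}(v)$ as the good moduli space of the algebraic stack of $\sigma_{0}$-semistable objects of class $v$. However, your proposal rests the decisive step on a theorem that does not exist in the source you name. Piyaratne--Toda prove that, on a threefold where the BMT-type inequality holds, the moduli stack of Bridgeland semistable objects of fixed class is an algebraic stack of finite type over $\mathbb{C}$ which is universally closed; this gives exactly the \emph{input} data (algebraicity, boundedness via the support property, openness in families, the valuative criterion at the stack level). What they do \emph{not} prove is that this stack admits a good moduli space --- a separated algebraic space whose closed points are S-equivalence classes. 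That passage is precisely the content of the lemma, and it is the part you yourself flag as ``the hard part'' before deferring it to ``the general machinery available for $\mathbb{P}^{3}$''.

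The machinery that actually closes this gap is the existence theorem of Alper--Halpern-Leistner--Heinloth (\cite{AHH}, which is what the paper cites): an algebraic stack of finite type with affine diagonal in characteristic zero admits a (separated) good moduli space if and only if it is $\Theta$-reductive and S-complete, and these two criteria are verified there for stacks of semistable objects, with the Piyaratne--Toda results supplying the finite-type and universal-closedness hypotheses. Neither boundedness, nor the support property, nor the polystable-representative description you give can substitute for this verification: separating distinct S-equivalence classes at strictly semistable points is exactly what $\Theta$-reductivity and S-completeness encode. So your proof has the correct skeleton but the core step is misattributed and left unproven; replacing the appeal to Piyaratne--Toda by the existence criteria of \cite{AHH} (with Piyaratne--Toda as input) repairs the argument and recovers the paper's proof.
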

\begin{proof}
This follows from \cite{AHH}.
\end{proof}

\subsection{Intersection in $\mathcal{M}_{\sigma_{+}}(v)$  }
To describe the intersection of $\widetilde{\mathcal{M}_{\sigma_{-}}(v)}$ with the new component $\mathcal{M}'$ in $\mathcal{M}_{\sigma_{+}}(v)$, let  $\mathcal{U}_{-,+}$ be the destabilizing locus in $\mathcal{M}_{\sigma_{-}}(v)$. More precisely, we have

$$\mathcal{U}_{-,+}= \{E\colon \iota_{P_{*}}(\mathcal{I}_{Z_{2}})^{\vee}(-5) \rightarrow E \rightarrow \mathcal{I}_{L}(-1) \},$$
and
  $$\mathcal{M}'= \{E\colon\mathcal{I}_{L}(-1) \rightarrow E \rightarrow \iota_{P_{*}}(\mathcal{I}_{Z_{2}})^{\vee}(-5)  \}.$$

 \begin{lem} \label{U-+}
The locus $\mathcal{U}_{-,+}$ is of dimension $10$; notice that it contains the exceptional locus of  $\phi\colon \mathcal{M}_{\sigma_{-}}(v) \rightarrow \mathcal{M}_{\sigma_{0}}(v)$ of dimension 8 which is a $\mathbb{P}^{1}$-bundle over its 7-dimensional image under $\phi$. 
\end{lem}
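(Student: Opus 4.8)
The plan is to exhibit $\mathcal{U}_{-,+}$ as a family of projectivized extensions over the product of the parameter spaces of its two Jordan--H\"older factors $A=\mathcal{I}_L(-1)$ and $B=\iota_{P_*}(\mathcal{I}_{Z_2})^\vee(-5)$, and then to read off its dimension one stratum at a time from the computation of $\mathrm{Ext}^1(A,B)$ in Lemma~\ref{ext1purple1}.

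First I would observe that on the $\sigma_-$-side $A$ has strictly bigger phase than $B$, so in any $\sigma_-$-stable object with these factors the subobject must be $B$; thus every $E\in\mathcal{U}_{-,+}$ is a non-split extension $B\hookrightarrow E\twoheadrightarrow A$, and for fixed $A,B$ these are parametrized by $\mathbb{P}(\mathrm{Ext}^1(A,B))$. Since a $\sigma_-$-stable $E$ recovers its factors and its extension class, this realizes $\mathcal{U}_{-,+}$ as a bundle, with fibre $\mathbb{P}(\mathrm{Ext}^1(A,B))$, over the locus in $\mathbb{G}r(2,4)\times\mathfrak{Fl}_2$ (of dimension $4+7$) where $\mathrm{ext}^1(A,B)\geq 1$.

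Next I would invoke the three cases of Lemma~\ref{ext1purple1}. The fibre is empty over $\langle Z_2\rangle\cap L=\emptyset$, so only the two incidence strata contribute. Over the generic stratum $\langle Z_2\rangle\cap L\neq\emptyset$ with $\langle Z_2\rangle\neq L$ one has $\mathrm{ext}^1(A,B)=1$, so the fibre is a point, and the base is counted by choosing $L$ ($4$ parameters), then the line $M=\langle Z_2\rangle$ meeting $L$ ($3$ parameters, a codimension-one Schubert cycle in $\mathbb{G}r(2,4)$), then $Z_2\subset M$ ($2$ parameters, via $(\mathbb{P}^1)^{[2]}=\mathbb{P}^2$), then a plane $P\supset M$ ($1$ parameter, the pencil through $M$); since a plane contains $Z_2$ iff it contains $\langle Z_2\rangle=M$, this is the full data and gives $4+3+2+1=10$. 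Over the special stratum $\langle Z_2\rangle=L$ one has $\mathrm{ext}^1(A,B)=2$, so the fibre is $\mathbb{P}^1$, while $M=L$ is now forced; the count becomes $L$ ($4$) $+$ $Z_2\subset L$ ($2$) $+$ $P\supset L$ ($1$) $+$ $\mathbb{P}^1$ ($1$) $=8$. Taking the maximum gives $\dim\mathcal{U}_{-,+}=10$.

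Finally, for the second assertion I would identify the $8$-dimensional stratum with the exceptional locus of $\phi$. The contraction $\phi$ sends a $\sigma_-$-stable object to its $S$-equivalence class $[A\oplus B]\in\mathcal{M}_{\sigma_0}(v)$, and its fibre over such a point is exactly $\mathbb{P}(\mathrm{Ext}^1(A,B))$, which is contracted precisely when it is positive-dimensional, i.e.\ when $\mathrm{ext}^1(A,B)=2$. This is the stratum $\langle Z_2\rangle=L$, a $\mathbb{P}^1$-bundle over the $7$-dimensional base of triples $(L,Z_2,P)$ with $\langle Z_2\rangle=L$, hence of dimension $8$, mapping to a $7$-dimensional locus in $\mathcal{M}_{\sigma_0}(v)$. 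The point that needs the most care is to confirm that $\phi$ is genuinely injective over the generic stratum, so that the $10$-dimensional part is not contracted and the exceptional locus is exactly this $\mathbb{P}^1$-bundle; this holds because there $\mathrm{ext}^1(A,B)=1$ makes the fibre a reduced point and distinct polystable objects $A\oplus B$ have distinct images. I expect this bookkeeping, rather than any one calculation, to be the main thing to nail down, the heavy Ext-theoretic lifting having already been carried out in Lemma~\ref{ext1purple1}.
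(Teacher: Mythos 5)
Your proposal is correct and takes essentially the same approach as the paper's proof: both realize $\mathcal{U}_{-,+}$ as a $\mathbb{P}(\mathrm{Ext}^{1}(A,B))$-bundle over the configuration space of $(L,Z_{2},P)$, stratify by the three cases of Lemma~\ref{ext1purple1}, and identify the $10$-dimensional generic stratum with point fibres together with the $8$-dimensional stratum fibred in $\mathbb{P}^{1}$'s over the $7$-dimensional locus $\langle Z_{2}\rangle = L$ (your counts $4+3+2+1$ and $4+2+1$ describe the same incidence varieties as the paper's $3+4+3$ and $3+4$). The only difference is cosmetic and lies in the final step: the paper invokes the positivity lemma to conclude that $\phi$ contracts exactly the $\mathbb{P}^{1}$'s arising from $\mathrm{Ext}^{1}(A,B)=\mathbb{C}^{2}$, whereas you argue directly that the fibres of $\phi$ over polystable points $[A\oplus B]$ are the projective spaces $\mathbb{P}(\mathrm{Ext}^{1}(A,B))$, hence positive-dimensional only on that stratum -- equivalent standard justifications of the same fact.
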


\begin{proof}
From Lemma ~\ref{ext1purple1}, we have 

\begin{equation*}
    \mathrm{Ext}^{1}(\mathcal{I}_{L}(-1),\iota_{P_{*}}(\mathcal{I}_{Z_{2}})^{\vee}(-5))=
    \begin{cases}
    0, & \langle Z_2 \rangle \cap L = \emptyset\\
       \mathbb{C}, & \text{ $\langle Z_2 \rangle \cap L \neq \emptyset$  but $\langle Z_2 \rangle \neq L$}\\
     
      \mathbb{C}^{2}, &  \langle Z_2 \rangle =  L

    \end{cases}
  \end{equation*}

  Considering the cases where $\mathrm{Ext}^{1}(A,B)\neq 0$, the generic case
       $L \not\subset P$ and $ p\cup Z_{2}$ colinear is given by a  10-dimensional stratum ($\mathbb{P}^{0}$-bundle over the parameter space of the configuration).
       
       When $\mathrm{Ext}^{1}(A,B)=\mathbb{C}^{2}$, the parameter space of all the points $A,B$ is given by the configurations $ Z_{2} \subset L \subset P$  which gives a 8-dimensional stratum ($\mathbb{P}^{1}$-bundle over the parameter space of the configuration). By the positivity Lemma, $\phi $ contracts exactly the $\mathbb{P}^1$ coming from  $\mathrm{Ext}^{1}(A,B)=\mathbb{C}^{2}$. 
\end{proof}

  \begin{rmk} \label{defE}
Recall that $A=\mathcal{I}_{L}(-1)$, $B=\iota_{P_{*}}(\mathcal{I}_{Z_{2}})^{\vee}(-5)$. Let 
 $E \in \mathcal{M}'$ fitting into a short exact sequence $A \hookrightarrow E \twoheadrightarrow B$. Deformations of $E$ within the exceptional locus corresponds to maps $E \rightarrow E[1]$ fitting into a map of exact triangles

 \begin{center}
    
\begin{tikzcd}[column sep=
scriptsize
]

A   \arrow[r, "a"] \arrow[d]  & E \arrow[r] \arrow[d, "f"] & B  \arrow[d]\\
A[1] \arrow[r]  & E[1] \arrow[r, "b"]  & B[1] 

\end{tikzcd}
\end{center}
 This is equivalent to $b \circ f \circ a=0$. In particular, if there are deformations of $E$ that do not remain in the exceptional locus, then $\mathrm{Ext}^{1}(A,B) \neq 0$.
  
  \end{rmk}

\subsubsection{Surjectivity}
  The goal of this subsection is to show that 
  $$\delta\colon \mathrm{Ext}^{1}(B,A) \rightarrow \mathrm{Hom}(\mathrm{Ext}^{1}(A,B),  \mathrm{Ext}^{2}(B,B))$$ (which is induced by the natural map  $\delta' \colon \mathrm{Ext}^{1}(B,A) \otimes \mathrm{Ext}^{1}(A,B)  \rightarrow  \mathrm{Ext}^{2}(B,B)$) is surjective.

Before proving the surjectivity, we begin with the following Lemma:
\begin{lem} \label{surj1}
 If $\delta$ is surjective when $\mathrm{Ext}^{1}(A,B)=\mathbb{C}^2$, then it would be surjective when $\mathrm{Ext}^{1}(A,B)=\mathbb{C}$ as well.
\end{lem}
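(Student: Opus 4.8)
The plan is to treat this as a semicontinuity statement. The configuration with $\Ext^1(A,B)=\C^2$ is exactly the deepest stratum $\langle Z_2\rangle = L$ (forcing $Z_2\subset L\subset P$), and every configuration with $\Ext^1(A,B)=\C$ degenerates to it by sliding the two points of $Z_2$ onto $L$ inside $P$ (and, where necessary, moving $L$ into $P$). I would first isolate the purely linear-algebraic mechanism behind the reduction. Write $\delta'(-\otimes\xi)$ for the composite $\mathrm{ev}_\xi\circ\delta$, where $\mathrm{ev}_\xi\colon \Hom(\C^2,\Ext^2(B,B))\to \Ext^2(B,B)$, $T\mapsto T(\xi)$, is evaluation at $\xi$. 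For every nonzero $\xi\in\Ext^1(A,B)=\C^2$ the map $\mathrm{ev}_\xi$ is surjective onto $\Ext^2(B,B)=\C^4$. Hence, if $\delta$ is surjective in the $\C^2$-case, then $\delta'(-\otimes\xi)\colon \Ext^1(B,A)\to\Ext^2(B,B)$ is surjective for every $\xi\neq 0$, being a composite of two surjections.

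\textbf{Family set-up.} Next I would realise a $\C$-type configuration as the general fibre of a flat family $(A_t,B_t)_{t\in T}$ over a smooth pointed curve $T$, whose central fibre $(A_0,B_0)$ is of type $\langle Z_2\rangle = L$ and whose fibres over $T\setminus\{0\}$ are of type $\langle Z_2\rangle\cap L\neq\emptyset$, $\langle Z_2\rangle\neq L$. By Lemma \ref{ext1purple1} the dimensions $\ext^1(B,A)=18$ and $\ext^2(B,B)=4$ are constant along $T$, so by cohomology and base change the relative Ext-sheaves $\mathcal{E}xt^1(B_t,A_t)$ and $\mathcal{E}xt^2(B_t,B_t)$ are locally free of ranks $18$ and $4$; only $\mathcal{E}xt^1(A_t,B_t)$ jumps, from generic rank $1$ on $T\setminus\{0\}$ to fibre dimension $2$ at $0$. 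The relative Yoneda pairing then yields, for any chosen section $\xi$ of $\mathcal{E}xt^1(A,B)$, a morphism of vector bundles $g\colon \mathcal{E}xt^1(B_t,A_t)\to \mathcal{E}xt^2(B_t,B_t)$ over $T$ with fibre $g_t=\delta'_t(-\otimes\xi_t)$.

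\textbf{Conclusion and the hard part.} The decisive step is the choice of $\xi$. On $T\setminus\{0\}$ a generator of the rank-one sheaf $\mathcal{E}xt^1(A_t,B_t)$ defines a morphism $T\setminus\{0\}\to \P(\Ext^1(A_0,B_0))=\P^1$, which extends across $0$ by properness of $\P^1$; after rescaling the generator by a suitable power of the local parameter I obtain a section $\xi$ with $\xi_0\neq 0$. Then over $T\setminus\{0\}$ the map $g_t$ agrees up to a nonzero scalar with the $\C$-case map $\delta$, hence has the same rank, while $g_0=\delta'(-\otimes\xi_0)$ is surjective by the first paragraph. Since $\{t:\rk g_t=4\}$ is open and contains $0$, it contains a punctured neighbourhood of $0$, giving surjectivity of $\delta$ on fibres near $0$, i.e. in the $\C$-case. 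Finally, the whole construction is equivariant for $\PGL_4=\Aut(\P^3)$, and for each of the finitely many $\C$-type configurations all fibres over $T\setminus\{0\}$ lie in a single $\PGL_4$-orbit; since surjectivity is the non-vanishing of a $4\times 4$ minor and therefore holds on a cofinite subset of $T$, equivariance propagates it to the entire orbit, and hence to every configuration with $\Ext^1(A,B)=\C$. I expect the main obstacle to be precisely the geometric input of this last paragraph---showing that the limiting generator $\xi_0$ is nonzero and that the Yoneda pairing is genuinely functorial in the family---rather than any further computation, since all relevant $\Ext$-dimensions are already pinned down by Lemma \ref{ext1purple1}.
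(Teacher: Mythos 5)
Your proposal is correct and follows essentially the same route as the paper: the paper's own proof is exactly this specialization argument---constancy of the domain $\mathrm{Ext}^{1}(B,A)=\mathbb{C}^{18}$, openness of surjectivity, and the fact that every $\mathrm{PGL}$-orbit of configurations $(L,P,Z_{2})$ with $\mathrm{Ext}^{1}(A,B)=\mathbb{C}$ contains the stratum $Z_{2}\subset L\subset P$ in its closure---compressed into two lines. Your evaluation-map observation, one-parameter family, and limiting section $\xi_{0}\neq 0$ simply make explicit the semicontinuity mechanism that the paper summarizes as ``surjectivity of $\delta$ is an open condition.''
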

\begin{proof}
As the domain $\mathrm{Ext}^{1}(B,A)=\mathbb{C}^{18}$ is constant for all the configurations, surjectivity of $\delta$ is an open condition. Since all $PGL(3)$-orbits of configurations of $Z_{2} \subset P$ and $L$ contain the case $Z_{2} \subset L \subset P$ in its closure, for which $\mathrm{Ext}^{1}(A,B)=\mathbb{C}^2$, this proves the claim.
\end{proof}
Hence we only need to prove the surjectivity when  $\mathrm{Ext}^{1}(A,B)=\mathbb{C}^2$:
\begin{lem} \label{surj2}
 Assume that $\mathrm{Ext}^{1}(A,B)=\mathbb{C}^{2}$. Then $\delta$ is surjective.
\end{lem}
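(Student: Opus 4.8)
The plan is to compute $\delta$ explicitly after transporting everything to the plane $P\cong\mathbb{P}^2$, and then read off surjectivity from a rank statement. First note the numerology: by Lemma~\ref{ext1purple1} we have $\dim\mathrm{Ext}^1(B,A)=18$ and $\dim\mathrm{Ext}^2(B,B)=4$, and in the present stratum $\langle Z_2\rangle=L$ we have $\dim\mathrm{Ext}^1(A,B)=2$, so the target $\mathrm{Hom}(\mathrm{Ext}^1(A,B),\mathrm{Ext}^2(B,B))$ has dimension $8$ and the task is to show that $\delta\colon\mathbb{C}^{18}\to\mathbb{C}^8$ has full rank $8$. Since all three dimensions are constant along this locus, full rank of $\delta$ is an open condition on it; moreover $\delta$ is equivariant for the action of $PGL(3)=\mathrm{Aut}(P)$ on the configurations $(L,Z_2)$. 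The stratum consists of only two orbits, the reduced one $Z_2=\{q,q'\}\subset L$ and the non-reduced double point with tangent line $L$, so it suffices to verify surjectivity at one explicit representative of each (the computation being formally the same), and to propagate over each orbit by equivariance. I would fix coordinates so that $P=\{x_3=0\}$, $L=\{x_2=x_3=0\}$, and $Z_2=\{q,q'\}$.

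The first genuine step is to rewrite the three groups on $P$. Exactly as in the proof of Lemma~\ref{ext1purple1}, the adjunctions used there—Serre duality, Grothendieck--Verdier duality (Corollary~\ref{duallem}), and the restriction triangle (Lemma~\ref{exttrihuy})—give
\[
\mathrm{Ext}^1(B,A)\cong \mathrm{H}^0(\mathcal{I}_{Z_2}(4))\oplus \mathrm{H}^0(\mathcal{I}_{Z_2}\otimes\mathcal{O}_L(4)),\qquad \mathrm{Ext}^2(B,B)\cong \mathrm{Ext}^1_P(\mathcal{I}_{Z_2},\mathcal{I}_{Z_2}(1)),
\]
together with $\mathrm{Ext}^1(A,B)\cong \mathrm{H}^1(\mathcal{I}_{Z_2})^{\vee}\oplus \mathrm{H}^1(\mathcal{O}_L(-2))^{\vee}$. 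Fixing the Hilbert--Burch resolution $\mathcal{O}_P(-3)\to\mathcal{O}_P(-2)\oplus\mathcal{O}_P(-1)\to\mathcal{I}_{Z_2}$ provides an explicit $4$-dimensional basis of $\mathrm{Ext}^1_P(\mathcal{I}_{Z_2},\mathcal{I}_{Z_2}(1))$, while the ideal $(\ell,Q)$ of $Z_2$, with $\ell$ the equation of $L$, supplies explicit generators for the two section spaces and for the one-dimensional groups $\mathrm{H}^1(\mathcal{I}_{Z_2})^{\vee}$ and $\mathrm{H}^1(\mathcal{O}_L(-2))^{\vee}$.

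The technical core, and the step I expect to be the main obstacle, is to identify the abstract Yoneda composition $\delta'(\eta\otimes\xi)=\xi[1]\circ\eta\in\mathrm{Ext}^2(B,B)$ with a concrete map in these terms. Tracing $\eta\in\mathrm{Ext}^1(B,A)$ and $\xi\in\mathrm{Ext}^1(A,B)$ through the adjunction isomorphisms above converts the composition into a cup/multiplication pairing on $P$: the global-section part of $\eta$ (a quartic through $Z_2$, or its restriction to $L$) is multiplied against the Serre dual of $\xi$, and the product is recorded in $\mathrm{Ext}^1_P(\mathcal{I}_{Z_2},\mathcal{I}_{Z_2}(1))$ via the fixed resolution. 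The bookkeeping is delicate precisely because $Z_2\subset L$ forces torsion: the summand $\mathrm{H}^0(\mathcal{I}_{Z_2}\otimes\mathcal{O}_L(4))$ of $\mathrm{Ext}^1(B,A)$ and the $L$-supported class in $\mathrm{H}^1(\mathcal{O}_L(-2))^{\vee}$ must be paired separately from the purely $P$-supported pieces, and one has to check that these contributions are compatibly oriented so that the two halves of the pairing assemble into a single matrix.

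Once $\delta'$ is written as such a multiplication, choosing a basis $\xi_1,\xi_2$ of $\mathrm{Ext}^1(A,B)$ presents $\delta$ as $\eta\mapsto\bigl(\delta'(\eta\otimes\xi_1),\delta'(\eta\otimes\xi_2)\bigr)\in\mathbb{C}^4\oplus\mathbb{C}^4$, i.e. the multiplication of degree-$4$ sections against two fixed classes. Surjectivity then becomes a positivity statement: the relevant twists are high enough relative to the Castelnuovo--Mumford regularity of the sheaves on $\mathbb{P}^2$ that the combined map is onto, and in any case the explicit $8\times 18$ matrix can be checked to have rank $8$ directly—by exhibiting eight classes $\eta$ with linearly independent images, or by verifying the rank in Macaulay2 as elsewhere in the paper. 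Full rank is exactly the surjectivity of $\delta$; together with the propagation over the two orbits this proves the claim for every configuration with $\mathrm{Ext}^1(A,B)=\mathbb{C}^2$, and combined with Lemma~\ref{surj1} it yields surjectivity in the remaining case $\mathrm{Ext}^1(A,B)=\mathbb{C}$ as well.
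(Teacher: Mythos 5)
Your setup coincides with the first half of the paper's proof and is correct as far as it goes: restricting everything to $P$ identifies $\mathrm{Ext}^1(B,A)\cong \mathrm{H}^0(\mathcal{I}_{Z_2}(4))\oplus \mathrm{H}^0(\mathcal{I}_{Z_2}\otimes\mathcal{O}_L(4))=\mathbb{C}^{13}\oplus\mathbb{C}^5$, $\mathrm{Ext}^1(A,B)\cong\mathbb{C}\oplus\mathbb{C}$, and $\mathrm{Ext}^2(B,B)\cong\mathrm{Ext}^1_P(\mathcal{I}_{Z_2},\mathcal{I}_{Z_2}(1))=\mathbb{C}^4$; this is exactly the paper's ``Claim 1''. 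But your proposal stops where the substance of the lemma begins. The two steps you defer are the proof: (1) you never pin down the pairing --- you yourself call the identification of the Yoneda composition with a concrete multiplication map ``the main obstacle'' and leave it unestablished; and (2) surjectivity itself is settled only by an appeal to Castelnuovo--Mumford regularity ``positivity'' or by an unperformed Macaulay2 rank check. The regularity heuristic does not apply as stated: the target is the Ext group $\mathrm{Ext}^1_P(\mathcal{I}_{Z_2},\mathcal{I}_{Z_2}(1))$, not a space of sections, so surjectivity of this Yoneda composition is not a multiplication-map statement; and without step (1) there is no explicit $8\times 18$ matrix to hand to a computer, so neither of your closing routes is actually available.

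The missing idea is structural, and it is what makes the paper's proof short. Since the composition factors through the two summands $\mathcal{O}_P(2)$ and $\mathcal{O}_L(3)[-1]$, the map $\delta$ is block diagonal, and surjectivity onto $\mathrm{Hom}(\mathbb{C}^2,\mathbb{C}^4)$ reduces to surjectivity of two separate maps: composition of $\mathbb{C}^{13}=\mathrm{Hom}(\mathcal{O}_P(-3),\mathcal{I}_{Z_2}(1))$ with the generator of $\mathrm{Ext}^1(\mathcal{I}_{Z_2},\mathcal{O}_P(-3))\cong\mathbb{C}$, and precomposition of $\mathbb{C}^5=\mathrm{Ext}^1(\mathcal{O}_L(-2),\mathcal{I}_{Z_2}(1))$ with the quotient $\mathcal{I}_{Z_2}\twoheadrightarrow\mathcal{O}_L(-2)$. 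The paper proves the first by identifying the extension $E$ of $\mathcal{I}_{Z_2}$ by $\mathcal{O}_P(-3)$ as $\mathcal{O}_P(-1)\oplus\mathcal{O}_P(-2)$ (the Hilbert--Burch middle term), so that $\mathrm{Ext}^1(E,\mathcal{I}_{Z_2}(1))=\mathrm{H}^1(\mathcal{I}_{Z_2}(2))\oplus\mathrm{H}^1(\mathcal{I}_{Z_2}(3))=0$ and the long exact sequence forces the connecting map to surject; the second follows the same way from $\mathrm{Ext}^1(\mathcal{I}_{L/P},\mathcal{I}_{Z_2}(1))=\mathrm{H}^1(\mathcal{I}_{Z_2}(2))=0$. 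This is where your regularity intuition legitimately enters (as the vanishing of $\mathrm{H}^1(\mathcal{I}_{Z_2}(k))$ for $k\geq 1$), but only after the connecting-homomorphism reduction you do not make. Note also that once the argument is done this way it is uniform in $Z_2$ (reduced or not), so your orbit/equivariance reduction, while valid, is unnecessary.
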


\begin{proof}

First, we simplify the map $$\delta'\colon \mathrm{Ext}^1(\iota_{P_{*}}\mathcal{I}^{\vee}_{Z_{2}}(-5),\mathcal{I}_{L}(-1))\otimes \mathrm{Ext}^1(\mathcal{I}_{L}(-1),\iota_{P_{*}}\mathcal{I}^{\vee}_{Z_{2}}(-5)) \rightarrow \mathrm{Ext}^2(\iota_{P_{*}}\mathcal{I}^{\vee}_{Z_{2}}(-5), \iota_{P_{*}}\mathcal{I}^{\vee}_{Z_{2}}(-5)).$$
\begin{description}

\item[$\boldsymbol{Claim \: 1}$] $\delta'$ is isomorphic to  $$\mathrm{\mathrm{Hom}}(\mathcal{O}_{P}(2) \oplus \mathcal{O}_{L}(3)[-1], \mathcal{I}_{Z_{2}}(6))\otimes \mathrm{Ext}^1(\mathcal{I}_{Z_{2}}(5),\mathcal{O}_{P}(2) \oplus \mathcal{O}_{L}(3)[-1])  \rightarrow$$
$$ \rightarrow \mathrm{Ext}^1(\mathcal{I}_{Z_{2}}(5), \mathcal{I}_{Z_{2}} (6)).$$

\underline{Proof of Claim 1:} Applying $\iota^{*}_{P}$  to $\delta'$ gives the following map:
$$ \iota^{*}_{P}\delta'\colon \mathrm{Ext}^1(\iota^{*}_{P}\iota_{P_{*}}\mathcal{I}^{\vee}_{Z_{2}}(-5),\iota^{*}_{P}\mathcal{I}_{L}(-1))\otimes \mathrm{Ext}^1(\iota^{*}_{P}\mathcal{I}_{L}(-1),\iota^{*}_{P}\iota_{P_{*}}\mathcal{I}^{\vee}_{Z_{2}}(-5)) \rightarrow $$$$\rightarrow \mathrm{Ext}^2(\iota^{*}_{P}\iota_{P_{*}}\mathcal{I}^{\vee}_{Z_{2}}(-5),\iota^{*}_{P}\iota_{P_{*}}\mathcal{I}^{\vee}_{Z_{2}}(-5)).$$

       We note that $\iota^{*}_{P}\mathcal{I}_{L}(-1)=\mathcal{I}_{L/P}(-1) \oplus \mathcal{O}_{L}(-2)$. Using this and Lemma \ref{lemhuy} implies that the map $ \iota^{*}_{P}\delta'$ projects to:
        $$ \mathrm{\mathrm{Hom}}((\mathcal{I}^{\vee}_{Z_{2}}(-6),\mathcal{O}_{P}(-2) \oplus \mathcal{O}_{L}(-2) )\otimes \mathrm{Ext}^1(\mathcal{O}_{P}(-2) \oplus \mathcal{O}_{L}(-2),\mathcal{I}^{\vee}_{Z_{2}}(-5)) \rightarrow $$$$\rightarrow \mathrm{Ext}^1( \mathcal{I}^{\vee}_{Z_{2}}(-6),\mathcal{I}^{\vee}_{Z_{2}}(-5)),$$
        which can be written as (noting that $\mathcal{O}^{\vee}_{L}= (\mathcal{O}_{P}(-1) \rightarrow \mathcal{O}_{P})^{\vee}$ is isomorphic to $(\mathcal{O}_{P} \rightarrow \mathcal{O}_{P}(1) )=\mathcal{O}_{L}(1)[-1]$, using the degree change):
        $$ \mathrm{\mathrm{Hom}}(\mathcal{O}_{P}(2) \oplus \mathcal{O}_{L}(3)[-1], \mathcal{I}_{Z_{2}}(6))\otimes \mathrm{Ext}^1(\mathcal{I}_{Z_{2}}(5),\mathcal{O}_{P}(2) \oplus \mathcal{O}_{L}(3)[-1]) \rightarrow $$$$\rightarrow \mathrm{Ext}^1(\mathcal{I}_{Z_{2}}(5), \mathcal{I}_{Z_{2}} (6)),$$
        and so Claim 1 is proved. 
        
        Now, using Serre duality for $\mathrm{Ext}^1(\mathcal{I}_{Z_{2}}(5),\mathcal{O}_{P}(2) \oplus \mathcal{O}_{L}(3)[-1])$ we have
    $$\mathrm{Ext}^1(\mathcal{I}_{Z_{2}}(5),\mathcal{O}_{P}(2) )=\mathrm{Ext}^1(\mathcal{O}_{P}(2),\mathcal{I}_{Z_{2}}(2) )^{\vee}=\mathbb{C},$$
    and 
    $$\mathrm{Ext}^1(\mathcal{I}_{Z_{2}}(5), \mathcal{O}_{L}(3)[-1])=\mathrm{\mathrm{Hom}}(\mathcal{I}_{Z_{2}}(5), \mathcal{O}_{L}(3))=\mathbb{C}.$$
    
    Also for $\mathrm{\mathrm{Hom}}(\mathcal{O}_{P}(2) \oplus \mathcal{O}_{L}(3)[-1], \mathcal{I}_{Z_{2}}(6))$, component-wise we have (after applying $\mathrm{R}\mathrm{\mathrm{Hom}}(-,\mathcal{I}_{Z_{2}}(3))$ to $\mathcal{O}_{P}(-1) \rightarrow \mathcal{O}_{P} \rightarrow \mathcal{O}_{L}$):
        $$\mathrm{\mathrm{Hom}}(\mathcal{O}_{P}(2), \mathcal{I}_{Z_{2}}(6))=\mathrm{H}^0(\mathcal{I}_{Z_{2}}(4))=\mathbb{C}^{13},$$
        and 
        $$\mathrm{\mathrm{Hom}}(\mathcal{O}_{L}(3)[-1], \mathcal{I}_{Z_{2}}(6))=\mathrm{Ext}^1(\mathcal{O}_{L}, \mathcal{I}_{Z_{2}}(3))= \mathbb{C}^5.$$
        As $\mathrm{Ext}^1(\mathcal{I}_{Z_{2}}(5), \mathcal{I}_{Z_{2}} (6))=\mathbb{C}^{4}$, the above map is the same as the following two maps:
        $$\mathbb{C}^{13} \otimes \mathbb{C}=\mathbb{C}^{13}  \rightarrow \mathbb{C}^{4}, \quad  \text{and} \quad \mathbb{C}^{5}  \otimes \mathbb{C}=\mathbb{C}^{5} \rightarrow \mathbb{C}^{4},$$
   i.e., we have the following diagram

\begin{center}
    
\begin{tikzcd}[column sep=
scriptsize
]
& \mathcal{O}_{P}(-3)[1]   \arrow["\mathbb{C}^{13}",dr ]\\
\mathcal{I}_{Z_{2}} \arrow["\mathbb{C}",ur ]  \arrow["\mathbb{C}",dr ] \arrow[rr, rightarrow, "\mathbb{C}^{4}"]{}
& & \mathcal{I}_{Z_{2}}(1)[1]. \\
& \mathcal{O}_{L}(-2) \arrow[ur, "\mathbb{C}^{5}"]
\end{tikzcd}
\end{center}

\item[$\boldsymbol{Claim \: 2}$] $\mathbb{C} \otimes \mathbb{C}^{13}\rightarrow \mathbb{C}^{4}$ is surjective.

\underline{Proof of Claim 2:} For $\mathbb{C} \otimes \mathbb{C}^{13}  \rightarrow \mathbb{C}^{4}$ or $\mathrm{Ext}^1(\mathcal{I}_{Z_{2}},\mathcal{O}_{P}(-3)) \otimes \mathrm{\mathrm{Hom}}(\mathcal{O}_{P}(-3),\mathcal{I}_{Z_{2}}(1)) \\ \rightarrow \mathrm{Ext}^1(\mathcal{I}_{Z_{2}},\mathcal{I}_{Z_{2}}(1))$, we consider $E \in \mathrm{Ext}^1(\mathcal{I}_{Z_{2}},\mathcal{O}_{P}(-3)) $ and apply $\mathrm{\mathrm{RHom}}(-,\mathcal{I}_{Z_{2}}(1) )$ to $\mathcal{O}_{P}(-3) \rightarrow E \rightarrow \mathcal{I}_{Z_{2}} $, and so we have

\begin{center}

\begin{tikzpicture}[descr/.style={fill=white,inner sep=1.5pt}]
        \matrix (m) [
            matrix of math nodes,
            row sep=1em,
            column sep=2.5em,
            text height=1.5ex, text depth=0.25ex
        ]
        { 0  &\mathrm{\mathrm{Hom}}(\mathcal{I}_{Z_{2}},\mathcal{I}_{Z_{2}}(1)) &\mathrm{\mathrm{Hom}}(E,\mathcal{I}_{Z_{2}}(1)) & \mathrm{\mathrm{Hom}}(\mathcal{O}_{P}(-3),\mathcal{I}_{Z_{2}}(1))\\
             & \mathrm{Ext}^1(\mathcal{I}_{Z_{2}},\mathcal{I}_{Z_{2}}(1)) & \mathrm{Ext}^1(E,\mathcal{I}_{Z_{2}}(1)) & 0\\
        };

        \path[overlay,->, font=\scriptsize,>=latex]
        (m-1-1) edge (m-1-2)
        (m-1-2) edge (m-1-3)
        (m-1-3) edge (m-1-4)
        (m-1-4) edge[out=355,in=175] node[descr,yshift=0.3ex] {} (m-2-2)
        (m-2-2) edge (m-2-3)
        (m-2-3) edge (m-2-4)
      ;
\end{tikzpicture}
\end{center}

   For computing $\mathrm{Ext}^1(E,\mathcal{I}_{Z_{2}}(1))$, as $\mathrm{Ext}^1( \mathcal{O}_{P}(-1), \mathcal{O}_{P}(-3))=0$ from the top and the middle row of diagram below, we get $ \mathcal{O}_{P}(-1) \hookrightarrow E$, and so we can complete the diagram:

   \begin{center}
    
\begin{tikzcd}[column sep=
scriptsize
]
 0 \arrow[r, hook, ""]  \arrow[d, hook ] & \OO_P(-1) \arrow[r, dash, "\cong"] \arrow[d, hook ]&  \mathcal{O}_{P}(-1)  \arrow[dr, dashrightarrow, "0" ]   \arrow[d, hookrightarrow, "" ]\\
\mathcal{O}_{P}(-3)   \arrow[r,  ""]  \arrow[d, dash, "\cong" ]  &E\arrow[r, ""]  \arrow[d, two heads ] 
&  \mathcal{I}_{Z_{2}} \arrow[r,  ""]  \arrow[d, two heads ] & \mathcal{O}_{P}(-3) [1] \\
\mathcal{O}_{P}(-3) \arrow[r, dashed,  hook,  ""]  & Q \arrow[r,dashed,  two heads, ""] &
 \mathcal{O}_{L}(-2),
\end{tikzcd}
\end{center} 
  where $Q$ is the quotient of $\mathcal{O}_{P}(-1) \rightarrow E$. Therefore, the bottom row implies $Q=\mathcal{O}_{P}(-2)$. But as $\mathrm{Ext}^1(\mathcal{O}_{P}(-2),\mathcal{O}_{P}(-1))=0$, we are left with the trivial extension $E=\mathcal{O}_{P}(-1) \oplus \mathcal{O}_{P}(-2)$. Therefore we have
   $$\mathrm{Ext}^1(E,\mathcal{I}_{Z_{2}}(1))=\mathrm{Ext}^1(\mathcal{O}_{P}(-1) \oplus \mathcal{O}_{P}(-2),\mathcal{I}_{Z_{2}}(1))=\mathrm{H}^1(\mathcal{I}_{Z_{2}}(2))\oplus \mathrm{H}^1(\mathcal{I}_{Z_{2}}(3))=0,$$
   i.e., from the above long exact sequence we have the surjection
   $$\mathrm{Hom}(\mathcal{O}_{P}(-3),\mathcal{I}_{Z_{2}}(1))=\C^{13}\twoheadrightarrow \mathrm{Ext}^1(\mathcal{I}_{Z_{2}},\mathcal{I}_{Z_{2}}(1)=\C^4,$$

  \item[$\boldsymbol{Claim \: 3}$]$\mathbb{C} \otimes \mathbb{C}^{5}  \rightarrow \mathbb{C}^{4}$ is surjective.

   \underline{Proof of Claim 3:} Applying $R\mathrm{\mathrm{Hom}}(-,\mathcal{I}_{Z_{2}}(1))$ on $\mathcal{O}(-1)=\mathcal{I}_{L} \hookrightarrow \mathcal{I}_{Z_{2}} \twoheadrightarrow \mathcal{I}_{Z_{2}/L}=\mathcal{O}_{L}(-2)$ gives the surjection
 $$\mathrm{Ext}^1(\mathcal{O}_{L}(-2),\mathcal{I}_{Z_{2}}(1))=\C^5 \twoheadrightarrow \mathrm{Ext}^1(\mathcal{I}_{Z_{2}},\mathcal{I}_{Z_{2}}(1))=\C^4,$$
 \end{description}
  Therefore, $\delta$ is surjective when $\mathrm{Ext}^1(A,B)=\mathbb{C}^2$.
  
  \end{proof}

Therefore Lemma \ref{surj1} implies that $\delta$ is always surjective.

  \subsubsection{Intersection of the components} In this subsection, we  describe the intersection $\widetilde {\mathcal{M}_{\sigma_{-}}(v)} \\ \cap \mathcal{M}'$. So far, we have proved that the map $\delta\colon \mathrm{Ext}^1(B,A) \rightarrow \mathrm{\mathrm{Hom}}(\mathrm{Ext}^1(A,B), \mathrm{Ext}^2(B,B))$ is surjective. 
Consider the map $\zeta\colon \mathrm{Ext}^{1}(A,B) \rightarrow \mathrm{Ext}^{2}(B,B)$, which is defined by composition with a class in $\mathrm{Ext}^{1}(B,A)$.

   \begin{lem} \label{ext1E}
    For any $E \in \mathcal{M}'$, we have 
    $$\mathrm{ext}^1(E,E)=28+dim (\mathrm{ker} (\zeta)).$$
    
    \end{lem}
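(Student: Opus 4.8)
The plan is to compute $\mathrm{ext}^1(E,E)$ by pushing the defining triangle
$$A \to E \xrightarrow{b} B \xrightarrow{e} A[1], \qquad A=\mathcal{I}_L(-1),\quad B=\iota_{P_*}(\mathcal{I}_{Z_2})^\vee(-5),\quad e\in\mathrm{Ext}^1(B,A),$$
through three long exact sequences so that everything is expressed in terms of the groups between $A$ and $B$ recorded in Lemma \ref{ext1purple1}. Besides those groups I would use that $A$, $B$ and $E$ are stable, so $\mathrm{Hom}(A,A)=\mathrm{Hom}(B,B)=\mathrm{Hom}(E,E)=\mathbb{C}$, and that $A,B$ are non-isomorphic stable objects of the same $\sigma_0$-slope, giving $\mathrm{Hom}(A,B)=\mathrm{Hom}(B,A)=0$. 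Since $E$ is stable it is a non-split extension, so $e\neq 0$.

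First I would compute $\RHom(E,A)$ by applying $\RHom(-,A)$ to the triangle, whose connecting maps $\mathrm{Ext}^i(A,A)\to\mathrm{Ext}^{i+1}(B,A)$ are Yoneda product with $e$. At $i=0$ this sends $\mathrm{id}_A\mapsto e\neq 0$, so it is injective; hence $\mathrm{Hom}(E,A)=0$ and, using $\mathrm{Ext}^2(B,A)=0$ from Lemma \ref{ext1purple1},
$$\mathrm{ext}^1(E,A)=\big(\mathrm{ext}^1(B,A)-1\big)+\mathrm{ext}^1(A,A)=17+4=21.$$
At $i=2$ both $\mathrm{Ext}^2(B,A)$ and $\mathrm{Ext}^2(A,A)$ vanish, so $\mathrm{Ext}^2(E,A)=0$; this vanishing is what will kill one of the later boundary maps.

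Next I would compute $\RHom(E,B)$ by applying $\RHom(-,B)$. The crucial observation is that the connecting map $\mathrm{Ext}^1(A,B)\to\mathrm{Ext}^2(B,B)$ in this sequence is exactly Yoneda composition with $e$, i.e. it is the map $\zeta$ of the statement. Since $\mathrm{Hom}(A,B)=0$, the sequence yields $\mathrm{Hom}(E,B)=\mathbb{C}$, generated by the quotient map $b$, and an extension $0\to\mathrm{Ext}^1(B,B)\to\mathrm{Ext}^1(E,B)\to\ker\zeta\to 0$, whence
$$\mathrm{ext}^1(E,B)=\mathrm{ext}^1(B,B)+\dim\ker\zeta=7+\dim\ker\zeta.$$

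Finally I would assemble $\mathrm{Ext}^1(E,E)$ by applying $\RHom(E,-)$ to the triangle, placing it in
$$\mathrm{Hom}(E,B)\xrightarrow{\partial_0}\mathrm{Ext}^1(E,A)\to\mathrm{Ext}^1(E,E)\to\mathrm{Ext}^1(E,B)\xrightarrow{\partial_1}\mathrm{Ext}^2(E,A).$$
Here $\partial_1=0$ because $\mathrm{Ext}^2(E,A)=0$, while $\partial_0$ is post-composition with $e$ evaluated on the generator $b$, giving $e\circ b=0$ since consecutive maps of a triangle compose to zero. With both boundary maps vanishing,
$$\mathrm{ext}^1(E,E)=\mathrm{ext}^1(E,A)+\mathrm{ext}^1(E,B)=21+7+\dim\ker\zeta=28+\dim\ker\zeta,$$
as claimed. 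The computation is essentially mechanical once the Ext groups are in hand; the care lies in bookkeeping the connecting homomorphisms as Yoneda products with $e$. The real crux — which is what cleanly isolates the singular directions and refines the inequality of Lemma \ref{3.13} into an equality — is that exactly two of the relevant boundary maps vanish for structural reasons ($e\circ b=0$ and $\mathrm{Ext}^2(E,A)=0$), so the only deviation from the expected dimension $28=\dim\mathcal{M}'$ is the single contribution $\dim\ker\zeta$. I expect the main thing to nail down to be the identification of the connecting map with $\zeta$ together with the vanishings $\mathrm{Ext}^2(A,A)=\mathrm{Ext}^2(B,A)=0$, rather than any hard estimate.
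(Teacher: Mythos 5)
Your proposal is correct and takes essentially the same approach as the paper: the paper's proof likewise runs the three long exact $\mathrm{Hom}$-sequences obtained from the triangle $A \to E \to B$ (applying $\mathrm{Hom}(-,B)$, $\mathrm{Hom}(-,A)$, and $\mathrm{Hom}(E,-)$), extracts $\mathrm{ext}^1(E,B)=7+\dim\ker(\zeta)$ and $\mathrm{ext}^1(E,A)=21$, and combines them in the $\mathrm{Ext}^{\bullet}(E,-)$ column to get $\mathrm{ext}^1(E,E)=28+\dim\ker(\zeta)$. Your explicit identification of the connecting maps as Yoneda products with the extension class $e$, together with the vanishings $e\circ b=0$ and $\mathrm{Ext}^2(E,A)=0$, simply makes precise what the paper's diagram chase leaves implicit.
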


   \begin{proof}
   We consider the long exact $\mathrm{Hom}$-sequence associated to the short exact sequence $A \rightarrow E \rightarrow B$. Using Lemma \ref{ext1purple1},  we get

  \begin{center}
    \begin{tikzpicture}[>=angle 90]
\matrix(a)[matrix of math nodes,
row sep=2.5em, column sep=0.5em,
text height=1.5ex, text depth=0.25ex]
{&
& \mathrm{Ext}^{2}(A,A)=0 & 0 &\mathrm{Ext}^{2}(B,A)=0&&&\\&
\mathbb{C}^{4} \supset \mathrm{Im}(\zeta)& \mathrm{Ext}^{1}(A,B) & \mathrm{Ext}^{1}(E,B) & \mathbb{C}^{7}& 0& &\\&
&&\mathrm{Ext}^{1}(E,E)&&&&
\\&
0&\mathbb{C}^{4}&\mathrm{Ext}^{1}(E,A)&\mathbb{C}^{18}&\mathbb{C}&0&\\&
& 0&\mathrm{Hom}(E,B)=\mathbb{C}&\mathbb{C}&0&&\\
&0&\mathrm{Hom}(A,E)=\mathbb{C}&\mathrm{Hom}(E,E)=\mathbb{C}&0&&&&\\
&&&\mathrm{Hom}(E,A)=0.\\
};

\path [<- ](a-1-4) edge  (a-1-5);

\path [<- ](a-1-3) edge  (a-1-4);

\path [<- ](a-2-2) edge  (a-2-3);

\path [<- ](a-2-3) edge  (a-2-4);
\path [<- ](a-2-4) edge  (a-2-5);
\path [<- ](a-2-5) edge  (a-2-6);

\path[<-](a-6-4) edge (a-7-4);
\path[<-](a-5-4) edge (a-6-4);
\path[<-](a-4-4) edge (a-5-4);
\path[<-](a-3-4) edge (a-4-4);
\path[<-](a-2-4) edge (a-3-4);
\path[<-](a-1-4) edge (a-2-4);

\path[<-](a-6-4) edge (a-6-5);
\path[<-](a-6-3) edge (a-6-4);
\path[<-](a-6-2) edge (a-6-3);

\path[<-](a-5-5) edge (a-5-6);
\path[<-](a-5-4) edge (a-5-5);
\path[<-](a-5-3) edge (a-5-4);

\path[<-](a-4-6) edge (a-4-7);
\path[<-](a-4-5) edge (a-4-6);
\path[<-](a-4-4) edge (a-4-5);
\path[<-](a-4-3) edge (a-4-4);
\path[<-](a-4-2) edge (a-4-3);

\end{tikzpicture}
\end{center}
  The second row gives $\mathrm{ext}^{1}(E,B)=7+\mathrm{dim}(\mathrm{ker}(\zeta))$. On the other hand, the fourth row implies $\mathrm{ext}^{1}(E,A)=21.$ Therefore, the column of $\mathrm{Ext}^{1}(E,E)$ gives
  $[7+\mathrm{dim}(\mathrm{ker}(\zeta))]-\mathrm{ext}^{1}(E,E)+21=0,$ which implies the claim.
  
  \end{proof}

As $\mathrm{dim}(\mathcal{M}')=28$, the locus where $\mathrm{ext}^{1}(E,E) > 28$ is the singular locus in $\mathcal{M}'$, which we will now describe in more detail in Proposition \ref{P}. 
\begin{prop} \label{P} 
 Let $\mathcal{R}$ be the locus in $\mathcal{M}'$ where  $\mathcal{M}_{\sigma_{+}}(v)$ is singular. Let $\widetilde{\psi}\colon \mathcal{M}_{\sigma_{+}}(v) \rightarrow \mathcal{M}_{\sigma_{0}}(v)$. Then the restriction
$\widetilde{\psi}|_\mathcal{R}$ is generically a $\mathbb{P}^{13}$-bundle over a 10-dimensional base, degenerating to a bundle over a 7-dimensional base whose fibers are a 14-dimensional cone with $\mathbb{P}^{9}$ as vertex.

\end{prop}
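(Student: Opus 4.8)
The plan is to read the structure of $\mathcal{R}$ off Lemma \ref{ext1E}. Since $\dim\mathcal{M}'=28$ equals the expected dimension and the Zariski tangent space of the moduli space at $E\in\mathcal{M}'$ has dimension $\mathrm{ext}^1(E,E)=28+\dim\ker\zeta$, the point $E$ is a singular point of $\mathcal{M}_{\sigma_+}(v)$ exactly when $\zeta=\zeta_E\colon\mathrm{Ext}^1(A,B)\to\mathrm{Ext}^2(B,B)$ is non-injective, where $\zeta_E=\delta(e_E)$ and $e_E\in\mathrm{Ext}^1(B,A)=\mathbb{C}^{18}$ is the extension class presenting $E$. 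Thus $\mathcal{R}$ is the non-injectivity locus of the family $\{\zeta_E\}$, and its shape is governed by two inputs already available: the jumping of $\dim\mathrm{Ext}^1(A,B)\in\{0,1,2\}$ across the strata of the base $\mathbb{G}r(2,4)\times\mathfrak{Fl}_2$ recorded in Lemma \ref{ext1purple1}, and the surjectivity of $\delta$ from Lemmas \ref{surj1}--\ref{surj2}. Since $\widetilde\psi$ identifies $S$-equivalent objects, it collapses each $\mathbb{P}^{17}$ fibre of $\mathcal{M}'$ to the point $[A\oplus B]$ of its base; hence the fibre of $\widetilde\psi|_\mathcal{R}$ over a configuration is precisely $\mathcal{R}\cap\mathbb{P}^{17}$, and the whole statement reduces to computing this intersection fibrewise.

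First I would stratify the base. On the open locus $\langle Z_2\rangle\cap L=\emptyset$ one has $\mathrm{Ext}^1(A,B)=0$, so $\zeta_E$ is trivially injective and no point is singular; hence $\mathcal{R}$ lies over the union of $\Sigma_1=\{\langle Z_2\rangle\cap L\neq\emptyset,\ \langle Z_2\rangle\neq L\}$ and $\Sigma_2=\{\langle Z_2\rangle=L\}$. A direct count gives $\dim\Sigma_1=10$: the incidence that the line $\langle Z_2\rangle$ meets $L$ is a single codimension-one Schubert condition on the $11$-dimensional base. Similarly $\dim\Sigma_2=7$, since $\langle Z_2\rangle=L$ forces the configuration $Z_2\subset L\subset P$ (three parameters for $P$, and a length-two scheme spanning a line inside $P$ contributing four more, with $L$ then determined).

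Over $\Sigma_1$ we have $\mathrm{Ext}^1(A,B)=\mathbb{C}$, and $\zeta_E\colon\mathbb{C}\to\mathbb{C}^4$ is non-injective iff it vanishes, i.e. iff $e_E\in\ker\bigl(\delta\colon\mathbb{C}^{18}\to\mathrm{Hom}(\mathbb{C},\mathbb{C}^4)=\mathbb{C}^4\bigr)$. Surjectivity of $\delta$ forces $\dim\ker\delta=14$ uniformly along $\Sigma_1$, so $\mathcal{R}\cap\mathbb{P}^{17}=\mathbb{P}(\ker\delta)=\mathbb{P}^{13}$ and $\widetilde\psi|_\mathcal{R}$ is a $\mathbb{P}^{13}$-bundle over the $10$-dimensional $\Sigma_1$. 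Over $\Sigma_2$ we have $\mathrm{Ext}^1(A,B)=\mathbb{C}^2$, and $\zeta_E=\delta(e_E)\colon\mathbb{C}^2\to\mathbb{C}^4$ is non-injective iff $\mathrm{rk}\,\delta(e_E)\le 1$. The rank-$\le 1$ locus $C_1\subset\mathrm{Hom}(\mathbb{C}^2,\mathbb{C}^4)=\mathbb{C}^8$ is the $5$-dimensional affine cone over the Segre variety of rank-one $2\times 4$ matrices, isomorphic to $\mathbb{P}^1\times\mathbb{P}^3$ (projective dimension $4$). Now $\delta\colon\mathbb{C}^{18}\twoheadrightarrow\mathbb{C}^8$ is surjective with $\ker\delta=\mathbb{C}^{10}$, so choosing a linear section $s$ of $\delta$ presents $\delta^{-1}(C_1)=\ker\delta\oplus s(C_1)$ as a $15$-dimensional cone; its projectivization $\mathcal{R}\cap\mathbb{P}^{17}=\mathbb{P}(\delta^{-1}(C_1))$ is therefore the join of $\mathbb{P}(\ker\delta)=\mathbb{P}^9$ with $\mathbb{P}(C_1)=\mathbb{P}^1\times\mathbb{P}^3$, a $14$-dimensional cone with vertex $\mathbb{P}^9$ over the rank-one locus. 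This is exactly the claimed degeneration of the generic $\mathbb{P}^{13}$ into a $14$-dimensional cone over the $7$-dimensional $\Sigma_2$.

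I expect the genuine work to be concentrated in the $\Sigma_2$ analysis: identifying the determinantal condition $\mathrm{rk}\,\delta(e)\le 1$ with the join/cone description and confirming that $\delta^{-1}(C_1)$ has the expected dimension and cone structure, with no collapse or spurious components. Everything here rests on the surjectivity of $\delta$ in the $\mathbb{C}^2$ stratum (Lemma \ref{surj2}); granting it, the linear-section argument makes the cone structure automatic, and the same uniform surjectivity (via Lemma \ref{surj1}) guarantees the constancy of $\ker\delta$ needed for the bundle assertion over $\Sigma_1$. The only other care point is the bookkeeping that the fibre dimension genuinely jumps from $13$ to $14$ as $\Sigma_1$ specializes to $\Sigma_2$, reflecting the jump $\dim\mathrm{Ext}^1(A,B)\colon 1\mapsto 2$.
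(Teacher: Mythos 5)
Your proposal is correct and follows essentially the same route as the paper's proof: identify $\mathcal{R}$ fibrewise as the projectivized locus of classes $\vartheta\in\mathrm{Ext}^1(B,A)$ where $\zeta=\delta(\vartheta)$ drops rank (via Lemma \ref{ext1E}), stratify the base by $\dim\mathrm{Ext}^1(A,B)$ using Lemma \ref{ext1purple1}, and use surjectivity of $\delta$ (Lemmas \ref{surj1} and \ref{surj2}) to obtain the $\mathbb{P}^{13}$-bundle over the $10$-dimensional stratum and the preimage of the rank $\le 1$ determinantal locus, a $14$-dimensional cone with vertex $\mathbb{P}^{9}$, over the $7$-dimensional stratum. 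Your linear-section/join packaging of the cone is just a cleaner phrasing of what the paper does by separate codimension counts in the sub-cases $\ker\zeta=\mathbb{C}$ and $\ker\zeta=\mathbb{C}^{2}$, so the two arguments coincide in substance.
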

\begin{proof}

 Given $A$ and $B$, the singular locus is a subset of $\mathbb{P}\mathrm{Ext}^1(B, A)$-bundle, such that $\mathrm{ker}(\zeta)$ is non-zero (by Lemma \ref{ext1E}).
In other words, we want to find all extension classes $\vartheta \in \mathrm{Ext}^1(B, A)$ such that the induced map $\zeta\colon \mathrm{Ext}^{1}(A,B) \rightarrow \mathrm{Ext}^{2}(B,B)$ has non-trivial kernel. Having a non-trivial kernel means that  $\zeta$ cannot have full rank and hence the locus is given by the projectivization of those $\vartheta$ for which $\zeta=\delta(\vartheta)$ drops rank.

Now there are two non-zero cases of $\mathrm{Ext}^{1}(A,B)$:
\begin{description}

\item [$\boldsymbol{Case (I) \quad }\mathrm{Ext}^1(A,B)=\mathbb{C}$]
  
 In this case, dropping rank means $\zeta=\delta(\vartheta)=0$. Also as in this case we have $\delta\colon \mathbb{C}^{18} \rightarrow \mathbb{C}^{4}$, and by Lemmas \ref{surj1} and \ref{surj2}, $\delta$ is surjective, we will have $\mathrm{ker}(\delta)= \mathbb{C}^{14}$. Therefore in this fiber we get  $\mathbb{P}\mathrm{ker}(\delta)=\mathbb{P}^{13}$.
 
 The base depends on the configurations of $Z_{2}, L$ and $P$: The generic case is when \text{$L \not\subset P$ and $p \cup Z_{2}$ colinear}, which gives a 10-dimensional base (3+4+3 in which 3 is for the plane, 4 for two points in the plane, and 3 for the spatial line with one condition). Therefore, we have a $\mathbb{P}^{13}$-bundle over a 10-dimensional locus, and so the objects $E$ with $\zeta$ non-injective correspond to a 23-dimensional space, in this case.
 
$$$$

\item[$\boldsymbol{Case (II)} \quad \mathrm{Ext}^1(A,B)=\mathbb{C}^2$]

   In this case, dropping rank means we have to consider
$$\mathcal{V}=\mathbb{P}(\delta^{-1} \text{(rank $\leq1$ matrices in }  \mathrm{Hom}(\mathrm{Ext}^1(A, B), \mathrm{Ext}^2(B, B)) \cong \mathrm{Hom}(\mathbb{C}^{2},\mathbb{C}^{4}))).$$

 Lemma \ref{ext1purple1} says that we are  in the  case ${Z_{2} \subset L \subset P}$, so the base is a 7-dimensional locus (3  for the plane, and 4 for two points in the plane). Now we want to find 
 the fiber locus. There are two  non-injective possibilities for $\mathrm{ker}(\zeta)$:
 
(1) $\mathrm{ker}(\zeta)= \mathbb{C}$.  
In this case,  as a cone over \text{$\mathbb{P}(Im \delta \cap$ (rank $1$ matrices))}, the corresponding part of $\widetilde {\mathcal{M}_{\sigma_{-}}(v)} \cap \mathbb{P}(\mathrm{Ext}^1(B,A))$ as a fiber over the base, is $\mathcal{V}$ which is a subset of $\mathbb{P}(\mathrm{Ext}^1(B,A))=\mathbb{P}^{17}$. 
Consider the map $\mathbb{C}^{2} \rightarrow \mathbb{C}^{4}$ and note that $\mathrm{ker}(\zeta)= \mathbb{C}$ is equivalent to choose 2 linearly dependent vectors in $\mathbb{C}^{4}$, which is a condition of codimension $3$. Therefore the desired preimage is of codimension 3 as well. Therefore, the intersection $\widetilde {\mathcal{M}_{\sigma_{-}}(v)} \cap \mathbb{P}(\mathrm{Ext}^1(B,A))$ is a (projective) 14-dimensional subset of $\mathbb{P}^{17}$. As the dimension of the fiber increases in this case, the corresponding fiber is the degeneration of the $\mathbb{P}^{13}$.

 (2) $\mathrm{ker}(\zeta)= \mathbb{C}^{2}$.
In this case, the codomain of $\delta$ is the space of 4 by 2 matrices, so we have $\mathrm{dim}(\mathrm{ker}(\delta))=10$ as $\delta$ is surjective by Lemma \ref{surj2}, which means that $\mathbb{P}(\mathrm{ker} \delta)$ is 9-dimensional as a subset of $\widetilde{\mathcal{M}_{\sigma_{-}}(v)} \cap \mathbb{P}(\mathrm{Ext}^1(B,A))$. As $\zeta$ is trivial in this case, this locus in the preimage of $\delta$ is the singularity locus (or the vertex) of the 14-dimensional cone.

Therefore as the degeneration of the $\mathbb{P}^{13}$-bundle, we have a 14-dimensional fiber cone over a 7 dimensional base (with vertex a $\mathbb{P}^{9}$-bundle over the 7-dimensional base).
 \end{description}

\end{proof}

Because $\mathcal{M}'$ is smooth of dimension 28, the intersection  $\widetilde{\mathcal{M}_{\sigma_{-}}(v)} \cap \mathcal{M}'$ is contained in the singular locus $\mathcal{R}$. We now want to prove the converse $\mathcal{R} \subset \widetilde{\mathcal{M}_{\sigma_{-}}(v)} \cap \mathcal{M}'$. We first consider the following subsets of $\widetilde{\psi}(\mathcal{R})$:
\begin{itemize}
 \item Let $U_{1} \subset \mathbb{G}r(2,4) \times \mathfrak{Fl}_2$ be the locus where $L \not \subset P$ and $p=L \cap P$ is colinear with $Z_2$, but $p$ is disjoint from $Z_2$, and $Z_2$ consists of two distinct points.
 
 \item Let  $U_{2} \subset \mathbb{G}r(2,4) \times \mathfrak{Fl}_2$ be the locus where $Z_2$ consists of two distinct points, and L is the line spanned by $Z_2$.
\end{itemize}

We observe that $U_1$ and $U_2$ are open and dense in the loci of $ \mathbb{G}r(2,4) \times \mathfrak{Fl}_{2}$ where $\mathrm{Ext}^1(A,B)\\=\mathbb{C}$ and 
$\mathrm{Ext}^1(A,B)=\mathbb{C}^2$, respectively. Hence $\mathcal{R}_1:=  (\widetilde{\psi}|_\mathcal{R})^{-1} (U_1)$ and  $\mathcal{R}_2:=  (\widetilde{\psi}|_\mathcal{R})^{-1} (U_2)$ are dense in the loci where $\widetilde{\psi}|_\mathcal{R}$ is a $\mathbb{P}^{13}$-bundle or a 14-dimensional cones bundle, respectively; as the intersection $\widetilde{\mathcal{M}_{\sigma_{-}}(v)} \cap \mathcal{M}'$  is closed, it is therefore enough to show that $\mathcal{R}_1$ and $\mathcal{R}_2$ are contained in $\widetilde{\mathcal{M}_{\sigma_{-}}(v)} \cap \mathcal{M}'$.

  \begin{lem} \label{{H}^{i}(E)}

 Let  $E \in \mathcal{R}_1$ (i.e., $E$ is the general element of the $\mathbb{P}^{13}$-bundle). We have
  \begin{itemize}
      \item $\HH^{0}(E)=\mathcal{I}_{C_{5} \cup L}$, where $C_{5} \subset P$ is a plane quintic containing $L \cap P$ and having a node at each point of $Z_2$, such that $L \cap P \sub \langle Z_2 \rangle $,  and
      \item  $\HH^{1}(E)=\mathcal{O}_{Z_{2}}$.
  \end{itemize}

  \end{lem}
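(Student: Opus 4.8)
The plan is to read off $\HH^{0}(E)$ and $\HH^{1}(E)$ from the defining triangle $A \to E \to B \xrightarrow{+1}$ (with $A = \mathcal{I}_{L}(-1)$ and $B = \iota_{P_{*}}(\mathcal{I}_{Z_{2}})^{\vee}(-5)$) by computing the standard cohomology sheaves of the two outer terms and running the associated long exact sequence. Since $A$ is a sheaf, $\HH^{0}(A) = \mathcal{I}_{L}(-1)$ and $\HH^{i}(A)=0$ for $i\neq 0$. For $B$ I would compute $(\mathcal{I}_{Z_{2}})^{\vee}=\mathbf{R}\mathcal{H}om_{P}(\mathcal{I}_{Z_{2}},\mathcal{O}_{P})$ on the plane: dualizing $\mathcal{I}_{Z_{2}} \to \mathcal{O}_{P} \to \mathcal{O}_{Z_{2}}$ and using that $\mathbf{R}\mathcal{H}om_{P}(\mathcal{O}_{Z_{2}},\mathcal{O}_{P})=\mathcal{O}_{Z_{2}}[-2]$ (local duality, $Z_{2}$ being Gorenstein of codimension $2$) gives $\HH^{0}((\mathcal{I}_{Z_{2}})^{\vee})=\mathcal{O}_{P}$ and $\HH^{1}((\mathcal{I}_{Z_{2}})^{\vee})=\mathcal{O}_{Z_{2}}$; applying the exact functor $\iota_{P_{*}}(-)(-5)$ then yields $\HH^{0}(B)=\mathcal{O}_{P}(-5)$ and $\HH^{1}(B)=\mathcal{O}_{Z_{2}}$. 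Feeding this into the cohomology long exact sequence and using $\HH^{1}(A)=\HH^{2}(A)=0=\HH^{-1}(B)$ forces $\HH^{-1}(E)=0$, an isomorphism $\HH^{1}(E)\cong \HH^{1}(B)=\mathcal{O}_{Z_{2}}$ (the relevant connecting maps land in the vanishing groups $\HH^{1}(A),\HH^{2}(A)$, so this needs no input from the extension class), and a short exact sequence
\[
0 \to \mathcal{I}_{L}(-1) \to \HH^{0}(E) \to \mathcal{O}_{P}(-5) \to 0 .
\]
This already settles the second bullet.

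For the first bullet I would identify the middle term. Writing $h$ for the equation of $P$, I compute $\mathrm{Ext}^{1}(\mathcal{O}_{P}(-5),\mathcal{I}_{L}(-1))=\mathrm{Ext}^{1}(\mathcal{O}_{P},\mathcal{I}_{L}(4))$ by applying $\mathrm{Hom}(-,\mathcal{I}_{L}(4))$ to $0 \to \mathcal{O}(-1)\xrightarrow{h}\mathcal{O}\to \mathcal{O}_{P}\to 0$; since $\mathrm{H}^{1}(\mathcal{I}_{L}(4))=0$ this group is $\mathrm{coker}\!\big(\mathrm{H}^{0}(\mathcal{I}_{L}(4))\xrightarrow{h}\mathrm{H}^{0}(\mathcal{I}_{L}(5))\big)\cong \mathrm{H}^{0}(\mathcal{I}_{p/P}(5))$, the $20$-dimensional space of plane quintics in $P$ through $p=L\cap P$. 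Conversely, for a quintic $C_{5}\subset P$ through $p$ and the scheme-theoretic union $C_{5}\cup L$, multiplication by $h$ produces $0\to \mathcal{I}_{L}(-1)\xrightarrow{h}\mathcal{I}_{C_{5}\cup L}\to \mathcal{O}_{P}(-5)\to 0$, because $h\mathcal{I}_{L}\subset \mathcal{I}_{C_{5}}\cap \mathcal{I}_{L}=\mathcal{I}_{C_{5}\cup L}$ and the cokernel is supported on $P$ with the class of $\mathcal{O}_{P}(-5)=\mathcal{I}_{C_{5}/P}$; one checks this extension class is exactly $C_{5}$ under the isomorphism above. Thus the classes with torsion-free middle term are precisely the nonzero quintics through $p$ (the zero class gives the non-torsion-free split extension), so for $E$ general in the $\mathbb{P}^{13}$-bundle $\HH^{0}(E)=\mathcal{I}_{C_{5}\cup L}$ with $p\in C_{5}$ automatic, and the colinearity $p\in\langle Z_{2}\rangle$ is built into the configuration defining $\mathcal{R}_{1}$ (Lemma \ref{ext1purple1}, the stratum $L\not\subset P$ with $p\cup Z_{2}$ colinear).

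It remains to show that for general $E\in\mathcal{R}_{1}$ the quintic $C_{5}$ has an ordinary node at each point of $Z_{2}$. Here $E$ corresponds to a class $\vartheta\in\ker(\delta)\subset \mathrm{Ext}^{1}(B,A)$ (Proposition \ref{P}, Case (I)), and the plan is to push the restriction map $\ker(\delta)\to \mathrm{Ext}^{1}(\mathcal{O}_{P}(-5),\mathcal{I}_{L}(-1))=\mathrm{H}^{0}(\mathcal{I}_{p/P}(5))$ through the identifications used in the proof of Lemma \ref{surj2}, where $\delta$ is computed after applying $\iota_{P}^{*}$ and is factored through $\mathcal{O}_{Z_{2}}$-valued data on $P$; the vanishing $\delta(\vartheta)=0$ should translate precisely into the quintic being singular along $Z_{2}$. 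The expected main obstacle is exactly this translation — matching the homological condition $\vartheta\in\ker\delta$ with the geometric condition ``node at $Z_{2}$''. As a guide and consistency check, imposing a singular point at each of the two points of $Z_{2}$ cuts $6$ conditions on the $20$-dimensional system of quintics through $p$, leaving a $14$-dimensional family, matching $\dim\ker(\delta)=14$ and the $\mathbb{P}^{13}$ fibre of Proposition \ref{P}; passing to a general member then makes the two singularities ordinary nodes, using $p\in\langle Z_{2}\rangle$ and the explicit generators of $\mathcal{I}_{Z_{2}}$ to exhibit the local node.
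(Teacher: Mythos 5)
Your first half is correct and coincides with the paper's own argument: taking cohomology of the defining triangle $\mathcal{I}_{L}(-1)\to E\to \iota_{P_{*}}(\mathcal{I}_{Z_{2}})^{\vee}(-5)$, with $\HH^{0}(B)=\mathcal{O}_{P}(-5)$ and $\HH^{1}(B)=\mathcal{O}_{Z_{2}}$, immediately gives $\HH^{1}(E)=\mathcal{O}_{Z_{2}}$ and the extension $0\to\mathcal{I}_{L}(-1)\to\HH^{0}(E)\to\mathcal{O}_{P}(-5)\to 0$. Your identification of the nonsplit such extensions with sheaves $\mathcal{I}_{C_{5}\cup L}$, via $\mathrm{Ext}^{1}(\mathcal{O}_{P}(-5),\mathcal{I}_{L}(-1))\cong \mathrm{H}^{0}(\mathcal{I}_{p/P}(5))$ and multiplication by the equation of $P$, is a legitimate alternative to the paper's route (the paper instead proves torsion-freeness of $\HH^{0}(E)$ by showing no subsheaf $\mathcal{O}_{P}(-i)$, $i\geq 6$, of $\mathcal{O}_{P}(-5)$ lifts, using injectivity of the composite $\mathrm{Ext}^{1}(B,\mathcal{I}_{L}(-1))\to\mathrm{Ext}^{1}(\mathcal{O}_{P}(-5),\mathcal{I}_{L}(-1))\to\mathrm{Ext}^{1}(\mathcal{O}_{P}(-i),\mathcal{I}_{L}(-1))$).

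However, there is a genuine gap, and it sits exactly where you flag the ``expected main obstacle'': you never carry out the translation of the homological condition $\vartheta\in\ker(\delta)$ into the geometric condition that the quintic is singular at both points of $Z_{2}$. This translation is not a routine verification to be deferred; it is the core of the paper's proof and the main content of the lemma. The paper performs it explicitly: applying $\iota_{P}^{*}$, Grothendieck--Verdier duality and Serre duality, it shows that the map $\eta=\delta(\cdot)$ becomes the evaluation map
\[
\mathrm{H}^{0}(\mathcal{I}_{Z_{2}}\otimes\mathcal{I}_{p}(5))\longrightarrow \mathrm{H}^{0}\bigl((\mathcal{I}_{Z_{2}}/\mathcal{I}^{2}_{Z_{2}})(6)\bigr),
\]
so that $\ker(\eta)=\mathrm{H}^{0}(\mathcal{I}^{2}_{Z_{2}}\otimes\mathcal{I}_{p}(5))=\mathbb{C}^{14}$, i.e.\ precisely the quintics through $p$ that are singular along $Z_{2}$. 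Your dimension count ($20-6=14=\dim\ker\delta$) is only a consistency check: both $\ker(\delta)$ and $\mathrm{H}^{0}(\mathcal{I}^{2}_{Z_{2}}\otimes\mathcal{I}_{p}(5))$ are $14$-dimensional subspaces of the $18$-dimensional space $\mathrm{Ext}^{1}(B,A)\cong\mathrm{H}^{0}(\mathcal{I}_{Z_{2}}\otimes\mathcal{I}_{p}(5))$, and equality of dimensions does not force the two subspaces to coincide; one must actually identify $\delta$ with reduction modulo $\mathcal{I}^{2}_{Z_{2}}$. Note also that your second bullet's argument quietly depends on the same identification: to know that a general $E\in\mathcal{R}_{1}$ has nonsplit (hence torsion-free) $\HH^{0}(E)$, you need the composite $\ker(\delta)\hookrightarrow\mathrm{Ext}^{1}(B,A)\to\mathrm{Ext}^{1}(\mathcal{O}_{P}(-5),\mathcal{I}_{L}(-1))$ to be nonzero, which again follows from (and in the paper, falls out of) the explicit description of $\ker(\delta)$ as a space of quintics. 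Since the node condition is what feeds all the subsequent geometry (the normalization being a genus $4$ curve in Lemmas \ref{C'} and \ref{pointx}, and the degeneration arguments of Corollary \ref{secondcor} and Lemma \ref{closure}), the proposal as written does not prove the lemma.
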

  \begin{proof}
  
  We want to describe the $\mathbb{P}^{13}$-bundle in Proposition \ref{P} more precisely. Imitating the beginning of the proof of Lemma \ref{surj2} for the generic case $\langle Z_2 \rangle \cap L \neq \emptyset$ and  $\langle Z_2 \rangle \neq L$, and applying $\iota^{*}_{P}$  to $\delta'$ gives the following map:
$$\iota^{*}_{P}\delta'\colon \mathrm{Ext}^1(\iota^{*}_{P}\iota_{P_{*}}(\mathcal{I}_{Z_{2}})^{\vee}(-5),\iota^{*}_{P}\mathcal{I}_{L}(-1))\otimes \mathrm{Ext}^1(\iota^{*}_{P}\mathcal{I}_{L}(-1),\iota^{*}_{P}\iota_{P_{*}}(\mathcal{I}_{Z_{2}})^{\vee}(-5)) \rightarrow $$$$\rightarrow \mathrm{Ext}^2(\iota^{*}_{P}\iota_{P_{*}}(\mathcal{I}_{Z_{2}})^{\vee}(-5),\iota^{*}_{P}\iota_{P_{*}}(\mathcal{I}_{Z_{2}})^{\vee}(-5)).$$

We notice that $\iota^{*}_{P}\mathcal{I}_{L}(-1)=\mathcal{I}_{p}(-1)$ in the generic case, and so using Lemma \ref{lemhuy} implies that the map $\iota^{*}_{P}\delta'$ projects to
 $$ \mathrm{\mathrm{Hom}}((\mathcal{I}_{Z_{2}})^{\vee}(-6),\mathcal{I}_{p}(-1))\otimes \mathrm{Ext}^1(\mathcal{I}_{p}(-1),(\mathcal{I}_{Z_{2}})^{\vee}(-5)) \rightarrow \mathrm{Ext}^1( (\mathcal{I}_{Z_{2}})^{\vee} (-6),(\mathcal{I}_{Z_{2}})^{\vee}(-5)),$$
        which can be written as :
        $$ \mathrm{\mathrm{Hom}}((\mathcal{I}_{p})^{\vee} (1), \mathcal{I}_{Z_{2}}(6))\otimes \mathrm{Ext}^1(\mathcal{I}_{Z_{2}}(5),(\mathcal{I}_{p})^{\vee} (1)) \rightarrow \mathrm{Ext}^1(\mathcal{I}_{Z_{2}}(5), \mathcal{I}_{Z_{2}} (6)).$$
        Applying Serre Duality we  have 
        $$ \mathrm{\mathrm{Hom}}((\mathcal{I}_{p})^{\vee} (1), \mathcal{I}_{Z_{2}}(6))\otimes \mathrm{Ext}^1((\mathcal{I}_{p})^{\vee} (1), \mathcal{I}_{Z_{2}}(2))^{\vee} \rightarrow \mathrm{Ext}^1(\mathcal{I}_{Z_{2}}(6), \mathcal{I}_{Z_{2}} (2))^{\vee}.$$
        Rearranging this  gives
 $$ \mathrm{\mathrm{Hom}}((\mathcal{I}_{p})^{\vee} (1), \mathcal{I}_{Z_{2}}(6))\otimes \mathrm{Ext}^1(\mathcal{I}_{Z_{2}}(6), \mathcal{I}_{Z_{2}} (2)) \rightarrow \mathrm{Ext}^1((\mathcal{I}_{p})^{\vee} (1), \mathcal{I}_{Z_{2}}(2)).$$
 
 Applying $\mathrm{Hom}(\mathcal{I}_{Z_{2}}(6), -)$ and $\mathrm{Hom}((\mathcal{I}_{p})^{\vee} (1), -)$ to $\mathcal{I}_{Z_{2}}(2) \hookrightarrow \mathcal{O}(2) \twoheadrightarrow \mathcal{O}_{Z_{2}} $ implies the map can be written as
  $$ \mathrm{\mathrm{Hom}}((\mathcal{I}_{p})^{\vee} (1), \mathcal{I}_{Z_{2}}(6))\otimes \mathrm{Hom}(\mathcal{I}_{Z_{2}}(6), \mathcal{O}_{Z_{2}} ) \rightarrow \mathrm{Hom}((\mathcal{I}_{p})^{\vee} (1), \mathcal{O}_{Z_{2}})/\mathrm{Im}(\mathrm{Hom}(\mathcal{O}(2), \mathcal{O}_{Z_{2}})).$$
 But as $\mathrm{Hom}(\mathcal{I}_{Z_{2}}(6), \mathcal{O}_{Z_{2}} ) =\mathrm{H}^{0}( \mathcal{I}_{Z_{2}}/ \mathcal{I}^{2}_{Z_{2}}(6))^{\vee}$, the map $\eta\colon \mathrm{Ext}^{1}(B,A) \rightarrow \mathrm{Ext}^{2}(B,B)$ (which is induced by the above map) can be written as 
 $$\mathrm{H}^{0}(\mathcal{I}_{Z_{2}} \otimes \mathcal{I}_{p} (5))  \rightarrow \mathrm{H}^{0}( \mathcal{I}_{Z_{2}}/ \mathcal{I}^{2}_{Z_{2}}(6)).$$
 Therefore $\mathrm{ker}(\eta)=\mathrm{H}^{0}(\mathcal{I}^{2}_{Z_{2}} \otimes \mathcal{I}_{p} (5))=\mathbb{C}^{14}$ (notice that we have $\mathrm{H}^{0}(\mathcal{O} (5))=\mathbb{C}^{21}$, $\mathrm{H}^{0}(\mathcal{O}^{2}_{Z_{2}} )=\mathbb{C}^{6}$, and $\mathrm{H}^{0}(\mathcal{O}_{p} )=\mathbb{C}$). This exactly gives the $\mathbb{P}^{13}$-bundle above.

  The cohomology long exact sequence of the defining short exact sequence of $E$ is
  
  \begin{center}

\begin{tikzpicture}[descr/.style={fill=white,inner sep=1.5pt}]
        \matrix (m) [
            matrix of math nodes,
            row sep=1em,
            column sep=2.5em,
            text height=1.5ex, text depth=0.25ex
        ]
        { 0  &\HH^{0}(\mathcal{I}_{L}(-1))=\mathcal{I}_{L}(-1) &\HH^{0}(E) & \HH^{0}(\iota_{P_{*}}(\mathcal{I}_{Z_{2}})^{\vee}(-5))=\mathcal{O}_{P}(-5)\\
         &\HH^{1}(\mathcal{I}_{L}(-1))=0 &\HH^{1}(E) & \HH^{1}(\iota_{P_{*}}(\mathcal{I}_{Z_{2}})^{\vee}(-5))=\mathcal{O}_{Z_{2}}\\
     &\HH^{2}(\mathcal{I}_{L}(-1))=0.\\
        };

        \path[overlay,->, font=\scriptsize,>=latex]
        (m-1-1) edge (m-1-2)
        (m-1-2) edge (m-1-3)
        (m-1-3) edge (m-1-4)
        (m-1-4) edge[out=355,in=175] node[descr,yshift=0.3ex] {} (m-2-2)
        (m-2-2) edge (m-2-3)
        (m-2-3) edge (m-2-4)
        (m-2-4) edge[out=355,in=175] node[descr,yshift=0.3ex] {} (m-3-2)
        ;
\end{tikzpicture}
\end{center}
The second row implies $\HH^{1}(E)=\mathcal{O}_{Z_{2}}$. The first row %$\mathcal{I}_{L}(-1) \rightarrow \HH^{0}(E) \rightarrow \mathcal{O}_{P}(-5)$ 
implies $\mathrm{ch}_{\leq 2}(\HH^{0}(E))=(1,0,-6)$. Therefore $\HH^{0}(E)$ is a sheaf with $c_{0}=1$ and $c_{1}=0$. Therefore, in order to show $\mathrm{H}^{0}(E)$ is an ideal sheaf of a curve, we need to show that it is torsion-free. We know that any subsheaf of $\mathcal{O}_{P}(-5)$ contains a subsheaf of the form $\mathcal{O}_{P}(-i)$ for some $-i \leq -6$. It is enough to show that such $\mathcal{O}_{P}(-i)$ does not lift to a subobject of $\HH^{0}(E)$. However, using the identification $\mathrm{Ext}^1(\mathcal{O}_{P}(-i), \mathcal{I}_{L}(-1))=\mathrm{Ext}^1(\mathcal{O}_{P}(-i), \iota^{*}_{P}\mathcal{I}_{L}(-1)(1)[-1])=\mathrm{H}^0(\mathcal{O}_{P}(-i) \otimes \mathcal{I}_{L})$, one can see that the composition $  \mathrm{Ext}^{1}(\iota_{P_{*}}(\mathcal{I}_{Z_{2}})^{\vee}(-5), \mathcal{I}_{L}(-1))   \rightarrow \mathrm{Ext}^{1}(\mathcal{O}_{P}(-5), \mathcal{I}_{L}(-1)) \rightarrow \mathrm{Ext}^{1}(\mathcal{O}_{P}(-i), \mathcal{I}_{L}(-1)) $ is injective. Hence $\mathcal{O}_{P}(-i)$ does not lift to a subsheaf of $\HH^{0}(E)$.

Therefore $\HH^{0}(E)$ is an ideal sheaf of curves. Hence the argument above implies that this ideal sheaf is of the form $\mathcal{I}_{C_{5} \cup L}$. The condition  $L \cap P \sub \langle Z_2 \rangle $ comes from Lemma \ref{ext1purple1}.
   
  \end{proof}

   By Lemma  \ref{{H}^{i}(E)}, we assume that $C_{5}$ is smooth outside $Z_{2}$. Let $C'$ be the normalization of $C_{5}$ which is a smooth curve of genus 4. Let $\mathcal{L} :=  \mathcal{O}_{C'}(1)$, a line bundle of degree 5 with at least 3 global sections. By Riemann-Roch we have $\mathrm{h}^{0}(\mathcal{L})-\mathrm{h}^{1}(\mathcal{L})=1+\mathrm{deg}(\mathcal{L})-\mathrm{genus}(\mathcal{L})=2$, and therefore $\mathrm{h}^{0}(\omega_{C'}\otimes\mathcal{L}^{\vee})=\mathrm{h}^{1}(\mathcal{L})\geq 1$ for canonical bundle $\omega_{C'}$, and so $\omega_{C'}\otimes\mathcal{L}^{\vee}$ has section. Thus, as $\omega_{C'}\otimes\mathcal{L}^{\vee}$ has degree one, it is of the form $\mathcal{O}_{C'}(x)$, for some point $x \in C'$. Therefore, $\mathcal{L}=\omega_{C'}(-x)$ and it has exactly 3 global sections.

  \begin{lem} \label{C'}
 $C'$ is not hyper-elliptic. 
  \end{lem}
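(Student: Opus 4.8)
The plan is to argue by contradiction, exploiting that $\mathcal{L} = \omega_{C'}(-x)$ is a \emph{special} line bundle. Since $h^1(\mathcal{L}) = h^0(\mathcal{O}_{C'}(x)) = 1 > 0$ and $\dim|\mathcal{L}| = h^0(\mathcal{L}) - 1 = 2$, the complete linear system $|\mathcal{L}|$ is a special $g^2_5$, and by construction it is base-point-free and birational onto its image, namely the plane quintic $C_5 \subset P \cong \mathbb{P}^2$. Indeed $\mathcal{L} = \mathcal{O}_{C'}(1)$ is the pullback of $\mathcal{O}_P(1)$ under the morphism $C' \to C_5 \hookrightarrow P$, hence globally generated, and the three coordinate sections exhibit the birational normalization map. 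The key point is that a hyperelliptic curve admits no such special linear system.

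Concretely, suppose $C'$ were hyperelliptic, with hyperelliptic pencil $g^1_2$ (of degree $2$ and $h^0 = 2$) and conjugation $\iota$. For a genus $4$ curve this forces $\omega_{C'} \cong 3\,g^1_2$, so $\mathcal{L} \cong 3\,g^1_2 - x$. Choosing $x'$ with $x + x' \in |g^1_2|$ (take $x' = \iota x$), we get $\mathcal{L}(-x') \cong 2\,g^1_2$, and a Riemann--Roch computation gives $h^0(2\,g^1_2) = 3 = h^0(\mathcal{L})$, so $x'$ is a base point of $|\mathcal{L}|$. This contradicts the base-point-freeness of $\mathcal{L} = \mathcal{O}_{C'}(1)$ noted above, and therefore $C'$ cannot be hyperelliptic.

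Equivalently --- and this is the way I would double-check the computation --- the canonical morphism of a hyperelliptic genus $4$ curve is the $2{:}1$ cover of a twisted cubic $R \subset \mathbb{P}^3$, and since $\mathcal{L} = \omega_{C'}(-x)$ the morphism $\phi_{|\mathcal{L}|}$ is the projection of $\phi_{\omega_{C'}}$ from the point $\phi_{\omega_{C'}}(x) \in R$. Projecting a twisted cubic from a point lying on it yields a conic, so $\phi_{|\mathcal{L}|}$ would factor as a degree-$2$ map onto a conic; this again contradicts that $\phi_{|\mathcal{L}|}$ is birational onto the genus-$4$ (hence non-rational) quintic $C_5$. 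Both formulations rest on the same classical fact: every special complete linear system of positive dimension on a hyperelliptic curve is composite with the hyperelliptic pencil.

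The main obstacle is not any single hard estimate but rather pinning down the structural input cleanly: I must verify that $\mathcal{L}$ is genuinely base-point-free and of the asserted degree and dimension (already established in the text), and then invoke the classification of special linear systems on hyperelliptic curves --- or reprove the needed instance by the explicit Riemann--Roch and base-point computation above --- being careful about the edge case where $x$ is a Weierstrass point (so $\iota x = x$), which yields the same base-point conclusion via $2x \in |g^1_2|$.
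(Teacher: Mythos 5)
Your proof is correct and is essentially the paper's own argument: both rest on the fact that $\mathcal{L}=\omega_{C'}(-x)=\mathcal{O}_{C'}(1)$ is base-point-free (being pulled back from $\mathcal{O}_{\mathbb{P}^3}(1)$ via a morphism), while hyperellipticity would force $h^0(\mathcal{L}(-x'))=h^0(\mathcal{L})=3$ at the conjugate point $x'=\iota x$, since $\mathcal{L}(-x')\cong 2\,g^1_2$. You simply spell out the step the paper leaves implicit (that $\omega_{C'}\cong 3\,g^1_2$ and $h^0(2\,g^1_2)=3$ on a hyperelliptic genus $4$ curve), and add an equivalent geometric cross-check via projection of the twisted cubic.
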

  
  \begin{proof}
  By construction, since $\omega_{C'}(-x)=\mathcal{O}_{C'}(1)$ is globally generated (as the pull-back of $\mathcal{O}_{\mathbb{P}^3}(1)$), we have $\mathrm{H}^0(\omega_{C'}(-x)) =\mathbb{C}^3$. Therefore, $\mathrm{H}^0(\omega_{C'}(-x-y)) = \mathbb{C}^2$, for any $y \in C'$. This means that $C'$ cannot be hyper-elliptic.
  
   %By construction, we know $\mathrm{H}^0(\omega_{C'}(-x)) =\mathbb{C}^3$.  Also, 
%since $\omega_{C'}(-x)=\mathcal{O}_{C'}(1)$ is globally generated (as the pull-back of $\mathcal{O}_{\mathbb{P}^3}(1)$), $\mathrm{H}^0(\omega_{C'}(-x-y)) = \mathbb{C}^2$ for any $y \in C'$. But  $C'$ being hyper-elliptic is equivalent to say that for any $x\in C'$, there exist $y\in C'$ such that $\mathrm{H}^{1}(\omega_{C'}(-x-y))=\mathrm{H}^0(O_{C'}(x + y)) = \mathbb{C}^2$; but this implies $\mathrm{h}^{0}(\omega_{C'}(-x-y))-\mathrm{h}^{1}(\omega_{C'}(-x-y))=1+\mathrm{deg}(\omega_{C'}(-x-y))-g=1+[\mathrm{deg}(\omega_{C'})-2]-g=1+[(2g-2)-2]-g=1$, and therefor $\mathrm{H}^0(\omega_{C'}(-x-y)) = \mathbb{C}^3$; and hence $C'$ cannot be hyper-elliptic.
  \end{proof}
  \begin{lem} \label{pointx}
  $x$ is the same as $\langle Z_2 \rangle \cap C_5$.
  \end{lem}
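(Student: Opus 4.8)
The plan is to reinterpret everything through the canonical model of $C'$ and the geometry of linear projection. Since $C'$ is a smooth non-hyperelliptic curve of genus $4$ (Lemma \ref{C'}), its canonical morphism $\phi_{\omega_{C'}}\colon C' \hookrightarrow \mathbb{P}^3$ is an embedding onto a $(2,3)$-complete intersection curve $\bar{C} = Q \cap S$, with $Q$ the unique quadric and $S$ a cubic through $\bar C$. The identity $\mathcal{O}_{C'}(1) = \omega_{C'}(-x)$ means that the degree-$5$ morphism $\phi\colon C' \to C_5 \subset P$ defined by $\mathcal{O}_{C'}(1)$ is exactly the projection of $\bar{C}$ from the point $\gamma := \phi_{\omega_{C'}}(x)$, because $|\mathcal{O}_{C'}(1)| = |\omega_{C'} - x|$ is the subsystem of the canonical system cut out by the hyperplanes through $\gamma$. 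Under this dictionary the assertion to be proved, $\phi(x) = \langle Z_2\rangle \cap C_5$, becomes the statement that the image of the projection centre is the \emph{residual} point of the line $\langle Z_2\rangle$ on $C_5$; by Lemma \ref{{H}^{i}(E)} this residual point is $p = L \cap P$, since $p \in C_5$, $p \in \langle Z_2\rangle$, and $p$ is not a node.

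First I would identify the two nodes of $C_5$ with the rulings of $Q$ through $\gamma$. A node of $C_5$ is produced by a secant line of $\bar{C}$ passing through $\gamma$; any such secant meets $Q \supset \bar{C}$ in at least three points and therefore lies on $Q$, so it is a ruling through $\gamma$. Because $C_5$ has exactly two nodes (at the two points of $Z_2$), the quadric $Q$ carries exactly two lines through $\gamma$; hence $Q$ is automatically smooth and $\gamma$ is not its vertex, and the two rulings $\ell_1, \ell_2$, one from each ruling family, cut out on $\bar{C}$ the divisors $\ell_i \cap \bar{C} = \gamma + A_i + B_i$, that is, the two $g^1_3$-divisors $x + a_i + b_i$ through $x$, where $\{a_i, b_i\}$ are the preimages of the node $q_i \in Z_2$.

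The computation then runs as follows. The two rulings span the tangent plane $\Pi := T_\gamma Q = \langle \ell_1, \ell_2\rangle$, and this plane is precisely the preimage under the projection of the line $\langle Z_2\rangle$ joining the two nodes. Since $\Pi \cap Q = \ell_1 + \ell_2$ as divisors on $Q$ and $\bar{C} = Q \cap S$, intersecting with the cubic $S$ gives the canonical divisor
$$\phi_{\omega_{C'}}^{*}\Pi = (\ell_1 \cap \bar{C}) + (\ell_2 \cap \bar{C}) = 2x + a_1 + b_1 + a_2 + b_2 \in |\omega_{C'}|,$$
in which $\gamma$ occurs with multiplicity two. Removing the base point $x$ coming from the centre $\gamma$ — equivalently, passing from $|\omega_{C'}|$ to the projected system $|\omega_{C'}-x| = |\mathcal{O}_{C'}(1)|$ — yields
$$\phi^{*}\langle Z_2\rangle = x + a_1 + b_1 + a_2 + b_2 \in |\mathcal{O}_{C'}(1)|.$$
Thus $\langle Z_2\rangle$ meets $C_5$ at the two nodes (accounting for $a_1+b_1+a_2+b_2$) and at the single further point $\phi(x)$, so the residual intersection point is $\phi(x)$; as this equals $p = \langle Z_2\rangle \cap C_5$, the claim follows. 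On the level of divisor classes the same conclusion drops out of the standard genus-$4$ relation $D_1 + D_2 \sim \omega_{C'}$ for the two $g^1_3$'s: adding the two fibres through $x$ gives $2x + a_1+b_1+a_2+b_2 \sim \omega_{C'}$, hence $x + a_1+b_1+a_2+b_2 \sim \omega_{C'}-x = \mathcal{O}_{C'}(1)$, and comparing this with $\phi^{*}\langle Z_2\rangle = a_1+b_1+a_2+b_2+\tilde r$ forces the residual point $\tilde r$ to satisfy $\tilde r \sim x$, so $\tilde r = x$.

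I expect the main obstacle to be the correct bookkeeping of intersection multiplicities rather than any conceptual difficulty. One must verify that $\langle Z_2\rangle$ is transverse to both branches at each node so that the pullback contributes $a_i + b_i$ and not a doubled point, that the residual intersection is reduced and coincides with $p = L\cap P$ via the collinearity $L \cap P \subset \langle Z_2\rangle$ of Lemma \ref{{H}^{i}(E)}, and — most importantly — the base-point-removal step identifying $\phi^{*}\langle Z_2\rangle$ with $\phi_{\omega_{C'}}^{*}\Pi - x$, where it is exactly the multiplicity-two appearance of $\gamma$ in $\Pi \cap \bar{C}$ that makes $x$ itself, rather than a genuinely new sixth point, the residual intersection. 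The genericity hypothesis $E \in \mathcal{R}_1$ enters only to ensure that $C_5$ has exactly two honest nodes, which is what forced $Q$ to be smooth in the first place.
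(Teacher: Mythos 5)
Your proof is correct and takes essentially the same route as the paper's: both realize $C_5$ as the projection of the canonical model of $C'$ from $x$, identify the two nodes with the trisecant rulings of the quadric through $x$, and exploit the fact that the plane spanned by those two rulings (your $\Pi = T_\gamma Q$, the paper's $P_0$) cuts the canonical curve in a divisor containing $x$ with multiplicity two, so that the residual intersection of $\langle Z_2\rangle$ with $C_5$ is exactly the image of $x$. Your closing divisor-class argument via $D_1 + D_2 \sim \omega_{C'}$ is a clean way to finesse the transversality bookkeeping, but it rests on the same underlying geometry rather than constituting a different proof.
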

  \begin{proof}

By Lemma \ref{C'}, $C'$ is non-hyperelliptic; so embedding via $\mathrm{H}^{0}(\omega_{C'}(-x))$ is the same as the canonical embedding via $\mathrm{H}^{0}(\omega_{C'})$ followed by projecting from the point $x$. The nodal points $Z_{2}$ correspond to two trisecant lines $l, l'$ containing $x$.  Let $P_{0}$ be the plane spanned by $l,l'$. As $C'$ is a canonical genus four curve, thus a $(2, 3)$-complete intersection curve, the intersection $C' \cap P_{0}$ is also a $(2, 3)$ complete intersection. But that is only possible if it contains $x$ as a double point (the quadratic equation has to be exactly the equation defining $l \cup l'$, and the cubic equation has to vanish at $x$), and thus its projection must be colinear to $Z_{2}$, i.e., $x=p$.
  \end{proof}

   For deforming $C' \cup L$, let us look at the family $\mathcal{C}:= Bl_{ 0}(C' \times \mathbb{A}^{1})$. We have the projection $\mathfrak{q}\colon \mathcal{C} \rightarrow  \mathbb{A}^{1}$; its fibers $C_{t}=\mathfrak{q}^{-1}(t)$ are $C_t=C'$ for $t \neq 0$, and    $C_0=C' \cup L$.

  \begin{lem} \label{globally}
    Let $\omega_{\mathfrak{q}}$ be the relative canonical bundle, and $\mathcal{L}': =\omega_{\mathfrak{q}}(-2L)$. For all $t \in \mathbb{A}^{1}$, we have $\mathrm{H}^{0}(\mathcal{L}'|_{C_{t}})=\mathbb{C}^{4}$, and $\mathcal{L}'|_{C_{t}}$ is globally generated. 
  \end{lem}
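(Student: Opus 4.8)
The plan is to compute $\mathcal{L}'$ as an honest line bundle on the smooth surface $\mathcal{C}$ and then restrict it to the fibres, treating the smooth fibres $C_t$ ($t\neq 0$) and the nodal central fibre $C_0=C'\cup L$ separately. First I would record the blow-up geometry: since $C'\times\mathbb{A}^1$ is a smooth surface and $\mathcal{C}$ is its blow-up at the point $(x,0)$ on the fibre $C'\times\{0\}$, with $x$ the point of Lemma~\ref{pointx}, the exceptional curve $L$ is a $(-1)$-curve meeting the strict transform $\widetilde{C'}\cong C'$ transversally at the point over $x$, so that $C_0=\widetilde{C'}\cup L$ is nodal of arithmetic genus $4$. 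Writing $\pi\colon\mathcal{C}\to C'\times\mathbb{A}^1$ for the contraction and using $\omega_{\mathbb{A}^1}\cong\mathcal{O}_{\mathbb{A}^1}$, one has $\omega_{\mathfrak{q}}\cong\omega_{\mathcal{C}}\cong\pi^{*}\mathrm{pr}_1^{*}\omega_{C'}\otimes\mathcal{O}(L)$, hence
$$\mathcal{L}'=\omega_{\mathfrak{q}}(-2L)\cong\pi^{*}\mathrm{pr}_1^{*}\omega_{C'}\otimes\mathcal{O}(-L).$$

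For $t\neq 0$ the fibre $C_t$ is disjoint from $L$ and is carried isomorphically onto $C'$ by $\mathrm{pr}_1\circ\pi$, so $\mathcal{L}'|_{C_t}\cong\omega_{C'}$. This has $\mathrm{h}^0=g=4$, and it is globally generated (in fact very ample) precisely because $C'$ is non-hyperelliptic by Lemma~\ref{C'}. This disposes of all the smooth fibres at once.

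For the central fibre I would restrict to the two components. On $L\cong\mathbb{P}^1$ we have $L^2=-1$, so $\mathcal{O}(-L)|_L\cong\mathcal{O}_{\mathbb{P}^1}(1)$ while $\pi^{*}(-)|_L$ is trivial; thus $\mathcal{L}'|_L\cong\mathcal{O}_{\mathbb{P}^1}(1)$, which has $\mathrm{h}^0=2$ and is globally generated. On $\widetilde{C'}$ we have $L\cdot\widetilde{C'}=1$ at $x$, so $\mathcal{O}(-L)|_{\widetilde{C'}}\cong\mathcal{O}_{C'}(-x)$ and $\mathcal{L}'|_{\widetilde{C'}}\cong\omega_{C'}(-x)$, which is exactly the degree-$5$ bundle $\mathcal{L}=\mathcal{O}_{C'}(1)$ with $\mathrm{h}^0=3$ shown to be globally generated in the discussion preceding Lemma~\ref{C'}. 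I would then feed these restrictions into the normalization sequence of the nodal curve $C_0$,
$$0\to\mathcal{L}'|_{C_0}\to\mathcal{L}'|_{\widetilde{C'}}\oplus\mathcal{L}'|_L\to\mathbb{C}_x\to 0,$$
whose last arrow is the difference of the two values at the node. Passing to cohomology, the map $\mathbb{C}^3\oplus\mathbb{C}^2\to\mathbb{C}$ is surjective because each restriction is globally generated at $x$, so $\mathrm{h}^0(\mathcal{L}'|_{C_0})=3+2-1=4$, matching the smooth fibres.

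Finally I would check global generation of $\mathcal{L}'|_{C_0}$ directly: a global section is a pair $(s,t)$ of sections on the two components agreeing at the node $x$. For a point in $\widetilde{C'}\setminus\{x\}$ (resp. $L\setminus\{x\}$) I pick a section of the corresponding restriction nonvanishing there and, using global generation at $x$ of the other restriction, match its value at the node; at $x$ itself I match a common nonzero value. The only real obstacle is this node bookkeeping---verifying that the difference map on sections is genuinely surjective and that values can always be matched across the node---but both reduce to the global generation of $\omega_{C'}(-x)$ and of $\mathcal{O}_{\mathbb{P}^1}(1)$ at the single point $x$, which is already in hand.
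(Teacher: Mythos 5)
Your proof is correct and takes essentially the same approach as the paper: both arguments restrict $\mathcal{L}'$ to the fibres, identifying $\mathcal{L}'|_{C_t}\cong\omega_{C'}$ for $t\neq 0$ and, on the central fibre, $\mathcal{L}'|_{C'}\cong\omega_{C'}(-x)=\mathcal{L}$ and $\mathcal{L}'|_{L}\cong\mathcal{O}_{L}(1)$, then glue along the node to get $\mathrm{h}^{0}=4$ and global generation. The only cosmetic differences are that the paper obtains these restrictions via $\omega_{\mathfrak{q}}|_{C'}=\omega_{C'}(x)$ rather than your global blow-up formula, uses the decomposition sequence $\mathcal{O}_{L}\to\mathcal{L}'|_{C_{0}}\to\mathcal{L}'|_{C'}$ in place of your normalization sequence (thereby avoiding the surjectivity check at the node), and leaves the final global-generation bookkeeping implicit where you carry it out explicitly.
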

  \begin{proof}

  For $t\neq 0$,  $\mathcal{L}'|_{C_{t}}= \omega_{C_t}$ implies the claim as $C_t=C'$ is canonical. For $C_0$, using Lemma \ref{pointx}, we have
  $\omega_{\mathfrak{q}}|_{C'}=\omega_{C'}(x)$, and so we observe
  $$\mathcal{L}'|_{C'}=\omega_{\mathfrak{q}}(-2L)|_{C'}=\omega_{C'}(-x)=\mathcal{L}, \quad \text{and} \quad \mathcal{L}'|_{L}=\omega_{\mathfrak{q}}(-2L)|_{L}=\omega_{\mathfrak{q}}|_{L}(2)=\mathcal{O}_{L}(1).$$
 From the exact sequence $\mathcal{O}_{L} \rightarrow \mathcal{L}'|_{C_{0}} \rightarrow \mathcal{L}'|_{C'}$ and noticing that $\mathrm{H}^{0}(\mathcal{O}_{L})=\mathbb{C}^1$ and $\mathrm{H}^{0}( \mathcal{L}'|_{C'})=\mathrm{H}^{0}( \mathcal{L})=\mathbb{C}^3$,  we have the result.

  \end{proof}

 We will show that any $E \in \mathcal{R}_1$ is a limit of an ideal sheaf of canonical genus four curves in $\widetilde{\mathcal{M}_{\sigma_{-}}(v)} \cap \mathcal{M}'$:
  \begin{cor} \label{secondcor}
   All $E$ in $\mathbb{P}^{13}=\mathbb{P}(\mathrm{H}^{0}(\mathcal{I}^{2}_{Z_{2}} \otimes \mathcal{I}_{p} (5)))$ are limits of objects in $\widetilde{\mathcal{M}_{\sigma_{-}}(v)} \cap \mathcal{M}'$. 
  
  \end{cor}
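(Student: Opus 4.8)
The plan is to exhibit the general element $E$ of the $\mathbb{P}^{13}$-bundle as a flat limit, inside the proper moduli space $\mathcal{M}_{\sigma_{+}}(v)$, of ideal sheaves of \emph{smooth} canonical genus four curves, using the degeneration $\mathcal{C}=Bl_{0}(C'\times\mathbb{A}^{1})$ and the relative line bundle $\mathcal{L}'=\omega_{\mathfrak{q}}(-2L)$ of Lemma \ref{globally}. Since that lemma gives that $\mathcal{L}'$ is globally generated with $\mathrm{H}^{0}(\mathcal{L}'|_{C_{t}})=\mathbb{C}^{4}$ on every fibre, the four sections define a relative morphism $\Phi\colon\mathcal{C}\to\mathbb{P}^{3}$ over $\mathbb{A}^{1}$. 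For $t\neq 0$ one has $\mathcal{L}'|_{C_{t}}=\omega_{C'}$, so $\Phi_{t}$ is the canonical embedding and its image $D_{t}$ is a smooth $(2,3)$-complete intersection curve; by Proposition \ref{prop:prop_1} its ideal sheaf has class $v$ and lies in $\widetilde{\mathcal{M}_{\sigma_{-}}(v)}$. For $t=0$ the identifications $\mathcal{L}'|_{C'}=\omega_{C'}(-x)$ and $\mathcal{L}'|_{L}=\mathcal{O}_{L}(1)$ show that $\Phi_{0}$ contracts $C'$ by the projection from $x$ onto the plane quintic $C_{5}$ and sends the exceptional $L$ isomorphically to a line; by Lemmas \ref{pointx} and \ref{{H}^{i}(E)} we get $\Phi_{0}(C_{0})=C_{5}\cup L$ with $p=\Phi_{0}(x)$ colinear to the two nodes $Z_{2}$.

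The point that makes the limit leave the open locus and land in $\mathcal{M}'$ is that the global sections of $\mathcal{L}'$ vanish to order two along $L$: near $x$ this forces the restricted canonical systems on the $D_{t}$ to degenerate in $t$, so the embeddings $D_{t}$ sweep out a genuinely nonconstant family whose limiting linear system is the projection from $x$. Thus $\{\mathcal{I}_{D_{t}}\}_{t\neq 0}$ is a flat family of $\sigma_{+}$-stable objects of class $v$ over $\mathbb{A}^{1}\setminus\{0\}$, all contained in $\widetilde{\mathcal{M}_{\sigma_{-}}(v)}$. Invoking properness of $\mathcal{M}_{\sigma_{+}}(v)$ (the valuative criterion), this family extends over $\mathbb{A}^{1}$ and acquires a unique limit $E_{0}\in\mathcal{M}_{\sigma_{+}}(v)$.

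I would then identify $E_{0}$. Because the $\mathcal{I}_{D_{t}}$ degenerate along the projection from $x$, the limit is supported on $C_{5}\cup L$; comparing the total class (still $(1,0,-6,15)$ by Proposition \ref{prop:prop_1}) against the two-term shape dictated by the degeneration shows $\HH^{0}(E_{0})=\mathcal{I}_{C_{5}\cup L}$ and $\HH^{1}(E_{0})=\mathcal{O}_{Z_{2}}$, the embedded points sitting exactly at the two nodes where the projection fails to be injective. By Lemma \ref{{H}^{i}(E)} this is precisely the object of the $\mathbb{P}^{13}$ attached to $C_{5}$, so $E_{0}=E$. Hence $E$ is a limit of objects of $\widetilde{\mathcal{M}_{\sigma_{-}}(v)}$ and simultaneously lies in $\mathcal{M}'$, i.e. $E\in\widetilde{\mathcal{M}_{\sigma_{-}}(v)}\cap\mathcal{M}'$. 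Letting $C_{5}$ range over the quintics satisfying the hypotheses of Lemma \ref{{H}^{i}(E)} (smooth outside $Z_{2}$, with non-hyperelliptic normalization, by Lemma \ref{C'}), which form a dense subset of $\mathbb{P}^{13}=\mathbb{P}(\mathrm{H}^{0}(\mathcal{I}^{2}_{Z_{2}}\otimes\mathcal{I}_{p}(5)))$, we realise a dense set of the $\mathbb{P}^{13}$ as such limits; since $\widetilde{\mathcal{M}_{\sigma_{-}}(v)}\cap\mathbb{P}^{13}$ is closed, the whole $\mathbb{P}^{13}$ lies in the intersection.

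The main obstacle is the precise identification of the flat limit $E_{0}$: one must show that the \emph{only} torsion acquired in the limit is $\mathcal{O}_{Z_{2}}$, supported at the two nodes, and that $\HH^{0}(E_{0})$ is exactly $\mathcal{I}_{C_{5}\cup L}$ with no further embedded or spatial structure. The matching of Chern characters pins down the total contribution and the global generation of $\mathcal{L}'$ from Lemma \ref{globally} controls where $\Phi_{0}$ ceases to be injective, but making rigorous that the genus discrepancy between $p_{a}(C_{5}\cup L)=6$ and the flat value $p_{a}=4$ is absorbed exactly as $\HH^{1}=\mathcal{O}_{Z_{2}}$ is the delicate bookkeeping step.
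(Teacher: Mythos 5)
Your degenerating family is the same one the paper uses: $\mathcal{C}=Bl_0(C'\times\mathbb{A}^1)$ with $\mathcal{L}'=\omega_{\mathfrak{q}}(-2L)$, giving canonical embeddings for $t\neq 0$ and the partial normalization $C'\cup L\to C_5\cup L$ at $t=0$. The genuine gap is in how you pass to the limit. You extend the family $\{\mathcal{I}_{D_t}\}_{t\neq 0}$ by invoking properness of $\mathcal{M}_{\sigma_+}(v)$ and the valuative criterion, and then you must identify the abstract limit $E_0$ --- a step you yourself flag as ``the delicate bookkeeping step'' and never carry out. Both halves are problematic. Properness of $\mathcal{M}_{\sigma_+}(v)$ is not available: the paper only ever treats it as an algebraic space (the reduced union of the two components), and properness or valuative criteria for Bridgeland moduli spaces on a threefold are not off-the-shelf facts (the paper cites \cite{AHH} only for existence of the space on the wall). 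More importantly, the identification of $E_0$ is exactly the content of the corollary: the na\"ive sheaf-theoretic (Hilbert-scheme) flat limit of the $D_t$ is $C_5\cup L$ \emph{with two embedded points} (the $D_t$ have degree $6$ and genus $4$, while $p_a(C_5\cup L)=6$), and its ideal sheaf is a different object from the element $E$ of the $\mathbb{P}^{13}$, which is a genuine two-term complex with $\HH^1(E)=\mathcal{O}_{Z_2}$. Saying that ``comparing the total class against the two-term shape'' forces $\HH^0(E_0)=\mathcal{I}_{C_5\cup L}$ and $\HH^1(E_0)=\mathcal{O}_{Z_2}$ asserts the conclusion rather than proving it; nothing you have written singles out this complex among $\sigma_+$-semistable objects of class $v$ supported near $C_5\cup L$.

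The paper closes exactly this gap with a device your proposal lacks: it never takes an abstract limit. Using Lemma \ref{globally}, it forms the relative two-term complex $E^{\bullet}=\bigl[\mathcal{O}_{\mathbb{P}(\mathfrak{q}_*\mathcal{L}')}\to\mathfrak{g}_*\mathcal{O}_{\mathcal{C}}\bigr]$ on the $\mathbb{P}^3$-bundle over $\mathbb{A}^1$ and defines $E_t$ as its restriction to the fiber over $t$. For $t\neq 0$ the defining map is surjective, so $E_t\cong\mathcal{I}_{\mathfrak{g}(C_t)}$ lies in the locus of ideal sheaves of canonical curves; at $t=0$ surjectivity fails precisely over the two nodes (where the partial normalization identifies two points), so one reads off $\HH^0(E_0)=\mathcal{I}_{C_5\cup L}$ and $\HH^1(E_0)=\mathcal{O}_{Z_2}$ by direct computation, and this object is $E$ itself, such objects being determined by their $\HH^0$ (a fact used again in Lemma \ref{closure}). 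The limit is thus constructed inside a single explicit flat family, so neither properness nor any a posteriori identification is needed. To repair your proof, replace the valuative-criterion step by this relative complex (or else supply both a properness theorem for $\mathcal{M}_{\sigma_+}(v)$ and an actual computation of the limit, which in practice amounts to the same construction).
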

  \begin{proof}
    Recall that as in Lemma \ref{{H}^{i}(E)}, we can assume that $E$ is generic. By Lemma \ref{globally}, $\mathbb{P}(\mathfrak{q}_* \mathcal{L}')$ is a $\mathbb{P}^3$-bundle over $\mathbb{A}^1$, and  $\omega_{\mathfrak{q}}|_{C_{t}}$ gives the morphism $\mathfrak{g}\colon \mathcal{C} \rightarrow  \mathbb{P}(\mathfrak{q}_* \mathcal{L}')$. Then $\mathfrak{g}_t: =\mathfrak{g}|_{C_t}$ is the canonical embedding for $t\neq 0$, and the partial normalization $\mathfrak{g}_0 \colon C' \cup L \rightarrow C_{5} \cup L$ for $t=0$. Let  $E^{\bullet}$ be the two term complex $  \mathcal{O}_{\mathbb{P}(\mathfrak{q}_* \mathcal{L}')} \rightarrow \mathfrak{g}_{*}\mathcal{O}_{\mathcal{C}}$, and $ E_{t}$, the restriction of $ E^{\bullet}$ to the fiber over $t \in \mathbb{A}^{1}$. For $t \neq 0$, the map defining $E_t$ is surjective, and so $E \cong \mathcal{I}_{\mathfrak{g}(C_{t})}$, an object of  $\widetilde{\mathcal{M}_{\sigma_{-}}(v)} \cap \mathcal{M}'$. For $t=0$, the map fails to be surjective at the nodes, and so $\HH^{0}(E_{0})=\mathcal{I}_{C_{5} \cup L}$ and  $\HH^{1}(E_{0})=\mathcal{O}_{Z_{2}}$, and therefore $E_{0}$ is quasi-isomorphic to $E \in \mathcal{M}'$ itself.

  \end{proof}
  
  \begin{lem} \label{Uinint}
  We have $\mathcal{R}_1 \subset  \widetilde {\mathcal{M}_{\sigma_{-}}(v)} \cap \mathcal{M}'$.
  \end{lem}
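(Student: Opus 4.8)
The plan is to obtain this containment as a formal, fiberwise consequence of Corollary \ref{secondcor}, combined with the closedness of the intersection; no new geometric input is needed beyond what has already been established. First I would recall that $\mathcal{R}_1 = (\widetilde{\psi}|_{\mathcal{R}})^{-1}(U_1)$ is exactly the restriction to the open dense locus $U_1 \subset \mathbb{G}r(2,4) \times \mathfrak{Fl}_2$ of the $\mathbb{P}^{13}$-bundle produced in Case (I) of Proposition \ref{P}. Over each point $u \in U_1$ — a configuration with $L \not\subset P$ and $p = L \cap P \notin Z_2$ but colinear with the two distinct points of $Z_2$ — the fiber is, by Lemma \ref{{H}^{i}(E)}, precisely the projective space $\mathbb{P}^{13} = \mathbb{P}(\mathrm{H}^{0}(\mathcal{I}^{2}_{Z_{2}} \otimes \mathcal{I}_{p}(5)))$.

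Next I would invoke Corollary \ref{secondcor} fiberwise. For each such $u$, every $E$ in that $\mathbb{P}^{13}$ is realized as the special fiber $E_0$ of the complex $E^{\bullet} = (\mathcal{O}_{\mathbb{P}(\mathfrak{q}_* \mathcal{L}')} \rightarrow \mathfrak{g}_{*}\mathcal{O}_{\mathcal{C}})$ built from the degeneration $\mathfrak{q}\colon \mathcal{C} \rightarrow \mathbb{A}^{1}$ of Lemma \ref{globally}, whose nearby fibers $E_t \cong \mathcal{I}_{\mathfrak{g}(C_t)}$ (for $t \neq 0$) are ideal sheaves of genuine $(2,3)$-complete intersection curves and therefore lie in $\widetilde{\mathcal{M}_{\sigma_{-}}(v)} \cap \mathcal{M}'$. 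Thus $E$ is a limit of points of this intersection. Since $\widetilde{\mathcal{M}_{\sigma_{-}}(v)}$ and $\mathcal{M}'$ are closed components of $\mathcal{M}_{\sigma_{+}}(v)$, their intersection is closed, so the limit $E$ lies in $\widetilde{\mathcal{M}_{\sigma_{-}}(v)} \cap \mathcal{M}'$; as $E \in \mathcal{R} \subset \mathcal{M}'$ is automatic, the only content is that $E$ is a limit of objects of the closed set $\widetilde{\mathcal{M}_{\sigma_{-}}(v)}$. Taking the union of these fibers over all $u \in U_1$ then yields $\mathcal{R}_1 \subset \widetilde{\mathcal{M}_{\sigma_{-}}(v)} \cap \mathcal{M}'$.

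The only substantive point is already discharged inside Corollary \ref{secondcor}, namely the identification $\HH^{0}(E_0) = \mathcal{I}_{C_{5} \cup L}$ and $\HH^{1}(E_0) = \mathcal{O}_{Z_{2}}$ matching the cohomology sheaves of the prescribed $E$ from Lemma \ref{{H}^{i}(E)}; this is where the geometry of the nodal projection (Lemmas \ref{C'} and \ref{pointx}) actually enters. I expect the only thing needing a word of care to be the genericity reductions of Lemma \ref{{H}^{i}(E)} (that $C_{5}$ is smooth away from $Z_2$ and $Z_2$ is reduced): these cost nothing here, because such configurations are dense in $U_1$ and $\widetilde{\mathcal{M}_{\sigma_{-}}(v)} \cap \mathcal{M}'$ is closed, so the containment verified on the dense generic fibers propagates to all of $\mathcal{R}_1$. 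Consequently there is no genuine obstacle remaining: the present lemma is precisely the bookkeeping step that globalizes Corollary \ref{secondcor} over the base $U_1$.
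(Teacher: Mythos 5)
Your proposal is correct and follows essentially the same route as the paper: both arguments take the degenerations of Corollary \ref{secondcor} (ideal sheaves of canonical genus four curves limiting to the prescribed objects of $\mathcal{R}_1$), combine them with the closedness of $\widetilde{\mathcal{M}_{\sigma_{-}}(v)} \cap \mathcal{M}'$, and propagate from the dense set of generic configurations/fibers to all of $\mathcal{R}_1$. Your remark that the only real content is membership in the closed set $\widetilde{\mathcal{M}_{\sigma_{-}}(v)}$, since $E \in \mathcal{M}'$ is automatic, is a slightly cleaner phrasing of the same bookkeeping the paper performs.
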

\begin{proof}
We just constructed the set of ideal sheaves of canonical genus four curves in the intersection which is dense in $R_1$ (Corollary \ref{secondcor}). As the base of $\mathcal{R}_1$ is $U_1$, and the intersection is proper (and so every fiber will be contained in the intersection), then $\mathcal{R}_1$ will be contained in the intersection. 
\end{proof}

  \begin{rmk} \label{defrmk}
 We want to show that the intersection $\widetilde {\mathcal{M}_{\sigma_{-}}(v)} \cap \mathcal{M}'$ is exactly $\overline{\mathcal{R}_1}=\mathcal{R}$. %In order to do so,
 So far, we have shown that any object in $\mathcal{R}_1$ (corresponding to $U_1$ or $\mathrm{Ext}^1(A,B)=\mathbb{C}$) is contained in the intersection (as the specialization of an ideal sheaf of canonical genus 4 curves); i.e., $\mathcal{R}_1 \subset \widetilde {\mathcal{M}_{\sigma_{-}}(v)} \cap \mathcal{M}' \subset \mathcal{R}$ (Lemma \ref{Uinint}).  As $U_2 \subset \overline{U_1}$, the question is whether the objects in  $\mathcal{R} \setminus \mathcal{R}_1$ (corresponding to $U_2$ or $\mathrm{Ext}^1(A,B)=\mathbb{C}^2$)  
  are also in the intersection. We  show this by explicitly degenerating objects in $\mathcal{R}_1$ to get a 14-dimensional family of objects in $\overline{\mathcal{R}_1}$ lying over a given point of $U_2$. As the fiber in $\mathcal{R}$ over the given point, is 14-dimensional and irreducible, and as the intersection is closed, this shows that the whole fiber is contained in the closure. As $U_2$ is dense in the locus over which $\mathcal{R}$ is a cone bundle, this will prove that the whole cone bundle is contained in the intersection.

  We will simultaneously deform $L$ to   $\langle Z_2 \rangle$ and $C_5$ to $C_4 \cup   \langle Z_2 \rangle$ for a plane quartic $C_4$ containing $ Z_2$; the limit of the corresponding objects in  $\mathcal{R}_1$ is $\HH^0(E_{0})=\mathcal{I}_{C_4 \cup D}$ for a thickening $D$ of $\langle Z_2 \rangle$.

  \end{rmk}

  \begin{lem} \label{closure}
  We have $\overline{\mathcal{R}_1}=\mathcal{R}$, and thus $\mathcal{R} \subset \widetilde{\mathcal{M}_{\sigma_{-}}(v)} \cap \mathcal{M}' $.
  \end{lem}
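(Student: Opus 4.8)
The plan is to reduce everything to the single containment $\mathcal{R} \subset \overline{\mathcal{R}_1}$. Indeed, Lemma \ref{Uinint} gives $\mathcal{R}_1 \subset \widetilde{\mathcal{M}_{\sigma_{-}}(v)} \cap \mathcal{M}'$, and since $\mathcal{M}'$ is smooth of dimension $28$ the intersection sits inside the singular locus, so $\widetilde{\mathcal{M}_{\sigma_{-}}(v)} \cap \mathcal{M}' \subset \mathcal{R}$; as this intersection is closed we also have $\overline{\mathcal{R}_1} \subset \widetilde{\mathcal{M}_{\sigma_{-}}(v)} \cap \mathcal{M}'$. Thus once $\mathcal{R} \subset \overline{\mathcal{R}_1}$ is established, the chain $\overline{\mathcal{R}_1} \subset \widetilde{\mathcal{M}_{\sigma_{-}}(v)} \cap \mathcal{M}' \subset \mathcal{R} \subset \overline{\mathcal{R}_1}$ collapses to the desired equalities. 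By Proposition \ref{P}, $\mathcal{R}$ has two strata under $\widetilde{\psi}$: the $\mathbb{P}^{13}$-bundle over the locus where $\mathrm{Ext}^1(A,B)=\mathbb{C}$, and the $14$-dimensional cone bundle over the locus where $\mathrm{Ext}^1(A,B)=\mathbb{C}^2$. It suffices to show $\overline{\mathcal{R}_1}$ contains both.

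For the first stratum this is immediate: $U_1$ is open and dense in the base where $\mathrm{Ext}^1(A,B)=\mathbb{C}$, and $\mathcal{R}_1=(\widetilde{\psi}|_{\mathcal{R}})^{-1}(U_1)$ is the restriction of the full $\mathbb{P}^{13}$-bundle to $U_1$, hence dense in that stratum; its closure therefore contains the entire $\mathbb{P}^{13}$-bundle part of $\mathcal{R}$. The real content is the cone-bundle stratum, which cannot be obtained merely by taking closures over $\overline{U_1} \supset U_2$, since the fiber dimension jumps from $13$ to $14$.

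To reach the cone stratum I would fix a point of $U_2$, so that $P$, $Z_2$ and $L=\langle Z_2 \rangle$ are fixed and the fiber of $\mathcal{R}$ is the irreducible $14$-dimensional cone $\mathcal{V} \subset \mathbb{P}(\mathrm{Ext}^1(B,A))=\mathbb{P}^{17}$ of Proposition \ref{P}. I would then build a $14$-dimensional family inside $\overline{\mathcal{R}_1}$ lying over this point by degenerating the curves attached to the generic members of $\mathcal{R}_1$ described in Lemma \ref{{H}^{i}(E)}. Concretely, take a family in which the spatial line $L(t)\not\subset P$ with $L(t)\cap P = p(t) \in \langle Z_2\rangle$ rotates into $P$ so that $L(t) \to \langle Z_2\rangle$, while simultaneously the nodal plane quintic $C_5(t)$ (with nodes at $Z_2$ and passing through $p(t)$) degenerates to $C_4 \cup \langle Z_2\rangle$ for a plane quartic $C_4 \supset Z_2$. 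As in Corollary \ref{secondcor}, each such family realizes objects of $\mathcal{R}_1$ as ideal sheaves of genus-$4$ curves for $t\neq 0$, and the flat limit of the ideal sheaves at $t=0$ is $\mathcal{I}_{C_4 \cup D}$, where the two copies of $\langle Z_2\rangle$ — one from the broken-off component of $C_5(t)$, the other from $L(t)$ — merge into a thickening $D$ of $\langle Z_2\rangle$. Running the cohomology long exact sequence exactly as in Lemma \ref{{H}^{i}(E)} would identify the limit complex $E_0$ as a genuine object of $\mathcal{M}'$ with $\HH^0(E_0)=\mathcal{I}_{C_4 \cup D}$ and $\HH^1(E_0)=\mathcal{O}_{Z_2}$, so that $E_0 \in \mathcal{V}$.

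Finally I would count parameters over the fixed point of $U_2$: the quartics $C_4 \supset Z_2$ form a $12$-dimensional family, and the choice of thickening $D$ (equivalently, the directions in which the two lines approach $\langle Z_2\rangle$) supplies the remaining $2$ dimensions, yielding a $14$-dimensional family of limit objects inside $\overline{\mathcal{R}_1}\cap \mathcal{V}$. Since $\mathcal{V}$ is irreducible of dimension $14$ and $\overline{\mathcal{R}_1}\cap \mathcal{V}$ is closed in it, this forces $\mathcal{V} \subset \overline{\mathcal{R}_1}$; as $U_2$ is dense in the base of the cone bundle, the whole cone-bundle stratum then lies in $\overline{\mathcal{R}_1}$, giving $\mathcal{R}\subset\overline{\mathcal{R}_1}$ and hence the lemma. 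The main obstacle is the flat-limit analysis at $t=0$: one must check that the limiting ideal sheaf is exactly $\mathcal{I}_{C_4 \cup D}$ with the correct non-reduced structure on $D$ and no spurious embedded points, and that the resulting family of limits genuinely attains dimension $14$ rather than collapsing — this is precisely what guarantees that it fills the cone $\mathcal{V}$ rather than a proper subvariety.
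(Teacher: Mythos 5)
Your strategy coincides with the paper's own: reduce to showing the cone stratum lies in $\overline{\mathcal{R}_1}$, fix a point of $U_2$, and fill the irreducible $14$-dimensional cone fiber with flat limits of objects of $\mathcal{R}_1$, obtained by simultaneously degenerating $L$ to $\langle Z_2\rangle$ and the nodal quintic $C_5$ to $C_4\cup\langle Z_2\rangle$, so that the limit is $\mathcal{I}_{C_4\cup D}$ for a thickening $D$ of $\langle Z_2\rangle$; even your parameter count $12+2=14$ is the paper's. However, the step you set aside as ``the main obstacle'' is not a routine verification to be postponed --- it \emph{is} the proof. Without it, the dimension count is unjustified: a priori the flat limit could be insensitive to one or both of your two extra parameters (for instance, the limit ideal might not depend on the relative speed of the two degenerations), in which case the family of limits would have dimension $12$ or $13$ and would fill only a proper closed subvariety of the cone $\mathcal{V}$, and the argument would collapse exactly at the point where you invoke irreducibility of $\mathcal{V}$.

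The paper closes this gap by explicit computation. In coordinates with $P=\{z=0\}$, $Z_2=\{(0,0,0),(0,-1,0)\}$, $\langle Z_2\rangle=\{x=z=0\}$, $L_t=\{x=z-t(y-1)=0\}$, and $C_5$ degenerating with speed $bt$, the flat limit of the ideals $\mathcal{I}_{L_t\cup (C_5)_{bt}}$ contains $(zx,\ z^2,\ f_4x^2,\ -f_4x+bzy^2(y+1)^2)$, and the rotation of the plane $P'$ containing the $L_t$ is encoded by the substitution $x\mapsto x-cz$. Writing the infinitesimal thickening direction along $\langle Z_2\rangle$ as a normal field $\mathfrak{A}(y)\frac{\partial}{\partial x}+\mathfrak{B}(y)\frac{\partial}{\partial z}$, one computes $\mathfrak{A}(y)/\mathfrak{B}(y)=b\,y^2(y+1)^2/f_4(0,y,0)+c$, so that $c$ is recovered as the value at $y=0$ and then $b$ at any point $y\neq 0,-1$. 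This shows the map $(C_4,b,c)\mapsto(\text{limit object})$ is injective, so the family of limits is genuinely $14$-dimensional, hence dense in each cone fiber --- which is precisely the claim your proposal asserts but does not establish. As written, your argument is a correct outline of the paper's proof with its central computation missing.
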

  
  \begin{proof}
  
  To construct objects in the closure, we consider a family of objects in $\mathcal{R}_1$ parametrized by  $\mathbb{A}^1 \setminus \{0\}$, and then fill in the central fiber to an additional  object in $\widetilde{M_{\sigma_-}(v)}$. Let $C =\mathrm{S}pec \mathbb{C}[t]$, and $E \in \mathcal{R}_1$. As such objects are uniquely determined by their $\HH^0$, they are ideals $\mathcal I $ in $ \mathbb{C}[x, y, z, w][t, t^{-1}]$ and we want to find their flat limit as $t$ goes to zero (notice that to distinguish different limits, it is enough to distinguish them after restriction to  $\mathbb{A}^3$, so in what follows we consider the ideals in $\mathbb{A}^3$ instead of $\mathbb{P}^3$).

  By Remark \ref{defrmk}, we want to understand the infinitesimal direction of the thickening. There are two possibilities to deform $C_5 \cup L$ to $C_4 \cup D\colon$

   (1) Simultaneously, (in the plane $P'$ of $L$) deform $L$ directly to $\langle Z_2 \rangle$ with parameter $t$ ($L$ and all its deformations pass through $p=L \cap P$), and deform  $C_5$ to  $C_4 \cup \langle Z_2 \rangle$ with parameter $bt$ for some $b \in \mathbb{C}$.

   (2) Deform the plane $P'$ (which always will contain $L$ and its deformations $L_t$'s) with parameter c.

   For each class $E$ fitting in  $\mathcal{I}_{C_5 \cup L}  \hookrightarrow  E \twoheadrightarrow \mathcal{O}_{Z_{2}}[-1]$, we want to figure out how the deformations above (or deforming $b,c$) would affect it. To be more precise, let $P$ is given by $z=0$, and  $Z_2$ given by  $(0, 0, 0)$ and $(0, -1, 0)$, and so $\langle Z_2 \rangle$ is given by $x=z=0$. Also, let the deformations of $L$, i.e., $L_{t}$'s are given by $x=z-t(y-1)=0$, and we want to have $L_{0}$ identical to $\langle Z_2 \rangle$ in the end.
 Let us denote a branch of $C_4$ passing through $p_{1}$ by $L''$ and assume that $C_4$ is given by $f_4$. Furthermore, we set $z=bty^2(y+1)^2(y-1)=0$ to denote $(C_5)_{bt}$, i.e., deformations of $C_5$ to $C_4 \cup \langle Z_2 \rangle$. Therefore, (locally) we  have
 $$\mathcal{I}: =\mathcal{I}_{ L_t \cup (C_5)_{bt}}=(z,f_4.x+bty^2 (y+1)^2 (y-1)) \cap (x,z-t(y-1))$$$$=(zx,z^2 -tz(y-1),f_4.x^2 +btxy^2 (y+1)^2 (y-1),-tf_4x+btzy^2 (y+1)^2-bt^2 y^2 (y+1)^2 (y-1)).$$
Dividing the last term by $t$ and then letting $t$ go to zero, we have
    \begin{equation}\label{eqn: limit} 
    (zx, z^2, f_4x^2,-f_4x+bzy^2 (y+1)^2) \subset lim_{t \rightarrow 0}\mathcal{I}.
    \end{equation}

   The closure of this ideal in $\mathbb{P}^3$ represents  a degree four curve, $C_4$, plus a degree two infinitesimal thickening around $L'$. We also notice that the direction of the infinitesimal thickening is determined by the ratio of $f_4x$ and $bzy^2(y+1)^2$.

    To determine the thickening direction at each point $(x = 0, y, z=0)$ on $L'$, we can write the normal vectors to $L'$ as
\[ w_{\mathfrak{A}, \mathfrak{B}} = \mathfrak{A}(y) \frac{\partial}{\partial x} + \mathfrak{B}(y)\frac{\partial}{\partial z}.\]

The question is that for which $\mathfrak{A}, \mathfrak{B}$ we have
\[ w_{\mathfrak{A}, \mathfrak{B}}(\mathfrak{f}(0, y, 0)) = 0 \quad \text{for all $\mathfrak{f} \in lim_{t \rightarrow 0}\mathcal{I}$.} \]

As we mentioned above, we deform the plane $P'$ with the parameter $c$, by replacing $P'$ by another plane containing $L'$, i.e., replacing $x$ by $x-cz$ for some $c \in \mathbb{C}$. Now, we could ask how $\mathfrak{A}(y)$ and $\mathfrak{B}(y)$ depend on the choice of the parameters $b$ and $c$. 

We apply $w_{\mathfrak{A},\mathfrak{B}}$ to the last term of the ideal in \ref{eqn: limit}, replace $x$ by $x-cz$, and then plug in $(0, y, 0)$ to solve the following equation for $\mathfrak{A}(y)$ and $\mathfrak{B}(y)$
    $$0=w_{\mathfrak{A}, \mathfrak{B}} (-f_4(0,y,0)(x-cz)+bzy^2 (y+1)^2)$$$$=-\mathfrak{A}(y)f_4(0,y,0)+c\mathfrak{B}(y)f_4(0,y,0)+b\mathfrak{B}(y)y^2(y+1)^2.$$
   Therefore, we get
    $$\frac{\mathfrak{A}(y)}{\mathfrak{B}(y)}=b \frac{y^2(y+1)^2}{f_4(0,y,0)}+c. $$
     We notice that $lim_{y \rightarrow 0}(\frac{\mathfrak{A}(y)}{\mathfrak{B}(y)})=c$,  and then $b$ also can be recovered from  $\frac{\mathfrak{A}(y)}{\mathfrak{B}(y)}$ at any point $y\neq 0,-1$.

   Notice that we have $12$ dimension for the choice of $C_4$ in the plane $P$, and two more dimensions for the parameters $b$ and $c$, corresponding to the proportion of the deformations of $L$ and $C_5$, and the deformation of the  the plane $P'$ (containing L), respectively. Thus we have a $14$-dimensional locus, which is open in the irreducible 14-dimensional cone and so it has to be dense in each fiber; therefore, its closure has to be the entire cone. This means that the closure of $\mathcal{R}_1$ is the whole $\mathcal{R}$. By Lemma \ref{Uinint}, we have $\mathcal{R}_1 \subset \widetilde{\mathcal{M}_{\sigma_{-}}(v)} \cap \mathcal{M}'$, and as the intersection is closed, we have  $\overline{\mathcal{R}_1} \subset \widetilde{\mathcal{M}_{\sigma_{-}}(v)} \cap \mathcal{M}'$; but we just proved $\overline{\mathcal{R}_1}=\mathcal{R}$, so we have  $\mathcal{R} \subset \widetilde{\mathcal{M}_{\sigma_{-}}(v)} \cap \mathcal{M}'$.

 \end{proof}
 
\begin{rmk}We notice that, once we are given the infinitesimal thickening direction at each point on $L'$, we can have the speed of the deformation of the plane $P'$ as well as the proportion of the speeds of the deformations of $C_5$ and $L$. The normal direction points in the direction of $P'$ near $Z_2$  (i.e., the (deformations of) plane $P'$ could be recovered from the normal direction near $Z_2$), in the direction of $P$ near the other intersection points $C_4 \cap L'$, and varies in between depending on $b$ (See Figure \ref{tangent direction of the infinitesimal thickening}).

     %  \begin{figure}[htp]
%\begin{center}

%\includegraphics[width=20cm]{Pic3333333(1).png} 
%\caption{Tangent direction of the infinitesimal %thickening 
%($P''$ is a deformation of $P'$)}
%\label{tangent direction of the infinitesimal thickening}
%\end{center}
%\end{figure}

\begin{figure}
    \includegraphics[width=21cm]{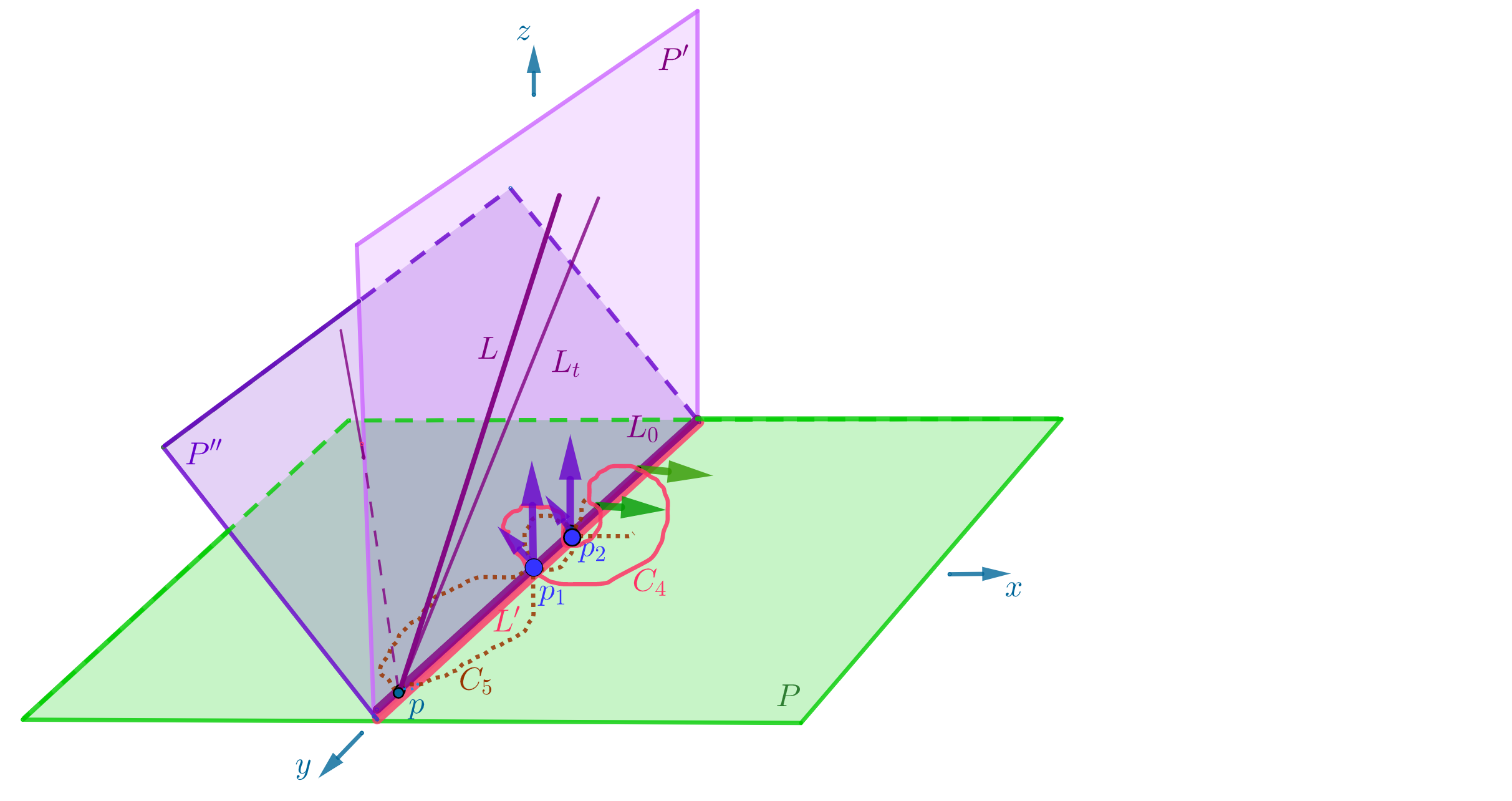}
\caption{Tangent directions of the infinitesimal thickening ($P''$ is a deformation of $P'$)}
\label{tangent direction of the infinitesimal thickening}
\end{figure}
      
\end{rmk}

Now we can show that $\mathcal{R}$ is exactly the intersection:

  \begin{cor} \label{singint}
Let $E \in \mathcal{M}'$. $E$ is in the intersection $\widetilde {\mathcal{M}_{\sigma_{-}}(v)} \cap \mathcal{M}'$ if and only if $ext^{1}(E,E) > 28$. In other words, the singularity locus of $\mathcal{M}'$ is exactly the intersection.
\end{cor}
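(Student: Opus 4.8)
The plan is to reduce the corollary to the single set equality $\mathcal{R} = \widetilde{\mathcal{M}_{\sigma_{-}}(v)} \cap \mathcal{M}'$. This suffices because of the characterization of $\mathcal{R}$ already in hand: by Lemma \ref{ext1E} we have $\mathrm{ext}^{1}(E,E) = 28 + \dim(\ker(\zeta))$ for every $E \in \mathcal{M}'$, and since $\mathcal{M}'$ is smooth of dimension $28$ (Corollary \ref{Cor: M+}), a point $E \in \mathcal{M}'$ is a smooth point of $\mathcal{M}_{\sigma_{+}}(v)$ exactly when $\mathrm{ext}^{1}(E,E) = 28$. Hence $\mathcal{R} = \{E \in \mathcal{M}' : \mathrm{ext}^{1}(E,E) > 28\}$, and once $\mathcal{R}$ is identified with the intersection the stated equivalence is immediate.

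For the inclusion $\widetilde{\mathcal{M}_{\sigma_{-}}(v)} \cap \mathcal{M}' \subseteq \mathcal{R}$, I would use the observation recorded before Lemma \ref{{H}^{i}(E)}: a point lying on both $\widetilde{\mathcal{M}_{\sigma_{-}}(v)}$ and $\mathcal{M}'$ is a point where two distinct irreducible components of $\mathcal{M}_{\sigma_{+}}(v)$ meet, and since $\mathcal{M}'$ is smooth such a point cannot be a smooth point of $\mathcal{M}_{\sigma_{+}}(v)$; therefore it lies in the singular locus $\mathcal{R}$. For the reverse inclusion $\mathcal{R} \subseteq \widetilde{\mathcal{M}_{\sigma_{-}}(v)} \cap \mathcal{M}'$, I would simply invoke Lemma \ref{closure}, which establishes precisely this containment (via $\mathcal{R} = \overline{\mathcal{R}_1}$ together with Lemma \ref{Uinint}). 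Combining the two inclusions gives $\mathcal{R} = \widetilde{\mathcal{M}_{\sigma_{-}}(v)} \cap \mathcal{M}'$, and translating back through the characterization of $\mathcal{R}$ yields the equivalence $E \in \widetilde{\mathcal{M}_{\sigma_{-}}(v)} \cap \mathcal{M}' \iff \mathrm{ext}^{1}(E,E) > 28$, together with the final assertion that the singular locus of $\mathcal{M}'$ is exactly the intersection.

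At the level of this corollary there is no genuine obstacle: the entire substantive difficulty has been absorbed into Lemma \ref{closure}, whose degeneration argument shows that every object of the singular locus is a flat limit of ideal sheaves of canonical genus four curves, so that $\mathcal{R}$ is no larger than the intersection. The only point demanding care is the bookkeeping underlying the first paragraph, namely that $\{E : \mathrm{ext}^{1}(E,E) > 28\}$ is the singular locus of $\mathcal{M}_{\sigma_{+}}(v)$ and not merely some non-smooth locus internal to $\mathcal{M}'$; this is exactly what the smoothness and $28$-dimensionality of $\mathcal{M}'$ guarantee, so the corollary follows by assembling the pieces.
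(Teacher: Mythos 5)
Your proposal is correct and follows essentially the same route as the paper: the paper deduces the corollary from Proposition \ref{P} (which, via Lemma \ref{ext1E} and the smoothness of $\mathcal{M}'$, identifies $\mathcal{R}$ with the locus $\mathrm{ext}^{1}(E,E)>28$, and whose following remark gives $\widetilde{\mathcal{M}_{\sigma_{-}}(v)} \cap \mathcal{M}' \subseteq \mathcal{R}$) together with Lemma \ref{closure} (which gives $\mathcal{R} \subseteq \widetilde{\mathcal{M}_{\sigma_{-}}(v)} \cap \mathcal{M}'$). Your write-up merely makes explicit the bookkeeping that the paper leaves implicit in its one-line citation, and your assessment that the real content lives in Lemma \ref{closure} is accurate.
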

\begin{proof}
This is deduced from Proposition \ref{P} and Lemma \ref{closure}.
\end{proof}

At this point, we can complete the proof of Theorem \ref{intN3}:

  \begin{thm} \label{intersec}
    
 We have
    $$\widetilde {\mathcal{M}_{\sigma_{-}}(v)} \cap \mathcal{M}'=\{\text{$E\colon \zeta$  associated to $E$ is  non-injective}\}.$$
    More precisely, we have $\widetilde{\mathcal{M}_{\sigma_{-}}(v)} \cap \mathcal{M}'=\mathcal{R}$ which is equal to the exceptional divisor of the contraction map $\psi$; i.e.,  the intersection  contains an open subset $\mathcal{R}_1$ such that $\psi|_{\mathcal{R}_1}$ is a $\mathbb{P}^{13}$-bundle over a 10-dimensional base. Over a 7-dimensional subset of the base, the fibers degenerate to a 14-dimensional cone with $\mathbb{P}^9$ as vertex over the 5-dimensional variety of rank one $2 \times 4$ matrices.

   \end{thm}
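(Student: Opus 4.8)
The plan is to assemble the statement from the pieces established above: it is essentially the synthesis of Lemma \ref{ext1E}, Proposition \ref{P}, and Lemma \ref{closure}. First I would settle the set-theoretic description. By Lemma \ref{ext1E}, for any $E \in \mathcal{M}'$ one has $\mathrm{ext}^{1}(E,E) = 28 + \dim(\ker(\zeta))$, so since $\mathcal{M}'$ is smooth of dimension $28$, the inequality $\mathrm{ext}^{1}(E,E) > 28$ holds precisely when $\zeta$ fails to be injective. Thus the locus $\mathcal{R}$ where $\mathcal{M}_{\sigma_{+}}(v)$ is singular along $\mathcal{M}'$ is exactly $\{E : \zeta \text{ non-injective}\}$. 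The equality $\widetilde{\mathcal{M}_{\sigma_{-}}(v)} \cap \mathcal{M}' = \mathcal{R}$ is then Corollary \ref{singint}: the inclusion $\subseteq$ holds because a point lying on both components is a singular point of $\mathcal{M}_{\sigma_{+}}(v)$, while the reverse inclusion $\mathcal{R} \subseteq \widetilde{\mathcal{M}_{\sigma_{-}}(v)} \cap \mathcal{M}'$ is Lemma \ref{closure}. This already yields the first two displayed assertions of the theorem.

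Next I would read off the fibral structure directly from Proposition \ref{P}. Because $\mathcal{R} \subset \widetilde{\mathcal{M}_{\sigma_{-}}(v)}$ by Lemma \ref{closure} and $\psi = \widetilde{\psi}|_{\widetilde{\mathcal{M}_{\sigma_{-}}(v)}}$, the restriction $\psi|_{\mathcal{R}}$ coincides with $\widetilde{\psi}|_{\mathcal{R}}$, so Proposition \ref{P} transports verbatim: over the open dense locus $U_{1}$ where $\mathrm{Ext}^{1}(A,B)=\mathbb{C}$, the map $\psi|_{\mathcal{R}_{1}}$ is a $\mathbb{P}^{13} = \mathbb{P}(\ker(\delta))$-bundle over a $10$-dimensional base, and over the $7$-dimensional locus $U_{2}$ where $\mathrm{Ext}^{1}(A,B)=\mathbb{C}^{2}$ the fibre degenerates. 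In this degenerate case the fibre is $\mathbb{P}(\delta^{-1}(\{\text{rank} \le 1\}))$ inside $\mathbb{P}(\mathrm{Ext}^{1}(B,A))=\mathbb{P}^{17}$; since $\delta\colon \mathbb{C}^{18} \to \mathrm{Hom}(\mathbb{C}^{2},\mathbb{C}^{4})=\mathbb{C}^{8}$ is surjective (Lemmas \ref{surj1} and \ref{surj2}) with $10$-dimensional kernel, and the affine determinantal variety of rank-$\le 1$ matrices in $\mathrm{Hom}(\mathbb{C}^{2},\mathbb{C}^{4})$ is $5$-dimensional, the preimage $\delta^{-1}(\{\text{rank} \le 1\})$ is $15$-dimensional, i.e.\ a $14$-dimensional projective cone, whose vertex $\mathbb{P}(\ker(\delta))=\mathbb{P}^{9}$ lies over the rank-$0$ matrix. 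This is exactly the claimed $14$-dimensional cone with $\mathbb{P}^{9}$ vertex over the $5$-dimensional variety of rank-one $2\times 4$ matrices.

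Finally I would identify $\mathcal{R}$ with the exceptional divisor of $\psi$. The generic fibre description gives $\dim \mathcal{R} = 13 + 10 = 23$, one less than $\dim \widetilde{\mathcal{M}_{\sigma_{-}}(v)} = 24$, so $\mathcal{R}$ is a divisor. The morphism $\widetilde{\psi}$ identifies $S$-equivalent objects, contracting each $\mathbb{P}^{17}$-fibre of $\mathcal{M}'$ to the point represented by the polystable object $A \oplus B$; its restriction $\psi$ therefore collapses each $\mathbb{P}^{13}$-fibre of $\mathcal{R}_{1}$ to the corresponding point of its $10$-dimensional image in $\mathcal{M}_{\sigma_{0}}(v)$, while objects of $\widetilde{\mathcal{M}_{\sigma_{-}}(v)}$ not meeting $\mathcal{M}'$ remain $\sigma_{0}$-stable and are not identified, so $\psi$ is an isomorphism off $\mathcal{R}$. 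Hence $\psi$ is a divisorial contraction with exceptional divisor precisely $\mathcal{R} = \widetilde{\mathcal{M}_{\sigma_{-}}(v)} \cap \mathcal{M}'$, which completes the proof. The only genuinely delicate input is the reverse inclusion $\mathcal{R} \subseteq \widetilde{\mathcal{M}_{\sigma_{-}}(v)} \cap \mathcal{M}'$, already supplied by the explicit degeneration of Lemma \ref{closure}; everything remaining is dimension bookkeeping and transport of the structure from Proposition \ref{P}.
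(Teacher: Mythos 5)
Your proposal is correct and takes essentially the same route as the paper's own proof: the set-theoretic identification via Lemma \ref{ext1E} and Corollary \ref{singint} (with Lemma \ref{closure} supplying the reverse inclusion $\mathcal{R} \subseteq \widetilde{\mathcal{M}_{\sigma_{-}}(v)} \cap \mathcal{M}'$), the fibral structure transported from Proposition \ref{P} via $\psi = \widetilde{\psi}|_{\widetilde{\mathcal{M}_{\sigma_{-}}(v)}}$, and the exceptional-divisor bookkeeping from the fact that $\widetilde{\psi}$ contracts the $\mathbb{P}^{17}$-fibres of $\mathcal{M}'$. Your added dimension counts (surjective $\delta\colon \mathbb{C}^{18} \to \mathbb{C}^{8}$ with $10$-dimensional kernel, $5$-dimensional affine determinantal variety, hence a $15$-dimensional affine preimage and the $14$-dimensional projective cone with $\mathbb{P}^{9}$ vertex) are consistent with the paper's Proposition \ref{P} and merely make its bookkeeping explicit.
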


   \begin{proof}

 The first part is obtained from Lemma \ref{ext1E} and Corollary \ref{singint}, and noticing that $\widetilde{\mathcal{M}_{\sigma_{-}}(v)}   \rightarrow \mathcal{M}_{\sigma_{0}}(v) $ is the restriction of  $\mathcal{M}_{\sigma_{+}}(v) \dashrightarrow \mathcal{M}_{\sigma_{0}}(v)$, and the later contracts $ \mathcal{M}^{'}$. The second part is obtained from Lemma \ref{closure}.

   \end{proof}

\section{Birational description of the wall-crossing} \label{Birational morphism corresponding to the wall-crossing}

In this section, we will finally prove Theorem \ref{main Theorem}, which explains the birational relation between $\mathcal{M}_{\sigma_{-}}(v)$ and  $\widetilde{ \mathcal{M}_{\sigma_{-}}(v)}$. From the construction we know that $ \mathcal{M}_{\sigma_{-}}(v) \setminus \mathcal{U}_{-,+}$ is isomorphic to $\mathcal{M}_{\sigma_{+}}(v) \setminus \mathcal{M}'$, i.e., they both parametrize strictly stable objects with respect to stability conditions on the wall $\WW$, and stable  with respect to stability conditions on the both sides of $\WW$. Now the question is how we can describe the birational relationship between their closures. First, we have the following Lemma and Corollary:

\begin{lem} \label{goodmoduli}
There are morphisms $\mathcal{M}_{\sigma_{-}}(v) \rightarrow \mathcal{M}_{\sigma_{0}}(v)$ and $\widetilde{ \mathcal{M}_{\sigma_{-}}(v)} \rightarrow \mathcal{M}_{\sigma_{0}}(v)$  which contract the loci of $S$-equivalent objects, which are  a  $\mathbb{P}^{1}$-bundle and a 23-dimensional locus, respectively.

\end{lem}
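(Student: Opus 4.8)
The plan is to construct both morphisms from the good moduli space property of $\mathcal{M}_{\sigma_0}(v)$ and then to read off the contracted loci from the Ext-computation of Lemma \ref{ext1purple1} together with the description of the intersection in Theorem \ref{intersec}. So the work is mostly bookkeeping on top of results already in place.

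First I would produce the morphisms. Since $\sigma_0$ lies on the wall $\mathcal{W}$, every $\sigma_-$-stable (resp. $\sigma_+$-stable) object of class $v$ is $\sigma_0$-semistable. By Lemma \ref{Lem: wall-moduli} the space $\mathcal{M}_{\sigma_0}(v)$ exists as a good moduli space of the stack of $\sigma_0$-semistable objects (via \cite{AHH}). The universal family on $\mathcal{M}_{\sigma_-}(v)$ therefore induces a map of stacks to the $\sigma_0$-semistable stack, and composing with the good moduli morphism yields a morphism $\phi\colon \mathcal{M}_{\sigma_-}(v) \to \mathcal{M}_{\sigma_0}(v)$; concretely $\phi$ sends an object to the S-equivalence class of its $\sigma_0$-Jordan--H\"older factors. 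The same construction applied to $\mathcal{M}_{\sigma_+}(v)$ gives $\widetilde{\psi}\colon \mathcal{M}_{\sigma_+}(v) \to \mathcal{M}_{\sigma_0}(v)$, and I define $\psi := \widetilde{\psi}|_{\widetilde{\mathcal{M}_{\sigma_-}(v)}}$. Equivalently, each map is the morphism associated to the nef Bayer--Macr\`i class $\ell_{\sigma_0}$ via the positivity lemma of \cite{BM}.

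Next I would identify the contracted loci with the loci of strictly $\sigma_0$-semistable objects. Two stable objects have the same image if and only if they are S-equivalent, i.e. share the same collection of $\sigma_0$-Jordan--H\"older factors; a $\sigma_0$-stable object is its own S-equivalence class and so is not contracted. Thus the positive-dimensional fibers occur exactly over classes $[A \oplus B]$ admitting a positive-dimensional family of stable extensions, where $A = \mathcal{I}_L(-1)$ and $B = \iota_{P_*}(\mathcal{I}_{Z_2})^{\vee}(-5)$. For $\phi$, the strictly $\sigma_0$-semistable locus of $\mathcal{M}_{\sigma_-}(v)$ is $\mathcal{U}_{-,+}$, and the fiber of $\phi$ over $[A \oplus B]$ is $\mathbb{P}(\mathrm{Ext}^1(A,B))$ (the stable extensions $B \to E \to A$). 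By Lemma \ref{ext1purple1} this is a single point when $\mathrm{Ext}^1(A,B) = \mathbb{C}$ (so $\phi$ is injective on that $10$-dimensional stratum) and a $\mathbb{P}^1$ when $\mathrm{Ext}^1(A,B) = \mathbb{C}^2$, i.e. for the configurations $Z_2 \subset L \subset P$. Hence, as already recorded in Lemma \ref{U-+} through the positivity lemma, $\phi$ contracts exactly the $8$-dimensional $\mathbb{P}^1$-bundle over the $7$-dimensional space of such configurations. For $\psi$, an object of $\widetilde{\mathcal{M}_{\sigma_-}(v)}$ is strictly $\sigma_0$-semistable precisely when it lies in $\mathcal{M}'$, so the contracted locus is $\mathcal{R} = \widetilde{\mathcal{M}_{\sigma_-}(v)} \cap \mathcal{M}'$; by Theorem \ref{intersec} this is the $23$-dimensional exceptional divisor (generically a $\mathbb{P}^{13}$-bundle over a $10$-dimensional base). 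This yields the stated $\mathbb{P}^1$-bundle and $23$-dimensional contracted loci.

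The main obstacle is not the existence of the morphisms---which is formal once \cite{AHH} provides $\mathcal{M}_{\sigma_0}(v)$---but the precise identification and dimension count of the contracted loci, and this rests entirely on the earlier analysis: the branching of $\mathrm{Ext}^1(A,B)$ in Lemma \ref{ext1purple1}, the stratification of $\mathcal{U}_{-,+}$ in Lemma \ref{U-+}, and the identification $\mathcal{R} = \widetilde{\mathcal{M}_{\sigma_-}(v)} \cap \mathcal{M}'$ with its bundle structure in Theorem \ref{intersec} (which itself depends on Lemma \ref{closure}). Care is needed to confirm that all of $\mathcal{R}$---including the degenerate $14$-dimensional cone fibers over the $7$-dimensional locus, and not merely the generic $\mathbb{P}^{13}$'s---is contracted by $\psi$; this follows because those fibers are exactly the intersections of the $\mathbb{P}^{17}$-fibers of $\widetilde{\psi}$ with $\widetilde{\mathcal{M}_{\sigma_-}(v)}$, all of which $\widetilde{\psi}$ collapses to points of the base.
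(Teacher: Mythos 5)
Your proposal is correct and follows essentially the same route as the paper: the paper's own proof simply cites Lemma \ref{Lem: wall-moduli} (existence of $\mathcal{M}_{\sigma_{0}}(v)$ via \cite{AHH}) for the morphisms, Lemma \ref{U-+} for the $\mathbb{P}^{1}$-bundle contracted by $\phi$, and Theorem \ref{intersec} for the $23$-dimensional locus contracted by $\psi$. Your writeup just makes explicit the details the paper leaves implicit — the good-moduli/S-equivalence construction of the maps and the identification of the fibers of $\phi$ with $\mathbb{P}(\mathrm{Ext}^{1}(A,B))$ stratified by Lemma \ref{ext1purple1} — all of which are consistent with the paper's supporting results.
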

\begin{proof} 
It follows from Lemma  \ref{U-+} and Theorem \ref{intersec} (notice that the good moduli space $\mathcal{M}_{\sigma_{0}}(v)$ does exist by Lemma \ref{Lem: wall-moduli}).
\end{proof}

\begin{cor} \label{Cor: smalldivisorial}
The map $\phi$ is a small contraction and $\psi$ is a divisorial contraction.
\end{cor}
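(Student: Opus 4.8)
The plan is to read both claims off the dimension counts already established, since by definition a birational contraction is \emph{small} exactly when its exceptional locus has codimension at least two, and \emph{divisorial} exactly when the exceptional locus is a (prime) divisor. First I would record that $\phi$ and $\psi$ are genuinely birational: on the open locus of objects that are strictly $\sigma_0$-stable (equivalently, stable on both sides of $\mathcal{W}$) no $S$-equivalence identifications occur, so by Lemma \ref{goodmoduli} each of the two morphisms to $\mathcal{M}_{\sigma_0}(v)$ restricts there to an isomorphism onto a common open set, with reduced point as generic fibre. Thus both are birational morphisms onto the $24$-dimensional good moduli space $\mathcal{M}_{\sigma_0}(v)$, and it remains only to measure the codimension of what each one contracts.

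For $\phi\colon \mathcal{M}_{\sigma_-}(v)\to\mathcal{M}_{\sigma_0}(v)$ I would use that $\mathcal{M}_{\sigma_-}(v)$ is smooth of dimension $24$ (Proposition \ref{N2}, as $\mathcal{M}_1$ is a $\mathbb{P}^{15}$-bundle over $\mathbb{P}^9$ and the blow-up preserves dimension). By Lemma \ref{U-+} the locus actually contracted by $\phi$ is the $8$-dimensional $\mathbb{P}^1$-bundle arising from the stratum $\mathrm{Ext}^1(A,B)=\mathbb{C}^2$ singled out by the positivity lemma. Hence its codimension is $24-8=16\ge 2$, so $\phi$ is a small contraction.

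For $\psi\colon \widetilde{\mathcal{M}_{\sigma_-}(v)}\to\mathcal{M}_{\sigma_0}(v)$ I would invoke Theorem \ref{intersec}: its exceptional locus is precisely $\mathcal{R}=\widetilde{\mathcal{M}_{\sigma_-}(v)}\cap\mathcal{M}'$, which generically is a $\mathbb{P}^{13}$-bundle over a $10$-dimensional base and so has dimension $23$. Since $\widetilde{\mathcal{M}_{\sigma_-}(v)}$ is $24$-dimensional (it is birational to $\mathcal{M}_{\sigma_-}(v)$; cf.\ Corollary \ref{Cor: M+}), this exceptional locus has codimension $24-23=1$, i.e.\ it is a divisor, so $\psi$ is divisorial.

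The arithmetic is immediate once the exceptional loci are in hand; the substance has already been delivered in Lemma \ref{U-+} and Theorem \ref{intersec}. The one point I would check carefully is that $\mathcal{R}$ is pure of codimension one, so that $\psi$ genuinely contracts a divisor and is not itself small on part of its image. This holds because $\mathcal{R}=\overline{\mathcal{R}_1}$ (Lemma \ref{closure}) is irreducible of dimension $23$, with the degenerate cone-bundle locus over the $7$-dimensional base (of dimension $7+14=21$) sitting inside it as a proper closed subset rather than as a separate component. Thus $\psi$ contracts exactly the prime divisor $\mathcal{R}$, confirming that it is divisorial, while $\phi$ contracts only a codimension-$16$ locus and is therefore small. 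I expect no real obstacle beyond making sure these two codimensions are quoted against the correct ambient dimension $24$ in each case.
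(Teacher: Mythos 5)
Your proposal is correct and takes essentially the same approach as the paper: both arguments read the smallness of $\phi$ off the $8$-dimensional $\mathbb{P}^1$-bundle exceptional locus of Lemma \ref{U-+}, and the divisoriality of $\psi$ off the codimension-one exceptional locus $\mathcal{R}$ given by Theorem \ref{intersec}. Your extra checks (birationality via Lemma \ref{goodmoduli}, and irreducibility of $\mathcal{R}=\overline{\mathcal{R}_1}$ so that the $21$-dimensional cone-bundle locus is not a separate component) are sound refinements of points the paper leaves implicit.
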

\begin{proof}
Since the 8  dimensional exceptional locus of $\phi$ in $\mathcal{U}_{-,+}$ is a $\mathbb{P}^{1}$-bundle (Lemma \ref{U-+}),  there is a small contraction from $\mathcal{M}_{\sigma_{-}}(v)$ to the good moduli space for the wall $\mathcal{W}$, i.e., $\mathcal{M}_{\sigma_{0}}(v)$  (see Lemma \ref{Lem: wall-moduli} and Lemma \ref{goodmoduli}). On the other hand, by Theorem \ref{intersec}, the exceptional locus of $\psi$ in $\widetilde {\mathcal{M}_{\sigma_{-}}(v)}$ is of codimension one, so we have a divisorial contraction from $\widetilde {\mathcal{M}_{\sigma_{-}}(v)}$ to $\mathcal{M}_{\sigma_{0}}(v)$.

\end{proof}

% In other words, we have:

%
  %\begin{center}
    
%\begin{tikzcd}[column sep=
%scriptsize
%]
%\mathcal{M}_{\sigma_{-}}(v) \arrow[dr, "small ($\phi$)"{align=left,left=2mm,font=\scriptsize}] {}
%&  & \widetilde {\mathcal{M}_{\sigma_{-}}(v)} \arrow[dl, "divisorial ($\psi$)"{align=right,right=2mm,font=\scriptsize}] {} \\
%& \mathcal{M}_{\sigma_{0}}(v)
%\end{tikzcd}
%\end{center}
%We will see that it wouldn't be possible to explain this diagram in $Mov(\mathcal{M}_{\sigma_{-}}(v))$ or in $Mov(\widetilde {\mathcal{M}_{\sigma_{-}}(v)})$.

At this point, we can complete the proof of Theorem  \ref{main Theorem}:

\begin{thm}  \label{finally!}
Fix $v=(1,0,-6,15)$. There is a wall-crossing with respect to Bridgeland stability conditions $\mathcal{M}_{\sigma_{-}}(v)\rightarrow \mathcal{M}_{\sigma_{+}}(v)$ between two moduli spaces separated by the wall $\mathcal{W}$ with the following properties:
 \begin{itemize}
     \item $\mathcal{M}_{\sigma_{-}}(v)$ is a smooth and irreducible variety,
     \item $\mathcal{M}_{\sigma_{+}}(v)= \widetilde {\mathcal{M}_{\sigma_{-}}(v)} \cup \mathcal{M}'$, where $ \widetilde {\mathcal{M}_{\sigma_{-}}(v)}$ is birational to $\mathcal{M}_{\sigma_{-}}(v)$ and $\mathcal{M}'$ is a new irreducible component,
     \item There is a diagram 
     \begin{center} \label{1}
    
\begin{tikzcd}[column sep=
scriptsize
]
 \mathcal{M}_{\sigma_{-}}(v) \arrow[dr, "\text{small contraction ($\phi$)}"{align=left,left=2mm,font=\scriptsize}] {}
&  &\widetilde {\mathcal{M}_{\sigma_{-}}(v)} \arrow[dl, "\text{divisorial contraction ($\psi$)}"{align=right,right=2mm,font=\scriptsize}] {} \\

& \mathcal{M}_{\sigma_{0}}(v)
\end{tikzcd}
\end{center}
     where both $\phi$ and $\psi$ have relative Picard rank 1. Furthermore, $ \widetilde {\mathcal{M}_{\sigma_{-}}(v)}$ is not $\mathbb{Q}$-factorial.
 \end{itemize}

\end{thm}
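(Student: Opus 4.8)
The plan is to assemble the first two bullet points directly from results already established and then to reduce the two remaining assertions—relative Picard rank one and failure of $\mathbb{Q}$-factoriality—to a short argument in birational geometry. Smoothness and irreducibility of $\mathcal{M}_{\sigma_{-}}(v)$ are immediate from Proposition \ref{N2}, which exhibits it as the blow-up of the $\mathbb{P}^{15}$-bundle $\mathcal{M}_1$ along the smooth center $\mathcal{H}ilb^{2t+1}(\mathbb{P}^{3}) \times (\mathbb{P}^{3})^{\vee}$; the decomposition $\mathcal{M}_{\sigma_{+}}(v) = \widetilde{\mathcal{M}_{\sigma_{-}}(v)} \cup \mathcal{M}'$ with $\widetilde{\mathcal{M}_{\sigma_{-}}(v)}$ birational to $\mathcal{M}_{\sigma_{-}}(v)$ and $\mathcal{M}'$ a new irreducible component is exactly Corollary \ref{Cor: M+}. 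The existence of the diagram, together with the assertions that $\phi$ is a small contraction and $\psi$ a divisorial contraction onto the good moduli space $\mathcal{M}_{\sigma_{0}}(v)$ (which exists by Lemma \ref{Lem: wall-moduli}), is precisely Lemma \ref{goodmoduli} and Corollary \ref{Cor: smalldivisorial}. Thus only the relative Picard rank and the non-$\mathbb{Q}$-factoriality remain to be shown.

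For the relative Picard rank I would argue that in each case the relative cone of curves over $\mathcal{M}_{\sigma_{0}}(v)$ is a single ray. Both $\phi$ and $\psi$ are the morphisms identifying $S$-equivalent objects at the single wall $\mathcal{W}$, so every contracted curve lies in a fibre of the relevant projective-bundle or cone-bundle structure. For $\phi$ the exceptional locus is the $\mathbb{P}^{1}$-bundle of Lemma \ref{U-+}, whose fibres are all numerically equivalent, so $N_{1}(\mathcal{M}_{\sigma_{-}}(v)/\mathcal{M}_{\sigma_{0}}(v))$ is one-dimensional. For $\psi$, Theorem \ref{intersec} shows the exceptional divisor $\mathcal{R}$ is generically a $\mathbb{P}^{13}$-bundle, degenerating over a smaller locus to a $14$-dimensional cone bundle; the contracted curves are the lines in these $\mathbb{P}^{13}$'s, and the ruling lines of the degenerate cones specialise to them by flatness, so all contracted classes are proportional and $\rho(\widetilde{\mathcal{M}_{\sigma_{-}}(v)}/\mathcal{M}_{\sigma_{0}}(v)) = 1$. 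Since each morphism is an isomorphism outside its exceptional locus, no further curves are contracted, so both contractions have relative Picard rank one.

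The failure of $\mathbb{Q}$-factoriality I would then deduce in two steps, using that $\widetilde{\mathcal{M}_{\sigma_{-}}(v)}$ is normal—its only singularities occur along $\mathcal{R}$ and are modelled, by Theorem \ref{intersec} together with the deformation description in Proposition \ref{P}, on the normal cone over the Segre variety $\mathbb{P}^{1} \times \mathbb{P}^{3}$ of rank-one $2 \times 4$ matrices, which is a classical non-$\mathbb{Q}$-factorial singularity admitting small resolutions. First, $\phi$ is a small contraction of relative Picard rank one from the smooth, hence $\mathbb{Q}$-factorial, space $\mathcal{M}_{\sigma_{-}}(v)$: if $\mathcal{M}_{\sigma_{0}}(v)$ were $\mathbb{Q}$-factorial, then for any divisor $D$ on $\mathcal{M}_{\sigma_{-}}(v)$ the pushforward $\phi_{*}D$ would be $\mathbb{Q}$-Cartier and $\phi^{*}\phi_{*}D$ would agree with $D$ in codimension one (the exceptional locus having codimension $\geq 2$), forcing $D \equiv_{\mathcal{M}_{\sigma_{0}}(v)} 0$ and hence $\rho(\phi)=0$, a contradiction. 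Therefore $\mathcal{M}_{\sigma_{0}}(v)$ is not $\mathbb{Q}$-factorial. Second, $\psi$ is a divisorial contraction of relative Picard rank one; were $\widetilde{\mathcal{M}_{\sigma_{-}}(v)}$ $\mathbb{Q}$-factorial, the standard fact that an extremal divisorial contraction from a $\mathbb{Q}$-factorial variety has $\mathbb{Q}$-factorial target would force $\mathcal{M}_{\sigma_{0}}(v)$ to be $\mathbb{Q}$-factorial, contradicting the previous step. Hence $\widetilde{\mathcal{M}_{\sigma_{-}}(v)}$ is not $\mathbb{Q}$-factorial.

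The main obstacle is the honest verification of the two inputs feeding this clean dichotomy: that both contractions genuinely have relative Picard rank one—which requires controlling every contracted curve class, including those in the degenerate cone fibres of $\psi$, and confirming that nothing outside the stated exceptional loci is contracted—and that $\widetilde{\mathcal{M}_{\sigma_{-}}(v)}$ is normal with exactly the asserted local Segre-cone structure, so that the MMP preservation statements apply verbatim. Once these are in place the birational dichotomy does all the remaining work and simultaneously explains conceptually why $\mathbb{Q}$-factoriality must fail: the small contraction $\phi$ would ordinarily be completed by a flip, but here the opposite side of the wall instead performs a divisorial contraction, so the expected flip is replaced by passage through a non-$\mathbb{Q}$-factorial model.
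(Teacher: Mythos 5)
Your overall route coincides with the paper's: the first three bullet points are assembled from the same ingredients (Proposition \ref{N2}, Corollary \ref{Cor: M+}, Lemma \ref{goodmoduli}, Corollary \ref{Cor: smalldivisorial}); the relative Picard rank of $\phi$ is proved exactly as in the paper, by noting the contracted $\mathbb{P}^1$'s form a connected family of numerically equivalent curves; and your two-step contradiction for non-$\mathbb{Q}$-factoriality (a small contraction from the smooth $\mathcal{M}_{\sigma_{-}}(v)$ forces $\mathcal{M}_{\sigma_{0}}(v)$ to be non-$\mathbb{Q}$-factorial, while $\mathbb{Q}$-factoriality of $\widetilde{\mathcal{M}_{\sigma_{-}}(v)}$ plus the rank-one divisorial contraction $\psi$ would force the opposite) is precisely the paper's argument, which cites the proof of Corollary 3.18 of \cite{KM} for the divisorial step.

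The genuine gap is in your proof that $\rho(\psi)=1$. You assert that ``the contracted curves are the lines in these $\mathbb{P}^{13}$'s'' and that ruling lines of the degenerate cone fibers specialise to them by flatness, so all contracted classes are proportional. This is insufficient, and the specialisation runs the wrong way for what is needed: flatness shows that the class of a line in a nearby $\mathbb{P}^{13}$-fiber is represented by some $1$-cycle supported on a special fiber, but relative Picard rank one requires that \emph{every} curve in \emph{every} fiber --- in particular a line inside the $\mathbb{P}^{9}$-vertex of a cone fiber, or a curve lying over a curve in the Segre base of rank-one $2\times 4$ matrices --- has class in $N_1(\widetilde{\mathcal{M}_{\sigma_{-}}(v)})$ proportional to the rest; such curves need not arise as limits of curves in generic fibers, so flatness gives no control over them. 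This is exactly where the paper invests its main effort: it proves that each cone fiber $X$ has $\rk N_1(X)=1$, by blowing up the vertex $\mathbb{P}^{9}$ to obtain a $\mathbb{P}^{10}$-bundle $f\colon Z \to Y$ over the Segre variety $Y$, computing $N_1(Z)=E^{9}\cdot f^{*}N_0(Y)\oplus E^{10}\cdot f^{*}N_1(Y)$ by extending Example 2.8 of \cite{FL} from a point vertex to a $\mathbb{P}^{9}$ vertex, and pushing forward along the contraction $\pi\colon Z\to X$ to kill the second summand. You identify this verification yourself as ``the main obstacle,'' but flagging it is not supplying it; without this fiberwise $N_1$ computation (or a substitute) the claim $\rho(\psi)=1$, and with it the non-$\mathbb{Q}$-factoriality argument that depends on it, is unproven. (Your side remark that the singularities of $\widetilde{\mathcal{M}_{\sigma_{-}}(v)}$ are locally modelled on the cone over the Segre variety is likewise not established in the paper and is not needed for the global argument.)
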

\begin{proof} The first three parts are implied by Proposition  \ref{N2}, Corollary \ref{Cor: M+}, and Corollary \ref{Cor: smalldivisorial}, respectively. The relative Picard rank of $\phi$ is one:  The only non-trivial fibers of $\phi$ are $\P^1$s, and these $\P^1$s are all numerically equivalent, as they occur in a connected family (over the locus where $\mathrm{Ext}^1(A, B)= \C^2$). In order to show that the relative Picard rank of $\psi$ is $1$, it is enough to show that the fibers have  $1-$dimensional $N_1$. For $\P^{13}$ this is obvious due to having a projective contraction. To show this for the 14-dimensional cone, we extend the method in {\cite[Example 2.8.]{FL}} from a cone with point vertex to the one with the $\mathbb{P}^9 vertex\colon $

 Let $X \subset \mathbb P(\mathbb C^{10} \oplus \mathbb C^8)$ be the projective cone with vertex $\mathbb P^9 = \mathbb P(\mathbb C^{10})$ over $Y$, the 4-dimensional variety corresponding to $2\times 4$-matrices of rank $\le 1$ in  $\mathbb P^7$. We want to describe the blow-up of $X$ at the vertex $\mathbb P^9$:  Let $Z$ be the $\mathbb P^{10}$-bundle over $Y$ with the fiber 
\[ \mathbb{P}(Z_y) = \mathbb P\Bigl(\mathbb{C}^{10} \oplus \text{one-dimensional subspace in $\mathbb C^8$ corresponding to y}\Bigr)\]
over $y \in Y$. Let us call this blow-up $\pi\colon Z \rightarrow X$. There is a natural map from $Z$ to $\mathbb P^{17}$ (with image contained in $X$) by forgetting the point $y$ and sending a point in the fiber $\mathbb P^{10} = \mathbb P(Z_y)$, corresponding to a one-dimensional subspace of $Z_y$, to the corresponding one-dimensional subspace of $\mathbb C^{18}$. On the other hand, there is a fibration $f: Z \rightarrow Y$. Thus we have 

 \begin{center}
     \begin{tikzcd}[column sep=
scriptsize
]
& Z \arrow[dl, "$f$" {align=left,left=0.0001mm,font=\scriptsize}]  \arrow[dr, "\pi" ] 
\\Y & & X. 
\end{tikzcd}
 \end{center}

Now, let $E:=Y \times \mathbb{P}^9$ be the exceptional locus of $\pi$, i.e., we have $\pi(E)=\mathbb{P}^9$. Following {\cite[Example 2.8.]{FL}}, as the relative dimension of $f$ is $10$, we have $N_1(Z)=E^{9}.f^{*}N_0(Y) \oplus E^{10}.f^{*}N_1(Y)$. Let $i$ be the embedding of $Y$ into $Y \times \P^9$ followed by  embedding in $Z$. Then as $\pi \circ i $ is constant, we will have $\pi_{*}(E^{10}.f^{*}N_1(Y))=0$. Now, taking $\pi_{*}$ of $N_1(Z)=E^{9}.f^{*}N_0(Y) \oplus E^{10}.f^{*}N_1(Y)$, we have $\pi_{*}N_1(Z)=\pi_{*}E^{9}.f^{*}N_0(Y)$, and we know that $N_1(X)$ is generated by $\pi_*N_1(Z)$. Notice that  $\pi$ is a contraction of a projective variety; hence, the equality indicates that $\pi_*N_1(Z)$ has rank at most one, and on the other hand, cannot have rank zero. Therefore, $\rk(N_1(X))=1$.

Finally, we need to show that $ \widetilde {\mathcal{M}_{\sigma_{-}}(v)}$ is not $\mathbb{Q}-$factorial. If it was $\mathbb{Q}-$factorial, then  $\mathcal{M}_{\sigma_{0}}(v)$ would also be $\mathbb{Q}-$factorial, since it is the image of $ \widetilde {\mathcal{M}_{\sigma_{-}}(v)}$ under a divisorial contraction of relative Picard rank 1 (see the proof of {\cite[Corollary 3.18]{KM}}). On the other hand,  $\mathcal{M}_{\sigma_{0}}(v)$ is the image of $\mathcal{M}_{\sigma_{-}}(v)$ (which is smooth and inparticular, $\mathbb{Q}-$factorial) under a small contraction, and hence cannot be $\mathbb{Q}-$factorial;   %as $\mathcal{M}_{\sigma_{-}}(v)$ is a  (it is even smooth), its image under a small contraction can not be $\mathbb{Q}-$factorial; 
this is a contradiction. %Also notice that by the construction of the exceptional locus (See Theorem \ref{intersec}), $\psi$ cannot be factorized via any small contraction.
%Now, let $E:=Y \times \mathbb{P}^9$ be the exceptional locus of $\pi$, i.e., we have $\pi(E)=\mathbb{P}^9$. Following {\cite[Example 2.8.]{FL}}, as the relative dimension of $f$ is $10$, we have $N_1(Z)=E^{9}.f^{*}N_0(Y) \oplus E^{10}.f^{*}N_1(Y)$. Let $i$ be the embedding of $Y$ into $Y \times \P^9$ followed by  embedding in $Z$. Then as $\pi \circ i $ is constant, we have $\pi_{*}(E^{10}.f^{*}N_1(Y))=0$. {\color{red} Doing the same for the term $E^{9}.f^{*}N_0(Y)$ gives an identity map.} Therefore taking $\pi_{*}$ of $N_1(Z)=E^{9}.f^{*}N_0(Y) \oplus E^{10}.f^{*}N_1(Y)$ implies $\rk N_1(X)=\rk N_0(Y)=1$.

%Finally, we need to show that $ \widetilde {\mathcal{M}_{\sigma_{-}}(v)}$ is not $\mathbb{Q}-$factorial. If it were $\mathbb{Q}-$factorial, then  $\mathcal{M}_{\sigma_{0}}(v)$ would also be $\mathbb{Q}-$factorial; but as $\mathcal{M}_{\sigma_{-}}(v)$ is a $\mathbb{Q}-$factorial (it is even smooth), its image under a small contraction can not be $\mathbb{Q}-$factorial; this is a contradiction. 

\end{proof}

We can see that it is not possible to explain the diagram in the statement of the above Theorem in $Mov(\mathcal{M}_{\sigma_{-}}(v))$ or in $Mov(\widetilde {\mathcal{M}_{\sigma_{-}}(v)})$. To explain the birational behaviour of the wall-crossing further, we have the following argument.

In what follows, by abuse of terminology, we use the terminology "flip" for the small birational map with respect to the relevant small contractions. Let $ \mathcal{N}$ be the flip of $\mathcal{M}_{\sigma_{-}}(v)$ with respect to the small contraction $\phi$. Let $\mathbb{E}$ be the exceptional divisor in $\widetilde {\mathcal{M}_{\sigma_{-}}(v)}$, and consider $\mathbb{E}'=\psi (\mathbb{E})$; then $\psi$ is the blow-up of $\mathcal{M}_{\sigma_{0}}(v)$ along $\mathbb{E}'$. Now  define $\phi'\colon \NN \to \mathcal{M}_{\sigma_{0}}(v)$; then  we can blow up $\mathcal{M}_{\sigma_{-}}(v)$ and $ \mathcal{N}$ along $\phi^{-1}(\mathbb{E}')$ and $\phi'^{-1}(\mathbb{E}')$ and call the result $ \mathcal{N}''$ and $ \mathcal{N}'$, respectively. Notice that $\mathcal{N}'$ is the flip of $ \mathcal{N}''$ with respect to the small contraction $\gamma$. 
%Now, under the map from $\Stab(\mathbb{P}^{3})$ to the movable cone $Mov(\mathcal{N}')= Mov(\mathcal{N}'')$, the wall $\mathcal{W}$ is not sent to a single wall in the movable cone, but it is sent to walls corresponding to the intersection of two the divisorial and the flipping contractions. Furthermore, this map sends the two adjacent chambers not to the chambers but to a subset on the divisorial and flipping walls; in other words, crossing the wall $\mathcal{W}$ in the stability world is equivalent to remaining and walking on the broken line passing the  point which is the image of the wall $\mathcal{W}$ in the birational world. Notice that $\NN', \NN''$, and the flip of $\mathcal{M}_{\sigma_{-}}(v)$, i.e., $\mathcal{N}$  do not seem to appear as a moduli space of Bridgeland-stable objects. The wall-crossing in the stability space which is shown by the thick arrow, is the combination of the arrow starting from  $\mathcal{M}_{\sigma_{-}}(v)$ goes to $\mathcal{M}_{\sigma_{0}}(v)$ and followed by the arrow starting from $\mathcal{M}_{\sigma_{0}}(v)$ and goes to $\widetilde {\mathcal{M}_{\sigma_{-}}(v)}$ in the movable cone.  To summarize, we have the following diagram which explains the relation between $\mathcal{M}_{\sigma_{-}}(v)$ and $\widetilde {\mathcal{M}_{\sigma_{-}}(v)}$:
Now, under the map from $\Stab(\mathbb{P}^{3})$ to the movable cone $Mov(\mathcal{N}')= Mov(\mathcal{N}'')$, the wall $\mathcal{M}_{\sigma_{0}}(v)$ is not sent to a single wall in the movable cone, but it is sent to walls corresponding to the intersection of two, the divisorial and the flipping contractions. Furthermore, this map sends the two adjacent chambers not to the chambers but to a subset on the divisorial and flipping walls; in other words, crossing the wall $\mathcal{M}_{\sigma_{0}}(v)$ in the stability world is equivalent to remaining and walking on the broken line passing the point which is the image of the wall $\mathcal{M}_{\sigma_{0}}(v)$ in the birational world (See Figure \ref{birational model (or quasi-linearization) in the movable cone}). Notice that %the flip of $\mathcal{M}_{\sigma_{-}}(v)$, i.e., 
$\mathcal{N}$ does not seem to appear as a moduli space of Bridgeland-stable objects. Similarly,  $\NN', \NN''$ do not seem to show up in the stability space. 

The wall-crossing in the stability space is equivalent to the combination consists of the path starting from  $\mathcal{M}_{\sigma_{-}}(v)$ and heading to $\mathcal{M}_{\sigma_{0}}(v)$, followed by the path starting from $\mathcal{M}_{\sigma_{0}}(v)$ and heading to $\widetilde {\mathcal{M}_{\sigma_{-}}(v)}$ in the movable cone (see Figure \ref{birational model (or quasi-linearization) in the movable cone}). To summarize, we have the following diagram which explains the relation between $\mathcal{M}_{\sigma_{-}}(v)$ and $\widetilde {\mathcal{M}_{\sigma_{-}}(v)}$:

   \begin{center}
    
\begin{tikzcd}[column sep=
scriptsize
]
& & & & & & & & \textcolor{magenta}{\mathcal{N}''}  \arrow[dllllllll, rightarrow,"\eta "] \arrow[drr, dashrightarrow, "\beta'" ] \arrow[ddr, dashrightarrow, "\gamma" ]\\
\textcolor{green!75!black}{\mathcal{M}_{\sigma_{-}}(v)} \arrow["\phi",ddr ]   \arrow[drr, dashrightarrow, "\beta" ]{}
& & & & & & & & & & \textcolor{brown!70!black}{\mathcal{N}'} \arrow[dllllllll, rightarrow,"\theta "]   \arrow[dl, rightarrow,"\gamma'"]\\
& & \color{green!47!black}{\mathcal{N}} \arrow[dl,"\phi'"] & & & & & & & \textcolor{cyan!71!black}{\widetilde {\mathcal{M}_{\sigma_{-}}(v)}} \arrow[dllllllll, "\psi "]  \\
& \textcolor{violet!80!black}{\mathcal{M}_{\sigma_{0}}(v)}
\end{tikzcd}

\end{center}

    Existence of the small maps $\gamma$ and $\gamma'$ is implied by the universal property of blow-ups as the maps from $\mathcal{N}''$ and $ \mathcal{N}'$ to $\mathcal{M}_{\sigma_{0}}(v)$ must be factored via $\psi$  (notice that as $E'$ is closed in $\mathcal{M}_{\sigma_{0}}(v)$, the blow-up of its inverse image in $\mathcal{N}''$ and $ \mathcal{N}'$ must be Cartier). Pictorially, the movable cone of $\mathcal{N}'$ (or $\mathcal{N}''$) looks like the following:
 
  \begin{figure}[htp]
    \includegraphics[width=24cm]{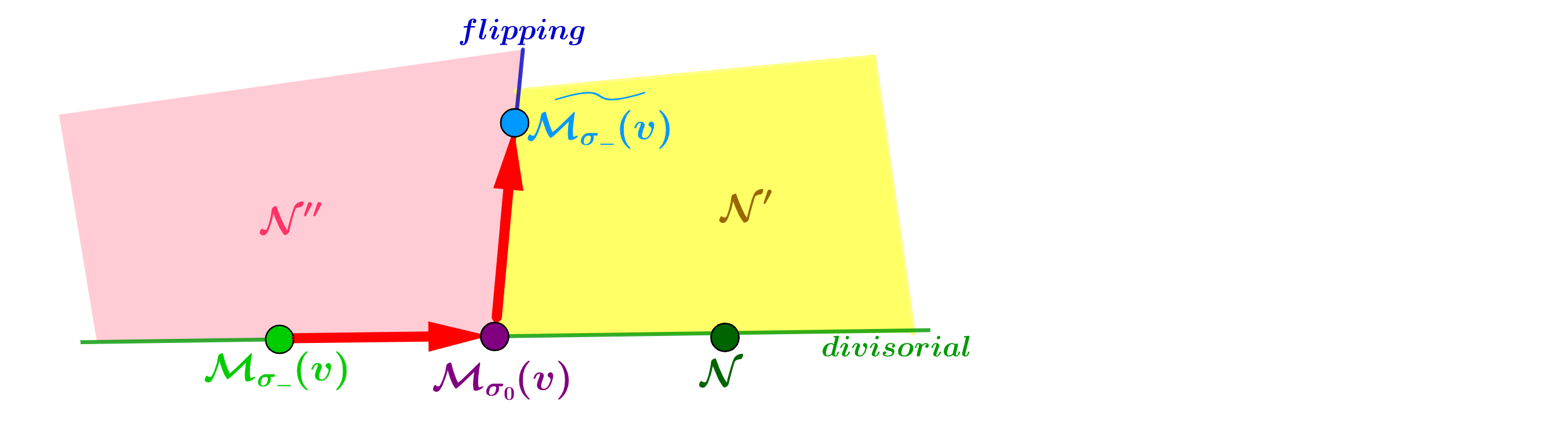}
\caption{Birational models in the movable cone of $\NN'$ (or $\NN''$)}
\label{birational model (or quasi-linearization) in the movable cone}
\end{figure}
\pagebreak

\bibliographystyle{plain}
\bibliography{sample}

\begin{thebibliography}{10}

\bibitem{AHH}
J.~Alper, D.~Halpern-Leistner, and J.~Heinloth.
\newblock Existence of moduli spaces for algebraic stacks.
\newblock {\em arXiv:1812.01128}, 2018.

\bibitem{ABCH}
D.~Arcara, A.~Bertram, I.~Coskun, and J.~Huizenga.
\newblock The minimal model program for the {H}ilbert scheme of points on
  $\mathbb{P}^2$ and {B}ridgeland stability.
\newblock {\em Adv. Math.}, 235(3):580–626, 2013.

\bibitem{BM}
A.~Bayer and E.~Macr\`{i}.
\newblock M{M}{P} for moduli of sheaves on {K}3s via wall-crossing: nef and
  movable cones, lagrangian fibrations.
\newblock {\em Invent. Math.}, 2014.

\bibitem{bm14}
A.~Bayer and E.~Macr\`{i}.
\newblock Projectivity and birational geometry of {B}ridgeland moduli spaces.
\newblock {\em J. Amer. Math. Soc.}, 27(3):707–752, 2014.

\bibitem{BMS}
A.~Bayer, E.~Macr\`{i}, and P.~Stellari.
\newblock The space of stability conditions on abelian threefolds, and on some
  calabi-yau threefolds.
\newblock {\em Invent. Math.}, 206(3):869–933, 2016.

\bibitem{BMT14:stability_threefolds}
A.~Bayer, E.~Macr{\`{\i}}, and Y.~Toda.
\newblock Bridgeland stability conditions on threefolds {I}:
  {B}ogomolov-{G}ieseker type inequalities.
\newblock {\em J. Algebraic Geom.}, 23(1):117--163, 2014.

\bibitem{Beckmann}
T.~Beckmann.
\newblock Birational geometry of moduli spaces of stable objects on {E}nriques
  surfaces.
\newblock {\em Sel. Math. New Ser.}, 26(14), 2020.

\bibitem{BC13}
A.~Bertram and I.~Coskun.
\newblock The birational geometry of the hilbert scheme of points on surfaces.
  birational geometry, rational curves, and arithmetic.
\newblock {\em Springer, New York}, 173:15--55, 2013.

\bibitem{BMW14}
A.~Bertram, C.~Martinez, and J.~Wang.
\newblock The birational geometry of moduli spaces of sheaves on the projective
  plane.
\newblock {\em Geom. Dedicata}, 173:37--64, 2014.

\bibitem{BHLRSWZ16}
B.~Bolognese, J.~Huizenga, Y.~Lin, E.~Riedl, B.~Schmidt, M.~Woolf, and X.~Zhao.
\newblock ef cones of hilbert schemes of points on surfaces.
\newblock {\em Algebra Number Theory}, 10(4):907--930, 2016.

\bibitem{Bri8}
T.~Bridgeland.
\newblock Stability conditions on {K}3 surfaces.
\newblock {\em Duke Math. J.}, 141(2):241--291, 2008.

\bibitem{CH14b}
I.~Coskun and J.~Huizenga.
\newblock The ample cone of moduli spaces of sheaves on the plane.
\newblock {\em Algebr. Geom.}, 3(1):106--136, 2016.

\bibitem{FL}
M.~Fulger and B.~Lehmann.
\newblock Positive cones of dual cycle classes.
\newblock {\em Algebr. Geom.}, 4(1):1--28, 2017.

\bibitem{GHS}
P.~Gallardo, C.~Lozano Huerta, and B.~Schmidt.
\newblock On the {H}ilbert scheme of elliptic quartics.
\newblock {\em Michigan Math. J.}, 67(4):787--813, 2018.

\bibitem{GS}
D.~Grayson and M.~Stillman.
\newblock Macaulay2.
\newblock {\em a software system for research in algebraic geometry,
  \url{http://www.math.uiuc.edu/Macaulay2/}}.

\bibitem{Huy}
D.~Huybrechts.
\newblock {\em Fourier-Mukai Transforms in Algebraic Geometry}.
\newblock Oxford University Press, Oxford, 2006.

\bibitem{KM}
J.~Kollár and S.~Mori.
\newblock {\em Birational Geometry of Algebraic Varieties}.
\newblock Cambridge: Cambridge University Press, 1998.

\bibitem{LZ}
C.~Li and X.~Zhao.
\newblock The {M}{M}{P} for deformations of hilbert schemes of points on the
  projective plane.
\newblock {\em Algebraic Geometry}, 5(3):328--358, 2018.

\bibitem{MacA}
A.~Maciocia.
\newblock Computing the walls associated to {B}ridgeland stability conditions
  on projective surfaces.
\newblock {\em Asian J. Math.}, 18(2):263--279, 2014.

\bibitem{MM13}
A.~Maciocia and C.~Meachan.
\newblock Rank 1 bridgeland stable moduli spaces on a principally polarized
  abelian surface.
\newblock {\em Int. Math. Res. Not. IMRN}, 2013(9):2054--2077, 2013.

\bibitem{M}
E.~Macr\`{i}.
\newblock Generalized {B}ogomolov-{G}ieseker inequality for the
  three-dimensional projective space.
\newblock {\em Algebra Number Theory}, 8(1):173--190, 2014.

\bibitem{MS3}
E.~Macr\`{i} and B.~Schmidt.
\newblock Derived categories and the genus of space curves.
\newblock {\em arXiv:1801.02709}, 2018.

\bibitem{MS2}
E.~Macr\`{i} and B.~Schmidt.
\newblock Lectures on {B}ridgeland stability.
\newblock {\em Lect. Notes Unione Mat. Ital.}, 21:139--211, Springer, Cham,
  2017.

\bibitem{Moi}
B.~Moishezon.
\newblock On n-dimensional compact complex varieties with n algebraic
  independent meromorphic functions.
\newblock {\em Transl. Am. Math. Soc}, 63:51--177, 1967.

\bibitem{NY}
H.~Neur and K.~Yoshioka.
\newblock {M}{M}{P} via wall-crossing for {B}ridgeland moduli spaces on an
  {E}nriques surface.
\newblock {\em arXiv:1901.04848}, 2019.

\bibitem{R2}
F.~Rezaee.
\newblock Geometry of canonical genus four curves.
\newblock {\em preprint}, 2020.

\bibitem{Sch18}
B.~Schmidt.
\newblock Rank two sheaves with maximal third chern character in
  three-dimensional projective space.
\newblock {\em arXiv:1811.11951}, 2018.

\bibitem{Sch}
B.~Schmidt.
\newblock Bridgeland stability on threefolds---{F}irst wall crossings.
\newblock {\em J. Algebraic Geom.}, 29(2):247--283, 2020.

\bibitem{Toda13}
Y.~Toda.
\newblock Stability conditions and extremal contractions.
\newblock {\em Math. Ann.}, 357:631–685, 2013.

\bibitem{Toda14}
Y.~Toda.
\newblock Stability conditions and birational geometry of projective surfaces.
\newblock {\em Compos. Math.}, 150:1755–1788, 2014.

\bibitem{TX}
R.~Tramel and B.~Xia.
\newblock Bridgeland stability conditions on surfaces with curves of negative
  self-intersection.
\newblock {\em arXiv:1702.06252v2}, 2017.

\bibitem{Xia}
B.~Xia.
\newblock Hilbert scheme of twisted cubics as a simple wall-crossing.
\newblock {\em Trans. Amer. Soc. Math.}, 370(8):5535–5559, 2018.

\bibitem{YY14}
S.~Yanagida and K.~Yoshioka.
\newblock Bridgeland’s stabilities on abelian surfaces.
\newblock {\em Math. Z.}, 276(1-2):571--610, 2014.

\bibitem{Y}
K.~Yoshioka.
\newblock Moduli spaces of stable sheaves on abelian surfaces.
\newblock {\em Math Ann}, 321:817–884, 2001.

\end{thebibliography}







@article{M,
  author = "Macr\`{i}, E.",
  title ="Generalized  {B}ogomolov-{G}ieseker  inequality  for  the  three-dimensional projective space",
  year = "2014",
  journal = "Algebra Number Theory",
  volume = "8",
  number = "1",
  pages = "173--190",
   keywords  = "latex"
} 





@article{Sch1,
  author = "Schmidt, B.",
  title ="Stability conditions on threefolds and space curves ",
  year = "2016",
  journal = "PhD dissertation",
  volume = "",
  number = "",
  pages = "",
   keywords  = "latex"
} 




@article{GS,
  author =  "Grayson, D. and Stillman, M. ",
  title ="Macaulay2",
  journal = "a software system for research in algebraic geometry, \url{http://www.math.uiuc.edu/Macaulay2/}",
   keywords  = "latex"
} 








@article{Moi,
  author = "Moishezon, B.",
  title =" On n-dimensional compact complex varieties with n algebraic independent meromorphic
functions",
  year = "1967",
  journal = "Transl. Am. Math. Soc",
  volume = "63",
  number = "",
  pages = "51--177",
   keywords  = "latex"
} 



@article{Gar,
  author = "M. van Garrel",
  title ="(2015) {I}ntroduction to {D}onaldson–{T}homas and Stable Pair Invariants. In: {L}aza {R}., {S}ch$\ddot{u}tt$ {M}., {Y}ui {N}. (eds) Calabi-Yau Varieties: Arithmetic, Geometry and Physics. ",
  year = "2015",
  journal = "Fields Institute Monographs, vol 34. Springer, New York, NY",
  volume = "",
  number = "",
  pages = "303--313",
   keywords  = "latex"
} 


@article{PT,
  author =  "Pandharipande, R. and Thomas, R. P.",
  title =  "Curve counting via stable pairs in the derived category",
  year = "2009",
  journal = "Invent. Math.",
  volume = "178",
  number = "",
  pages = "407--447",
   keywords  = "latex"
} 



@article{Toda,
  author =  "Y. Toda",
  title =  "CURVE COUNTING THEORIES VIA STABLE OBJECTS {I}.
{DT}/{PT} CORRESPONDENCE",
  year = "2010",
  journal = "J. Am. Math. Soc",
  volume = "23",
  number = "4",
  pages = "1119--1157",
   keywords  = "latex"
} 




@article{ST,
  author =  "Stoppa, J. and Thomas, R. P.",
  title =  "Hilbert schemes and stable pairs: {GIT} and derived category wall crossings ",
  year = "2011",
  journal ="Bulletin de la Société Mathématique de France",
  volume = "139",
  number = "",
  pages = "297--339",
   keywords  = "latex"
} 



@article{T1,
  author =  "Toda, Y.",
  title =  "Limit stable objects on {C}alabi-{Y}au 3-folds ",
  year = "2009",
  journal ="Duke Math. J.",
  volume = "149",
  number = "1",
  pages = "157--208",
   keywords  = "latex"
} 







@article{JS,
  author =  "Joyce, D. and Song, Y.",
  title =  "A theory of generalized {D}onaldson-{T}
homas invariants",
  year = "2012",
  journal ="Memoirs of the AMS
",
  volume = "217",
  number = "1020",
  pages = "",
   keywords  = "latex"
} 




@article{Sh,
  author =  "Sheshmani, A.",
  title =  "Wall-crossing and invariants of higher rank {J}oyce–{S}ong stable pairs",
  year = "2015",
  journal ="Illinois J. Math.
",
  volume = "59",
  number = "1",
  pages = "55--83",
   keywords  = "latex"
} 







@article{RV,
  author =  "Raghavendra, N. and  Vishwanath, P. A.",
  title =  "Moduli of pairs and generalized theta divisors",
  year = "1994",
  journal ="Tohoku Math. J.
",
  volume = "46",
  number = "3",
  pages ="321--340",
   keywords  = "latex"
} 




@book{BDG,
    author    = "Bradlow, S. B. and Daskalopoulos, G. D. and García-Prada, O. and Wentworth, R.",
    title     = "Stable augmented bundles over Riemann surfaces, in: vector bundles in algebraic geometry",
    year      = "1995",
    publisher = "(Durham 1993) 15-67, London Math. Soc. Lecture Notes Ser., 208, Cambridge Univ. Press, Cambridge",
    address   = ""
}




@article{LeP,
  author =  "Le Potier, J.",
  title =  "Faisceaux semi-stables et syst\`{e}mes cohérents",
  year = "1995",
  journal ="Vector Bundles in Algebraic Geometry, Durham 1993,  ed. N.J. Hitchin, P.E. Newstead and W.M. Oxbury, LMS Lectur
e
Notes Series
208, Cambridge University Press


",
  volume = "",
  number = "",
  pages ="179--239",
   keywords  = "latex"
} 









@article{BGMMN,
  author =  "Bradlow, S. B.  and García-Prada, O. and Mercat, V. and Munoz, V. and Newstead, P. E.,",
  title =  "On the geometry of moduli spaces of coherent systems on algebraic curves",
  year = "2007",
  journal ="Internat. J. Math.
",
  volume = "18",
  number = "",
  pages ="411--453",
   keywords  = "latex"
} 



@book{PT1,
    author = "Pandharipande, R. and Thomas, R. P.",
    title = "13/2 ways of counting curves, in: Moduli Spaces (eds. L. Brambila-Paz, O. García-Prada, P. Newstead, and R. P. Thomas)",
    year = "2014",
    publisher = "CUP",
    address   = ""
}


   
   
   
@article{LeP93,
  author =  "Le Potier, J.",
  title =  "Systèmes cohérents et structures de niveau",
  year = "1993",
  journal = "Astérisque",
  volume = "",
  number = "214",
  pages ="",
   keywords  = "latex"
} 




@article{Xia,
  author =  "B. Xia ",
  title =  "Hilbert Scheme of twisted cubics as a simple wall-crossing",
  year = "2018",
  journal ="Trans. Amer. Soc. Math.",
  volume = "370",
  number = "8",
  pages = "5535–5559 ",
   keywords  = "latex"
} 


@article{Hart,
  author =  "R. Hartshorne ",
  title =  "Connectedness of the {H}ilbert scheme",
  year = "1966",
  journal ="Inst. Hautes
 
\'{E}tudes Sci. Publ. Math.",
  volume = "29",
  number = "",
  pages = "5--48 ",
   keywords  = "latex"
} 


@book{HM,
    author    = "Harris, J. and Morrison, I.",
    title     = "Moduli of curves",
    year      = "1998",
    publisher = "Springer",
    address   = "Berlin-New York"
}



@article{Bri,
  author = "Bridgeland, T. ",
  title =  "Stability conditions on triangulated categories",
  year = "2007",
  journal ="Ann. of Math.",
  volume = "166",
  number = "2",
  pages = "317--345 ",
   keywords  = "latex"
} 


@book{Eis,
    author    = "D. Eisenbud",
    title     = "Commutative Algebra - with
a View Toward Algebraic Geometry",
    year      = "1995",
    publisher = "Springer,  Graduate Texts in Mathematics",
    address   = "Berlin-New York"
}



@article{Cal,
  author = "A. C\u{a}ld\u{a}raru",
  title =  "Derived categories of sheaves: a skimm
ing",
  year = "2005",
  journal = "Snowbird lectures in algebraic geometry,
Contemp. Math.",
  volume = "388, Amer. Math. Soc., Providence, RI ",
  number = "",
  pages ="43--75 ",
   keywords  = "latex"
} 



@book{Huy,
    author    = "D. Huybrechts",
    title     ="Fourier-Mukai Transforms in
Algebraic Geometry",
    year      = "2006",
    publisher = "Oxford
University Press",
    address   = "Oxford"
}


@book{Ser,
    author    = "E. Sernesi",
    title     ="Deformations of Algebraic
Schemes",
    year      = "2006",
    publisher = "Springer-Verlag",
    address   = "Berlin"
}


@article{MS,
  author =  "Macr\`{i}, E. and  and Schmidt, B.)",
  title =  "Lectures on
{B}ridgeland Stability",
  year = "2016",
  journal = ",ArXiv e-prints (2016), arXiv:
1607.01262 [math.AG]",

  pages = "",
   keywords  = "latex"
   
   
} 


@article{O,
  author =  "K. G. O’Grady",
  title =  "Compact hyperk$\ddot{a}$hler manifolds: an introduction",
  year = "March 1 2013",
  journal = "Sapienza” Universit`a di Roma
",
  number = "",
  pages = "",
   keywords  = "latex"
   
   
} 

@book{Laz,
    author    ="R. Lazarsfeld,",
    title     ="Positivity in algebraic geometry. I. Classical setting: line bundles and linear series
",
    year      = "2004",
    publisher = "Springer-Verlag",
    address   = "Berlin"
}


@article{Vak,
  author =  "R. Vakil",
  title =  "Foundations of algebraic geometry",
  year = "",
  journal = "Lecture notes, http://math.stanford.edu/~vakil/",
  pages = "",
   keywords  = "latex"
   }
   
   
   
   
   
@book{HuyLehn,
    author    = "Huybrechts, D. and Lehn, M.",
    title     ="The Geometry of Moduli Spaces of Sheaves",
    year      = "2010",
    publisher ="Cambridge
Mathematical Library ",
    address   = "Cambridge University Press, Cambridge, second edition"
}






@article{Hal82,
  author =  "G. Halphen",
  title ="Memoire sur la classification des courbe
s gauches algebriques",
  year = "1982",
  journal = "J. Ecole Polyt.
",
  volume = "52",
  number = "",
  pages = "1-200",
   keywords  = "latex"
} 




@article{GP78,
  author =  "Gruson, L. and Peskine, C. ",
  title = "Genre des courbes de l’espac
e projectif. In
Algebraic geometry (Proc. Sympos.,
Univ. Tromsø, Tromsø, 1977)
, volume 687 of
Lecture Notes in Math.
 ",
  year = "1978",
  journal = "Springer, Berlin
",
  volume = "",
  number = "",
  pages = "31-59",
   keywords  = "latex"
} 





@article{Hal82,
  author =  "G. Halphen",
  title ="Memoire sur la classification des courbe
s gauches algebriques",
  year = "1982",
  journal = "J. Ecole Polyt.
",
  volume = "52",
  number = "",
  pages = "1-200",
   keywords  = "latex"
} 




@article{GP78,
  author =  "Gruson, L. and Peskine, C. ",
  title = "Genre des courbes de l’espac
e projectif. In
Algebraic geometry (Proc. Sympos.,
Univ. Tromsø, Tromsø, 1977)
, volume 687 of
Lecture Notes in Math.
 ",
  year = "1978",
  journal = "Springer, Berlin
",
  volume = "",
  number = "",
  pages = "31-59",
   keywords  = "latex"
} 
@article{Har80,
  author =  "J. Harris",
  title = "The genus of space curves",
  year = "1980",
  journal = "Math. Ann.

",
  volume = "249",
  number = "3",
  pages = "191-204",
   keywords  = "latex"
} 



@article{MS18,
  author = "Macr\`{i}, E. and Schmidt, B. ",
  title = "Derived categories and the genus of space curves
 ",
  year = "2018",
  journal = "arXiv:1801.02709v1
",
  volume = "",
  number = "",
  pages = "",
   keywords  = "latex"
} 






@article{RV02,
  author = "Rojas, J. and Vainsencher, I. ",
  title = "Canonical curves in $\mathbb{P}^{3}$
 ",
  year = "2002",
  journal = "Proc. London Math. Soc.
",
  volume = "85",
  number = "2",
  pages = "333-366",
   keywords  = "latex"
} 


@article{Harts,
  author = "R. Hartshorne",
  title = "The genus of space curves
 ",
  year = "1994",
  journal = "Ann. 
Uniu. 
Ferrara, 
Sez. 
VII, 
Sc. 
Mat. 
",
  volume = "50",
  number = "",
  pages = "207-223",
   keywords  = "latex"
} 


@article{MDP,
 author ="Martin-Deschamps, M. and Perrin, D.",
 title="Le Sch\'{e}ma De {H}ilbert Des Courbes Gauches Localement Cohen-Macaulay N'est (Presque) Jamais R\'{e}duit",
   year="1966",
   
  journal="Ann. scient. Ec. Norm. Sup.",
  Volume="29",
  pages="757-785",
  keywords="latex"
  }
  
  
  @article{NScott,
  author="S. Nollet",
  title="The {H}ilbert Scheme of Degree Three Curves",
  year="1997",
  journal="Ann. scient. Ec. Norm. Sup.",
  volume="30",
  pages="367-384",
  keywords="latex"
  }
  
  
  @article{B97,
  author = "Beorchia, V.",
  title ="Bounds for the Genus of Space Curves",
  year = "1997",
  journal = "Math. Nachr.",
  volume = "184",
  number = "",
  pages = "59--71",
   keywords  = "latex"
} 
 
 


@book{Mum66,
    author    ="D. Mumford",
    title     =" Lectures on curves on an algebraic surface
",
    year      = "1966",
    publisher = "Princeton
University Press",
    address   = ""
}

  
  
  @article{Chen,
  author = "Chen, D.",
  title =" On the dimension of the {H}ilbert scheme of curves",
  year = "2009",
  journal ="Math. Res. Lett.",
  volume = "16",
  number = "6",
  pages ="941--954",
   keywords  = "latex"
} 
 
 
 
 
 @article{KKL,
  author = "Keem, C. and   Kim, Y. and Lopez, A.F.",
  title ="IRREDUCIBILITY AND COMPONENTS RIGID IN MODULI OF
THE {H}ILBERT SCHEME OF SMOOTH CURVES",
  year = "",
  journal ="Preprint, arXiv:1605.00297v3",  
  volume = "",
  number = "",
  pages ="",
   keywords  = "latex"
} 
 
 @article{EV,
  author = "Eisenbud, D.  and  Van de Ven, A. ",
  title ="On the Normal Bundles of Smooth Rational Space Curves ",
  year = "1981",
  journal ="Math. Ann", 
  volume = "256",
  number = "",
  pages = "453--463 ",
   keywords  = "latex"
} 
 
 
 
 @article{OR,
  author =  "Oh, K. and Rao, A.P. ",
  title ="Dimension of the {H}ilbert Scheme of Curves in $\mathbb{P}^3$",
  year = "1996",
  journal = "American Journal of Mathematics,",
  volume = "118",
  number = "2",
  pages = "363--375 ",
   keywords  = "latex"
} 


 
 @article{Ein,
  author = "Ein, L.  ",
  title ="Hilbert scheme of smooth space curves",
  year = "1986",
  journal ="Ann. Sci. \`{E}cole Norm. Sup", 
  volume = "19",
  number = "",
  pages = "469--478 ",
   keywords  = "latex"
}

@article{CEVV,
  author = "Cartwright, D. A. and   Erman, D. and  Velasco, M. and  Viray
, B.",
  title = "Hilbert schemes of 8 points",
  year = "2009",
  journal ="Algebra Number
Theory
", 
  volume = "3",
  number = "",
  pages = "763--795 ",
   keywords  = "latex"
} 





@article{Bri8,
  author =  "Bridgeland, T. ",
  title =  "Stability conditions on {K}3 surfaces",
  year = "2008",
  journal = "Duke Math. J.",
  volume = "141",
  number = "2",
  pages = "241--291",
   keywords  = "latex"
} 








@article{BMS,
  author =  "Bayer, A. and Macr\`{i}, E. and Stellari, P.",
  title =  "The Space of Stability Conditions on Abelian Threefolds, and
on some Calabi-Yau Threefolds",
  year = "2016",
  journal = "Invent. Math.",
  volume = "206",
  number = "3",
  pages ="869–933 ",
   keywords  = "latex"
} 



@article{MacA,
  author =  "Maciocia, A.",
  title =  "Computing the walls associated to {B}ridgeland stability conditions on projective
surfaces",
  year = "2014",
  journal = "Asian J. Math.",
  volume = "18",
  number = "2",
  pages ="263-279 ",
   keywords  = "latex"
} 



























@article{Toda13,
  author =  "Toda, Y.",
  title =  "Stability conditions and extremal contractions",
  year = "2013",
  journal = "Math. Ann.",
  volume = "357",
  number = "",
  pages ="631–685",
   keywords  = "latex"
} 





@article{TX,
  author =  "Tramel, R. and Xia, B. ",
  title =  "Bridgeland stability conditions on surfaces with curves of negative self-intersection",
  year = "2017",
  journal = "arXiv:1702.06252v2",
  volume = "",
  number = "",
  pages = "",
   keywords  = "latex"
} 



@article{NY,
  author =  "Neur, H. and Yoshioka, K. ",
  title =  "{M}{M}{P} via wall-crossing for {B}ridgeland moduli spaces on an {E}nriques surface",
  year = "2019",
  journal ="arXiv:1901.04848
",
  volume = "",
  number = "",
  pages = "",
   keywords  = "latex"
} 





@article{Beckmann,
  author =  "Beckmann, T.",
  title =  "Birational geometry of moduli spaces of stable objects on {E}nriques surfaces",
  year = "2020",
  journal = "Sel. Math. New Ser.  ",
  volume = "26",
  number = "14",
  pages ="",
   keywords  = "latex"
} 










@article{AHH,
  author =  "Alper, J. and Halpern-Leistner, D. and Heinloth, J.",
  title =  "Existence of moduli spaces for algebraic
stacks
",
  year = "2018",
  journal = "arXiv:1812.01128 ",
  volume = "",
  number = "",
  pages =" ",
   keywords  = "latex"
} 



@article{FL,
  author =  "Fulger, M. and Lehmann, B.",
  title = "Positive cones of dual cycle classes
",
  year = "2017",
  journal = "Algebr. Geom.",
  volume = "4",
  number = "1",
  pages = "1-28 ",
   keywords  = "latex"
} 




@article{MS3,
  author =  "Macr\`{i}, E. and Schmidt, B.",
  title =  "Derived categories and the genus of space curves",
  year = "2018",
  journal = "arXiv:1801.02709 ",
  volume = "",
  number = "",
  pages = "",
   keywords  = "latex"
} 




@article{Y,
  author =  "Yoshioka, K. ",
  title =  "Moduli spaces of stable sheaves on abelian surfaces",
  year = "2001",
  journal = "Math Ann ",
  volume = "321",
  number = "",
  pages = "817–884",
   keywords  = "latex"
} 


























@article {BMT14:stability_threefolds,
    AUTHOR = {A. Bayer and E. Macr{\`{\i}} and Y. Toda},
     TITLE = {Bridgeland stability conditions on threefolds {I}:
              {B}ogomolov-{G}ieseker type inequalities},
   JOURNAL = {J. Algebraic Geom.},
  FJOURNAL = {Journal of Algebraic Geometry},
    VOLUME = {23},
      YEAR = {2014},
    NUMBER = {1},
     PAGES = {117--163},
      ISSN = {1056-3911},
   MRCLASS = {14F05 (14J30)},
  MRNUMBER = {3121850},
MRREVIEWER = {Adrian Langer},
       DOI = {10.1090/S1056-3911-2013-00617-7},
       URL = {http://dx.doi.org/10.1090/S1056-3911-2013-00617-7},
}



@article{BMT,
  author =  "Bayer, A. and Macr\`{i}, E. and Toda, Y.",
  title =  "Bridgeland  stability  conditions  on
threefolds {I}: Bogomolov-{G}ieseker type inequalities",
  year = "2014",
  journal = "J. Algebraic Geom.",
  volume = "23",
  number = "1",
  pages = "117--163",
   keywords  = "latex"
} 







@article{Sch,
  author = "Schmidt, B.",
  title ="Bridgeland  stability on threefolds---{F}irst wall crossings ",
  year = "2020",
  journal = "J. Algebraic Geom.",
  volume = "29",
  number = "2",
  pages = "247-283",
   keywords  = "latex"
} 
 



@article{Sch1,
  author = "Schmidt, B.",
  title ="Stability conditions on threefolds and space curves ",
  year = "2016",
  journal = "PhD dissertation",
  volume = "",
  number = "",
  pages = "",
   keywords  = "latex"
} 






@article{GHS,
  author =  "Gallardo, P. and Huerta, C. Lozano and Schmidt, B.",
  title =  "On the {H}ilbert scheme
of elliptic quartics.",
  year = "2018",
  journal ="Michigan Math. J.",
  volume = "67",
  number = "4",
  pages = "787-813",
   keywords  = "latex"
} 






@article{Gar,
  author = "M. van Garrel",
  title ="(2015) {I}ntroduction to {D}onaldson–{T}homas and Stable Pair Invariants. In: {L}aza {R}., {S}ch$\ddot{u}tt$ {M}., {Y}ui {N}. (eds) {C}alabi-{Y}au Varieties: Arithmetic, Geometry and Physics. ",
  year = "2015",
  journal = "Fields Institute Monographs, vol 34. Springer, New York, NY",
  volume = "",
  number = "",
  pages = "303--313",
   keywords  = "latex"
} 


@article{PT,
  author =  "Pandharipande, R. and Thomas, R. P.",
  title =  "Curve counting via stable pairs in the derived category",
  year = "2009",
  journal = "Invent. Math.",
  volume = "178",
  number = "",
  pages = "407--447",
   keywords  = "latex"
} 



@article{Toda,
  author =  "Y. Toda",
  title =  "CURVE COUNTING THEORIES VIA STABLE OBJECTS {I}.
{DT}/{PT} CORRESPONDENCE",
  year = "2010",
  journal = "J. Am. Math. Soc",
  volume = "23",
  number = "4",
  pages = "1119--1157",
   keywords  = "latex"
} 




@article{ST,
  author =  "Stoppa, J. and Thomas, R. P.",
  title =  "Hilbert schemes and stable pairs: {GIT} and derived category wall crossings ",
  year = "2011",
  journal ="Bulletin de la Société Mathématique de France",
  volume = "139",
  number = "",
  pages = "297--339",
   keywords  = "latex"
} 



@article{T1,
  author =  "Toda, Y.",
  title =  "Limit stable objects on {C}alabi-{Y}au 3-folds ",
  year = "2009",
  journal ="Duke Math. J.",
  volume = "149",
  number = "1",
  pages = "157--208",
   keywords  = "latex"
} 







@article{JS,
  author =  "Joyce, D. and Song, Y.",
  title =  "A theory of generalized {D}onaldson-{T}
homas invariants",
  year = "2012",
  journal ="Memoirs of the AMS
",
  volume = "217",
  number = "1020",
  pages = "",
   keywords  = "latex"
} 




@article{Sh,
  author =  "Sheshmani, A.",
  title =  "Wall-crossing and invariants of higher rank {J}oyce–{S}ong stable pairs",
  year = "2015",
  journal ="Illinois J. Math.
",
  volume = "59",
  number = "1",
  pages = "55--83",
   keywords  = "latex"
} 







@article{RV,
  author =  "Raghavendra, N. and  Vishwanath, P. A.",
  title =  "Moduli of pairs and generalized theta divisors",
  year = "1994",
  journal ="Tohoku Math. J.
",
  volume = "46",
  number = "3",
  pages ="321--340",
   keywords  = "latex"
} 




@book{BDG,
    author    = "Bradlow, S. B. and Daskalopoulos, G. D. and García-Prada, O. and Wentworth, R.",
    title     = "Stable augmented bundles over Riemann surfaces, in: vector bundles in algebraic geometry",
    year      = "1995",
    publisher = "(Durham 1993) 15-67, London Math. Soc. Lecture Notes Ser., 208, Cambridge Univ. Press, Cambridge",
    address   = ""
}




@article{LeP,
  author =  "Le Potier, J.",
  title =  "Faisceaux semi-stables et syst\`{e}mes cohérents",
  year = "1995",
  journal ="Vector Bundles in Algebraic Geometry, Durham 1993,  ed. N.J. Hitchin, P.E. Newstead and W.M. Oxbury, LMS Lectur
e
Notes Series
208, Cambridge University Press


",
  volume = "",
  number = "",
  pages ="179--239",
   keywords  = "latex"
} 









@article{BGMMN,
  author =  "Bradlow, S. B.  and García-Prada, O. and Mercat, V. and Munoz, V. and Newstead, P. E.,",
  title =  "On the geometry of moduli spaces of coherent systems on algebraic curves",
  year = "2007",
  journal ="Internat. J. Math.
",
  volume = "18",
  number = "",
  pages ="411--453",
   keywords  = "latex"
} 



@book{PT1,
    author = "Pandharipande, R. and Thomas, R. P.",
    title = "13/2 ways of counting curves, in: Moduli Spaces (eds. L. Brambila-Paz, O. García-Prada, P. Newstead, and R. P. Thomas)",
    year = "2014",
    publisher = "CUP",
    address   = ""
}


   
   
   
@article{LeP93,
  author =  "Le Potier, J.",
  title =  "Systèmes cohérents et structures de niveau",
  year = "1993",
  journal = "Astérisque",
  volume = "",
  number = "214",
  pages ="",
   keywords  = "latex"
} 





@article{Hart,
  author =  "R. Hartshorne ",
  title =  "Connectedness of the {H}ilbert scheme",
  year = "1966",
  journal ="Inst. Hautes
 
\'{E}tudes Sci. Publ. Math.",
  volume = "29",
  number = "",
  pages = "5--48 ",
   keywords  = "latex"
} 


@book{HM,
    author    = "Harris, J. and Morrison, I.",
    title     = "Moduli of curves",
    year      = "1998",
    publisher = "Springer",
    address   = "Berlin-New York"
}



@article{Bri,
  author =  "T. Bridgeland ",
  title =  "Stability conditions on triangulated categories",
  year = "2007",
  journal ="Ann. of Math.",
  volume = "166",
  number = "2",
  pages = "317--345 ",
   keywords  = "latex"
} 


@book{Eis,
    author    = "D. Eisenbud",
    title     = "Commutative Algebra - with
a View Toward Algebraic Geometry",
    year      = "1995",
    publisher = "Springer,  Graduate Texts in Mathematics",
    address   = "Berlin-New York"
}



@article{Cal,
  author = "A. C\u{a}ld\u{a}raru",
  title =  "Derived categories of sheaves: a skimm
ing",
  year = "2005",
  journal = "Snowbird lectures in algebraic geometry,
Contemp. Math.",
  volume = "388, Amer. Math. Soc., Providence, RI ",
  number = "",
  pages ="43--75 ",
   keywords  = "latex"
} 




@book{Ser,
    author    = "E. Sernesi",
    title     ="Deformations of Algebraic
Schemes",
    year      = "2006",
    publisher = "Springer-Verlag",
    address   = "Berlin"
}




@article{O,
  author =  "K. G. O’Grady",
  title =  "Compact hyperk$\ddot{a}$hler manifolds: an introduction",
  year = "March 1 2013",
  journal = "Sapienza” Universit`a di Roma
",
  number = "",
  pages = "",
   keywords  = "latex"
   
   
} 

@book{Laz,
    author    ="R. Lazarsfeld,",
    title     ="Positivity in algebraic geometry. I. Classical setting: line bundles and linear series
",
    year      = "2004",
    publisher = "Springer-Verlag",
    address   = "Berlin"
}


@article{Vak,
  author =  "R. Vakil",
  title =  "Foundations of algebraic geometry",
  year = "",
  journal = "Lecture notes, http://math.stanford.edu/~vakil/",
  pages = "",
   keywords  = "latex"
   }
   
   
   
   
   
@book{HuyLehn,
    author    = "Huybrechts, D. and Lehn, M.",
    title     ="The Geometry of Moduli Spaces of Sheaves",
    year      = "2010",
    publisher ="Cambridge
Mathematical Library ",
    address   = "Cambridge University Press, Cambridge, second edition"
}



@article{BM14,
  author =  "Bayer, A. and Macr\`{i}, E. ",
  title ="Projectivity and birational geometry of {B}ridgeland moduli
spaces",
  year = "2014",
  journal = "J. Amer. Math. Soc.
",
  volume = "27",
  number = "3",
  pages = "707–752",
   keywords  = "latex"
} 


@article{Hal82,
  author =  "G. Halphen",
  title ="Memoire sur la classification des courbe
s gauches algebriques",
  year = "1982",
  journal = "J. Ecole Polyt.
",
  volume = "52",
  number = "",
  pages = "1-200",
   keywords  = "latex"
} 




@article{GP78,
  author =  "Gruson, L. and Peskine, C. ",
  title = "Genre des courbes de l’espac
e projectif. In
Algebraic geometry (Proc. Sympos.,
Univ. Tromsø, Tromsø, 1977)
, volume 687 of
Lecture Notes in Math.
 ",
  year = "1978",
  journal = "Springer, Berlin
",
  volume = "",
  number = "",
  pages = "31-59",
   keywords  = "latex"
} 




@article{Hal82,
  author =  "G. Halphen",
  title ="Memoire sur la classification des courbe
s gauches algebriques",
  year = "1982",
  journal = "J. Ecole Polyt.
",
  volume = "52",
  number = "",
  pages = "1-200",
   keywords  = "latex"
} 




@article{GP78,
  author =  "Gruson, L. and Peskine, C. ",
  title = "Genre des courbes de l’espac
e projectif. In
Algebraic geometry (Proc. Sympos.,
Univ. Tromsø, Tromsø, 1977)
, volume 687 of
Lecture Notes in Math.
 ",
  year = "1978",
  journal = "Springer, Berlin
",
  volume = "",
  number = "",
  pages = "31-59",
   keywords  = "latex"
} 
@article{Har80,
  author =  "J. Harris",
  title = "The genus of space curves",
  year = "1980",
  journal = "Math. Ann.

",
  volume = "249",
  number = "3",
  pages = "191-204",
   keywords  = "latex"
} 



@article{MS18,
  author = "Macr\`{i}, E. and Schmidt, B. ",
  title =  "Derived categories and the genus of space curves
 ",
  year = "2018",
  journal = "arXiv:1801.02709v1
",
  volume = "",
  number = "",
  pages = "",
   keywords  = "latex"
} 






@article{RV02,
  author =  "Rojas, J. and Vainsencher, I. ",
  title =  "Canonical curves in $\mathbb{P}^{3}$
 ",
  year = "2002",
  journal = "Proc. London Math. Soc.
",
  volume = "85",
  number = "2",
  pages = "333-366",
   keywords  = "latex"
} 


@article{Harts,
  author = "R. Hartshorne",
  title = "The genus of space curves
 ",
  year = "1994",
  journal = "Ann. 
Uniu. 
Ferrara, 
Sez. 
VII, 
Sc. 
Mat. 
",
  volume = "50",
  number = "",
  pages = "207-223",
   keywords  = "latex"
} 


@article{MDP,
 author ="Martin-Deschamps, M. and Perrin, D.",
 title="Le Sch\'{e}ma De {H}ilbert Des Courbes Gauches Localement Cohen-Macaulay N'est (Presque) Jamais R\'{e}duit",
   year="1966",
   
  journal="Ann. scient. Ec. Norm. Sup.",
  Volume="29",
  pages="757-785",
  keywords="latex"
  }
  
  
  @article{NScott,
  author="S. Nollet",
  title="The {H}ilbert Scheme of Degree Three Curves",
  year="1997",
  journal="Ann. scient. Ec. Norm. Sup.",
  volume="30",
  pages="367-384",
  keywords="latex"
  }
  
  
  @article{B97,
  author = "Beorchia, V.",
  title ="Bounds for the Genus of Space Curves",
  year = "1997",
  journal = "Math. Nachr.",
  volume = "184",
  number = "",
  pages = "59--71",
   keywords  = "latex"
} 
 
 


@book{Mum66,
    author    ="D. Mumford",
    title     =" Lectures on curves on an algebraic surface
",
    year      = "1966",
    publisher = "Princeton
University Press",
    address   = ""
}

  
  
  @article{Chen,
  author = "Chen, D.",
  title =  "On the dimension of the {H}ilbert scheme of curves",
  year = "2009",
  journal ="Math. Res. Lett.",
  volume = "16",
  number = "6",
  pages ="941--954",
   keywords  = "latex"
} 
 
 
 
 
 @article{KKL,
  author = "Keem, C. and   Kim, Y. and Lopez, A.F.",
  title ="IRREDUCIBILITY AND COMPONENTS RIGID IN MODULI OF
THE {H}ILBERT SCHEME OF SMOOTH CURVES",
  year = "",
  journal ="Preprint, arXiv:1605.00297v3",  
  volume = "",
  number = "",
  pages ="",
   keywords  = "latex"
} 
 
 @article{EV,
  author = "Eisenbud, D.  and  Van de Ven, A. ",
  title ="On the Normal Bundles of Smooth Rational Space Curves ",
  year = "1981",
  journal ="Math. Ann", 
  volume = "256",
  number = "",
  pages = "453--463 ",
   keywords  = "latex"
} 
 
 
 
 @article{OR,
  author =  "Oh, K. and Rao, A.P. ",
  title ="Dimension of the Hilbert Scheme of Curves in $\mathbb{P}^3$",
  year = "1996",
  journal = "American Journal of Mathematics,",
  volume = "118",
  number = "2",
  pages = "363--375 ",
   keywords  = "latex"
} 


 
 @article{Ein,
  author = "Ein, L.  ",
  title ="Hilbert scheme of smooth space curves",
  year = "1986",
  journal ="Ann. Sci. \`{E}cole Norm. Sup", 
  volume = "19",
  number = "",
  pages = "469--478 ",
   keywords  = "latex"
}

@article{CEVV,
  author =  "Cartwright, D. A. and   Erman, D. and  Velasco, M. and  Viray, B.",
  title = "Hilbert schemes of 8 points",
  year = "2009",
  journal ="Algebra Number
Theory
", 
  volume = "3",
  number = "",
  pages = "763--795 ",
   keywords  = "latex"
} 


@article{BM,
  author =  "Bayer, A. and Macr\`{i}, E.",
  title =  "M{M}{P} for moduli of sheaves on {K}3s via
wall-crossing: nef and
movable cones, Lagrangian fibrations.",
  year = "2014",
  journal = "Invent. Math.",
  volume = "",
  number = "",
  pages = "",
   keywords  = "latex"
} 

























@article{Sch18,
  author =  "Schmidt, B.",
  title =  "Rank two sheaves with maximal third Chern character in three-dimensional projective space ",
  year = "2018",
  journal = "arXiv:1811.11951",
  volume = "",
  number = "",
  pages = "",
   keywords  = "latex"
} 








@article{LZ,
  author =  "Li, C. and Zhao, X. ",
  title =  "The {M}{M}{P} for deformations of Hilbert schemes of points on the projective plane",
  year = "2018",
  journal = "Algebraic Geometry",
  volume = "5",
  number = "3",
  pages = "328-358",
   keywords  = "latex"
} 














@article{ABCH,
  author =  "Arcara, D. and Bertram, A. and Coskun, I. and Huizenga, J.",
  title =  "The minimal model program for the
{H}ilbert scheme of points on $\mathbb{P}^2$ and {B}ridgeland stability",
  year = "2013",
  journal = "Adv. Math.",
  volume = "235",
  number = "3",
  pages ="580–626 ",
   keywords  = "latex"
} 



@article{Toda14,
  author =  "Toda, Y.",
  title =  "Stability conditions and birational geometry of projective surfaces",
  year = "2014",
  journal = "Compos. Math.",
  volume = "150",
  number = "",
  pages ="1755–1788",
   keywords  = "latex"
} 














@article{R1,
  author =  "Rezaee, F.",
  title = "An interesting wall-crossing: Failure of the wall-crossing/{MMP} correspondence",
  year = "2020",
  journal = "preprint",
  volume = "",
  number = "",
  pages = "",
   keywords  = "latex"
} 



@article {R2,
    AUTHOR = {Rezaee, F.},
     TITLE = {Geometry of canonical genus four curves},
   JOURNAL = {preprint},
  FJOURNAL = {},
    VOLUME = {},
      YEAR = {2020},
    NUMBER = {},
     PAGES = {},
      ISSN = {},
   MRCLASS = {},
  MRNUMBER = {},
MRREVIEWER = {},
}





@article{MS2,
  author =  "Macr\`{i}, E. and Schmidt, B.",
  title =  "Lectures on {B}ridgeland stability",
  year = "Springer, Cham, 2017",
  journal ="Lect. Notes Unione Mat. Ital.",
  volume = "21",
  number = "",
  pages = "139--211",
   keywords  = "latex"
} 




@article{DJNT,
  author =  "Douvropoulos, T. and Jelisiejew, J. and  Nødland, B. I. U. and  Teitle, Z.",
  title = "The  {H}ilbert scheme of 11 points in $\A^3$ is irreducible",
  year = "2017",
  journal ="Combinatorial Algebraic Geometry, FieldsInst. Commun., Fields Inst. Res. Math. Sci., Toronto",
  volume = "80",
  number = "",
  pages = "321--352",
   keywords  = "latex"
} 

  year = "2018",
  journal = "arXiv:1801.02709 ",
  volume = "",
  number = "",
  pages = "",
   keywords  = "latex"
} 


@article{PT2009,
  author =  "Pandharipande, R. and Thomas, R. P.",
  title =  "STABLE PAIRS AND {BPS} INVARIANTS",
  year = "2010",
  journal ="Jour.
AMS. ",
  volume = "23",
  number = "1",
  pages = "267--297",
   keywords  = "latex"
} 

@article{MS20,
  author = "Macr\`{i}, E. and Schmidt, B. ",
  title =  "Stability and applications
 ",
  year = "2020",
  journal = "arXiv:2002.01242
",
  volume = "",
  number = "",
  pages = "",
   keywords  = "latex"
} 


@article{CH14b,
  author =  "Coskun, I. and  Huizenga, J.",
  title =  "The ample cone of moduli spaces of sheaves on the plane
 ",
  year = "2016",
  journal = "Algebr. Geom.
",
  volume = "3",
  number = "1",
  pages = "106--136",
   keywords  = "latex"
} 


@article{BMW14,
  author = "Bertram, A. and   Martinez, C. and  Wang, J.",
  title =  "The birational geometry of moduli spaces of sheaves
on the projective plane
 ",
  year = "2014",
  journal ="Geom. Dedicata
",
  volume = "173",
  number = "",
  pages = "37--64",
   keywords  = "latex"
} 



@article{BC13,
  author = "Bertram, A. and   Coskun, I.",
  title =  "The birational geometry of the Hilbert scheme of points on surfaces. Birational

geometry, rational curves, and arithmetic
 ",
  year = "2013",
  journal ="Springer, New York
",
  volume = "173",
  number = "",
  pages = "15--55",
   keywords  = "latex"
} 



@article{YY14,
  author = "Yanagida, S. and  Yoshioka, K.",
  title =  "Bridgeland’s stabilities on abelian surfaces
 ",
  year = "2014",
  journal ="Math. Z.
",
  volume = "276",
  number = "1-2",
  pages = "571--610",
   keywords  = "latex"
   }



@article{MM13,
  author = "Maciocia, A. and Meachan, C.",
  title = "Rank 1 Bridgeland stable moduli spaces on a principally polarized abelian

surface
 ",
  year = "2013",
  journal ="Int. Math. Res. Not. IMRN
",
  volume = "2013",
  number = "9",
  pages = "2054--2077",
   keywords  = "latex"
   }
   
   


@article{HT10,
  author = "Hassett, B. and  Tschinkel, Y.",
  title = "Intersection numbers of extremal rays on holomorphic symplectic varieties
 ",
  year = "2010",
  journal ="Asian J. Math.
",
  volume = "14",
  number = "3",
  pages = "303--322",
   keywords  = "latex"
   }




@article{BHLRSWZ16,
  author ="Bolognese, B. and  Huizenga, J. and  Lin, Y. and  Riedl, E. and Schmidt, B. and  Woolf, M.  and  Zhao, X. ",
  title = "ef cones of Hilbert schemes of points on surfaces
 ",
  year = "2016",
  journal ="Algebra Number Theory
",
  volume = "10",
  number = "4",
  pages = "907--930",
   keywords  = "latex"
   }





@book{KM,
    author    = "Kollár, J. and Mori, S.",
    title     = "Birational Geometry of Algebraic Varieties",
    year      = "1998",
    publisher = "Cambridge: Cambridge University Press",

}

\end{document}